\documentclass{article}
%%%%%%%%%%%%%%%%%%%%%%%%%%%%%%%%%%%%%%%%%%%%%%%%%%%%%%%%%%%%%%%%%%%%%%%%%%%%%%%%%%%%%%%%%%%%%%%%%%%%%%%%%%%%%%%%%%%%%%%%%%%%%%%%%%%%%%%%%%%%%%%%%%%%%%%%%%%%%%%%%%%%%%%%%%%%%%%%%%%%%%%%%%%%%%%%%%%%%%%%%%%%%%%%%%%%%%%%%%%%%%%%%%%%%%%%%%%%%%%%%%%%%%%%%%%%
\usepackage{amssymb}
\usepackage{amsmath}

\usepackage{hyperref}
\setcounter{MaxMatrixCols}{10}

\newcommand{\ti}{\tilde}
\newtheorem{theorem}{Theorem}[section]

\newtheorem{axiom}[theorem]{Axiom}

\newtheorem{conjecture}[theorem]{Conjecture}
\newtheorem{corollary}[theorem]{Corollary}

\newtheorem{definition}[theorem]{Definition}
\newtheorem{example}[theorem]{Example}
\newtheorem{exercise}[theorem]{Exercise}
\newtheorem{lemma}[theorem]{Lemma}
\newtheorem{notation}[theorem]{Notation}

\newtheorem{proposition}[theorem]{Proposition}
\newtheorem{remark}[theorem]{Remark}

\newenvironment{proof}[1][Proof]{\noindent\textbf{#1.} }{\ \rule{0.5em}{0.5em}}
\newcommand{\f}{\frac}
\usepackage{color}%
\usepackage{graphicx}
\newcommand{\diff}{\mathrm{d}}

\definecolor{vert}{rgb}{0,0.58,0}

\usepackage{marvosym}

\newcommand{\ep}{\varepsilon}

\usepackage[normalem]{ulem}
\usepackage{caption}
\usepackage{subcaption}
\usepackage{manfnt}
\usepackage{cancel}
\usepackage{comment}
\usepackage{authblk}
% Macros for Scientific Word and Scientific WorkPlace 5.5 documents saved with the LaTeX filter.
% Copyright (C) 2005 Mackichan Software, Inc.

\typeout{TCILATEX Macros for Scientific Word and Scientific WorkPlace 5.5 <06 Oct 2005>.}
\typeout{NOTICE:  This macro file is NOT proprietary and may be 
freely copied and distributed.}
\makeatletter

%%%%%%%%%%%%%%%%%%%%%
% pdfTeX related.
\ifx\pdfoutput\relax\let\pdfoutput=\undefined\fi
\newcount\msipdfoutput
\ifx\pdfoutput\undefined
\else
 \ifcase\pdfoutput
 \else 
    \msipdfoutput=1
    \ifx\paperwidth\undefined
    \else
      \ifdim\paperheight=0pt\relax
      \else
        \pdfpageheight\paperheight
      \fi
      \ifdim\paperwidth=0pt\relax
      \else
        \pdfpagewidth\paperwidth
      \fi
    \fi
  \fi  
\fi

%%%%%%%%%%%%%%%%%%%%%
% FMTeXButton
% This is used for putting TeXButtons in the 
% frontmatter of a document. Add a line like
% \QTagDef{FMTeXButton}{101}{} to the filter 
% section of the cst being used. Also add a
% new section containing:
%     [f_101]
%     ALIAS=FMTexButton
%     TAG_TYPE=FIELD
%     TAG_LEADIN=TeX Button:
%
% It also works to put \defs in the preamble after 
% the \input tcilatex

%
%%%%%%%%%%%%%%%%%%%%%%
% macros for time
\newcount\@hour\newcount\@minute\chardef\@x10\chardef\@xv60
\def\tcitime{
\def\@time{%
  \@minute\time\@hour\@minute\divide\@hour\@xv
  \ifnum\@hour<\@x 0\fi\the\@hour:%
  \multiply\@hour\@xv\advance\@minute-\@hour
  \ifnum\@minute<\@x 0\fi\the\@minute
  }}%

%%%%%%%%%%%%%%%%%%%%%%
% macro for hyperref and msihyperref
%\@ifundefined{hyperref}{\def\hyperref#1#2#3#4{#2\ref{#4}#3}}{}

\def\x@hyperref#1#2#3{%
   % Turn off various catcodes before reading parameter 4
   \catcode`\~ = 12
   \catcode`\$ = 12
   \catcode`\_ = 12
   \catcode`\# = 12
   \catcode`\& = 12
   \catcode`\% = 12
   \y@hyperref{#1}{#2}{#3}%
}

\def\y@hyperref#1#2#3#4{%
   #2\ref{#4}#3
   \catcode`\~ = 13
   \catcode`\$ = 3
   \catcode`\_ = 8
   \catcode`\# = 6
   \catcode`\& = 4
   \catcode`\% = 14
}

\@ifundefined{hyperref}{\let\hyperref\x@hyperref}{}
\@ifundefined{msihyperref}{\let\msihyperref\x@hyperref}{}

% macro for external program call
\@ifundefined{qExtProgCall}{\def\qExtProgCall#1#2#3#4#5#6{\relax}}{}
%%%%%%%%%%%%%%%%%%%%%%
%
% macros for graphics
%
%
%
\def\QCTOpt[#1]#2{%
  \def\QCTOptB{#1}
  \def\QCTOptA{#2}
}
\def\QCTNOpt#1{%
  \def\QCTOptA{#1}
  \let\QCTOptB\empty
}
\def\Qct{%
  \@ifnextchar[{%
    \QCTOpt}{\QCTNOpt}
}
\def\QCBOpt[#1]#2{%
  \def\QCBOptB{#1}%
  \def\QCBOptA{#2}%
}
\def\QCBNOpt#1{%
  \def\QCBOptA{#1}%
  \let\QCBOptB\empty
}
\def\Qcb{%
  \@ifnextchar[{%
    \QCBOpt}{\QCBNOpt}%
}
\def\PrepCapArgs{%
  \ifx\QCBOptA\empty
    \ifx\QCTOptA\empty
      {}%
    \else
      \ifx\QCTOptB\empty
        {\QCTOptA}%
      \else
        [\QCTOptB]{\QCTOptA}%
      \fi
    \fi
  \else
    \ifx\QCBOptA\empty
      {}%
    \else
      \ifx\QCBOptB\empty
        {\QCBOptA}%
      \else
        [\QCBOptB]{\QCBOptA}%
      \fi
    \fi
  \fi
}
\newcount\GRAPHICSTYPE
%\GRAPHICSTYPE 0 is for TurboTeX
%\GRAPHICSTYPE 1 is for DVIWindo (PostScript)
%%%(removed)%\GRAPHICSTYPE 2 is for psfig (PostScript)
\GRAPHICSTYPE=\z@
\def\GRAPHICSPS#1{%
 \ifcase\GRAPHICSTYPE%\GRAPHICSTYPE=0
   \special{ps: #1}%
 \or%\GRAPHICSTYPE=1
   \special{language "PS", include "#1"}%
%%%\or%\GRAPHICSTYPE=2
%%%  #1%
 \fi
}%
%
%
%
% \graffile{ body }                                  %#1
%          { contentswidth (scalar)  }               %#2
%          { contentsheight (scalar) }               %#3
%          { vertical shift when in-line (scalar) }  %#4

\def\graffile#1#2#3#4{%
%%% \ifnum\GRAPHICSTYPE=\tw@
%%%  %Following if using psfig
%%%  \@ifundefined{psfig}{\input psfig.tex}{}%
%%%  \psfig{file=#1, height=#3, width=#2}%
%%% \else
  %Following for all others
  % JCS - added BOXTHEFRAME, see below
    \bgroup
	   \@inlabelfalse
       \leavevmode
       \@ifundefined{bbl@deactivate}{\def~{\string~}}{\activesoff}%
        \raise -#4 \BOXTHEFRAME{%
           \hbox to #2{\raise #3\hbox to #2{\null #1\hfil}}}%
    \egroup
}%
%
% A box for drafts
\def\draftbox#1#2#3#4{%
 \leavevmode\raise -#4 \hbox{%
  \frame{\rlap{\protect\tiny #1}\hbox to #2%
   {\vrule height#3 width\z@ depth\z@\hfil}%
  }%
 }%
}%
\newcount\@msidraft
\@msidraft=\z@
\let\nographics=\@msidraft
\newif\ifwasdraft
\wasdraftfalse

%  \GRAPHIC{ body }                                  %#1
%          { draft name }                            %#2
%          { contentswidth (scalar)  }               %#3
%          { contentsheight (scalar) }               %#4
%          { vertical shift when in-line (scalar) }  %#5
\def\GRAPHIC#1#2#3#4#5{%
   \ifnum\@msidraft=\@ne\draftbox{#2}{#3}{#4}{#5}%
   \else\graffile{#1}{#3}{#4}{#5}%
   \fi
}
\def\addtoLaTeXparams#1{%
    \edef\LaTeXparams{\LaTeXparams #1}}%
%
% JCS -  added a switch BoxFrame that can 
% be set by including X in the frame params.
% If set a box is drawn around the frame.

\newif\ifBoxFrame \BoxFramefalse
\newif\ifOverFrame \OverFramefalse
\newif\ifUnderFrame \UnderFramefalse

\def\BOXTHEFRAME#1{%
   \hbox{%
      \ifBoxFrame
         \frame{#1}%
      \else
         {#1}%
      \fi
   }%
}

\def\doFRAMEparams#1{\BoxFramefalse\OverFramefalse\UnderFramefalse\readFRAMEparams#1\end}%
\def\readFRAMEparams#1{%
 \ifx#1\end%
  \let\next=\relax
  \else
  \ifx#1i\dispkind=\z@\fi
  \ifx#1d\dispkind=\@ne\fi
  \ifx#1f\dispkind=\tw@\fi
  \ifx#1t\addtoLaTeXparams{t}\fi
  \ifx#1b\addtoLaTeXparams{b}\fi
  \ifx#1p\addtoLaTeXparams{p}\fi
  \ifx#1h\addtoLaTeXparams{h}\fi
  \ifx#1X\BoxFrametrue\fi
  \ifx#1O\OverFrametrue\fi
  \ifx#1U\UnderFrametrue\fi
  \ifx#1w
    \ifnum\@msidraft=1\wasdrafttrue\else\wasdraftfalse\fi
    \@msidraft=\@ne
  \fi
  \let\next=\readFRAMEparams
  \fi
 \next
 }%
%
%Macro for In-line graphics object
%   \IFRAME{ contentswidth (scalar)  }               %#1
%          { contentsheight (scalar) }               %#2
%          { vertical shift when in-line (scalar) }  %#3
%          { draft name }                            %#4
%          { body }                                  %#5
%          { caption}                                %#6

\def\IFRAME#1#2#3#4#5#6{%
      \bgroup
      \let\QCTOptA\empty
      \let\QCTOptB\empty
      \let\QCBOptA\empty
      \let\QCBOptB\empty
      #6%
      \parindent=0pt
      \leftskip=0pt
      \rightskip=0pt
      \setbox0=\hbox{\QCBOptA}%
      \@tempdima=#1\relax
      \ifOverFrame
          % Do this later
          \typeout{This is not implemented yet}%
          \show\HELP
      \else
         \ifdim\wd0>\@tempdima
            \advance\@tempdima by \@tempdima
            \ifdim\wd0 >\@tempdima
               \setbox1 =\vbox{%
                  \unskip\hbox to \@tempdima{\hfill\GRAPHIC{#5}{#4}{#1}{#2}{#3}\hfill}%
                  \unskip\hbox to \@tempdima{\parbox[b]{\@tempdima}{\QCBOptA}}%
               }%
               \wd1=\@tempdima
            \else
               \textwidth=\wd0
               \setbox1 =\vbox{%
                 \noindent\hbox to \wd0{\hfill\GRAPHIC{#5}{#4}{#1}{#2}{#3}\hfill}\\%
                 \noindent\hbox{\QCBOptA}%
               }%
               \wd1=\wd0
            \fi
         \else
            \ifdim\wd0>0pt
              \hsize=\@tempdima
              \setbox1=\vbox{%
                \unskip\GRAPHIC{#5}{#4}{#1}{#2}{0pt}%
                \break
                \unskip\hbox to \@tempdima{\hfill \QCBOptA\hfill}%
              }%
              \wd1=\@tempdima
           \else
              \hsize=\@tempdima
              \setbox1=\vbox{%
                \unskip\GRAPHIC{#5}{#4}{#1}{#2}{0pt}%
              }%
              \wd1=\@tempdima
           \fi
         \fi
         \@tempdimb=\ht1
         %\advance\@tempdimb by \dp1
         \advance\@tempdimb by -#2
         \advance\@tempdimb by #3
         \leavevmode
         \raise -\@tempdimb \hbox{\box1}%
      \fi
      \egroup%
}%
%
%Macro for Display graphics object
%   \DFRAME{ contentswidth (scalar)  }               %#1
%          { contentsheight (scalar) }               %#2
%          { draft label }                           %#3
%          { name }                                  %#4
%          { caption}                                %#5
\def\DFRAME#1#2#3#4#5{%
  \vspace\topsep
  \hfil\break
  \bgroup
     \leftskip\@flushglue
	 \rightskip\@flushglue
	 \parindent\z@
	 \parfillskip\z@skip
     \let\QCTOptA\empty
     \let\QCTOptB\empty
     \let\QCBOptA\empty
     \let\QCBOptB\empty
	 \vbox\bgroup
        \ifOverFrame 
           #5\QCTOptA\par
        \fi
        \GRAPHIC{#4}{#3}{#1}{#2}{\z@}%
        \ifUnderFrame 
           \break#5\QCBOptA
        \fi
	 \egroup
  \egroup
  \vspace\topsep
  \break
}%
%
%Macro for Floating graphic object
%   \FFRAME{ framedata f|i tbph x F|T }              %#1
%          { contentswidth (scalar)  }               %#2
%          { contentsheight (scalar) }               %#3
%          { caption }                               %#4
%          { label }                                 %#5
%          { draft name }                            %#6
%          { body }                                  %#7
\def\FFRAME#1#2#3#4#5#6#7{%
 %If float.sty loaded and float option is 'h', change to 'H'  (gp) 1998/09/05
  \@ifundefined{floatstyle}
    {%floatstyle undefined (and float.sty not present), no change
     \begin{figure}[#1]%
    }
    {%floatstyle DEFINED
	 \ifx#1h%Only the h parameter, change to H
      \begin{figure}[H]%
	 \else
      \begin{figure}[#1]%
	 \fi
	}
  \let\QCTOptA\empty
  \let\QCTOptB\empty
  \let\QCBOptA\empty
  \let\QCBOptB\empty
  \ifOverFrame
    #4
    \ifx\QCTOptA\empty
    \else
      \ifx\QCTOptB\empty
        \caption{\QCTOptA}%
      \else
        \caption[\QCTOptB]{\QCTOptA}%
      \fi
    \fi
    \ifUnderFrame\else
      \label{#5}%
    \fi
  \else
    \UnderFrametrue%
  \fi
  \begin{center}\GRAPHIC{#7}{#6}{#2}{#3}{\z@}\end{center}%
  \ifUnderFrame
    #4
    \ifx\QCBOptA\empty
      \caption{}%
    \else
      \ifx\QCBOptB\empty
        \caption{\QCBOptA}%
      \else
        \caption[\QCBOptB]{\QCBOptA}%
      \fi
    \fi
    \label{#5}%
  \fi
  \end{figure}%
 }%
%
%
%    \FRAME{ framedata f|i tbph x F|T }              %#1
%          { contentswidth (scalar)  }               %#2
%          { contentsheight (scalar) }               %#3
%          { vertical shift when in-line (scalar) }  %#4
%          { caption }                               %#5
%          { label }                                 %#6
%          { name }                                  %#7
%          { body }                                  %#8
%
%    framedata is a string which can contain the following
%    characters: idftbphxFT
%    Their meaning is as follows:
%             i, d or f : in-line, display, or floating
%             t,b,p,h   : LaTeX floating placement options
%             x         : fit contents box to contents
%             F or T    : Figure or Table. 
%                         Later this can expand
%                         to a more general float class.
%
%
\newcount\dispkind%

\def\makeactives{
  \catcode`\"=\active
  \catcode`\;=\active
  \catcode`\:=\active
  \catcode`\'=\active
  \catcode`\~=\active
}
\bgroup
   \makeactives
   \gdef\activesoff{%
      \def"{\string"}%
      \def;{\string;}%
      \def:{\string:}%
      \def'{\string'}%
      \def~{\string~}%
      %\bbl@deactivate{"}%
      %\bbl@deactivate{;}%
      %\bbl@deactivate{:}%
      %\bbl@deactivate{'}%
    }
\egroup

\def\FRAME#1#2#3#4#5#6#7#8{%
 \bgroup
 \ifnum\@msidraft=\@ne
   \wasdrafttrue
 \else
   \wasdraftfalse%
 \fi
 \def\LaTeXparams{}%
 \dispkind=\z@
 \def\LaTeXparams{}%
 \doFRAMEparams{#1}%
 \ifnum\dispkind=\z@\IFRAME{#2}{#3}{#4}{#7}{#8}{#5}\else
  \ifnum\dispkind=\@ne\DFRAME{#2}{#3}{#7}{#8}{#5}\else
   \ifnum\dispkind=\tw@
    \edef\@tempa{\noexpand\FFRAME{\LaTeXparams}}%
    \@tempa{#2}{#3}{#5}{#6}{#7}{#8}%
    \fi
   \fi
  \fi
  \ifwasdraft\@msidraft=1\else\@msidraft=0\fi{}%
  \egroup
 }%
%
% This macro added to let SW gobble a parameter that
% should not be passed on and expanded. 

\def\TEXUX#1{"texux"}

%
% Macros for text attributes:
%
%
%
%
%%%%%%%%%%%%%%%%%%%%%%%%%%%%%%%%%%%%%%%%%%%%%%%%%%%%%%%%%%%%%%%%%%%%%%%%
%
%
% macros for user - defined functions
%
\def\func#1{\mathop{\rm #1}\nolimits}%
% macro for unit names
%

%
% miscellaneous 
\long\def\QQQ#1#2{%
     \long\expandafter\def\csname#1\endcsname{#2}}%
\@ifundefined{QTP}{\def\QTP#1{}}{}
\@ifundefined{QEXCLUDE}{\def\QEXCLUDE#1{}}{}
\@ifundefined{Qlb}{}{}
\@ifundefined{Qlt}{}{}
\long\def\QQA#1#2{}%
\def\QTR#1#2{{\csname#1\endcsname {#2}}}%
  %	Add aliases for the ulem package

%
%
\def\EXPAND#1[#2]#3{}%
\def\NOEXPAND#1[#2]#3{}%
\def\LaTeXparent#1{}%
\def\ChildStyles#1{}%
\def\ChildDefaults#1{}%
\def\QTagDef#1#2#3{}%

% Constructs added with Scientific Notebook
\@ifundefined{correctchoice}{}{}
\@ifundefined{HTML}{\def\HTML#1{\relax}}{}
\@ifundefined{TCIIcon}{\def\TCIIcon#1#2#3#4{\relax}}{}
\if@compatibility
  \typeout{Not defining UNICODE  U or CustomNote commands for LaTeX 2.09.}
\else
  \providecommand{\UNICODE}[2][]{\protect\rule{.1in}{.1in}}
  \providecommand{\U}[1]{\protect\rule{.1in}{.1in}}
  
\fi

\@ifundefined{lambdabar}{
      
   }{}

%
% Macros for style editor docs
\@ifundefined{StyleEditBeginDoc}{}{}
%
% Macros for footnotes
\def\QQfnmark#1{\footnotemark}

%
% Macros for indexing.
%
\@ifundefined{TCIMAKEINDEX}{}{\makeindex}%
%
% Attempts to avoid problems with other styles
\@ifundefined{abstract}{%
 \def\abstract{%
  \if@twocolumn
   \section*{Abstract (Not appropriate in this style!)}%
   \else \small 
   \begin{center}{\bf Abstract\vspace{-.5em}\vspace{\z@}}\end{center}%
   \quotation 
   \fi
  }%
 }{%
 }%
\@ifundefined{endabstract}{\def\endabstract
  {\if@twocolumn\else\endquotation\fi}}{}%
\@ifundefined{maketitle}{\def\maketitle#1{}}{}%
\@ifundefined{affiliation}{\def\affiliation#1{}}{}%
\@ifundefined{proof}{}{}%
\@ifundefined{endproof}{}{}%
\@ifundefined{newfield}{\def\newfield#1#2{}}{}%
\@ifundefined{chapter}{\def\chapter#1{\par(Chapter head:)#1\par }%
 \newcount\c@chapter}{}%
\@ifundefined{part}{\def\part#1{\par(Part head:)#1\par }}{}%
\@ifundefined{section}{\def\section#1{\par(Section head:)#1\par }}{}%
\@ifundefined{subsection}{\def\subsection#1%
 {\par(Subsection head:)#1\par }}{}%
\@ifundefined{subsubsection}{\def\subsubsection#1%
 {\par(Subsubsection head:)#1\par }}{}%
\@ifundefined{paragraph}{\def\paragraph#1%
 {\par(Subsubsubsection head:)#1\par }}{}%
\@ifundefined{subparagraph}{\def\subparagraph#1%
 {\par(Subsubsubsubsection head:)#1\par }}{}%
%%%%%%%%%%%%%%%%%%%%%%%%%%%%%%%%%%%%%%%%%%%%%%%%%%%%%%%%%%%%%%%%%%%%%%%%
% These symbols are not recognized by LaTeX
\@ifundefined{therefore}{}{}%
\@ifundefined{backepsilon}{}{}%
\@ifundefined{yen}{}{}%
\@ifundefined{registered}{%
   \def\registered{\relax\ifmmode{}\r@gistered
                    \else$\m@th\r@gistered$\fi}%
 \def\r@gistered{^{\ooalign
  {\hfil\raise.07ex\hbox{$\scriptstyle\rm\text{R}$}\hfil\crcr
  \mathhexbox20D}}}}{}%
\@ifundefined{Eth}{}{}%
\@ifundefined{eth}{}{}%
\@ifundefined{Thorn}{}{}%
\@ifundefined{thorn}{}{}%
% A macro to allow any symbol that requires math to appear in text
%
\@ifundefined{degree}{}{}%
%
% macros for T3TeX files
\newdimen\theight
\@ifundefined{Column}{\def\Column{%
 \vadjust{\setbox\z@=\hbox{\scriptsize\quad\quad tcol}%
  \theight=\ht\z@\advance\theight by \dp\z@\advance\theight by \lineskip
  \kern -\theight \vbox to \theight{%
   \rightline{\rlap{\box\z@}}%
   \vss
   }%
  }%
 }}{}%
\@ifundefined{qed}{\def\qed{%
 \ifhmode\unskip\nobreak\fi\ifmmode\ifinner\else\hskip5\p@\fi\fi
 \hbox{\hskip5\p@\vrule width4\p@ height6\p@ depth1.5\p@\hskip\p@}%
 }}{}%
\@ifundefined{cents}{}{}%
\@ifundefined{tciLaplace}{}{}%
\@ifundefined{tciFourier}{}{}%
\@ifundefined{textcurrency}{}{}%
\@ifundefined{texteuro}{}{}%
\@ifundefined{euro}{}{}%
\@ifundefined{textfranc}{}{}%
\@ifundefined{textlira}{}{}%
\@ifundefined{textpeseta}{}{}%
\@ifundefined{miss}{\def\miss{\hbox{\vrule height2\p@ width 2\p@ depth\z@}}}{}%
\@ifundefined{vvert}{}{}%  %always translated to \left| or \right|
\@ifundefined{tcol}{\def\tcol#1{{\baselineskip=6\p@ \vcenter{#1}} \Column}}{}%
\@ifundefined{dB}{}{}%        %dummy entry in column 
\@ifundefined{mB}{}{}%   %column entry
\@ifundefined{nB}{}{}%     %column entry (not math)
\@ifundefined{note}{}{}%
\def\newfmtname{LaTeX2e}
% No longer load latexsym.  This is now handled by SWP, which uses amsfonts if necessary
%
\ifx\fmtname\newfmtname
  \DeclareOldFontCommand{\rm}{\normalfont\rmfamily}{\mathrm}
  \DeclareOldFontCommand{\sf}{\normalfont\sffamily}{\mathsf}
  \DeclareOldFontCommand{\tt}{\normalfont\ttfamily}{\mathtt}
  \DeclareOldFontCommand{\bf}{\normalfont\bfseries}{\mathbf}
  \DeclareOldFontCommand{\it}{\normalfont\itshape}{\mathit}
  \DeclareOldFontCommand{\sl}{\normalfont\slshape}{\@nomath\sl}
  \DeclareOldFontCommand{\sc}{\normalfont\scshape}{\@nomath\sc}
\fi

%
% Greek bold macros
% Redefine all of the math symbols 
% which might be bolded	 - there are 
% probably others to add to this list

\def\alpha{{\Greekmath 010B}}%
\def\beta{{\Greekmath 010C}}%
\def\gamma{{\Greekmath 010D}}%
\def\delta{{\Greekmath 010E}}%
\def\epsilon{{\Greekmath 010F}}%
\def\zeta{{\Greekmath 0110}}%
\def\eta{{\Greekmath 0111}}%
\def\theta{{\Greekmath 0112}}%
\def\iota{{\Greekmath 0113}}%
\def\kappa{{\Greekmath 0114}}%
\def\lambda{{\Greekmath 0115}}%
\def\mu{{\Greekmath 0116}}%
\def\nu{{\Greekmath 0117}}%
\def\xi{{\Greekmath 0118}}%
\def\pi{{\Greekmath 0119}}%
\def\rho{{\Greekmath 011A}}%
\def\sigma{{\Greekmath 011B}}%
\def\tau{{\Greekmath 011C}}%
\def\upsilon{{\Greekmath 011D}}%
\def\phi{{\Greekmath 011E}}%
\def\chi{{\Greekmath 011F}}%
\def\psi{{\Greekmath 0120}}%
\def\omega{{\Greekmath 0121}}%
\def\varepsilon{{\Greekmath 0122}}%
\def\vartheta{{\Greekmath 0123}}%
\def\varpi{{\Greekmath 0124}}%
\def\varrho{{\Greekmath 0125}}%
\def\varsigma{{\Greekmath 0126}}%
\def\varphi{{\Greekmath 0127}}%

\def\nabla{{\Greekmath 0272}}
\def\FindBoldGroup{%
   {\setbox0=\hbox{$\mathbf{x\global\edef\theboldgroup{\the\mathgroup}}$}}%
}

\def\Greekmath#1#2#3#4{%
    \if@compatibility
        \ifnum\mathgroup=\symbold
           \mathchoice{\mbox{\boldmath$\displaystyle\mathchar"#1#2#3#4$}}%
                      {\mbox{\boldmath$\textstyle\mathchar"#1#2#3#4$}}%
                      {\mbox{\boldmath$\scriptstyle\mathchar"#1#2#3#4$}}%
                      {\mbox{\boldmath$\scriptscriptstyle\mathchar"#1#2#3#4$}}%
        \else
           \mathchar"#1#2#3#4% 
        \fi 
    \else 
        \FindBoldGroup
        \ifnum\mathgroup=\theboldgroup % For 2e
           \mathchoice{\mbox{\boldmath$\displaystyle\mathchar"#1#2#3#4$}}%
                      {\mbox{\boldmath$\textstyle\mathchar"#1#2#3#4$}}%
                      {\mbox{\boldmath$\scriptstyle\mathchar"#1#2#3#4$}}%
                      {\mbox{\boldmath$\scriptscriptstyle\mathchar"#1#2#3#4$}}%
        \else
           \mathchar"#1#2#3#4% 
        \fi     	    
	  \fi}

\newif\ifGreekBold  \GreekBoldfalse
\let\SAVEPBF=\pbf
\def\pbf{\GreekBoldtrue\SAVEPBF}%

\@ifundefined{theorem}{\newtheorem{theorem}{Theorem}}{}
\@ifundefined{lemma}{\newtheorem{lemma}[theorem]{Lemma}}{}
\@ifundefined{corollary}{\newtheorem{corollary}[theorem]{Corollary}}{}
\@ifundefined{conjecture}{}{}
\@ifundefined{proposition}{\newtheorem{proposition}[theorem]{Proposition}}{}
\@ifundefined{axiom}{}{}
\@ifundefined{remark}{\newtheorem{remark}{Remark}}{}
\@ifundefined{example}{}{}
\@ifundefined{exercise}{}{}
\@ifundefined{definition}{}{}

\@ifundefined{mathletters}{%
  \newcounter{equationnumber}  
  \def\mathletters{%
     \addtocounter{equation}{1}
     \edef\@currentlabel{\theequation}%
     \setcounter{equationnumber}{\c@equation}
     \setcounter{equation}{0}%
     \edef\theequation{\@currentlabel\noexpand\alph{equation}}%
  }
  
}{}

%Logos
\@ifundefined{BibTeX}{%
    \def\BibTeX{{\rm B\kern-.05em{\sc i\kern-.025em b}\kern-.08em
                 T\kern-.1667em\lower.7ex\hbox{E}\kern-.125emX}}}{}%
\@ifundefined{AmS}%
    {\def\AmS{{\protect\usefont{OMS}{cmsy}{m}{n}%
                A\kern-.1667em\lower.5ex\hbox{M}\kern-.125emS}}}{}%
\@ifundefined{AmSTeX}{}{}%
%

% This macro is a fix to eqnarray
\def\@@eqncr{\let\@tempa\relax
    \ifcase\@eqcnt \def\@tempa{& & &}\or \def\@tempa{& &}%
      \else \def\@tempa{&}\fi
     \@tempa
     \if@eqnsw
        \iftag@
           \@taggnum
        \else
           \@eqnnum\stepcounter{equation}%
        \fi
     \fi
     \global\tag@false
     \global\@eqnswtrue
     \global\@eqcnt\z@\cr}

\def\TCItag{\@ifnextchar*{\@TCItagstar}{\@TCItag}}
\def\@TCItag#1{%
    \global\tag@true
    \global\def\@taggnum{(#1)}%
    \global\def\@currentlabel{#1}}
\def\@TCItagstar*#1{%
    \global\tag@true
    \global\def\@taggnum{#1}%
    \global\def\@currentlabel{#1}}
%
%%%%%%%%%%%%%%%%%%%%%%%%%%%%%%%%%%%%%%%%%%%%%%%%%%%%%%%%%%%%%%%%%%%%%
%
%
%
%
%
%
%
%
%
%
%
%
%
%
%
%
%
% Macros for text size operators:
%

\def\tint{\msi@int\textstyle\int}%
\def\tiint{\msi@int\textstyle\iint}%
\def\tiiint{\msi@int\textstyle\iiint}%
\def\tiiiint{\msi@int\textstyle\iiiint}%
\def\tidotsint{\msi@int\textstyle\idotsint}%
\def\toint{\msi@int\textstyle\oint}%

%
%
%
%
%
%
%
%
%
%
%
%
%
%
%Macros for display size operators:
%

\newtoks\temptoksa
\newtoks\temptoksb
\newtoks\temptoksc

\def\msi@int#1#2{%
 \def\@temp{{#1#2\the\temptoksc_{\the\temptoksa}^{\the\temptoksb}}}%   
 \futurelet\@nextcs
 \@int
}

\def\@int{%
   \ifx\@nextcs\limits
      \typeout{Found limits}%
      \temptoksc={\limits}%
	  \let\@next\@intgobble%
   \else\ifx\@nextcs\nolimits
      \typeout{Found nolimits}%
      \temptoksc={\nolimits}%
	  \let\@next\@intgobble%
   \else
      \typeout{Did not find limits or no limits}%
      \temptoksc={}%
      \let\@next\msi@limits%
   \fi\fi
   \@next   
}%

\def\@intgobble#1{%
   \typeout{arg is #1}%
   \msi@limits
}

\def\msi@limits{%
   \temptoksa={}%
   \temptoksb={}%
   \@ifnextchar_{\@limitsa}{\@limitsb}%
}

\def\@limitsa_#1{%
   \temptoksa={#1}%
   \@ifnextchar^{\@limitsc}{\@temp}%
}

\def\@limitsb{%
   \@ifnextchar^{\@limitsc}{\@temp}%
}

\def\@limitsc^#1{%
   \temptoksb={#1}%
   \@ifnextchar_{\@limitsd}{\@temp}%   
}

\def\@limitsd_#1{%
   \temptoksa={#1}%
   \@temp
}

\def\dint{\msi@int\displaystyle\int}%
\def\diint{\msi@int\displaystyle\iint}%
\def\diiint{\msi@int\displaystyle\iiint}%
\def\diiiint{\msi@int\displaystyle\iiiint}%
\def\didotsint{\msi@int\displaystyle\idotsint}%
\def\doint{\msi@int\displaystyle\oint}%

\if@compatibility\else
  % Always load amsmath in LaTeX2e mode
  \RequirePackage{amsmath}
\fi

\def\ExitTCILatex{\makeatother }

\bgroup
\ifx\ds@amstex\relax
   \message{amstex already loaded}\aftergroup\ExitTCILatex
\else
   \@ifpackageloaded{amsmath}%
      {\if@compatibility\message{amsmath already loaded}\fi\aftergroup\ExitTCILatex}
      {}
   \@ifpackageloaded{amstex}%
      {\if@compatibility\message{amstex already loaded}\fi\aftergroup\ExitTCILatex}
      {}
   \@ifpackageloaded{amsgen}%
      {\if@compatibility\message{amsgen already loaded}\fi\aftergroup\ExitTCILatex}
      {}
\fi
\egroup

%Exit if any of the AMS macros are already loaded.
%This is always the case for LaTeX2e mode.

%%%%%%%%%%%%%%%%%%%%%%%%%%%%%%%%%%%%%%%%%%%%%%%%%%%%%%%%%%%%%%%%%%%%%%%%%%
% NOTE: The rest of this file is read only if in LaTeX 2.09 compatibility
% mode. This section is used to define AMS-like constructs in the
% event they have not been defined.
%%%%%%%%%%%%%%%%%%%%%%%%%%%%%%%%%%%%%%%%%%%%%%%%%%%%%%%%%%%%%%%%%%%%%%%%%%
\typeout{TCILATEX defining AMS-like constructs in LaTeX 2.09 COMPATIBILITY MODE}
%%%%%%%%%%%%%%%%%%%%%%%%%%%%%%%%%%%%%%%%%%%%%%%%%%%%%%%%%%%%%%%%%%%%%%%%
%  Macros to define some AMS LaTeX constructs when 
%  AMS LaTeX has not been loaded
% 
% These macros are copied from the AMS-TeX package for doing
% multiple integrals.
%
\let\DOTSI\relax
\def\RIfM@{\relax\ifmmode}%
\def\FN@{\futurelet\next}%
\newcount\intno@
\def\iint{\DOTSI\intno@\tw@\FN@\ints@}%
\def\iiint{\DOTSI\intno@\thr@@\FN@\ints@}%
\def\iiiint{\DOTSI\intno@4 \FN@\ints@}%
\def\idotsint{\DOTSI\intno@\z@\FN@\ints@}%
\def\ints@{\findlimits@\ints@@}%
\newif\iflimtoken@
\newif\iflimits@
\def\findlimits@{\limtoken@true\ifx\next\limits\limits@true
 \else\ifx\next\nolimits\limits@false\else
 \limtoken@false\ifx\ilimits@\nolimits\limits@false\else
 \ifinner\limits@false\else\limits@true\fi\fi\fi\fi}%
\def\multint@{\int\ifnum\intno@=\z@\intdots@                          %1
 \else\intkern@\fi                                                    %2
 \ifnum\intno@>\tw@\int\intkern@\fi                                   %3
 \ifnum\intno@>\thr@@\int\intkern@\fi                                 %4
 \int}%                                                               %5
\def\multintlimits@{\intop\ifnum\intno@=\z@\intdots@\else\intkern@\fi
 \ifnum\intno@>\tw@\intop\intkern@\fi
 \ifnum\intno@>\thr@@\intop\intkern@\fi\intop}%
\def\intic@{%
    \mathchoice{\hskip.5em}{\hskip.4em}{\hskip.4em}{\hskip.4em}}%
\def\negintic@{\mathchoice
 {\hskip-.5em}{\hskip-.4em}{\hskip-.4em}{\hskip-.4em}}%
\def\ints@@{\iflimtoken@                                              %1
 \def\ints@@@{\iflimits@\negintic@
   \mathop{\intic@\multintlimits@}\limits                             %2
  \else\multint@\nolimits\fi                                          %3
  \eat@}%                                                             %4
 \else                                                                %5
 \def\ints@@@{\iflimits@\negintic@
  \mathop{\intic@\multintlimits@}\limits\else
  \multint@\nolimits\fi}\fi\ints@@@}%
\def\intkern@{\mathchoice{\!\!\!}{\!\!}{\!\!}{\!\!}}%
\def\plaincdots@{\mathinner{\cdotp\cdotp\cdotp}}%
\def\intdots@{\mathchoice{\plaincdots@}%
 {{\cdotp}\mkern1.5mu{\cdotp}\mkern1.5mu{\cdotp}}%
 {{\cdotp}\mkern1mu{\cdotp}\mkern1mu{\cdotp}}%
 {{\cdotp}\mkern1mu{\cdotp}\mkern1mu{\cdotp}}}%
%
%
%  These macros are for doing the AMS \text{} construct
%
\def\RIfM@{\relax\protect\ifmmode}
\def\text{\RIfM@\expandafter\text@\else\expandafter\mbox\fi}
\let\nfss@text\text
\def\text@#1{\mathchoice
   {\textdef@\displaystyle\f@size{#1}}%
   {\textdef@\textstyle\tf@size{\firstchoice@false #1}}%
   {\textdef@\textstyle\sf@size{\firstchoice@false #1}}%
   {\textdef@\textstyle \ssf@size{\firstchoice@false #1}}%
   \glb@settings}

\def\textdef@#1#2#3{\hbox{{%
                    \everymath{#1}%
                    \let\f@size#2\selectfont
                    #3}}}
\newif\iffirstchoice@
\firstchoice@true
%
%These are the AMS constructs for multiline limits.
%
\def\Let@{\relax\iffalse{\fi\let\\=\cr\iffalse}\fi}%
\def\vspace@{\def\vspace##1{\crcr\noalign{\vskip##1\relax}}}%
\def\multilimits@{\bgroup\vspace@\Let@
 \baselineskip\fontdimen10 \scriptfont\tw@
 \advance\baselineskip\fontdimen12 \scriptfont\tw@
 \lineskip\thr@@\fontdimen8 \scriptfont\thr@@
 \lineskiplimit\lineskip
 \vbox\bgroup\ialign\bgroup\hfil$\m@th\scriptstyle{##}$\hfil\crcr}%
\def\Sb{_\multilimits@}%
\def\endSb{\crcr\egroup\egroup\egroup}%
\def\Sp{^\multilimits@}%

%
%
%These are AMS constructs for horizontal arrows
%
\newdimen\ex@
\ex@.2326ex
\def\rightarrowfill@#1{$#1\m@th\mathord-\mkern-6mu\cleaders
 \hbox{$#1\mkern-2mu\mathord-\mkern-2mu$}\hfill
 \mkern-6mu\mathord\rightarrow$}%
\def\leftarrowfill@#1{$#1\m@th\mathord\leftarrow\mkern-6mu\cleaders
 \hbox{$#1\mkern-2mu\mathord-\mkern-2mu$}\hfill\mkern-6mu\mathord-$}%
\def\leftrightarrowfill@#1{$#1\m@th\mathord\leftarrow
\mkern-6mu\cleaders
 \hbox{$#1\mkern-2mu\mathord-\mkern-2mu$}\hfill
 \mkern-6mu\mathord\rightarrow$}%
\def\overrightarrow{\mathpalette\overrightarrow@}%
\def\overrightarrow@#1#2{\vbox{\ialign{##\crcr\rightarrowfill@#1\crcr
 \noalign{\kern-\ex@\nointerlineskip}$\m@th\hfil#1#2\hfil$\crcr}}}%

\def\overleftarrow{\mathpalette\overleftarrow@}%
\def\overleftarrow@#1#2{\vbox{\ialign{##\crcr\leftarrowfill@#1\crcr
 \noalign{\kern-\ex@\nointerlineskip}$\m@th\hfil#1#2\hfil$\crcr}}}%
\def\overleftrightarrow{\mathpalette\overleftrightarrow@}%
\def\overleftrightarrow@#1#2{\vbox{\ialign{##\crcr
   \leftrightarrowfill@#1\crcr
 \noalign{\kern-\ex@\nointerlineskip}$\m@th\hfil#1#2\hfil$\crcr}}}%
\def\underrightarrow{\mathpalette\underrightarrow@}%
\def\underrightarrow@#1#2{\vtop{\ialign{##\crcr$\m@th\hfil#1#2\hfil
  $\crcr\noalign{\nointerlineskip}\rightarrowfill@#1\crcr}}}%

\def\underleftarrow{\mathpalette\underleftarrow@}%
\def\underleftarrow@#1#2{\vtop{\ialign{##\crcr$\m@th\hfil#1#2\hfil
  $\crcr\noalign{\nointerlineskip}\leftarrowfill@#1\crcr}}}%
\def\underleftrightarrow{\mathpalette\underleftrightarrow@}%
\def\underleftrightarrow@#1#2{\vtop{\ialign{##\crcr$\m@th
  \hfil#1#2\hfil$\crcr
 \noalign{\nointerlineskip}\leftrightarrowfill@#1\crcr}}}%
%%%%%%%%%%%%%%%%%%%%%

\def\qopnamewl@#1{\mathop{\operator@font#1}\nlimits@}
\let\nlimits@\displaylimits
\def\setboxz@h{\setbox\z@\hbox}

\def\varlim@#1#2{\mathop{\vtop{\ialign{##\crcr
 \hfil$#1\m@th\operator@font lim$\hfil\crcr
 \noalign{\nointerlineskip}#2#1\crcr
 \noalign{\nointerlineskip\kern-\ex@}\crcr}}}}

 \def\rightarrowfill@#1{\m@th\setboxz@h{$#1-$}\ht\z@\z@
  $#1\copy\z@\mkern-6mu\cleaders
  \hbox{$#1\mkern-2mu\box\z@\mkern-2mu$}\hfill
  \mkern-6mu\mathord\rightarrow$}
\def\leftarrowfill@#1{\m@th\setboxz@h{$#1-$}\ht\z@\z@
  $#1\mathord\leftarrow\mkern-6mu\cleaders
  \hbox{$#1\mkern-2mu\copy\z@\mkern-2mu$}\hfill
  \mkern-6mu\box\z@$}

\def\projlim{\qopnamewl@{proj\,lim}}
\def\injlim{\qopnamewl@{inj\,lim}}
\def\varinjlim{\mathpalette\varlim@\rightarrowfill@}
\def\varprojlim{\mathpalette\varlim@\leftarrowfill@}
\def\varliminf{\mathpalette\varliminf@{}}
\def\varliminf@#1{\mathop{\underline{\vrule\@depth.2\ex@\@width\z@
   \hbox{$#1\m@th\operator@font lim$}}}}
\def\varlimsup{\mathpalette\varlimsup@{}}
\def\varlimsup@#1{\mathop{\overline
  {\hbox{$#1\m@th\operator@font lim$}}}}

%
%Companion to stackrel
%
%
%
% These are AMS environments that will be defined to
% be verbatims if amstex has not actually been 
% loaded
%
%
\begingroup \catcode `|=0 \catcode `[= 1
\catcode`]=2 \catcode `\{=12 \catcode `\}=12
\catcode`\\=12 
|gdef|@alignverbatim#1\end{align}[#1|end[align]]
|gdef|@salignverbatim#1\end{align*}[#1|end[align*]]

|gdef|@alignatverbatim#1\end{alignat}[#1|end[alignat]]
|gdef|@salignatverbatim#1\end{alignat*}[#1|end[alignat*]]

|gdef|@xalignatverbatim#1\end{xalignat}[#1|end[xalignat]]
|gdef|@sxalignatverbatim#1\end{xalignat*}[#1|end[xalignat*]]

|gdef|@gatherverbatim#1\end{gather}[#1|end[gather]]
|gdef|@sgatherverbatim#1\end{gather*}[#1|end[gather*]]

|gdef|@gatherverbatim#1\end{gather}[#1|end[gather]]
|gdef|@sgatherverbatim#1\end{gather*}[#1|end[gather*]]

|gdef|@multilineverbatim#1\end{multiline}[#1|end[multiline]]
|gdef|@smultilineverbatim#1\end{multiline*}[#1|end[multiline*]]

|gdef|@arraxverbatim#1\end{arrax}[#1|end[arrax]]
|gdef|@sarraxverbatim#1\end{arrax*}[#1|end[arrax*]]

|gdef|@tabulaxverbatim#1\end{tabulax}[#1|end[tabulax]]
|gdef|@stabulaxverbatim#1\end{tabulax*}[#1|end[tabulax*]]

|endgroup

\def\align{\@verbatim \frenchspacing\@vobeyspaces \@alignverbatim
You are using the "align" environment in a style in which it is not defined.}

\@namedef{align*}{\@verbatim\@salignverbatim
You are using the "align*" environment in a style in which it is not defined.}
\expandafter\let\csname endalign*\endcsname =\endtrivlist

\def\alignat{\@verbatim \frenchspacing\@vobeyspaces \@alignatverbatim
You are using the "alignat" environment in a style in which it is not defined.}

\@namedef{alignat*}{\@verbatim\@salignatverbatim
You are using the "alignat*" environment in a style in which it is not defined.}
\expandafter\let\csname endalignat*\endcsname =\endtrivlist

\def\xalignat{\@verbatim \frenchspacing\@vobeyspaces \@xalignatverbatim
You are using the "xalignat" environment in a style in which it is not defined.}

\@namedef{xalignat*}{\@verbatim\@sxalignatverbatim
You are using the "xalignat*" environment in a style in which it is not defined.}
\expandafter\let\csname endxalignat*\endcsname =\endtrivlist

\def\gather{\@verbatim \frenchspacing\@vobeyspaces \@gatherverbatim
You are using the "gather" environment in a style in which it is not defined.}

\@namedef{gather*}{\@verbatim\@sgatherverbatim
You are using the "gather*" environment in a style in which it is not defined.}
\expandafter\let\csname endgather*\endcsname =\endtrivlist

\def\multiline{\@verbatim \frenchspacing\@vobeyspaces \@multilineverbatim
You are using the "multiline" environment in a style in which it is not defined.}

\@namedef{multiline*}{\@verbatim\@smultilineverbatim
You are using the "multiline*" environment in a style in which it is not defined.}
\expandafter\let\csname endmultiline*\endcsname =\endtrivlist

\def\arrax{\@verbatim \frenchspacing\@vobeyspaces \@arraxverbatim
You are using a type of "array" construct that is only allowed in AmS-LaTeX.}

\def\tabulax{\@verbatim \frenchspacing\@vobeyspaces \@tabulaxverbatim
You are using a type of "tabular" construct that is only allowed in AmS-LaTeX.}

\@namedef{arrax*}{\@verbatim\@sarraxverbatim
You are using a type of "array*" construct that is only allowed in AmS-LaTeX.}
\expandafter\let\csname endarrax*\endcsname =\endtrivlist

\@namedef{tabulax*}{\@verbatim\@stabulaxverbatim
You are using a type of "tabular*" construct that is only allowed in AmS-LaTeX.}
\expandafter\let\csname endtabulax*\endcsname =\endtrivlist

% macro to simulate ams tag construct

% This macro is a fix to the equation environment
 \def\endequation{%
     \ifmmode\ifinner % FLEQN hack
      \iftag@
        \addtocounter{equation}{-1} % undo the increment made in the begin part
        $\hfil
           \displaywidth\linewidth\@taggnum\egroup \endtrivlist
        \global\tag@false
        \global\@ignoretrue   
      \else
        $\hfil
           \displaywidth\linewidth\@eqnnum\egroup \endtrivlist
        \global\tag@false
        \global\@ignoretrue 
      \fi
     \else   
      \iftag@
        \addtocounter{equation}{-1} % undo the increment made in the begin part
        \eqno \hbox{\@taggnum}
        \global\tag@false%
        $$\global\@ignoretrue
      \else
        \eqno \hbox{\@eqnnum}% $$ BRACE MATCHING HACK
        $$\global\@ignoretrue
      \fi
     \fi\fi
 } 

 \newif\iftag@ \tag@false
 
 \def\TCItag{\@ifnextchar*{\@TCItagstar}{\@TCItag}}
 \def\@TCItag#1{%
     \global\tag@true
     \global\def\@taggnum{(#1)}%
     \global\def\@currentlabel{#1}}
 \def\@TCItagstar*#1{%
     \global\tag@true
     \global\def\@taggnum{#1}%
     \global\def\@currentlabel{#1}}

  \@ifundefined{tag}{
     \def\tag{\@ifnextchar*{\@tagstar}{\@tag}}
     \def\@tag#1{%
         \global\tag@true
         \global\def\@taggnum{(#1)}}
     \def\@tagstar*#1{%
         \global\tag@true
         \global\def\@taggnum{#1}}
  }{}

\def\tfrac#1#2{{\textstyle {#1 \over #2}}}%
%
%
%
%

% Do not add anything to the end of this file.  
% The last section of the file is loaded only if 
% amstex has not been.
\makeatother

% Keywords command
\providecommand{\keywords}[1]
{
  \small	
  \textbf{\textit{Keywords---}} #1
}

\begin{document}

\title{Asymptotic Analysis of a bi-monomeric nonlinear Becker-D\"{o}ring
system}
\author[1]{Marie Doumic}
\author[2]{Klemens Fellner}
\author[3]{Mathieu Mezache} 
\author[4]{Juan J.L. Vel\'azquez}

\affil[1]{\footnotesize Ecole Polytechnique, Inria, CNRS, Institut Polytechnique de Paris, route de Saclay, 91128 Palaiseau Cedex, France, marie.doumic@inria.fr}
\affil[2]{\footnotesize Department of Mathematics and Scientific Computing, University of Graz,
Heinrichstrasse 36, Graz 8010, Austria, klemens.fellner@uni-graz.at}
\affil[3]{\footnotesize Université Paris-Saclay, INRAE, MaIAGE (UR 1404), 78350 Jouy-en-Josas, France, mathieu.mezache@inrae.fr}
\affil[4]{University of Bonn, Institute for Applied Mathematics, Endenicher Allee 60, D-53115 Bonn, Germany, velazquez@iam.uni-bonn.de}

\maketitle
\keywords{Polymerisation-depolymerisation reactions, oscillations, asymptotic analysis, Lotka-Volterra system, drift-diffusion}

{\bf AMS Subject classifications.} 34E10, 34C41, 92C42, 92D25, 92E20
\begin{abstract}
{To provide a mechanistic explanation of sustained {then} damped oscillations observed in a depolymerisation experiment,  a bi-monomeric variant of the seminal Becker-D\"oring system has been proposed in~\cite{DFMR}. 
When all reaction rates are constant, the equations are the following:
\begin{align*}
\frac{dv}{dt} & =-vw+v\sum_{j=2}^{\infty}c_{j}, \qquad
\frac{dw}{dt} =vw-w\sum_{j=1}^{\infty}c_{j},  \\
\frac{dc_{j}}{dt} & =J_{j-1}-J_{j}\ \ ,\ \ j\geq1\ \ ,\ \ \
J_{j}=wc_{j}-vc_{j+1}\ \ ,\ \ j\geq1\ \ ,\ J_{0}=0,
\end{align*}
where $v$ and $w$ are two distinct unit species, and $c_i$ represents the concentration of clusters containing $i$ units. 

We study in detail the mechanisms leading to such oscillations and characterise the different phases of the dynamics, from the initial high-amplitude oscillations to the progressive damping leading to the convergence towards the unique positive stationary solution. We give quantitative approximations for the main quantities of interest: period of the oscillations, size of the damping (corresponding to a loss of energy), number of oscillations characterising each phase. We illustrate these results by numerical simulation,  in line with the theoretical results,  and provide numerical methods to solve the system.}
\end{abstract}

\tableofcontents
\section{Introduction}

We are interested in describing the damped oscillations of the following
model: 
\begin{align}
\frac{\diff v}{\diff t} & =-vw+v\sum_{j=2}^{\infty}c_{j}, \qquad v(0)=v^0,  \label{A1} \\
\frac{\diff w}{\diff t} & =vw-w\sum_{j=1}^{\infty}c_{j},  \qquad w(0)=w^0, \label{A2} \\
\frac{\diff c_{j}}{\diff t} & =J_{j-1}-J_{j}, \quad J_{j}=wc_{j}-vc_{j+1},\; c_j(0)=c_j^0,\; j\geq1,\;  J_{0}=0. \label{A3} 
\end{align}
{In this system, $v(t)$ and $w(t)$ denote the concentrations of two monomeric species
and 
$c_j(t)$ represents the concentrations of polymers, clusters or aggregates containing $j$ units/monomers at time $t$. Clusters grow by polymerisation events adding a $w$-monomer and shrink by catalytic depolymerisation induced by $v$-monomers.} 
System (\ref{A1})-(\ref{A3}) is a particular case of a system with more
general coefficients that was introduced in~\cite{DFMR}. Specifically, we
are assuming in (\ref{A1})-(\ref{A3}) that the reaction rates are size-independent. The goal of introducing this model is to explain sustained, though damped, oscillations,
experimentally observed during the time-course of protein fibrils
depolymerisation experiments~\cite{DFMR,fornara2024dynamics}, {which are also displayed when simulating~\eqref{A1}--\eqref{A3}, see \cite{DFMR}. 
These experiments raised significant interest since the classical Becker-D\"oring model for the polymerisation/depolymerisation of polymers %governed by monomers 
features a well-known Ljapunov functional, which conflicts with sustained oscillations. The need to propose a new mathematical model like \eqref{A1}--\eqref{A3} was therefore pointed out by the experimentalists in the hope to gather new insights into the mechanics of prion dynamics.}  The question of how variants of standard aggregation-fragmentation systems  can give rise to persistent or persistent then damped oscillations has attracted  increasing interest in recent years. For example, another modification of the Becker-D\"oring system has been proposed in~\cite{pego2020temporal,niethammer2022oscillations}. This system leads to sustained oscillations due to a very different mechanism, namely the atomisation of large polymers into monomers, see also~\cite{budzinskiy2021hopf,fortin2023stability}. Such rapid shortening events  also appear to be responsible for damped oscillations observed numerically in a model of microtubule dynamics~\cite{honore2019growth}.

Notice that {there are} two remarkable differences between
the model (\ref{A1})-(\ref{A3}) and the classical Becker-D\"oring  model{{~\cite{BeckerDoring_1935}}}, as it
can be found for instance in~{{\cite{Ball-Becker-Doring,Thierry,JabinNiethammer,velazquez1998becker}}}. On the one hand, there are  two
different types of monomers, whose concentrations are given by $v,\ w,$ whereas $c_1$ represents the  concentration of the smallest cluster, which is not necessarily a monomeric species. This leads to the  conservation of the total number of clusters $\sum_{j=1}^\infty c_j,$ a quantity which is not preserved in the Becker-D\"oring system.

As a second difference, the reaction yielding the depolymerisation of a cluster of size $j$ into  a $v$-monomer
and a cluster of size $\left( j-1\right) $ is not modelled as a 
linear process, but as a nonlinear reaction catalyzed by a $v$-monomer itself.
In other words, the depolymerisation reaction has the form%
\begin{equation}
c_{j}+v\rightarrow c_{j-1}+2v.  \label{eq:CatRe}
\end{equation}
We emphasise that the presence of several types of oligomers~\cite{armiento2017mechanism} and monomers as well as the possibility of the
uncommon catalytic depolymerisation reaction (\ref{eq:CatRe}) have been then
confirmed by other observations~\cite{torrent2019pressure}. 

A first numerical and theoretical study has been carried out in~\cite{DFMR}. It gave necessary and sufficient conditions for the existence of a positive steady state and studied the stability of boundary steady states (Proposition~4 in~\cite{DFMR}). Numerical simulations and comparison with well-known oscillatory systems (subsection~5.3. in~\cite{DFMR}) provided evidence for sustained damped oscillations converging towards the positive steady state, when it exists.

The aim of the present study is to characterise in full detail the mechanisms driving the system from pronounced oscillations to equilibrium. The results  will be justified using formal asymptotic expansions (see for instance \cite{guckenheimer2013nonlinear}, \cite{kevorkian2013perturbation} or \cite{bender2013advanced}), complemented with numerical simulations yielding results analogous to those obtained in the asymptotic formulae. Some rigorous mathematical results that support the scenario for the solutions described in this paper are proved in the companion paper~\cite{DFMV2}.

The plan of this paper is the following. In section~\ref{sec:coupling}, we describe in detail the model studied and provide some important notations and lemmas which are used throughout the paper. In subsection~\ref{subsec:LV}, we summarize the form of the solutions of a rescaled version of the Lotka-Volterra model, which we will identify as approximation to the behaviour of the monomeric concentrations {$(v,w)$}, and which will be shown to drive the oscillatory behaviour of the solutions of~\eqref{A1}--\eqref{A3} (subsection~\ref{subsec:sizedistrib}). Finally, in subsection~\ref{subsec:sketch}, we sum-up the main results and describe the processes that evolve the initial distribution of clusters to the final equilibrium distribution. We have found it convenient to decompose this evolution into four different Phases I, II, III, and IV, which are characterised by the range of values of the Lotka-Volterra energy associated to the monomer concentrations $(v,w)$ as well as the range of a typical cluster size $k$ containing most of the mass of the cluster distribution $\{c_k\}_{k\in\mathbb{N}}$.

Section~\ref{sec:phaseIandII} describes in detail the evolution of the cluster concentrations $\{c_k\}$ during one Lotka-Volterra cycle of the concentrations $(v,w).$ This analysis is relevant throughout the phases that we denote as Phases~I and~II. The evolution of the concentrations $\{c_k\}$ along several Lotka-Volterra cycles, and the slow changes occuring from one cycle to the next during Phases~I and~II, is then described in section~\ref{sec:IandII:overall}. Phases~III and~IV are described in sections~\ref{sec:PhaseIII} and~\ref{sec:PhaseIV} respectively. Section~\ref{sec:discussion} summarises and discusses the main conclusions of this article as well as possible generalisations. Finally, three appendices at the end of the paper some detailed calculations about the form of the solutions of the Lotka-Volterra problem, on analysis of the linearised problem around the steady states of~\eqref{A1}--\eqref{A3} and a description of the numerical methods we used.

\begin{notation}
We will use extensively the classical asymptotic notation $f\sim g$ as $%
x\rightarrow x_{0}$ to indicate that $\lim_{x\rightarrow x_{0}}\frac{f\left(
x\right) }{g\left( x\right) }=1.$ When $x_0$ is not specified it means that $f\sim g$ when $\ep \to 0.$ We will use also the notation $f\ll g$ as $%
x\rightarrow x_{0}$ to indicate that $\lim_{x\rightarrow x_{0}}\frac{f\left(
x\right) }{g\left( x\right) }=0.$ We will use the notation $f\simeq g$ to
indicate informally that the two functions $f$ and $g$ can be expected to be
approximately equal for a suitable range of values. We will use the notation 
$f\approx g$ to indicate that the functions $f$ and $g$ are of the same
order of magnitude for some range of values of their argument. More
precisely, there exists a constant $C>1$ such that $\frac{1}{C}f\leq g\leq
Cf.\ $We will use the notation $f\lesssim g$ to indicate that there is a
constant $C>0$ such that $f\leq Cg.$  
\end{notation}

\bigskip

\section{Coupling Lotka-Volterra and Becker-D\"oring systems}
\label{sec:coupling}
{\subsection{A first overview of the dynamics}}
\label{subsec:overview}
{When looking at~\eqref{A1}--\eqref{A3},} we first observe that there are two conserved quantities, namely
\begin{equation}
M=v+w+\sum_{j=1}^{\infty}jc_{j},\ \quad { \varepsilon}=\sum_{j=1}^{\infty
}c_{j}. \label{eq:MandConc}
\end{equation}
The first conserved quantity represents the total mass $M>0$  whereas the second is the number of clusters.
This second conservation law is an important difference between our system
and the classical Becker-D\"{o}ring system, {in which the smallest clusters and monomers are identical. Our study will show its importance to the dynamical
behaviour: Since our system preserves the number of clusters, the question is how its size distribution evolves with time.} 

We remark that we can assume $M=1$ without loss of generality because
the equations \eqref{A1}--\eqref{A3} are invariant under the change of
variables $v\rightarrow Mv,\ w\rightarrow Mw,\ c_{j}\rightarrow Mc_{j},$ {and $\varepsilon\rightarrow M\varepsilon$,}
which transforms the equations (\ref{eq:MandConc}) in%
\begin{equation}
1=v+w+\sum_{j=1}^{\infty}jc_{j},\ \ \varepsilon=\sum_{j=1}^{\infty}c_{j}.
\label{eq:Mresc}
\end{equation}

Notice that since $v\geq0$
and $w\geq0$ these equations imply that $\varepsilon\leq1$ and the identity $%
\varepsilon=1$ takes place only if $v=w=0$ and $c_{j}=\delta_{j,1}.$

The aim of our study is to quantitatively analyse the asymptotic behaviour
of the solutions of the problem \eqref{A1}--\eqref{A3}, \eqref{eq:Mresc}  for $%
\varepsilon\rightarrow0.$ The smallness of $\varepsilon$ may have two
biological interpretations: either only few polymers exist initially and the initial mass lies mainly in
the two monomeric species $v$ and $w,$ or the mass is distributed among monomers
and clusters with a very large average cluster size $i_{M}$ so that $%
i_{M}(0):=\frac{\sum ic_{i}(0)}{\sum c_{i}}=O(\frac{1}{\varepsilon}).$
This last case is typical of amyloid fibrils depolymerisation or fragmentation experiments~\cite{XueRadford2013,beal2020division}.

Using the conservation of the number of clusters, we can rewrite \eqref{A1}--\eqref{A3} in a simpler form:%
\begin{align}
\frac{\diff v}{\diff t} & =-vw+v\left( \varepsilon-c_{1}\right),  \label{eq:BD2} \\
\frac{\diff w}{\diff t} & =vw-\varepsilon w , \label{eq:BD2a} \\
\frac{\diff c_{j}}{\diff t} & =J_{j-1}-J_{j},\ \ \forall j\geq1,\qquad J_{0}=0, \quad
J_{j}=wc_{j}-vc_{j+1},\ \ j\geq1.  \label{eq:BD3}
\end{align}

\noindent The global-in-time {existence and uniqueness of a nonnegative solution to} this system has been proved in~\cite%
{DFMR} (Theorem~2), under the assumption that the initial nonnegative state $(v^0,w^0,c_j^0)$ satisfies
\[v^0+w^0+\sum\limits_{j=1}^\infty j^2 c_j^0 <\infty.\]
There is a family of boundary steady states for which $w=0.$ 
We can have either $v=0$ and an arbitrary distribution of
concentrations $c_{j}$ satisfying the constraint $\sum_{j\geq1}jc_{j}=1,$
or alternatively, $v=1-\varepsilon,$ $%
c_{1}=\varepsilon$, and $c_{j}=0$ for $j\geq2$. All the boundary steady states happen to be unstable {as soon as $\ep <\f{1}{2},$ what is assumed from now on (since we even assume $\ep \ll 1$),}
see Prop.~4 in~\cite{DFMR}. Existence and uniqueness of a unique positive
steady state (Corollary~4 in~\cite{DFMR}) {for $\ep <\f{1}{2}$} is given by 
\begin{equation}
\begin{array}{l}
\theta:=1-{\frac{1}{2\ep}\biggl(1-\sqrt{\bigl(1-2\ep \bigr)^{2}+{4\ep^2}}}\biggr)\sim \varepsilon, \\ 
\bar{c}_{1}:=\varepsilon\theta\sim{\varepsilon^{2}},\qquad\bar{c}%
_{i}:=(1-\theta)^{i-1}\bar{c}_{1},\qquad\bar{v}:=\varepsilon,\qquad\bar {w}%
:=\varepsilon-\bar{c}_{1}.%
\end{array}
\label{eq:steady}
\end{equation}
In Appendix~\ref{app:linear}, we prove that this positive steady state is
locally linearly stable. 

The system~\eqref{eq:BD2}--\eqref{eq:BD2a} for $%
(v,w)$ may be viewed as a $c_1$-perturbation of a Lotka-Volterra {(LV)} system, which
would permanently oscillate in case $c_{1}\equiv 0.$ 
{The unperturbed Lotka-Volterra system, {\it i.e.}~\eqref{eq:BD2}--\eqref{eq:BD2a} with $c_1\equiv0$, is governed by the Hamiltonian 
\begin{equation}
E=v+w-2\varepsilon-\varepsilon\log\left( \frac{vw}{\varepsilon^{2}}\right),
\label{D5}
\end{equation}
and features the steady state $v=w=\varepsilon$, which corresponds to the minimum of the Hamiltonian $E=0$. All other trajectories
of the unperturbed Lotka-Volterra system \eqref{eq:BD2}--\eqref{eq:BD2a} with $c_1\equiv 0$ are closed periodic orbits 
for positive values of $E$ 
and we define $T(E)$ as the associated time period.} 

For the full system \eqref{eq:BD2}--\eqref{eq:BD3}, in which $v,$ $w$ interact
with the clusters $c_j$ and vice versa, the function $E(t)$ is neither a conserved quantity nor a Ljapunov functional. The absence of
a suitable entropy is one difficulty in the study of \eqref{eq:BD2}--\eqref{eq:BD3}. More precisely, the following
formula for the change of the energy holds  
\begin{align}
\frac{\diff E}{\diff t} \label{D5a} 
=\left( \varepsilon-v\right) c_{1}.\  
\end{align}
As we shall demonstrate, solutions to 
the full system \eqref{eq:BD2}--\eqref{eq:BD3} feature 
approximate LV cycles, where $v$ and $w$ oscillate around the steady state values of the full system $\bar v=\ep$ and $\bar w=\ep -\bar c_1$; see Lemmas~\ref{lem:LV1} to~\ref{lem:LV5} for a full description of the dynamics of one LV cycle.  
Hence we see from~\eqref{D5a} that  the sign of $\frac {\diff E}{\diff t}$ changes over every time period. 
The energy level $\bar E$ of the steady state of the full system is computed from~\eqref{eq:steady}:
\begin{equation}\label{eq:steady:energy}
    \bar E=\bar v+\bar w-2\varepsilon-\varepsilon\log\left( \frac{\bar v\bar w}{\varepsilon^{2}}\right)
=-\bar c_1 -\ep\log\left(1-\f{\bar c_1}{\ep}\right)\sim \ep \f{\theta^2}{2} \sim \f{\ep^3}{2}.
\end{equation}
We expect that solutions to our model system \eqref{eq:BD2}--\eqref{eq:BD3}, which depart from $E\approx 1,$ will {slowly {decrease} the mean value of the energy $E$ along successive approximate LV cycle,  each of these cycles being characterised by a given level of energy $E.$} Quantifying precisely this trend is the aim of this article.
{We will show that the decay-on-average of the values of $E$ takes place by means of an intricate mechanism, in which the
concentrations $c_{j}$ oscillate in cluster sizes $j$} {with a transport speed given by $w-v.$} 

For small perturbations of the steady state~\eqref{eq:steady}, we expect $%
c_{1}$ to remain in the order of $\bar c_1 \sim \varepsilon^{2}.$ Else, we can expect $%
c_{1}$ to be very small, except for the times in which  $\left\langle
j\right\rangle $, the mean value of the concentrations $c_j$, reaches its minimum values. This happens when
depolymerisation dominates, namely for $v\gg\varepsilon.$  During these times, even if $c_{1}$ still remains
small, it modifies $E$ the most. This explains why  we
expect that in average $\Delta E =\int_{t}^{t+T(t)}{\f{\diff}{\diff s}} E(s)ds$, where $T(t)$ is the time (pseudo) period, to be negative. 

In fact, the values of $c_{1}$ remain small during most times, except for a possible initial transient state {(for instance if the system departs from $c_j=\ep \delta_1(j)$)}. 
Therefore, the change of $E$ will be small in each cycle of the variables $v,\ w,$ but
eventually its value will decrease and approach the minimum value 
 {$E\sim \frac{\varepsilon^3}{2}$} 
after sufficiently long times. 

A full description of this slow approach is the aim of this article. To do so, we first focus on the dynamics during one (approximately unperturbed) LV cycle. In subsection~\ref{subsec:LV} we begin by analysing the different stages of the dynamics of $(v,w)$ over one LV  cycle (Lemmas~\ref{lem:LV1} to~\ref{lem:LV5}). This done,  in subsection~\ref{subsec:sizedistrib} we can focus on  the size distribution dynamics, which appears close to a drift-diffusion equation with coefficients given in terms of the previously studied $(v,w).$ This strategy is the basis of our asymptotic long-time study and explains how the perturbation slowly modifies the LV cycles; we sketch it in subsection~\ref{subsec:sketch}.

\subsection{Study of the {unperturbed} Lotka-Volterra system\label{subsec:LV}}

According to our above argumentation, $c_{1}$ is expected to be very small during most of the evolution; It will be
shown {\it a posteriori} that this ansatz is self-consistent. In this section, we thus study in detail
the dynamics over one cycle of $\left( v,w\right) $ solutions to the unperturbed LV system, \textit{i.e.},~%
\eqref{eq:BD2}--\eqref{eq:BD2a} with $c_{1}\equiv 0,$ namely
\begin{equation}
\frac{dv}{dt}=-vw+v\varepsilon ,\ \ \frac{dw}{dt}=vw-\varepsilon w{, \ \ w(0)=v(0)\geq \varepsilon},  \label{D8}
\end{equation}
{where the last condition defines a convenient starting- and end-point of a LV cycle.}
Notice that the steady state of (\ref{D8}) is at%
\begin{equation}
v=w=\varepsilon, \ \ {E =0.}  \label{D8Equ}
\end{equation}

We recall that the energy $E$ defined by~\eqref{D5}  is constant along the
solutions of (\ref{D8}): the trajectories associated to $(v,w)$ solution of~(\ref{D8}) are 
the level lines of the function $E.$ The solutions $(v,w)$ are periodic,  oscillating around the equilibrium
value~(\ref{D8Equ}).

In order to describe the behaviour of the solutions of (\ref{D8}) we first
use some rescaling properties of the solutions, summed-up in the following lemma.
\begin{lemma}[Scaling properties]\label{lem:scaling1}
Let $(v,w)(t;E,\ep)$ denote the unique solution $(v,w)$ to~\eqref{D8} with $v(0)=w(0)>\ep$ and $E$ defined by~\eqref{D5}, {that is 
\begin{equation}\label{Ev0}
    E = 2(v(0)-\ep) - 2\ep \log \left( \frac{v(0)}{\ep}\right).
\end{equation}
Note that \eqref{Ev0} is strictly monotone increasing in $v(0)>\ep$ and that trajectories of \eqref{D8} are uniquely defined in terms of values $E>0$.} Let $T(E,\ep)$ denote the period of the corresponding LV cycle. By a scaling argument we have the following identities:
\begin{equation}
\left( w\left( t;E,\varepsilon\right) ,v\left( t;E,\varepsilon\right)
\right) =\left( E w\left( Et;1,\frac{\varepsilon}{E}\right) ,E v\left( Et;1,%
\frac{\varepsilon}{E}\right) \right)  \label{eq:Scal}
\end{equation}
and
\begin{equation}
T\left( E,\varepsilon\right) =\frac{T\left( 1,\frac{\varepsilon}{E}\right) }{%
E} . \label{eq:TScal}
\end{equation}
\end{lemma}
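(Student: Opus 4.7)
The plan is a direct scaling argument. I claim that the ansatz $(v,w)(t) = (E\tilde v(Et),\, E\tilde w(Et))$ maps solutions of~\eqref{D8} with parameter $\varepsilon$ on the energy level $E$ to solutions of the same system with parameter $\tilde\varepsilon := \varepsilon/E$ on the energy level $1$, both anchored at a diagonal point $v=w>\varepsilon$ (resp.\ $\tilde v=\tilde w>\tilde\varepsilon$). Since an orbit of~\eqref{D8} is uniquely characterised by its energy together with this starting-point convention (by the monotonicity observation in~\eqref{Ev0}), the identity~\eqref{eq:Scal} then follows from uniqueness of solutions of the ODE, and~\eqref{eq:TScal} from the linear rescaling of time $s = Et$.

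Substituting the ansatz into~\eqref{D8}, the chain rule gives $\frac{dv}{dt} = E^2 \frac{d\tilde v}{ds}$ with $s = Et$, while the right-hand side of the $v$-equation becomes $-E^2 \tilde v\tilde w + \varepsilon E\,\tilde v$. Dividing by $E^2$ yields
\begin{equation*}
\frac{d\tilde v}{ds} = -\tilde v\tilde w + \tilde\varepsilon\,\tilde v, \qquad \frac{d\tilde w}{ds} = \tilde v\tilde w - \tilde\varepsilon\,\tilde w,
\end{equation*}
which is the Lotka-Volterra system with parameter $\tilde\varepsilon = \varepsilon/E$, and the initial condition $v(0)=w(0)>\varepsilon$ becomes $\tilde v(0) = \tilde w(0) > \tilde\varepsilon$. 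Plugging the ansatz into the energy~\eqref{D5} gives
\begin{equation*}
v + w - 2\varepsilon - \varepsilon\log\!\left(\tfrac{vw}{\varepsilon^{2}}\right) = E(\tilde v + \tilde w) - 2\varepsilon - \varepsilon\log\!\left(\tfrac{E^{2}\tilde v\tilde w}{\varepsilon^{2}}\right) = E\left[\tilde v + \tilde w - 2\tilde\varepsilon - \tilde\varepsilon\log\!\left(\tfrac{\tilde v\tilde w}{\tilde\varepsilon^{2}}\right)\right],
\end{equation*}
so the energy value $E$ in the original variables corresponds to the energy value $1$ in the rescaled ones, as required.

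By uniqueness of the closed orbit through a prescribed diagonal starting point with $\tilde v(0)=\tilde w(0) > \tilde\varepsilon$, the trajectory $(\tilde v,\tilde w)(s)$ must coincide with $(v,w)(s;1,\tilde\varepsilon)$, which yields~\eqref{eq:Scal}. The period identity~\eqref{eq:TScal} then follows at once: a full period of the rescaled orbit in $s$ equals $T(1,\varepsilon/E)$, and since $s = Et$ this corresponds to a period of $T(1,\varepsilon/E)/E$ in the original time variable $t$. There is essentially no obstacle in this proof; the only point deserving a moment of care is that the anchoring at the diagonal intersection $v=w$ with $v>\varepsilon$ is preserved by the homogeneous rescaling, which ensures that the two trajectories start in phase so that $s=0$ corresponds to $t=0$. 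The argument is the standard use of the scale-invariance of the LV structure under the simultaneous rescaling of concentrations and time.
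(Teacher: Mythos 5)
Your proof is correct and amounts to exactly the "scaling argument" the paper alludes to without spelling out: the paper states Lemma~\ref{lem:scaling1} with the phrase "By a scaling argument" and does not give further details (Appendix~\ref{app:LV} proves Lemmas~\ref{lem:LV1}--\ref{lem:LV5}, \ref{lem:scaling2} and~\ref{lem:DYT}, but not this one). Your substitution of the ansatz into~\eqref{D8}, the verification that the energy rescales as $E\mapsto E\cdot 1$, the preservation of the diagonal anchoring $v(0)=w(0)>\ep$, and the appeal to uniqueness of the orbit through that anchor together give a complete and clean justification of~\eqref{eq:Scal}--\eqref{eq:TScal}, consistent with what the paper implicitly expects.
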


A large part of this paper will be concerned with the dynamics of the
solutions during the range of times in which $\frac{\varepsilon}{E}\ll1.$
Due to (\ref{eq:Scal}) and (\ref{eq:TScal}), in order to compute the
asymptotic behaviour of $w,\ v,\ T$ for this range of values of $\varepsilon$ and $ E,$ it is enough to compute their asymptotic behaviour for $E=1$ and $\varepsilon \rightarrow 0.$
{Given $E=1$, we first note that \eqref{Ev0} implies
\begin{equation}
v(0)=w(0)\sim\frac{1}{2}+\varepsilon\log\left( \frac{1}{\varepsilon }%
\right) +\varepsilon\left( 1-\log\left( 2\right) \right) +O(\ep^2 \log(\ep))\ \ \text{as\ \ }%
\varepsilon\rightarrow0 . \label{D8b}
\end{equation}
}

Figure \ref{fig:phasevw} depicts the $(v,w)$ phase portrait of a LV cycle and the associated time evolution of the concentrations. We divide a LV cycle into five stage subject to the following five lemmas, each describing the corresponding contribution to the period $T$. The proofs of these lemmas by standard asymptotic expansions are carried out in Appendix~\ref{app:LV}.

\begin{figure}[ptb]
\includegraphics[width=0.49\textwidth]{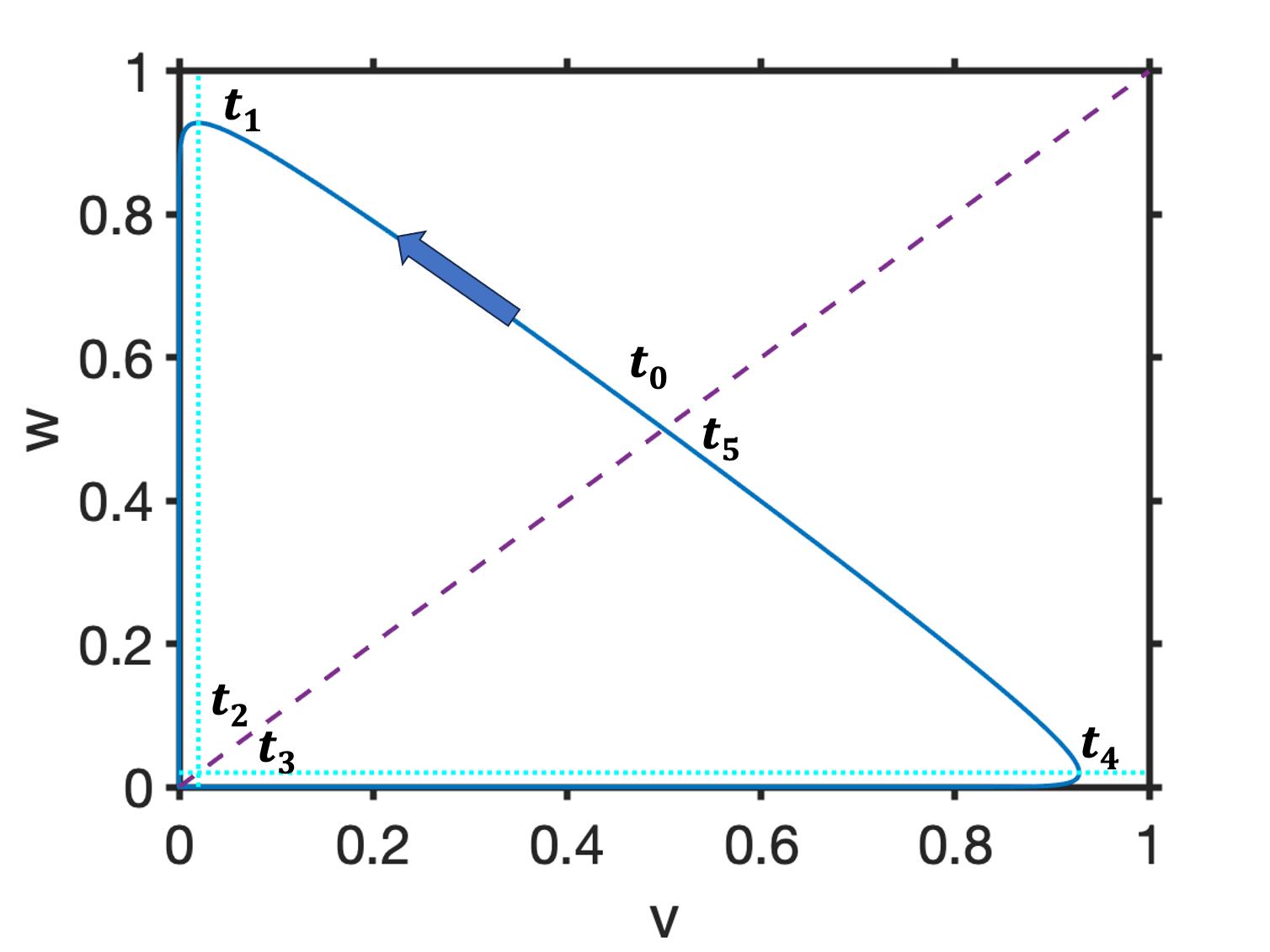} %
\includegraphics[width=0.49\textwidth]{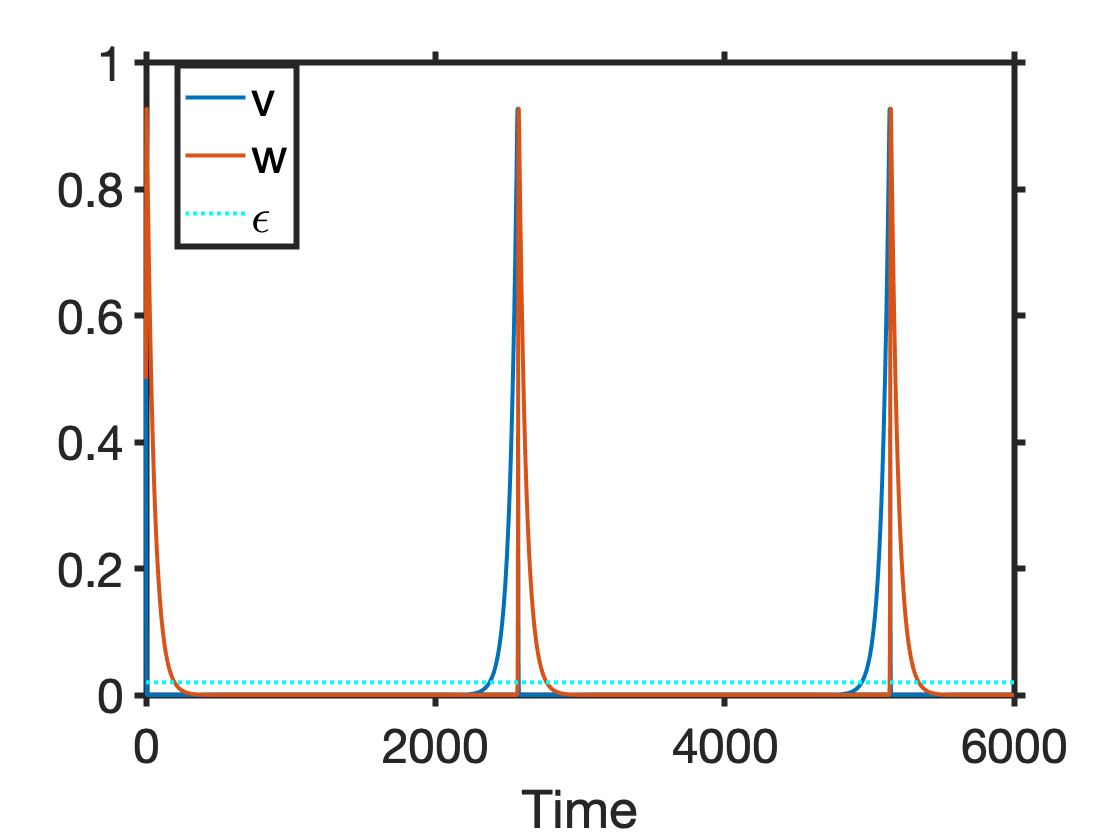}
\caption{Numerical solution of the Lotka-Volterra system \eqref{D8} { for $\ep=0.02$ and $v(0)=w(0)=\f{1}{2}$. To avoid numerical dissipation we rather solved the system for $(\log(v),\log(w))$, see Appendix~\ref{sec:app_PhaseI}, System~\eqref{eq:LV_exp_p1}.} Left: phase portrait for $(v,w)$ illustrating the different stages of the trajectory over one period T. The dashed purple line is the subset where $v=w$, the dotted blue lines are $w=\ep$ or $v=\ep$. Right: time-evolution of the solution $(v,w)$.}
\label{fig:phasevw}
\end{figure}

\begin{lemma}[First stage: fast decay of $v$ from {$v(0)$ to $\ep$, increase of $w$}]
\label{lem:LV1} 
Let $E$ be given by~\eqref{D5} and $\varepsilon\ll 1$. Let  
$(v,w)$ be the unique periodic solution to \eqref{D8} corresponding to $E=1$, i.e. 
subject to initial data at time $t_0=0$ asymptotically characterised in \eqref{D8b}.
We define $t_{1}$ by $$
t_{1}:=\min_{t\geq 0}\left\{ t,\ v(t)=\varepsilon\right\} .$$
On $(0,t_{1}),$ $w$ increases, $v$ decreases, and  as $\varepsilon
\rightarrow0$  the solution
is approximated by 
\begin{equation}\begin{array}{lllll}
v(t)&=&1-\frac{1}{1+e^{-t}} +O(\varepsilon e^{-t}\log(\varepsilon) t),&\\ \\
 w(t) &=& 
\frac{1}{1+e^{-t}} \left(1+O(\varepsilon \log(\varepsilon) t)\right), & t_1\sim-\log(\varepsilon),
\\ \\
w(t_1)&=&1-\ep\log(\ep)+\ep+o(\ep),& v(t_1)= \ep .  \label{estim:t1}
\end{array}
\end{equation}
\end{lemma}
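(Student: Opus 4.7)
The key observation is that the conservation of the Lotka--Volterra energy $E=1$ forces $v+w$ to remain very close to $1$, which reduces system \eqref{D8} asymptotically to a single scalar logistic equation whose explicit solution is precisely $W(t):=\frac{1}{1+e^{-t}}$. The plan is therefore to implement this reduction, control the perturbation by a Duhamel estimate exploiting the contractive linearisation of the logistic equation, and finally identify $t_1$ and $w(t_1)$ by inverting the approximate trajectory and plugging back into the energy identity.

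First I would rewrite $E=1$ using \eqref{D5} as
\begin{equation*}
v+w \;=\; 1+2\varepsilon+\varepsilon\log\!\Bigl(\frac{vw}{\varepsilon^{2}}\Bigr).
\end{equation*}
As long as $v,w\in[\varepsilon,1]$ (a property which is readily verified \emph{a posteriori} since the trajectory stays in this region up to time $t_1$), the logarithmic term is bounded in absolute value by $2\log(1/\varepsilon)$, so $v+w=1+O(\varepsilon\log(1/\varepsilon))$. Substituting $v=1-w+O(\varepsilon\log(1/\varepsilon))$ into the ODE for $w$ in \eqref{D8} yields the perturbed logistic equation
\begin{equation*}
\frac{dw}{dt} \;=\; w(1-w)+O\bigl(\varepsilon\log(1/\varepsilon)\bigr).
\end{equation*}

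Next I would compare $w$ with the unperturbed logistic solution $W(t)=\frac{1}{1+e^{-t}}$, for which $W(0)=\tfrac12$ and $1-2W(t)=-\tanh(t/2)$. Setting $r(t):=w(t)-W(t)$, the initial data \eqref{D8b} give $|r(0)|=O(\varepsilon\log(1/\varepsilon))$, and $r$ satisfies
\begin{equation*}
\dot r \;=\; (1-2W(t))\,r - r^{2} + O\bigl(\varepsilon\log(1/\varepsilon)\bigr).
\end{equation*}
The linear part has integrating factor $\exp\!\bigl(\int_0^t(1-2W)\,ds\bigr)=\cosh^{-2}(t/2)$, which decays like $e^{-t}$ for large $t$. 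Duhamel's formula then furnishes the bound $|r(t)|\lesssim \varepsilon\log(1/\varepsilon)\,t$ uniformly on the interval $[0,t_1]$, and the quadratic term $r^2$ may be absorbed by a standard bootstrap since $r$ remains $o(1)$ for $t\le t_1$. Dividing by $W(t)$ produces the multiplicative form of the error stated for $w$, while $v(t)=1-w(t)+O(\varepsilon\log(1/\varepsilon))$ combined with $1-W(t)=\frac{e^{-t}}{1+e^{-t}}$ yields the claimed bound for $v$, the factor $e^{-t}$ being the size of $V(t):=1-W(t)$ for large $t$.

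Finally, $t_1$ is characterised by $v(t_1)=\varepsilon$. Inverting the leading-order relation $V(t)\sim e^{-t}$ gives $t_1=-\log\varepsilon+o(1)$, and inserting $v(t_1)=\varepsilon$ into the energy identity above yields
\begin{equation*}
w(t_1) \;=\; 1+2\varepsilon-\varepsilon+\varepsilon\log\!\Bigl(\frac{w(t_1)}{\varepsilon}\Bigr) \;=\; 1-\varepsilon\log\varepsilon+\varepsilon+o(\varepsilon),
\end{equation*}
since $w(t_1)\to 1$ makes $\varepsilon\log w(t_1)=o(\varepsilon)$. The main technical obstacle is propagating the perturbation estimate uniformly up to $t_1\sim\log(1/\varepsilon)$: a naive Gronwall argument would give exponential growth of the error, but the negativity of $\int_0^t(1-2W)\,ds$ (equivalently, the contractivity of the logistic propagator along $W$) ensures that forcing of size $\varepsilon\log(1/\varepsilon)$ only accumulates linearly in $t$, keeping the total error of order $\varepsilon\log^{2}(1/\varepsilon)=o(1)$ throughout the first stage.
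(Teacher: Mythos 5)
Your reduction to the logistic equation via energy conservation and your Duhamel estimate for $w$ around the exact logistic solution $W(t)=\frac{1}{1+e^{-t}}$ are sound and are essentially the paper's argument in slightly different dress (the paper integrates the $v$--equation in the Riccati form $\frac{d}{dt}\log\frac{v}{1-v}=-1+O(\varepsilon\log\varepsilon)$ rather than doing a Duhamel estimate on $w$; you add a welcome explanation of why the $O(\varepsilon\log\varepsilon)$ forcing accumulates only linearly and not exponentially).

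However, your derivation of the stated bound for $v$ has a genuine gap. You deduce $v$ from $w$ via the energy relation $v=1-w+O(\varepsilon\log(1/\varepsilon))$, but that additive $O(\varepsilon\log(1/\varepsilon))$ term \emph{does not decay in $t$}: writing the energy identity exactly gives $v+w-1=2\varepsilon+\varepsilon\log\bigl(vw/\varepsilon^{2}\bigr)$, and near $t_1$ (where $v\approx\varepsilon$, $w\approx 1$) this is $\approx-\varepsilon\log\varepsilon$, with no factor of $e^{-t}$. Consequently your chain of estimates only yields $v(t)=1-\frac{1}{1+e^{-t}}+O(\varepsilon\log(1/\varepsilon)\,t)$, which near $t_1\sim-\log\varepsilon$ is an error of the same size as $v$ itself, not the claimed $O(\varepsilon e^{-t}\log(\varepsilon)\,t)$ which is much smaller than $v\approx e^{-t}$. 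The $e^{-t}$ in the lemma is not ``the size of $V(t)=1-W(t)$''; it appears because the \emph{relative} error on $v$ is $O(\varepsilon\log(1/\varepsilon)\,t)$, so the \emph{absolute} error inherits the decay of $v$. To obtain it you should integrate the $v$-equation directly, e.g.\ $v(t)=v(0)\exp\bigl(-\int_{0}^{t}(w(s)-\varepsilon)\,ds\bigr)$, insert your estimate for $w$, and observe that the error enters the exponent and hence multiplies $v(t)\approx e^{-t}/(1+e^{-t})$ --- this is precisely what the paper's manipulation of $\log\frac{v}{1-v}$ accomplishes. Without this step you can still conclude $t_1\sim-\log\varepsilon$ (with precision $O(\log\log(1/\varepsilon))$ instead of $o(1)$), but not the pointwise bound on $v$ stated in the lemma.
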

\begin{lemma}[Second stage: decay of $v$ and $w$] 
\label{lem:LV2}\hfill\\
Under the assumptions of Lemma~\ref{lem:LV1},
we define $$t_{2}:=\min_{t\geq t_{0}}\left\{ t,\ w(t)=\varepsilon\right\}.$$
On $(t_{1},t_{2}),$ $w$ and $v$ decrease, and as $\varepsilon
\rightarrow0$ we have
\begin{equation}
 t_{2}-t_{1}\sim\frac{1}{\varepsilon}%
\log\left( \frac{1}{\varepsilon}\right) .\   \label{estim:t2}
\end{equation}
\end{lemma}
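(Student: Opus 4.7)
The plan is to exploit the sign structure of~\eqref{D8} together with the energy level set $E=1$ to reduce the $w$-equation to $\dot w\approx-\varepsilon w$ on the bulk of $(t_1,t_2)$, then integrate. First I would check the qualitative behaviour: at $t=t_1$, $v(t_1)=\varepsilon$ gives $\dot w(t_1)=0$ and $\dot v(t_1)=v(\varepsilon-w)<0$; immediately after $t_1$, one has $v<\varepsilon$ and hence $\dot w<0$. The definition of $t_2$ keeps $w>\varepsilon$ throughout $(t_1,t_2)$, so $\dot v<0$ everywhere on this interval, and both $v$ and $w$ decrease monotonically.

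Next I use $E=1$ to bound $v$ sharply. Since $w(t_2)=\varepsilon$, the identity $v+w-2\varepsilon-\varepsilon\log(vw/\varepsilon^2)=1$ reduces at $t_2$ to $v(t_2)-\varepsilon\log(v(t_2)/\varepsilon)=1+\varepsilon$, which forces $-\varepsilon\log(v(t_2)/\varepsilon)\sim 1$, i.e.\ $v(t_2)\sim\varepsilon e^{-1/\varepsilon}$. More generally, for any $w$ bounded away from $w(t_1)$ the same relation implies that $v(w)$ on the trajectory is exponentially small in $1/\varepsilon$, so $v\ll\varepsilon$ on the bulk of $(t_1,t_2)$.

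Integrating $\dot w/w=v-\varepsilon$ over $(t_1,t_2)$ yields
\begin{equation*}
\log\!\left(\varepsilon/w(t_1)\right) = -\varepsilon(t_2-t_1) + \int_{t_1}^{t_2}\! v(s)\,ds,
\end{equation*}
whose left-hand side equals $-\log(1/\varepsilon)+o(\log(1/\varepsilon))$ by Lemma~\ref{lem:LV1}, since $w(t_1)\sim 1$. Changing variables to $w$, the remaining integral becomes $\int_\varepsilon^{w(t_1)} v(w)\,dw/[w(\varepsilon-v(w))]$, which I would split into a neighbourhood of $w(t_1)$ and its complement. On the complement, $v$ is exponentially small and the contribution is $o(1)$. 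Near $w(t_1)$ the energy level set expands as $w(t_1)-w\sim(\varepsilon-v)^2/(2\varepsilon)$, producing a boundary-layer contribution of order $\varepsilon^{-1/2}=o(\log(1/\varepsilon))$. Combining these estimates gives $\varepsilon(t_2-t_1)\sim\log(1/\varepsilon)$, as claimed.

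The main obstacle is the boundary-layer analysis near the turning point $(\varepsilon,w(t_1))$, where $\dot w$ vanishes and the naive approximation $\dot w\approx-\varepsilon w$ breaks down; controlling it through the local quadratic shape of $E$ at that point (a standard matched asymptotics computation) is the technical core of the argument, and is the analogue of the transition between Lemmas~\ref{lem:LV1} and~\ref{lem:LV2}.
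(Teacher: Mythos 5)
Your overall strategy is close to the paper's: both arguments recognize that $v$ falls below $\varepsilon$ essentially instantly (relative to the length of the stage) and that $w$ then obeys $\dot w\approx-\varepsilon w$, so that $w$ drops from $w(t_1)\sim 1$ to $\varepsilon$ in a time $\sim\varepsilon^{-1}\log(1/\varepsilon)$. The paper implements this by introducing an intermediate time $t_{1,2}$ with $v(t_{1,2})=\varepsilon^{3/2}$, showing $t_{1,2}-t_1\sim-\tfrac12\log\varepsilon$ via the explicit decay $v(t)\sim\varepsilon e^{-(t-t_1)}$, and then integrating $\dot w=-\varepsilon w$ on $(t_{1,2},t_2)$. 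You instead integrate $\dot w/w=v-\varepsilon$ over all of $(t_1,t_2)$ and control the error term $\int_{t_1}^{t_2}v\,ds$ through the quadratic shape of the energy level set at the turning point $(\varepsilon,w(t_1))$. These are genuinely two readings of the same mechanism; yours is slightly more compact, the paper's is slightly more elementary.

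However, there is a concrete error in your boundary-layer estimate that, taken at face value, invalidates the conclusion. You claim the near-$w(t_1)$ contribution to $\int_\varepsilon^{w(t_1)} v(w)\,dw/[w(\varepsilon-v(w))]$ is of order $\varepsilon^{-1/2}$ and assert $\varepsilon^{-1/2}=o(\log(1/\varepsilon))$. The latter is false: $\varepsilon^{-1/2}\to\infty$ much faster than $\log(1/\varepsilon)$, so if the contribution really were $\varepsilon^{-1/2}$ you could not discard it and the asymptotics would not close. The issue is that you appear to have dropped the factor $v\approx\varepsilon$ in the numerator. Setting $y=w(t_1)-w$ so that $\varepsilon-v\sim\sqrt{2\varepsilon y}$ and $w\approx 1$, the integrand is $\approx\varepsilon/\sqrt{2\varepsilon y}$, and
\begin{equation*}
\int_0^{Y}\frac{\varepsilon}{\sqrt{2\varepsilon y}}\,dy=\sqrt{2\varepsilon}\,\sqrt{Y}=O\bigl(\varepsilon^{1/2}\bigr)
\end{equation*}
for $Y=O(1)$. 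The correct order is therefore $O(\varepsilon^{1/2})$ (indeed even $O(\varepsilon)$ if one uses the exponential decay $v\sim\varepsilon e^{-(t-t_1)}$ as in the paper), and $\varepsilon^{1/2}=o(\log(1/\varepsilon))$ is what you actually need. With this correction the rest of your computation is sound, and $\varepsilon(t_2-t_1)\sim\log(1/\varepsilon)$ follows as claimed.
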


\begin{lemma}[Third stage: $v$ increases, $w$ decreases, minimum of $v+w$] \label{lem:LV3}
Under the assumptions of Lemma~\ref{lem:LV1}, we define $$
t_{3}:=\min_{t\geq t_{2}}\left\{ t,\ v(t)=\varepsilon\right\} .$$
On $(t_{2},t_{3}),$ $w$ decreases, $v$ increases to $\varepsilon$, and as $%
\varepsilon\rightarrow0$ we have
\begin{equation}
t_{3}-t_{2}\sim\frac{1}{\varepsilon^{2}},\quad w\sim \varepsilon\exp\left(
-\varepsilon\left( t-t_{2}\right) \right) ,\quad v\sim v\left(
t_{2}\right) e^{-1}\exp\left( \varepsilon\left( t-t_{2}\right) \right).
\label{estim:t3}
\end{equation}
\end{lemma}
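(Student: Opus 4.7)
The plan is to base the proof on formal asymptotic expansions that exploit the exponential smallness of $v$ and $w$ throughout this stage, combined with the energy conservation $E=1$. First, I would characterise $v(t_2)$. At $t_2$, we have $w(t_2)=\ep$ by definition, so the conservation law~\eqref{D5} yields
\[
v(t_2)-\ep-\ep\log\left(\f{v(t_2)}{\ep}\right)=1.
\]
As $\ep\to 0$, this forces $v(t_2)$ to be exponentially small; a routine expansion gives $v(t_2)\sim\tfrac{\ep}{e}\exp(-1/\ep)$, which is the starting datum for the stage.

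Next, I would observe that on $(t_2,t_3)$ the orbit stays in $\{0\leq v,w\leq\ep\}$: indeed, while $w\leq\ep$ we have $\tfrac{\diff v}{\diff t}\geq 0$, while $v\leq\ep$ we have $\tfrac{\diff w}{\diff t}\leq 0$, and the level set $\{E=1\}$ constrains the trajectory to remain in this region until $v$ reaches $\ep$. Since $v(t_2)$ is exponentially small, the expectation is that $v\ll\ep$ for most of the stage, so the system approximately decouples at leading order. Dropping $v$ in the equation for $w$ gives $\tfrac{\diff w}{\diff t}\sim-\ep w$, hence $w(t)\sim\ep\exp\bigl(-\ep(t-t_2)\bigr)$. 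Substituting this back into the equation for $v$ yields $\tfrac{\diff v}{\diff t}\sim v\bigl(\ep-\ep e^{-\ep(t-t_2)}\bigr)$, which integrates explicitly to
\[
\log\f{v(t)}{v(t_2)}\sim\ep(t-t_2)-\bigl(1-e^{-\ep(t-t_2)}\bigr).
\]
For $\ep(t-t_2)\to\infty$, the right-hand side is $\ep(t-t_2)-1+o(1)$, yielding the stated form $v(t)\sim v(t_2)e^{-1}\exp(\ep(t-t_2))$.

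Then I would determine $t_3$ by imposing $v(t_3)=\ep$: this gives
\[
\ep(t_3-t_2)\sim 1+\log\!\left(\f{\ep}{v(t_2)}\right)\sim\f{1}{\ep},
\]
so $t_3-t_2\sim 1/\ep^2$, matching~\eqref{estim:t3}. The main obstacle is to justify the decoupling approximation uniformly and to control the error terms, in particular near the two endpoints. At $t_2$, the condition $v\ll\ep$ must be transferred from the preceding stage (and reconfirmed via energy conservation), while near $t_3$ the variable $v$ becomes comparable with $\ep$, but by then $w$ is exponentially small so that the coupling term $vw$ is negligible. A matched-asymptotic argument that uses the global constraint provided by the level set $\{E=1\}$ (which pins $v$ to $w$ at every time) should suffice to close the expansion with errors much smaller than the leading terms, yielding the claimed asymptotics. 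The delicate bookkeeping of these error estimates is the most technical step and mirrors the analysis of the first two stages.
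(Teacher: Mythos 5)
Your argument is essentially correct and reaches the same conclusion as the paper, but the route is genuinely different in one respect. The paper splits $(t_2,t_3)$ at the midpoint $t_{2,3}$, defined by $v(t_{2,3})=w(t_{2,3})$, and exploits the $v\leftrightarrow w$ symmetry of the Lotka--Volterra flow: on $[t_2,t_{2,3}]$ it drops $vw$ in the $w$-equation (giving $w\sim\ep e^{-\ep(t-t_2)}$), determines $v(t_{2,3})=w(t_{2,3})\sim\ep e^{-1}e^{-1/(2\ep)}$ from energy conservation, deduces $t_{2,3}-t_2\sim\frac{1}{2\ep^2}$, and then doubles by symmetry. You instead integrate directly from $t_2$ to $t_3$, using the single approximation $w\sim\ep e^{-\ep(t-t_2)}$ throughout, substituting it into the $v$-equation, and pinning down $t_3$ by imposing $v(t_3)=\ep$ together with $v(t_2)\sim\ep e^{-1}e^{-1/\ep}$ obtained from the energy level set. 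Both give $t_3-t_2\sim 1/\ep^2$. The symmetry argument is a bit tidier because each half-interval uses only one valid approximation; your one-shot approach is shorter, but you should be explicit that the formula $w\sim\ep e^{-\ep(t-t_2)}$ loses a bounded multiplicative factor near $t_3$ (once $v$ becomes comparable to $\ep$, $\frac{\diff w}{\diff t}=w(v-\ep)$ decays much more slowly than $-\ep w$; indeed the energy relation gives $w(t_3)\sim\ep e^{-1}e^{-1/\ep}$, which differs by a factor $e$ from what your unsplit formula would give at $t_3$). This does not affect $t_3-t_2$ at leading order, nor the $v$-profile --- since $\int_{t_2}^{t}w\,\diff s$ is dominated by the region where $w$ is still $O(\ep)$ --- but it is precisely the issue that the paper's split at $t_{2,3}$ is designed to sidestep.
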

\begin{lemma}[Fourth stage: $v$ and $w$ increase, symmetric to Stage 2]
\label{lem:LV4}\hfill\\
Under the assumptions of Lemma~\ref{lem:LV1}, we define $$
t_{4}:=\min_{t\geq t_{3}}\left\{ t,\ w(t)=\varepsilon\right\} .$$
On $(t_{3},t_{4}),$ $w$ and $v$ increase, and as $\varepsilon\rightarrow0$
the time interval  is approximated by 
\begin{equation}
v\sim\varepsilon e^{\varepsilon\left( t-t_{3}\right) },\qquad
t_{4}-t_{3}\sim \frac{1}{\varepsilon}\log\left( \frac{1}{\varepsilon }%
\right) . \label{estim:t4}
\end{equation}
\end{lemma}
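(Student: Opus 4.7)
The plan is to reduce Stage~4 to Stage~2 via the time-reversal/swap symmetry $(v,w,t)\mapsto(w,v,-t)$ of the LV system~\eqref{D8}, transferring the asymptotic estimates of Lemma~\ref{lem:LV2} to Stage~4 with essentially no new calculation. The appeal of this route is that the heuristic ``symmetric to Stage~2'' suggested in the lemma's title can be made precise through a pointwise identity between the two stages.

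First I would verify the symmetry: if $(v,w)$ solves~\eqref{D8}, then so does $(\tilde v,\tilde w)(s):=(w(C-s),v(C-s))$ for every constant $C$, as a direct consequence of the symmetric roles of $v$ and $w$ in the vector field, and of the invariance of the energy $E$ defined in~\eqref{D5} under the swap $(v,w)\mapsto(w,v)$.

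Next, taking $C=t_2$, I note that $(\tilde v,\tilde w)(0)=(w(t_2),v(t_2))=(\varepsilon,v(t_2))$. Since $E(\varepsilon,v(t_2))=E(v(t_2),\varepsilon)=1$ and the level set $\{E=1\}\cap\{v=\varepsilon\}$ consists of exactly the two points with $w$-coordinate $w(t_1)\approx 1$ and the small value $w(t_3)$, while Lemma~\ref{lem:LV2} ensures $v(t_2)<\varepsilon$, I get the exact identification $v(t_2)=w(t_3)$. Hence $(\tilde v,\tilde w)(0)=(v(t_3),w(t_3))$, and uniqueness for the LV flow yields the pointwise identity
\[
(v(t_3+s),\,w(t_3+s))=(w(t_2-s),\,v(t_2-s))\quad\text{for all}\ s\in[0,t_2-t_1].
\]
Setting $s=t_2-t_1$ gives $w(t_3+s)=v(t_1)=\varepsilon$, so by the definition of $t_4$ we obtain the exact equality $t_4-t_3=t_2-t_1$, and Lemma~\ref{lem:LV2} then delivers $t_4-t_3\sim\frac{1}{\varepsilon}\log(1/\varepsilon)$. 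The asymptotic $v(t_3+s)\sim\varepsilon\, e^{\varepsilon s}$ is transported from the decay $w(t_1+s')\sim w(t_1)\,e^{-\varepsilon s'}$ during Stage~2, which itself follows from approximating $\tfrac{\diff w}{\diff t}=w(v-\varepsilon)\approx -\varepsilon w$ while $v\ll\varepsilon$, combined with the endpoint normalisation $w(t_1)\,e^{-\varepsilon(t_2-t_1)}=\varepsilon$.

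The main subtle point I expect to have to argue carefully is the endpoint identification $v(t_2)=w(t_3)$: it rests on the fact that the intersection of the energy level curve with the line $v=\varepsilon$ has exactly two points, which needs a brief convexity/monotonicity argument on the function $w\mapsto E(\varepsilon,w)$ (it is strictly convex with a unique minimum at $w=\varepsilon$). Once this is in place, the rest is automatic and no new asymptotic analysis is required. The alternative direct route—integrating $\tfrac{\diff v}{\diff t}=v(\varepsilon-w)\approx\varepsilon v$ under an \emph{a posteriori} bound $w\ll\varepsilon$ valid on most of $(t_3,t_4)$, then solving $\tfrac{\diff w}{\diff t}=w(v-\varepsilon)$ with $v\sim\varepsilon e^{\varepsilon(t-t_3)}$ and using the conserved energy $E=1$ to read off $w(t_3)\sim\varepsilon e^{-1/\varepsilon}$—produces the same conclusions but is markedly longer, and it is the symmetry route that makes the analogy with Stage~2 manifest.
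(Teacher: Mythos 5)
Your proof is correct, and it makes rigorous a symmetry appeal that the paper itself invokes only informally. The paper's proof of Lemma~\ref{lem:LV4} simply states that Stage~4 ``can be analyzed in a manner symmetric to the one from $t_1$ to $t_2$'' and points back to the computations of Lemma~\ref{lem:LV2}, effectively asking the reader to redo the asymptotic integration with $v$ and $w$ exchanged. You instead establish the exact pointwise identity $(v(t_3+s),w(t_3+s))=(w(t_2-s),v(t_2-s))$ from the time-reversal/swap invariance of~\eqref{D8} together with the symmetry of $E$ in~\eqref{D5}, and this buys more for less: it gives the stronger exact statement $t_4-t_3=t_2-t_1$ rather than a mere asymptotic equivalence, it transfers $v\sim\varepsilon e^{\varepsilon(t-t_3)}$ directly from the Stage~2 estimate with no new expansion, and it yields the monotonicity of $v$ and $w$ on $(t_3,t_4)$ for free. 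The one genuinely nontrivial step, which you correctly single out, is the endpoint identification $v(t_2)=w(t_3)$; it follows from the strict convexity of $w\mapsto E(\varepsilon,w)$ (so that $\{E=1\}\cap\{v=\varepsilon\}$ is exactly two points, one above and one below $w=\varepsilon$) together with $v(t_2)<\varepsilon$, which Lemma~\ref{lem:LV2} supplies. Your route is cleaner than a second pass through the $\varepsilon$-expansion and is the argument the paper's wording implicitly gestures at without writing out.
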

\begin{lemma}[Fifth stage: $v$ decreases, $w$ increases, end of the cycle] \label{lem:LV5}
\hfill\\
Under the assumptions of Lemma~\ref{lem:LV1},
we define $$t_{5}:=\min_{t\geq t_{4}}\left\{ t,\ v(t)=w(t)>\varepsilon\right\}.$$
On $(t_{4},t_{5}),$ $w$ increases, $v$ decreases, and as $\varepsilon
\rightarrow0$ the time interval is approximated by 
\begin{equation}
t_{5}-t_{4}\sim\log\left( \frac{1}{\varepsilon}\right) . \label{estim:t5}
\end{equation}
\end{lemma}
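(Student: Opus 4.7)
The plan is to carry out a formal asymptotic expansion analogous to that of Lemma~\ref{lem:LV1}, exploiting the near-symmetry between stages one and five. First I will use conservation of the energy $E=1$ at the endpoints: combining $w(t_{4})=\varepsilon$ with \eqref{D5} gives $v(t_{4}) \sim 1 + \varepsilon\log(1/\varepsilon) + \varepsilon$, while $v(t_{5})=w(t_{5})$ combined with $E=1$ yields $v(t_{5}) = w(t_{5}) \sim 1/2 + \varepsilon\log(1/\varepsilon) + \varepsilon(1-\log 2)$, which is precisely the order-one initial datum of stage one (cf.~\eqref{D8b}). This already suggests that stage five is essentially a time-reversed mirror image of stage one.

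The key observation to exploit is that during this stage both $v$ and $w$ stay of order one, except for the initial layer where $w\approx\varepsilon$, so that $v+w$ is nearly conserved. Indeed, from~\eqref{D8} we have $\frac{d(v+w)}{dt} = \varepsilon(v-w)$, so on any interval of length $O(\log(1/\varepsilon))$ it holds that $v(t)+w(t) = 1 + O(\varepsilon\log(1/\varepsilon))$. Substituting into the $w$-equation yields the logistic ODE
\begin{equation*}
\frac{dw}{dt} = w(v-\varepsilon) = w(1-w)\bigl(1+o(1)\bigr),
\end{equation*}
whose explicit solution with initial datum $w(t_{4})=\varepsilon$ is
\begin{equation*}
w(t) \sim \frac{\varepsilon\,e^{t-t_{4}}}{1+\varepsilon(e^{t-t_{4}}-1)}.
\end{equation*}
Imposing $w=1/2$ (the leading-order value at $t_{5}$) then gives $e^{t_{5}-t_{4}} = (1-\varepsilon)/\varepsilon$, hence $t_{5}-t_{4} \sim \log(1/\varepsilon)$. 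This estimate also closes the bootstrap used above to justify $v+w\sim 1$.

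The monotonicity statements $\frac{dv}{dt}<0$ and $\frac{dw}{dt}>0$ on $(t_{4},t_{5})$ will follow directly from \eqref{D8} together with the inequalities $w<v$ and $v>\varepsilon$ on this interval, which hold by continuity from the endpoint data. The main technical obstacle I anticipate is the inner-outer matching at the initial layer $w=O(\varepsilon)$, where the relative size of the $O(\varepsilon\log(1/\varepsilon))$ error in the logistic approximation is not a priori small. However, this is exactly the same matching issue that arises in Lemma~\ref{lem:LV1}: under the involution $(v,w)\mapsto(w,v)$ combined with time reversal, the leading-order flow on the manifold $\{v+w=1\}$ maps stage five onto stage one, so the asymptotic analysis of the latter can be imported essentially verbatim. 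The remaining details can be carried out as in Appendix~\ref{app:LV}.
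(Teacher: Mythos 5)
Your proposal is correct and takes essentially the same approach as the paper: the paper's proof of Lemma~\ref{lem:LV5} simply invokes the logistic approximation $\frac{dw}{dt}=w(1-w)$ with $v=1-w$ (citing energy conservation for the constraint $v+w\sim 1$) and appeals to the symmetry with the interval $(0,t_1)$. Your derivation fills in the same computation in more detail, the only cosmetic difference being that you justify $v+w\sim 1$ via the exact identity $\frac{d(v+w)}{dt}=\varepsilon(v-w)$ and a short bootstrap rather than directly from the energy formula~\eqref{D5}; both routes are equivalent, and your explicit logistic solution and endpoint matching reproduce the paper's conclusion.
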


We remark that the period $T$ is given as $T=t_5-t_0=t_5$.
We also notice that the third stage is the longest, giving the total cycle its asymptotic length. During this stage almost nothing visible happens since both $v$ and $w$ are exponentially small. By contrast, the first and last stages, where $v$ and $w$ are largest, are extremely fast. These considerations drive all the following analysis, {where we distinguish between a transport part (first and last stages of the cycle) and a transport-diffusion part (second to fourth stages). This is made clearer in the following section where we analyse the dynamics of the cluster size distribution along one LV cycle.}

\subsection{Cluster size distribution dynamics along one LV cycle, initial time}
\label{subsec:sizedistrib}

In subsection~\ref{subsec:LV}, we have described the dynamics of $(v,w)$ along one LV cycle, neglecting the perturbation given by $c_1.$ 
Let us now reason similarly for the cluster size distribution: assuming that $(v,w)$ is given by the unperturbed LV system, how does the cluster distribution evolve?

At this point, we remark that the equations {(\ref{eq:BD3})} for the evolution of the clusters can be
rewritten as%
\begin{align}
\frac{dc_{j}}{dt}& =\left( \frac{w-v}{2}\right) \left(
c_{j-1}-c_{j+1}\right) +\left( \frac{w+v}{2}\right) \left(
c_{j-1}+c_{j+1}-2c_{j}\right),\ \ j\geq 2\ , \label{eq:DiffForm1} \\
\frac{dc_{1}}{dt}& =vc_{2}-wc_{1}.  \label{eq:DiffForm2}
\end{align}
For given $v$ and $w$, the equations (\ref{eq:DiffForm1}) for $j\ge2$ constitute 
discrete-in-size convection-diffusion equations while equation (\ref{eq:DiffForm2})
for the smallest cluster concentration $c_1$ forms a kind of dynamical boundary condition. The approximation of discrete-in-size polymerisation/depolymerisation by a continuous-in-size convection-diffusion 
PDE is well-known in the literature, see e.g. \cite{velazquez1998becker}.
If we assume that the concentrations $c_{j}$ change
slowly in the variable $j$ (in a sense that will be precised later), we can
read (\ref{eq:DiffForm1}), (\ref{eq:DiffForm2}) as a discretized version
of the PDE
\begin{equation}
\frac{\partial c}{\partial t}\left( j,t\right) +{ \tilde{V}}\left( t\right) 
\frac{\partial c}{\partial j}\left( j,t\right) =\frac{{ d}\left( t\right) }{2}%
\frac{\partial^{2}c}{\partial j^{2}}\left( j,t\right).  \label{eq:DiffApp}
\end{equation}
where we introduce 
\begin{equation}
{\tilde{V}(t)} =\left( w-v\right) \left( t\right), \ \,\ \ {d}\left(
t\right) =\left( w+v\right) \left( t\right).\ \,\ \  
\label{eq:DiffCoef}
\end{equation}
In particular, we approximate the discrete-in-size cluster concentrations $c_{j}\left( t\right) $ by a smooth continuous-in-size distribution function $c\left( j,t\right) $ where $j\in(0,\infty)$ denotes now -- with a slight abuse of notation -- a continuum size variable. Hence, we assume  
\begin{equation}\label{eq:contapprox}
c_{j}(t) \simeq c(j,t), \ j\in\mathbb{N}, 
\qquad \vert c_{j+1} - c_{j} \vert 
\ll 1,
\quad\quad \left\vert 
\frac{\partial c}{\partial j}\right\vert \ll1.
\end{equation}
The PDE (\ref{eq:DiffApp}) constitutes a valid approximation for sufficiently large $j$, \cite{velazquez1998becker}. For
cluster sizes $j$ of order one the approximation (\ref{eq:DiffApp}) breaks
down and in order to calculate the concentrations $c_{j}$ we need to use
the system of equations \eqref{eq:DiffForm1}--\eqref{eq:DiffForm2}. 
Hence, we denote by \textit{the outer layer} the range in size where the approximation \eqref{eq:DiffApp} holds and by \textit{the boundary layer} the range where we have to consider the system \eqref{eq:DiffForm1}--\eqref{eq:DiffForm2}.

In the following, we assume that there exists initially a
characteristic cluster size, denoted as $L_0$ (and $L_n$ for the $n-$th following  LV cycle), around which the concentrations $c_{j}$ give a relevant contribution to the cluster distribution $\left\{ c_{j}\right\} _{j\in%
\mathbb{N}}$. 
More precisely, we
assume at $t=0$ when $v=w>\varepsilon$ that the initial concentrations are
given approximately by means of%
\begin{equation}
c_{j}(0) \simeq c(j,0) \simeq \frac{\varepsilon}{L_{0}}\varphi_{0}\left( \frac{j}{L_{0}}\right), \qquad j\in (0,\infty).
\label{eq:clustConc}
\end{equation}
We assume that $\varphi_{0}$ is a measure defined in $\left[
0,\infty\right) $ which {can contain a Dirac delta in $x=0$, modelling the fact that the mass contained in $c_1$ may be of the same order of magnitude as the mass contained in large clusters: we thus  define }
\begin{equation}\label{def:psi}
\varphi_0 (x)=\psi_0(x) + m_0 \delta (x),
\end{equation} 
where $\psi_0$ is a smooth function and supported in $(0,\infty)$ and $m_0 \geq 0$ the mass contained in the Dirac delta. 
Thanks to \eqref{eq:Mresc} and w.l.o.g., we consider $M=1$, which implies $\sum_{j=1}^{\infty
}jc_{j}=O(1)$ as well as $\sum_{j=1}^{\infty}c_{j}=\varepsilon$. For concentrations of the form \eqref{eq:clustConc} it 
follows that  we have $L_0=O(\frac{1}{\varepsilon})$. Assuming $\ep\ll 1$ in \eqref{eq:Mresc}, we typically have $L_0, L_n\gg1.$
We also impose 
\begin{equation}\label{as:phi0:2}
 \int_0^\infty \varphi_0(dx)=\int_0^\infty x \varphi_0(dx) =1,
\end{equation}
which defines uniquely $L_0$ by $L_0=\f{\sum j c_j}{\ep} \in [1,\f{1}{\ep})$. Note that we understand $\int_0^\infty=\int_{[0,\infty)}$, {\it i.e.} we include the Dirac mass at $0.$

The heuristic considerations in this section provides us with the qualitative behaviour of the clusters along one LV cycle: their size distribution is transported at the speed $\tilde{V}(t),$ and is diffused at the diffusion rate $\f{d(t)}{2}.$ 
We remark for instance that if $w>v$, we have that $\tilde{V}\left( t\right) >0$ {(see Figure \eqref{fig:vw4ter_p1} and Figure \eqref{fig:vw1_p1} below as part of a more detailed discussion}). In
particular the convective term in (\ref{eq:DiffApp}) shifts the distribution
of clusters towards larger values of $j$ {(see Figure \eqref{fig:sd4ter_p1} and Figure \eqref{fig:sd1_p1})}. 
Similar to Lemma~\ref{lem:scaling1} in subsection~\ref{subsec:LV}, we obtain the following lemma~\ref{lem:scaling2}, which collects  scaling properties concerning the transport and the total diffusion along one LV cycle.

\begin{lemma}\label{lem:scaling2}
With the notations and assumptions of Lemma~\ref{lem:scaling1}, let us define the characteristic curve $Y(t; E,\ep)$ and the total diffusion $D(E,\ep)$ by
\begin{equation}\label{def:Y}
Y\left( t;E,\varepsilon\right): =\int_{0}^{t}\left( w-v\right) \left(s;E,\varepsilon\right) ds\ , \qquad D(E,\ep):={\f{1}{2}}\int_0^{T(E,\ep)} d(s)ds.
\end{equation}
We have the relations
\begin{equation}Y\left( t;E,\varepsilon\right)=Y\left( Et;1,\frac{\varepsilon}{E}\right), \qquad D(E,\ep)=D(1,\f{\ep}{E}).
\label{eq:YScal}
\end{equation}
\end{lemma}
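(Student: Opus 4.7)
The plan is to derive both identities as direct consequences of the scaling relation \eqref{eq:Scal} for $(v,w)$ and the period scaling \eqref{eq:TScal} established in Lemma~\ref{lem:scaling1}. Indeed, once the substitution $\sigma = Es$ is carried out in the defining integrals \eqref{def:Y}, the factor of $E$ coming from \eqref{eq:Scal} and the Jacobian of the change of variables exactly compensate, reducing both $Y$ and $D$ at parameters $(E,\varepsilon)$ to their counterparts at $(1,\varepsilon/E)$. No asymptotic analysis is needed; the argument is purely algebraic.

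More precisely, for the first identity, Lemma~\ref{lem:scaling1} gives
\begin{equation*}
(w-v)(s;E,\varepsilon) = E\,(w-v)\bigl(Es;1,\tfrac{\varepsilon}{E}\bigr),
\end{equation*}
so that the substitution $\sigma = Es$, $d\sigma = E\,ds$, yields
\begin{equation*}
Y(t;E,\varepsilon) = \int_0^t E\,(w-v)\bigl(Es;1,\tfrac{\varepsilon}{E}\bigr)\,ds
= \int_0^{Et} (w-v)\bigl(\sigma;1,\tfrac{\varepsilon}{E}\bigr)\,d\sigma
= Y\bigl(Et;1,\tfrac{\varepsilon}{E}\bigr),
\end{equation*}
which is the first claim.

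For the second identity, I apply the same substitution to
\begin{equation*}
D(E,\varepsilon) = \tfrac{1}{2}\int_0^{T(E,\varepsilon)} (w+v)(s;E,\varepsilon)\,ds
= \tfrac{1}{2}\int_0^{T(E,\varepsilon)} E\,(w+v)\bigl(Es;1,\tfrac{\varepsilon}{E}\bigr)\,ds.
\end{equation*}
Setting $\sigma = Es$ transforms the upper limit to $E\cdot T(E,\varepsilon)$, which by the period scaling \eqref{eq:TScal} equals exactly $T(1,\varepsilon/E)$. Thus
\begin{equation*}
D(E,\varepsilon) = \tfrac{1}{2}\int_0^{T(1,\varepsilon/E)} (w+v)\bigl(\sigma;1,\tfrac{\varepsilon}{E}\bigr)\,d\sigma
= D\bigl(1,\tfrac{\varepsilon}{E}\bigr).
\end{equation*}

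There is no real obstacle here: the key observation is that $Y$ is homogeneous of degree zero in the sense that the scaling factor $E$ from $(v,w)$ cancels against the Jacobian, and $D$ inherits the analogous cancellation precisely because the integration is carried out over one full period, whose length also scales as $1/E$. The only point requiring a brief mention is that the endpoint condition $v(0)=w(0)>\varepsilon$ defining the starting point of a cycle in Lemma~\ref{lem:scaling1} is preserved under the rescaling $(v,w)\mapsto(E v, E w)$, so both integrals in \eqref{def:Y} are parameterised consistently with the LV cycle at energy $E$. \hfill\rule{0.5em}{0.5em}
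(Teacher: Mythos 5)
Your proof is correct and follows exactly the same route as the paper: apply the scaling identity \eqref{eq:Scal}, change variables $\sigma = Es$, and for $D$ invoke the period scaling \eqref{eq:TScal}. The paper's appendix only writes out the computation for $D$, whereas you also spell out the (analogous) $Y$ identity, but the argument is the same.
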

The next lemma gathers results concerning $D$, $Y$, $T$ and the average values of $v$ and $w$.
\begin{lemma}\label{lem:DYT}
Under the assumptions of Lemma~\ref{lem:LV1}, we have
\begin{equation}
T\left( 1,\varepsilon\right) \sim\frac{1}{\varepsilon^{2}}, \quad \int_0^T v(s)ds=\int_0^T w(s)ds=\ep T \sim \f{1}{\ep},\quad D\left( 1,\varepsilon\right) \sim\frac{1}{\varepsilon}. 
 \label{eq:Tasymp}
\end{equation}
Let us denote $Y(t)=Y(t; 1,\ep)$ and $D=D(1,\ep)$. We have the following behaviour of $Y$ and $D$ along one LV cycle.
\begin{itemize}
\item On $(0,t_2),$ $Y$ is increasing.
\item The main contribution to $D$ is due to $w$ in the interval $(t_1,t_2)$  {(see Figure~\eqref{fig:vw1_p1})}  and $v$ in $(t_3, t_4),$  {(see Figure~\eqref{fig:vw3_p1})}. Elsewhere, the contribution is negligible.
\item On $(t_2,t_3)$, $Y$ is almost constant, its largest value {$Y_{\max} \sim \f{1}{\ep}$} being reached at {$t_2 + \f{t_3 - t_2}{2}$,}
\item On $(t_3,t_5),$ $Y$ is decreasing.
\end{itemize}
\end{lemma}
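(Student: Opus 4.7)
The plan rests on two closed-form identities that reduce most of the claims to bookkeeping of the information already encoded in Lemmas~\ref{lem:LV1}--\ref{lem:LV5}. First, integrating $(\log w)' = v-\ep$ and $(\log v)' = \ep - w$ over one period and using $T$-periodicity of $v,w$ yields
\[
\int_0^T v(s)\,ds = \int_0^T w(s)\,ds = \ep T,
\]
so $D(1,\ep) = \tfrac{1}{2}\int_0^T (v+w)\,ds = \ep T$. Second, summing the two equations in \eqref{D8} gives $s'(t) = \ep(v-w) = -\ep\, Y'(t)$ where $s:=v+w$; integration from $0$ yields the key identity
\[
Y(t) = \frac{s(0) - s(t)}{\ep},
\]
so the monotonicity and extrema of $Y$ are inherited directly from those of $s$.

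For the asymptotics in \eqref{eq:Tasymp}, I would sum the five stage lengths from Lemmas~\ref{lem:LV1}--\ref{lem:LV5}: the third stage dominates with $t_3-t_2 \sim 1/\ep^2$ while the others are each at most $\log(1/\ep)/\ep$, giving $T\sim 1/\ep^2$ and hence $D\sim 1/\ep$. To localize where $D$ accumulates, I would run through each stage. Stages 1 and 5 have length $\log(1/\ep)$ and $v+w=O(1)$, contributing $O(\log(1/\ep))$. On Stage 3, Lemma~\ref{lem:LV3} gives $v+w \sim \ep e^{-\ep(t-t_2)} + v(t_2)e^{-1}e^{\ep(t-t_2)}$, whose integral over $(t_2,t_3)$ is $O(1)$. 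On Stage 2, $v\leq\ep$ so only $w$ contributes; using $(\log v)' = \ep-w$ with $v(t_1)=\ep$ gives
\[
\int_{t_1}^{t_2} w\,ds = \log\!\left(\tfrac{\ep}{v(t_2)}\right) + \ep(t_2-t_1),
\]
and the value $v(t_2)\sim\ep e^{-1/\ep}$ (obtained by imposing $E=1$ in \eqref{D5} at the endpoint $w(t_2)=\ep$) yields $\int_{t_1}^{t_2}w\,ds \sim 1/\ep$. By the $v\leftrightarrow w$ symmetry that mirrors Stage 4 to Stage 2, $v$ on Stage 4 provides the remaining $\sim 1/\ep$ of $D$.

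For the piecewise description of $Y$, I would combine $Y'=w-v$ with a stagewise sign analysis. In Stage 1 Lemma~\ref{lem:LV1} gives $v\leq\ep\leq w$; in Stage 2 both $v\leq\ep$ and $w\geq\ep$, so $Y'\geq 0$ throughout $(0,t_2)$. On Stage 3, substituting the Lemma~\ref{lem:LV3} asymptotics, the crossing $v=w$ occurs at $\tau^* = (1+\log(\ep/v(t_2)))/(2\ep)$, which coincides with $(t_3-t_2)/2$ since $v(t_3)=\ep$ forces $\ep(t_3-t_2) = 1+\log(\ep/v(t_2))$. Solving $E=2v-2\ep-2\ep\log(v/\ep)=1$ on the diagonal $v=w$ yields the minimum $v=w\sim\ep e^{-1/(2\ep)}$ of $s$ there, so $Y_{\max} = (s(0)-2\ep e^{-1/(2\ep)})/\ep \sim 2v(0)/\ep\sim 1/\ep$ by \eqref{D8b}. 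Since $\max_{[t_2,t_3]} s = \max(s(t_2),s(t_3)) = O(\ep)$, the variation of $Y$ on $(t_2,t_3)$ is $O(1)$, negligible compared with $1/\ep$; this justifies the ``almost constant'' phrasing. Finally, Stages 4 and 5 are mirror images of Stages 2 and 1 under $v\leftrightarrow w$, so $v>w$ throughout $(t_3,t_5)$ and $Y$ decreases there. The main delicate step is the Stage 3 analysis above, where pinning down the midpoint as the $v=w$ crossing and controlling the size of $v(t_2)$ via the energy relation are needed; everything else reduces to elementary manipulation of the two identities of the first paragraph combined with the stage-length and stage-magnitude information from Lemmas~\ref{lem:LV1}--\ref{lem:LV5}.
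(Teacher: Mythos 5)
Your argument is correct, and it takes a genuinely different, slightly slicker route than the paper. The paper's proof establishes \eqref{eq:Tasymp} by citing the classical Lotka--Volterra fact that the time-average of $v$ and $w$ over a period equals the equilibrium value $\ep$, and then obtains $Y_{\max}\sim 1/\ep$ by directly integrating $w-v$ stage by stage (splitting $(0,t_{2,3})$ at $t_1$, $t_{1,2}$, $t_2$ and estimating each piece from Lemmas~\ref{lem:LV1}--\ref{lem:LV5}, finding the dominant $\sim 1/\ep$ contribution on $(t_{1,2},t_2)$). You instead derive both facts from two closed-form identities: $\int_0^T v = \int_0^T w = \ep T$ via periodicity of $\log v$, $\log w$; and, more decisively, $s'=-\ep Y'$ with $s=v+w$, whence $Y(t)=\bigl(s(0)-s(t)\bigr)/\ep$. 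This second identity reduces all of the monotonicity claims for $Y$ and its extremum to the elementary behaviour of $s$, avoiding the stagewise integration, and gives $Y_{\max}=\bigl(s(0)-s_{\min}\bigr)/\ep\sim 2v(0)/\ep\sim 1/\ep$ immediately; it also makes the ``$Y$ almost constant on $(t_2,t_3)$'' claim transparent since $s=O(\ep)$ there. Your proof additionally substantiates two things that the lemma asserts but the paper's proof leaves implicit: the monotonicity of $Y$ on $(0,t_2)$ and $(t_3,t_5)$, and the localization of $D$ to the $w$-contribution on $(t_1,t_2)$ and the $v$-contribution on $(t_3,t_4)$, which you handle via $\int_{t_1}^{t_2} w = \ep(t_2-t_1)+\log(\ep/v(t_2))\sim 1/\ep$ using $(\log v)'=\ep-w$. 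One negligible slip: the minimum of $s$ on the diagonal is $2\ep e^{-1}e^{-1/(2\ep)}$ rather than $2\ep e^{-1/(2\ep)}$, a factor that does not affect any asymptotic conclusion.
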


\subsection{Main results and sketch of the dynamics phases}\label{subsec:sketch}

A main goal of this paper is to determine the evolution of the characteristic cluster length $L$
and its approximate cluster size distribution $\varphi$ over time; In particular for the $n-$th LV cycle, we will denote by $(L_n,\varphi_n)$ the corresponding typical length and size distribution. It will turn out that after
the variables $v,\ w$ have approximately followed several LV cycles \eqref{D8}, the energy $E$ changes and the concentrations $\left\{
c_{j}\right\} _{j\in\mathbb{N}}$ can be approximated by means of a formula
of the form (\ref{eq:clustConc}), for new choices of the length $L_n$ and the
function $\varphi_n.$

We now assume that the initial length $L_0$ is much smaller than $\frac {1}{%
\varepsilon}.$ 
Otherwise, if $L_0$ is of order $\frac{1}{\varepsilon}$ for the initial distribution of clusters, then the first of the four phases described below is skipped by the dynamics. 
We also remark that ${ L_0}\ll\frac{1}{\varepsilon}$ 
in \eqref{eq:Mresc} implies that the sum $\sum_{j=1}^{\infty}jc_{j}$  is
much smaller than $1$ for small $\varepsilon.$ Thus, $\left( v+w\right)
\simeq1$ for small $\varepsilon$ and we can assume $E=1$ and use the analysis carried out in Subsection~\ref{subsec:LV}, Lemmas~\ref{lem:LV1} to~\ref{lem:LV4}. 
Actually,
this approximation holds, not only for the initial time, but whenever we have $L\ll\frac{1}{\varepsilon}.$

In case that initially $L_0\ll\frac{1}{\varepsilon}$, the evolution of the concentrations towards the
equilibrium distribution undergoes four different phases. In each phase a different mechanisms governs the
evolution of the concentrations $\left\{ c_{j}\right\} _{j\in\mathbb{N}},\
v,\ w$ and the energy $E$ and
the characteristic length $L$.
We
describe the {leading} magnitudes associated to these phases as well as the
mechanism yielding the dissipation of energy $E.$ At the end of these four
phases the concentrations $\left\{ c_{j}\right\} _{j\in\mathbb{N}},\ v,\ w$
approach to an equilibrium. Notice that the approach to equilibrium takes place by means of an involved procedure and we do not see (at least currently) that it can be captured by means of a Ljapunov function.

\begin{itemize}
\item {\bf Phase I: The energy $E$ remains near a constant of order $1.$}
The period $T(E,\ep)$ of the LV oscillations  is of order $\frac{1}{\varepsilon^{2}}$%
{(see subsection~\ref{subsec:LV})}.  The evolution of the cluster distributions takes
place by means of a combination of an oscillatory motion of the cluster
distribution towards larger values of $j$ and backwards to clusters with $j$
of order one, combined with a spreading of the cluster concentration
distribution in the space of cluster sizes {(see subsection~\ref{subsec:sizedistrib})}. The distribution spreads an
amount of order $\frac{1}{\sqrt{\varepsilon}}$ in the space of cluster sizes
in each LV cycle, and if we assume that initially $L=L_{0}\approx1$ we require a number of
cycles $n$ of order $\frac{1}{\varepsilon}$ to arrive to $L=L_{\mathrm{fin}}\approx%
\frac {1}{\varepsilon}$ {(see subsection~\ref{subsec:IandII:overall}, Prop.~\ref{prop:phasesIandII})}. {From this point on, the order of magnitude of $L$  remains in the order of $\f{1}{\ep}$.}

\item {\bf Phase II: Decay of the energy $E$ from order $1$ to order $\ep$.} The function $\varphi$ approximating
the concentrations $c_{j}$ has a nontrivial evolution (Corollary~\ref{coroll:sizedistrib} and Prop.~\ref{prop:UM}). The energy $E$
decreases from a value close to $1$ to $E_{\mathrm{fin}}\approx\varepsilon.$ The
distribution of clusters evolves by means of a combination of an oscillatory
displacement towards large cluster sizes combined with their spread in the
space of cluster size as in Phase I (Lemma.~\ref{lem:I:sizedistrib}). The period $T$ of the LV oscillations
is of order $\frac{E}{\varepsilon^{2}}$ (Lemmas~\ref{lem:scaling1} and~\ref{lem:DYT}). {The number of LV oscillations
taking place during this phase is of order $n\approx\frac{\log(\f{1}{\ep})}{\varepsilon}$ (see subsection~\ref{subsec:IandII:overall}, Prop.~\ref{prop:phasesIandII}).}

\item {\bf Phase III: Decay of the energy $E$ from  order $\varepsilon$ to  order $\varepsilon^{3}.$} The concentrations of
clusters $c_{j}$ remain nearly constant for large sizes $j,$ but the
concentrations $c_{j}$ with $j$ of order one oscillate during the LV\
oscillations of the concentrations $v,\ w$ {(see section~\ref{sec:PhaseIII}, Prop.~\ref{prop:PhaseIII}).} The period of the oscillations $%
T$ {remains in the order of $\f{1}{\ep}$.} The number of LV oscillations $%
n $ taking place during this phase is of order {$n\approx\frac{-\log({\ep^2})}{\varepsilon }$ (see subsection~\ref{subsec:PhaseIII:end}). 
The oscillations for the cluster concentrations $j$ of order one are progressively damped and finally become negligible (see subsection~\ref{subsec:PhaseIII:end}, {Prop.~\ref{prop:endPhaseIII}}). }

\item {\bf Phase IV: Final trend to the equilibrium.} The energy $E$
remains of order $\varepsilon^{3}.$ The concentrations $c_{j}$ evolve
towards their equilibrium value and their values can be approximated using a
nonlinear second order parabolic equation {(see section~\ref{sec:PhaseIV}, Prop.~\ref{prop:PhaseIV})}. During this phase, all the
concentrations $c_{j}$ with $j$ of order one can be approximated by a
single function {$C\left(0^+, t\right) $} which changes in the same time scale in
which the whole distribution of concentrations $\left\{ c_{j}\right\} _{j\in%
\mathbb{N}}$ approach to equilibrium. {The oscillations being negligible, the total duration for this last phase is in the order of $\f{1}{\ep^3}.$}
\end{itemize}

\begin{figure}
     \centering
     \begin{subfigure}[b]{0.49\textwidth}
         \centering
         \includegraphics[width=\textwidth]{./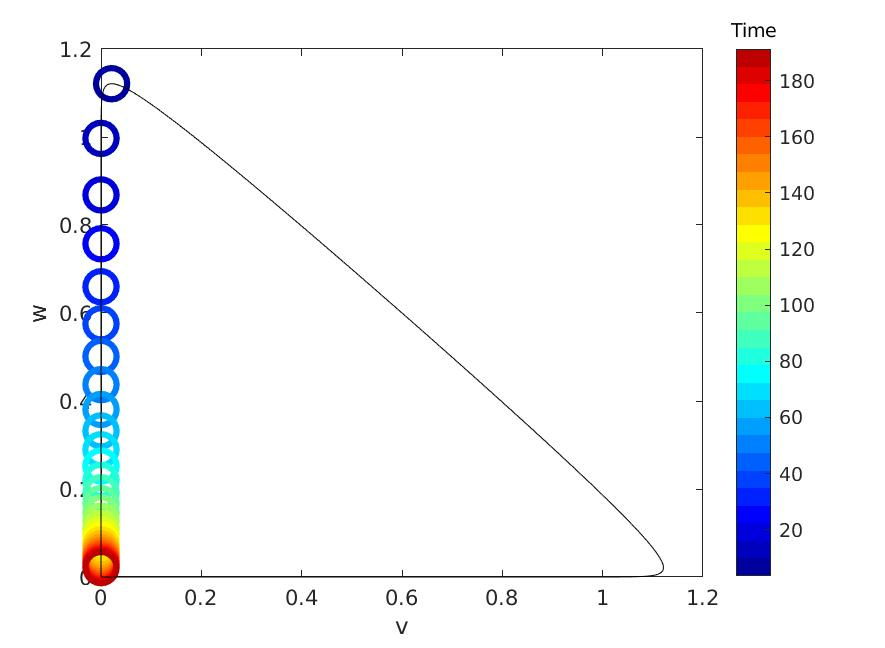}
         \caption{Phase portrait of $(v,w)$ for \\ $t\in [t_1,t_2]=[3,191]$, cf. Lemma \ref{lem:LV2}}
         \label{fig:vw1_p1}
     \end{subfigure}
     \hfill
     \begin{subfigure}[b]{0.49\textwidth}
         \centering
         \includegraphics[width=\textwidth]{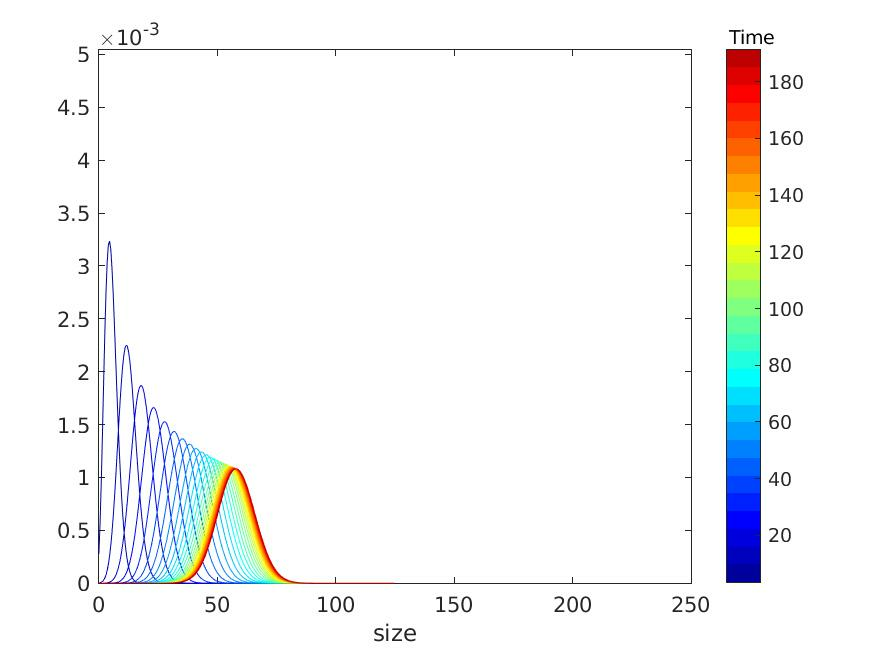}
         \caption{Evolution of cluster distribution $c(j,t)$ for $t\in [t_1,t_2]=[3,191]$}
         \label{fig:sd1_p1}
     \end{subfigure}
     \hfill
     \begin{subfigure}[b]{0.49\textwidth}
         \centering
         \includegraphics[width=\textwidth]{./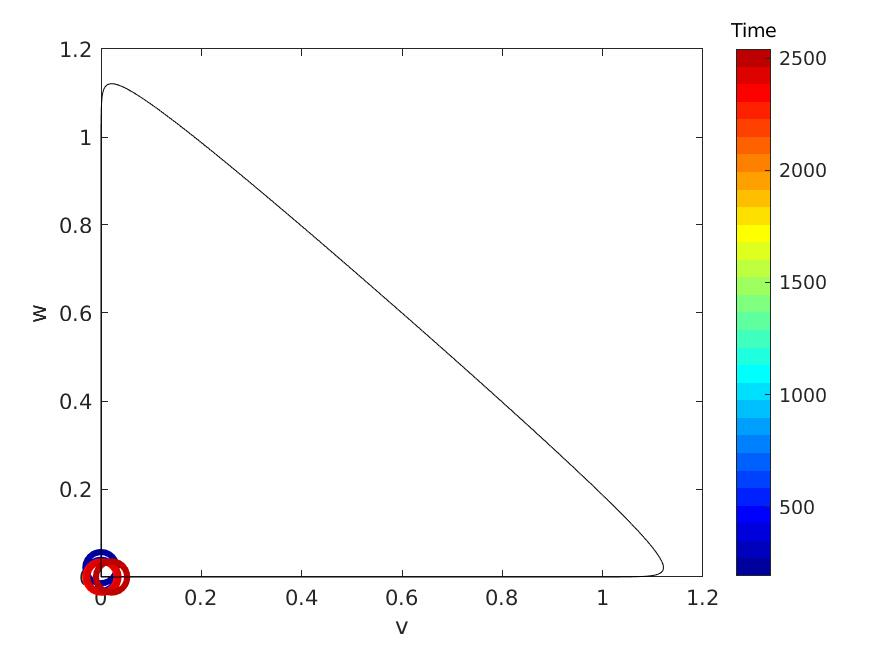}
         \caption{Phase portrait of $(v,w)$ for \\ $t\in [t_2,t_3]=[191,2541]$, cf. Lemma \ref{lem:LV3}}
         \label{fig:vw2_p1}
     \end{subfigure}
     \begin{subfigure}[b]{0.49\textwidth}
         \centering
         \includegraphics[width=\textwidth]{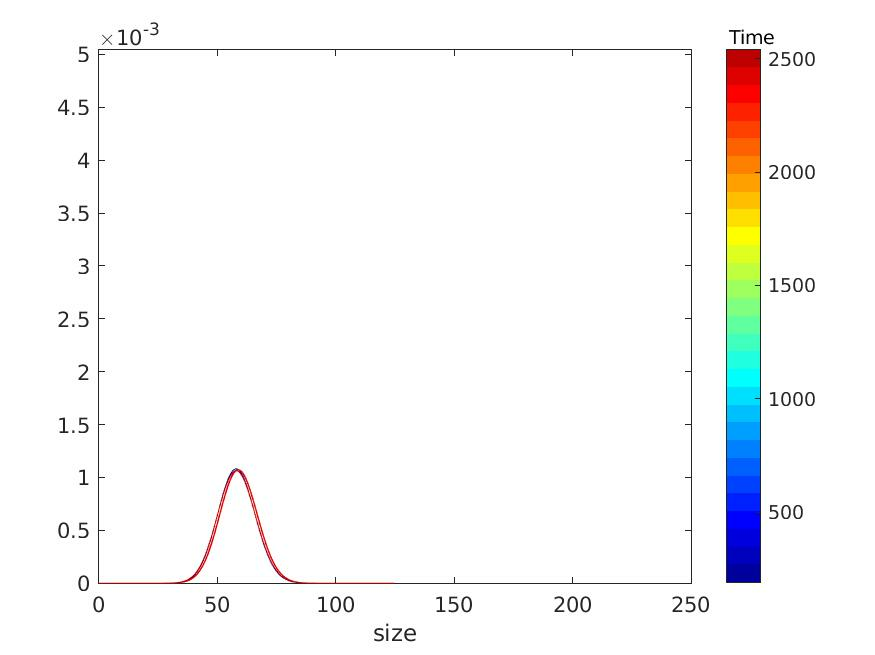}
         \caption{Evolution of cluster distribution $c(j,t)$ for $t\in [t_2,t_3]=[191,2541]$}
         \label{fig:sd2_p1}
     \end{subfigure}
     \begin{subfigure}[b]{0.49\textwidth}
         \centering
         \includegraphics[width=\textwidth]{./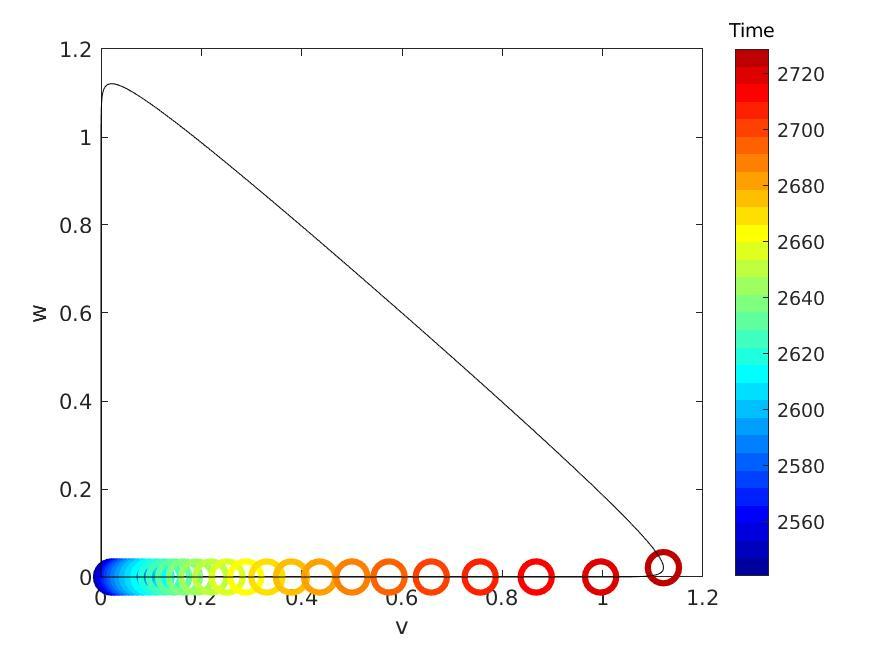}
         \caption{Phase portrait of $(v,w)$ for\\  $t\in [t_3,t_4]=[2541,2729]$, cf. Lemma \ref{lem:LV4}}
         \label{fig:vw3_p1}
     \end{subfigure}
     \begin{subfigure}[b]{0.49\textwidth}
         \centering
         \includegraphics[width=\textwidth]{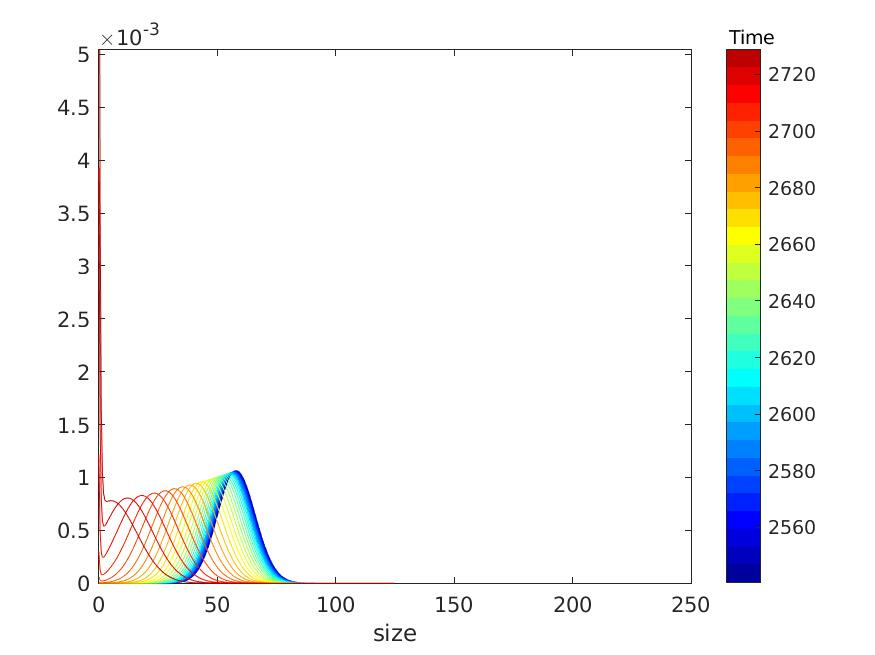}
         \caption{Evolution of cluster distribution $c(j,t)$ for $t\in [t_3,t_4]=[2541,2729]$}
         \label{fig:sd3_p1}
     \end{subfigure}
        \caption{Phase portraits and snapshots of the cluster size distribution for one cycle of the LV system during Phase I in the case $\ep=0.0212$. See Appendix \ref{sec:app_num} for the details about the numerical simulations.}
        \label{fig:size_distr_p1}
\end{figure}

\section{Phases I and II: from one LV cycle to the next} \label{sec:phaseIandII}

As pointed out in subsection~\ref{subsec:sizedistrib}, the dynamics of the $\left\{
c_{j}\right\} _{j\in \mathbb{N}}$ is a discrete-in-size convection-diffusion process. 
We now describe {in detail} the evolution of the cluster concentrations $\left\{
c_{j}\right\} _{j\in \mathbb{N}}$ during one LV cycle of the monomer
concentrations $\left( v,w\right).$
We will illustrate how this mechanism leads to an approximation of the cluster distribution in terms of the ansatz \eqref{def:psi} consisting of an aggregated mass for the smallest cluster and a smooth profile function for large cluster sizes during the Phases I and II. For this reason, we assume in all of this section that 
the energy satisfies $%
\varepsilon \ll E\lesssim 1,$ 
hence all the lemmas of subsections~\ref{subsec:LV} and~\ref{subsec:sizedistrib} are valid. We use them in order to find out what is the decay of energy along one LV cycle, and how to go from a cycle $n$ to a cycle $n+1$. 

Figure~\ref{fig:size_distr_p1} plots phase portraits of the monomeric concentrations $(v,w)$ and snapshots of the approximate cluster size distribution $c(j,t)$ on time intervals $[t_1,t_2]$, $[t_2,t_3]$ and $[t_3,t_4]$ according to Lemmas \ref{lem:LV2}, \ref{lem:LV3} and \ref{lem:LV4}. It is for didactic reasons that Figure~\ref{fig:size_distr_p1} starts with Lemma \ref{lem:LV2} describing stage two: The corresponding evolution of the size distribution in Figure \ref{fig:sd1_p1} shows very nicely the convective transport to larger cluster sizes ($\tilde V \simeq w$ in \eqref{eq:DiffApp}--\eqref{eq:DiffCoef} due to $v$ being small on $[t_1,t_2]$) combined with the diffusive spreading occurring in this stage according to $d\simeq w$. Convection and diffusion slow down as $w$ decays in the phase portrait Figure \ref{fig:vw1_p1}.

Next in Figure \ref{fig:vw2_p1}, the monomer concentrations $(v,w)$ are small compared to $\ep$ on the interval $[t_2,t_3]$ of Lemma \ref{lem:LV3}. Hence, the cluster size distribution undergoes only negligible convection and diffusion in Figure \ref{fig:sd2_p1}, making $[t_2,t_3]$ the longest time interval within one LV cycle.

The dynamics picks up again in stage four described in Lemma \ref{lem:LV4} and plotted in Figure \ref{fig:vw3_p1}, which sees the growth of $v$ on the time interval $[t_3,t_4]$ while $w$ remains small. 
 Accordingly, Figure \ref{fig:sd3_p1}
shows convection of the size distribution towards smaller clusters combined with diffusive spreading. We highlight that the resizing of cluster towards smaller sizes is 
naturally limited by the smallest cluster size $c_1$ where we observe the formation of an aggregate. 

\begin{figure}[htb]
     \begin{subfigure}[b]{0.49\textwidth}
         \centering
         \includegraphics[width=\textwidth]{./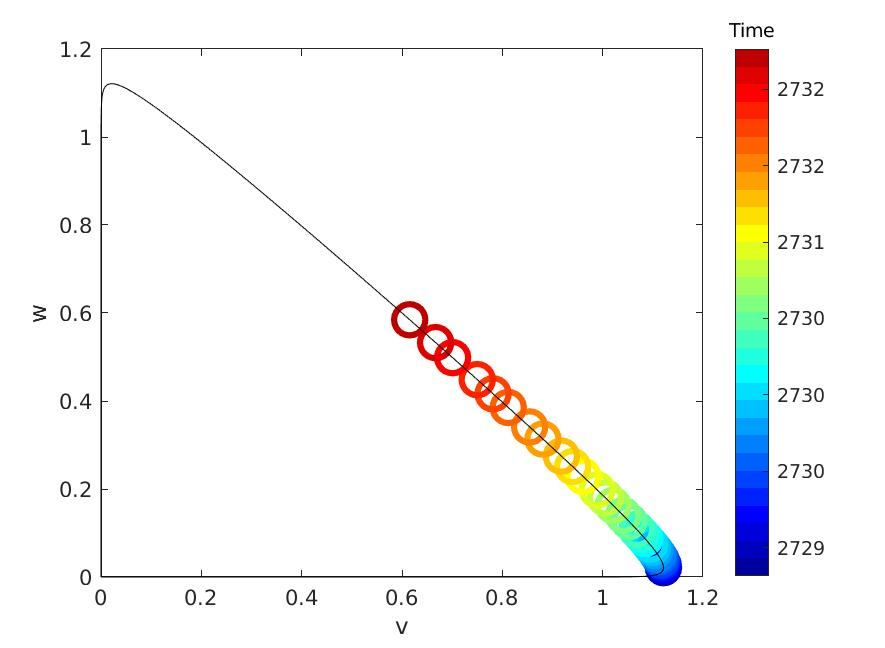}
         \caption{Phase portrait of $(v,w)$ for  $t\in [t_4,t_5=T]=[2729,2732]$, cf. Lemma \ref{lem:LV5}}
         \label{fig:vw4bis_p1}
     \end{subfigure}
     \begin{subfigure}[b]{0.49\textwidth}
         \centering
         \includegraphics[width=\textwidth]{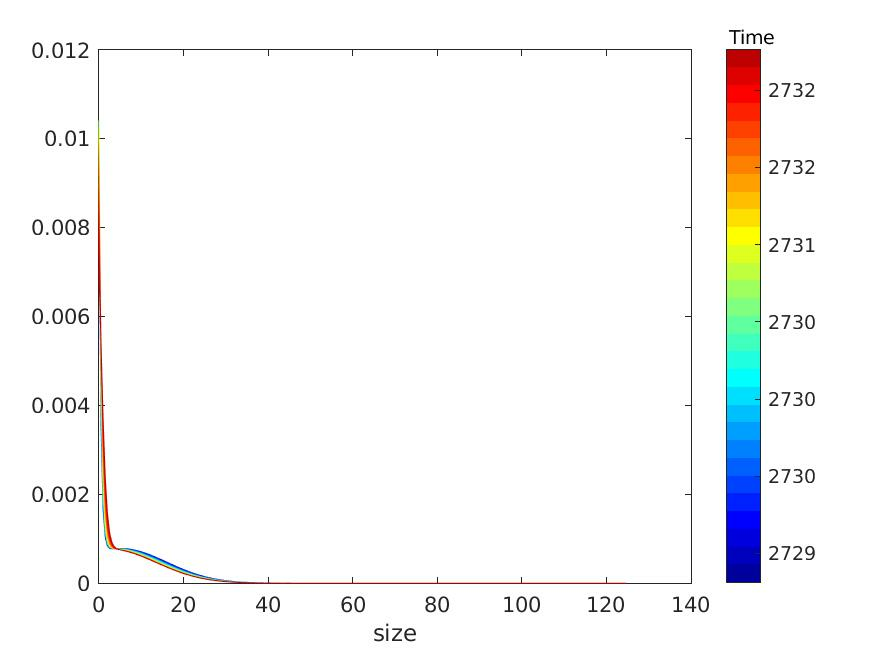}
         \caption{Evolution of cluster distribution $c(j,t)$ for $t\in [t_4,t_5=T]=[2729,2732]$}
         \label{fig:sd4bis_p1}
     \end{subfigure}
     \begin{subfigure}[b]{0.49\textwidth}
         \centering
         \includegraphics[width=\textwidth]{./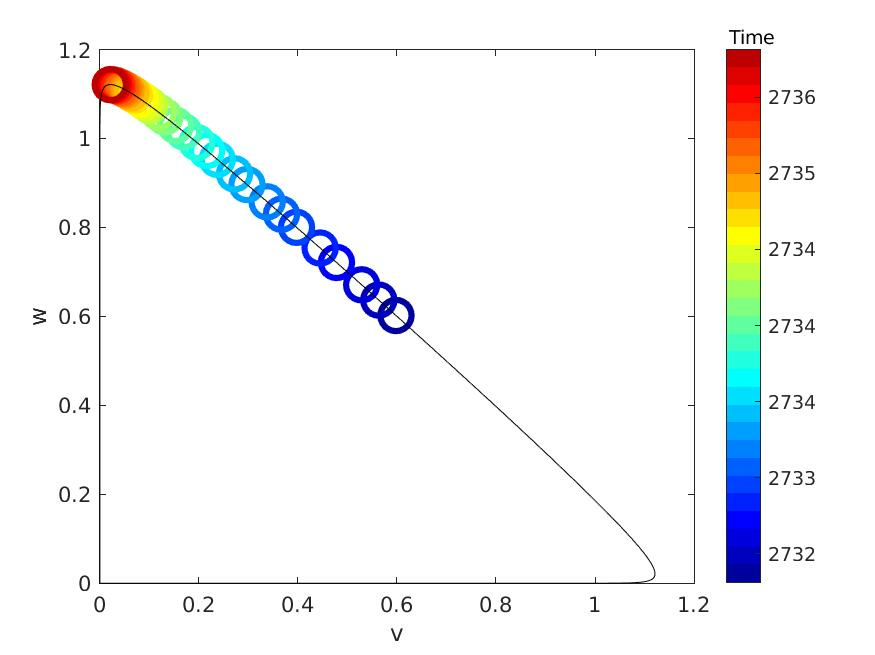}
         \caption{Phase portrait of $(v,w)$ for $t\in [T,T+t_1]=[2733,2736]$, cf. Lemma \ref{lem:LV1}}
         \label{fig:vw4ter_p1}
     \end{subfigure}
     \begin{subfigure}[b]{0.49\textwidth}
         \centering
         \includegraphics[width=\textwidth]{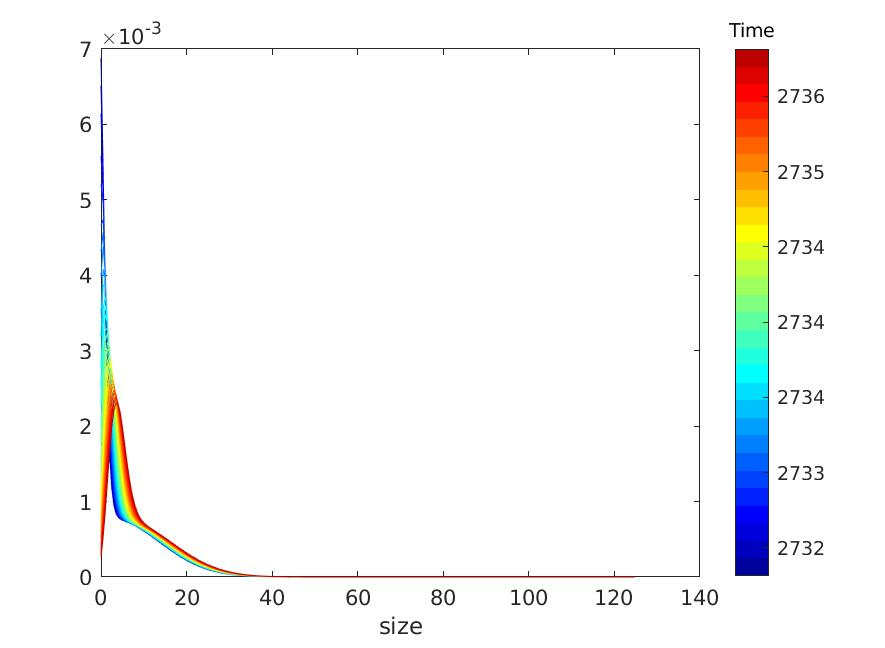}
         \caption{Evolution of cluster distribution $c(j,t)$ for  $t\in [T,T+t_1]=[2733,2736]$}
         \label{fig:sd4ter_p1}
     \end{subfigure}
     \caption{Phase portraits and snapshots of the cluster size distribution during the fastest region of the LV phase portrait in Phase I. See Appendix \ref{sec:app_num} for the details about the numerical simulations.}
        \label{fig:size_distr_p1_bis}
\end{figure}

Figure~\ref{fig:size_distr_p1_bis}
shows in subfigure \ref{fig:sd4bis_p1} that the aggregation at the smallest cluster is further enhanced on the time interval $[t_4,t_5]$ of stage five, Lemma~\ref{lem:LV5}, during which the phase portrait of $(v,w)$ reaches the end time $T$ of a LV cycle in Figure~\ref{fig:vw4bis_p1}.
Finally, Figures~\ref{fig:sd4ter_p1} and \ref{fig:sd4ter_p1} show the first 
stage of a LV cycle as in Lemma~\ref{lem:LV1}.
We remark again that we decided to plots this first time interval last, since the evolution of the cluster distribution during the first stage when the aggregate of the smallest cluster begins to convect towards larger clusters is difficult to read without knowing the previous discussion. 

Combining the time scales characterised in Lemma~\ref{lem:DYT} with the above discussion 
of Figures~\ref{fig:size_distr_p1} and \ref{fig:size_distr_p1_bis} suggests to separate the dynamics of each LV cycle into two principal steps: 1) a "pure transport" step, during which the diffusion is negligible - this is the interval $[0, t_1]$ and symmetrically $[t_4,t_5]$; we could even extend these intervals to the intermediate time points $[0,t_{1,2}]$ and $[t_{3,4},t_5]$ (see the proof of Lemma~\ref{lem:LV2} in the appendix where these times are defined); 2) a "transport and diffusion" step, where both transport and diffusion are important, which is given by the symmetric intervals $[t_{1,2},t_2]$ and $[t_3,t_{3,4}]$. The remaining time interval $[t_2,t_3]$ is a kind of lag time where almost nothing happens, neither transport nor diffusion, due to the {exponential} smallness of both $v$ and $w$.

The following 
proposition characterises one iteration step of the size distribution of clusters as plotted in Figures~\ref{fig:vw4bis_p1} and \ref{fig:sd4bis_p1} (by taking the final depicted time points) from a first appearance (which we shall call $t=0$ rather than $t=T$ for the sake of simplicity) to its second appearance after one period $T$.

\begin{proposition}[Initial cycle and iterative formula]
Let  $M=1,$  $\ep \ll E_0 \leq 1,$ and $v(0)=w(0)>\ep$ such that~\eqref{D5} is satisfied. Let $1\ll L_0\lesssim \f{1}{{\ep}}$, and $c_j(0)$ defined by~\eqref{eq:clustConc} for some measure $\varphi_0$ satisfying~\eqref{def:psi}--\eqref{as:phi0:2}. 
Let $T_0:=T(E_0,\ep)$ be the  time period of the unperturbed LV system~\eqref{D8}, $D_0:=D(E_0,\ep)$ the diffusion defined by~\eqref{def:Y}, and $\sigma_0:=\f{D_0}{(L_0)^2}$. 

\noindent
Defining $(\varphi_1,L_1)$ such that $c_j(T_0)=\f{\ep}{L_1}\varphi_1(\f{j}{L_1})$ and  $\varphi_1$ satisfies~\eqref{def:psi}--\eqref{as:phi0:2}, we have
\begin{align}
\varphi_{1}\left( x\right) & \approx \frac{L_{1}}{L_{0}}\chi_{\left(
0,\infty\right) }\left( x\right) \int_{0}^{\infty}G\left( \frac{L_{1}}{%
L_{0}}x-\eta;\sigma_0^2%
\right) \varphi_{0}\left( \eta\right) d\eta  \notag \\
& +\delta\left( x\right) \int_{-\infty}^{0}d\zeta\int_{0}^{\infty}G\left(
\zeta-\eta;\sigma_0^2%
\right) \varphi_{0}\left( \eta\right) d\eta,  \label{eq:Coninit}
\end{align}
where $G$ is the fundamental solution for the heat equation:
\begin{equation}
G\left( \xi;s\right) =\frac{1}{\sqrt{4\pi s}}\exp\left( -\frac{\xi^{2}}{4s}%
\right)  \label{eq:FundSol}
\end{equation}
and $L_{1}$ is given by
\begin{equation}
\begin{aligned}
\frac{L_{1}}{L_{0}}
&=\int_{0}^{\infty}\varphi _{0}\left(
\eta\right) d\eta\int_{0}^{\infty}xG\left( x-\eta;\sigma_0^2\right) dx
\\
&=1+\int_0^\infty x \diff x \int_0^\infty G\left(x+\eta;\sigma_0^2\right)\varphi_0(\eta)\diff \eta.
\end{aligned}
\label{eq:Coninit2}
\end{equation}
\label{prop:Ln}
\end{proposition}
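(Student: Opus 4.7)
My starting point is the reduction of the discrete-in-size recursion \eqref{eq:DiffForm1}--\eqref{eq:DiffForm2} to the continuous convection-diffusion equation \eqref{eq:DiffApp} with coefficients \eqref{eq:DiffCoef}, valid in the outer layer $j\gg 1$ where the slow-variation hypothesis \eqref{eq:contapprox} is satisfied. Under the initial ansatz $c_j(0)\simeq \f{\ep}{L_0}\varphi_0(j/L_0)$ with $1\ll L_0\lesssim 1/\ep$, that hypothesis holds initially, and the plan is to propagate the profile to time $T_0$ through the PDE, and then account separately for the mass that is delivered to the boundary layer near $j=O(1)$ to form the Dirac component of $\varphi_1$.

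\textbf{Propagation by characteristics.} Because \eqref{eq:DiffApp} has $j$-independent (though time-dependent) coefficients, its Green's function is explicit: a characteristic issued from $j'$ reaches $j'+Y(t)$ at time $t$ with a superimposed Gaussian broadening of variance $\int_0^t d(s)\,\diff s$. Two facts from Lemma~\ref{lem:DYT} drive the argument. First, $Y(T_0)=0$ over a full LV period: integrating the identities $\tfrac{\diff}{\diff t}\ln v=\ep-w$ and $\tfrac{\diff}{\diff t}\ln w=v-\ep$ along the periodic orbit gives $\int_0^{T_0}w\,\diff s=\int_0^{T_0}v\,\diff s=\ep T_0$, hence $\int_0^{T_0}(w-v)\diff s=0$. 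Second, the total diffusion over the cycle equals $D_0$ by definition in \eqref{def:Y}. Thus, neglecting boundary effects, the one-period propagator reduces to convolution with a heat kernel of variance $2D_0$:
\[
c(j,T_0)\approx \int_0^\infty G(j-j';D_0)\,c(j',0)\,\diff j'.
\]

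\textbf{Rescaling and boundary-layer term.} Inserting the ansatz $c(j',0)=\f{\ep}{L_0}\varphi_0(j'/L_0)$ and the target $c(j,T_0)=\f{\ep}{L_1}\varphi_1(j/L_1)$, applying the self-similarity $G(L_0\xi;L_0^2 s)=L_0^{-1}G(\xi;s)$ with $s=\sigma_0^2=D_0/L_0^2$, and changing variables $j'=L_0\eta$, $x=j/L_1$, I obtain for $x>0$
\[
\varphi_1(x)\approx \f{L_1}{L_0}\int_0^\infty G\!\left(\f{L_1}{L_0}x-\eta;\sigma_0^2\right)\varphi_0(\eta)\,\diff\eta,
\]
which is the first term of \eqref{eq:Coninit}. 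The Dirac term encodes the boundary layer: during the parts of the cycle where $v>w$ (Lemmas~\ref{lem:LV3}--\ref{lem:LV4}, illustrated in Figures~\ref{fig:sd3_p1}--\ref{fig:sd4bis_p1}), convection at speed $\tilde V<0$ drives mass toward $j=1$, where the no-flux condition $J_0=0$ prevents it from being lost and instead traps it at the smallest cluster. In the rescaled variable $x=j/L_1\ll 1$ for $j=O(1)$, this aggregate sits at $x=0^+$, and its weight equals precisely the mass that the free heat kernel on $\mathbb R$ would place on $\{\zeta<0\}$, namely $\int_{-\infty}^0 \diff\zeta\int_0^\infty G(\zeta-\eta;\sigma_0^2)\varphi_0(\eta)\diff\eta$.

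\textbf{Determination of $L_1/L_0$ and main obstacle.} The ratio $L_1/L_0$ is then fixed by the mass normalisation \eqref{as:phi0:2}: imposing $\int_0^\infty x\,\varphi_1(x)\,\diff x=1$ on the above expression (substitution $y=(L_1/L_0)x$) yields the first line of \eqref{eq:Coninit2}. The equivalence with the second line follows by evaluating $\int_0^\infty y\,G(y-\eta;\sigma_0^2)\diff y$ through the shift $u=y-\eta$ together with the fact that $G(\cdot;\sigma_0^2)$ is even and normalised on $\mathbb R$, combined with $\int \eta\,\varphi_0(\diff\eta)=1$. The principal obstacle in making this plan rigorous, rather than formal, is the inner/outer matching at $j=O(1)$: during the backward-transport window the slow-variation condition \eqref{eq:contapprox} fails near the boundary, and justifying the absorb-at-zero prescription requires an inner expansion of the full discrete equation \eqref{eq:DiffForm2}. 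A secondary (milder) difficulty is controlling the uniformity of the PDE approximation across the long quiescent interval $(t_2,t_3)$, where however $v$ and $w$ are exponentially small by Lemma~\ref{lem:LV3} so the accumulated correction is negligible.
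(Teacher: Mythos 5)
Your proposal is correct and follows essentially the same route as the paper: reduce the discrete evolution to the convection–diffusion PDE \eqref{eq:DiffApp}, observe that over a full LV cycle the net drift vanishes ($Y(T_0)=0$ by $\int_0^{T_0} v = \int_0^{T_0} w = \ep T_0$) while the accumulated diffusion is $D_0$, apply the resulting heat kernel $G(\cdot;D_0)$, rescale via the self-similarity of $G$, add a Dirac term at $x=0$ for the mass pushed below $j=0$, and fix $L_1/L_0$ by the normalisation \eqref{as:phi0:2} and the symmetry of $G$. The paper phrases the cycle propagator through the rescaled time $\tau=\int_0^t d(s)/2\,\diff s$ and the semigroup $S(\tau)$ rather than the explicit Green's function, but this is only a notational difference.
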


\begin{remark} If we had that $L_{0}$ is of order one (i.e. the initial
size concentration  is concentrated in cluster sizes $j$ of order
one), then the solution during a LV cycle would spread the
cluster concentrations to regions of size $\sqrt{\frac{E}{\varepsilon}}\gg 1$. Therefore, in a single
cycle the width of the region in which the clusters are concentrated would
become much larger than one; this explains why we have assumed in Prop~\ref{prop:Ln}
that $L_{0}\gg1.$ Notice that with this assumption, if there is a fraction
of clusters in the concentrations $c_{j}$ with $j$ of order one, we should
include a Dirac mass at in $\varphi_{0}$ at $x=0$ containing this fraction
of clusters. 
\end{remark}

\begin{proof}[Heuristic proof of Proposition \ref{prop:Ln}] We provide a heuristic proof of Proposition \ref{prop:Ln} divided into the following steps: In subsection~\ref{subsec:distrib}, we analyse the ODE system as being close to a continuous convection-diffusion equation: its dynamics along one LV cycle is thus described in Lemma~\ref{lem:I:sizedistrib} below. This result is used in subsection~\ref{subsec:iter} to obtain~\eqref{eq:Coninit}--\eqref{eq:Coninit2} by solving the diffusion equation during one cycle with the help of the fundamental solution to the heat equation.
\end{proof}

It is clear that as long as the assumptions of the Proposition \ref{prop:Ln} remain valid,  we can apply it to successive time periods $T_n$ themselves defined by successive energies $E_n$. Hence, the following corollary holds as long as $E_n \gg \ep.$ 

{\begin{corollary}
Under the assumptions and notations of Prop.~\ref{prop:Ln}, the same result applies between two successive cycles $n$ and $n+1$ as long as $E_n \gg \ep$. By denoting accordingly $\varphi_n$, $L_n,$ $m_n$, $\psi_n$ and $\sigma_n^2$, we have the following iterative formulae: 
\label{coroll:sizedistrib}
\begin{align}
\psi_{n+1}\left( x\right)  &=\frac{L_{n+1}}{L_{n}}\int_{0}^{\infty}G\left(
\frac{L_{n+1}}{L_{n}}x-\eta,{\sigma_n^2}\right) \psi_{n}\left( \eta\right) d\eta\notag
\\ &\qquad +m_{n}\frac{L_{n+1}%
}{L_{n}}G\left( \frac{L_{n+1}}{L_{n}}x,\sigma_n^2\right)  \notag, \\
m_{n+1} & =\int_{-\infty}^{0}d\zeta\int_{0}^{\infty}G\left( \zeta-\eta,{
\sigma_n^2} \right)
\psi_{n}\left( \eta\right) d\eta+\frac{m_{n}}{2}  \notag, \\
\frac{L_{n+1}}{L_{n}}  &=1+\int_{0}^{\infty}xdx\int_{0}^{\infty}G\left(
x+\eta,{
\sigma_n^2}\right)
\psi_{n}\left( \eta\right) d\eta \notag\\
&\qquad+m_{n}\int_{0}^{\infty}xG\left( x,{
\sigma_n^2}\right) dx.
\label{eq:SemGroup}
\end{align}

\end{corollary}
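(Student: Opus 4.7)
The approach is to apply Proposition~\ref{prop:Ln} inductively, taking the output state at time $T_n$ as the input state for cycle $n+1$. We first verify that the hypotheses carry over: by construction of one LV cycle (Lemmas~\ref{lem:LV1} to~\ref{lem:LV5}), the configuration $(v,w)$ returns approximately to a diagonal state $v=w>\varepsilon$ at the end of each cycle, which is precisely the endpoint used to define $T$; and since $E_n \gg \varepsilon$ by the hypothesis of the corollary, the full asymptotic analysis of subsections~\ref{subsec:LV} and~\ref{subsec:sizedistrib} continues to apply unchanged. Consequently Proposition~\ref{prop:Ln} may be reapplied with $(\varphi_n,L_n,E_n)$ playing the role of $(\varphi_0,L_0,E_0)$, producing $(\varphi_{n+1},L_{n+1})$ after cycle $n+1$.

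The core of the argument is then a direct algebraic rewriting. Writing $\varphi_n=\psi_n+m_n\delta$ and substituting into \eqref{eq:Coninit}, the smooth part of $\varphi_{n+1}$ arises from the first term and splits, by linearity of the convolution against $G$, into a contribution from $\psi_n$ plus a Gaussian contribution from the Dirac component via the sifting identity
\[
\int_0^\infty G\!\left(\tfrac{L_{n+1}}{L_n}x-\eta;\sigma_n^2\right)\delta(\eta)\,d\eta=G\!\left(\tfrac{L_{n+1}}{L_n}x;\sigma_n^2\right),
\]
which yields the first line of \eqref{eq:SemGroup}. The Dirac mass $m_{n+1}$ comes from the second term of \eqref{eq:Coninit}; inserting $\varphi_n=\psi_n+m_n\delta$ and using
\[
\int_{-\infty}^{0}G(\zeta;\sigma_n^2)\,d\zeta=\tfrac12
\]
(since $G(\cdot;\sigma_n^2)$ is an even Gaussian centered at $0$), one obtains the second line. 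Finally the ratio $L_{n+1}/L_n$ is read off by substituting $\varphi_n=\psi_n+m_n\delta$ into \eqref{eq:Coninit2}, which splits the single integral into the two integrals of the third line.

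The main obstacle is conceptual rather than computational: one must check that the whole ansatz framework is stable under iteration. Specifically, the continuum approximation \eqref{eq:contapprox} must stay valid, which requires $L_n\gg 1$ to persist (it does, since $L_n$ is monotone non-decreasing in $n$ by the positivity of the excess integral in \eqref{eq:Coninit2}, and $L_n\lesssim 1/\varepsilon$ by the mass bound \eqref{eq:Mresc}); the boundary-layer/outer-layer decomposition must remain clean, which justifies writing the continuum output as smooth-part-plus-Dirac; and the normalization constraints \eqref{as:phi0:2} must propagate. The constraint $\int_0^\infty\varphi_n(d\eta)=1$ follows from the exact conservation $\sum_j c_j=\varepsilon$, while $\int_0^\infty\eta\,\varphi_n(d\eta)=1$ is precisely the defining identity $L_{n+1}:=\varepsilon^{-1}\sum_j j\,c_j$ at the relevant time, which is consistent to leading order because the convection-diffusion approximation \eqref{eq:DiffApp} preserves the first moment up to lower-order corrections. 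The hypothesis $E_n\gg\varepsilon$ is exactly what guarantees we stay inside Phases I--II and hence inside the regime where the above arguments are self-consistent; at the end of Phase II this breaks down and the analysis of Sections~\ref{sec:PhaseIII}--\ref{sec:PhaseIV} takes over.
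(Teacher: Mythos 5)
Your proof is correct and follows essentially the same route as the paper: apply Proposition~\ref{prop:Ln} iteratively with $(\varphi_n,L_n,E_n)$ in place of $(\varphi_0,L_0,E_0)$, decompose $\varphi_n=\psi_n+m_n\delta$, and split the integrals in \eqref{eq:Coninit}--\eqref{eq:Coninit2} by linearity, using $\int_{-\infty}^0 G(\zeta;\sigma_n^2)\,d\zeta=\tfrac12$. The paper's own proof is a single terse sentence at the end of subsection~\ref{subsec:iter}; you have merely spelled out the substitution and added a useful sanity check (monotonicity of $L_n$ and propagation of the normalization), but the argument is the same.
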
}

\subsection{An approximate transport and diffusion equation.}
\label{subsec:distrib}
As a first step to establish Proposition~\ref{prop:Ln}, let us study the dynamics of the cluster size distribution during one cycle. We use equations \eqref{eq:DiffForm1}--\eqref{eq:DiffForm2} as written in subsection~\ref{subsec:sizedistrib} and interpret them as a discretised drift-diffusion equation~\eqref{eq:DiffApp}. 
Moreover, it will be useful to rescale time in \eqref{eq:DiffApp} and consider 
the following convection-diffusion equation 
\begin{equation}
\frac{\partial c}{\partial \tau }\left( j,\tau \right) +\frac{2\left(
w-v\right) }{\left( w+v\right) }\frac{\partial c}{\partial j}\left( j,\tau
\right) =\frac{\partial ^{2}c}{\partial j^{2}}\left( j,\tau \right), \qquad  \tau(t)=\int_0^t \f{d(s)}{2}\diff s, 
\label{eq:DiffAppA}
\end{equation}
where we recall that 
$d = w+v$ and Lemmas~\ref{lem:scaling2}, \ref{lem:DYT}, i.e. 
\begin{equation*}
D(E,\ep):={\f{1}{2}}\int_0^{T(E,\ep)} d(s)ds \sim \frac{E}{\ep},
\end{equation*}
which is a consequence of $d = O(\ep)$ for those part of the LV cycle which contribute to the integral the most. Hence, one can image the time-rescaling as roughly $\tau = O(\ep t)$.
The time-rescaled equation~\eqref{eq:DiffAppA} is a key step in understanding these dynamics.
\begin{lemma}\label{lem:I:sizedistrib}
Under the assumptions and notations of Prop.~\ref{prop:Ln}, we define $\Delta t:= \sqrt{\f{1}{E_0 \ep}}$. 
During the time interval $[0,T_0]$, the size distribution $c_j(t)$ approximately evolves according to the following dynamics:
\begin{enumerate}
\item \label{step1:drift} Over the time interval $[0, \Delta t]$, transport dominates diffusion: {\it{i.e.}} $c_j(t)\approx c_{j-Y(t)} (0)$ is transported along $Y(t)>0$ defined by~\eqref{def:Y}, and moves towards larger sizes.
\item \label{step2:diff} Over the time interval $t\in [\Delta t, T_0-\Delta t],$ $c_j(t)$ is close to the solution to~\eqref{eq:DiffAppA}, hence $c_j(t) \approx u(x,\tau(t))$ with  
$j=x+Y(t),$ $\tau$ and $d$ defined by~\eqref{eq:DiffCoef}, and $u$ solution to the pure diffusion equation
\begin{equation}
\label{eq:DiffAppB}
\f{\partial u}{\partial \tau}  (x,\tau) =\f{\partial^2 u}{\partial x^2 } (x,\tau),\qquad E_0\Delta t \leq \tau \leq D_0-E_0\Delta t.
\end{equation}
During this time interval, the total diffusion is in the order of $D_0\approx \f{E_0}{\ep}$: the size distribution diffuses in a range $\Delta x\approx \sqrt{\f{E_0}{\ep}}.$ 
\item \label{step3:drift} In the time interval $t\in [T_0-\Delta t, T_0]$, transport dominates diffusion again. We have $c_j(t)\approx c_{j-Y(t)+Y(T_0-\Delta t)} (T_0-\Delta t)$ and $Y(t)<0$. The mass moving again towards smaller sizes, for $j< \Delta x,$ the mass accumulates in $c_1.$ 
\end{enumerate}

\end{lemma}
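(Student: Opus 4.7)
The unifying observation is that the convection--diffusion equation~\eqref{eq:DiffApp} reduces to the pure heat equation~\eqref{eq:DiffAppB} after passing to the moving frame $x = j - Y(t)$ and rescaling time via $\tau(t) = \int_0^t d(s)/2\,ds$. In this representation, $c(j,t) \approx u(j-Y(t),\tau(t))$ where $u$ is the heat-kernel convolution of the initial profile. The three sub-intervals in the statement correspond to different physical regimes governed by the very uneven orders of magnitude of $d = v+w$ and $|w-v|$ along the LV cycle, as quantified in Lemmas~\ref{lem:LV1}--\ref{lem:LV5} and~\ref{lem:DYT}.

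For Step~\ref{step1:drift}, the plan is to estimate $Y(\Delta t)$ and $\tau(\Delta t)$ directly. On stage~1, whose length $t_1 \sim \log(E_0/\ep)/E_0$ is much smaller than $\Delta t = (E_0\ep)^{-1/2}$, both $d$ and $|w-v|$ are of order $E_0$ and contribute $O(\log(E_0/\ep))$ to both $Y$ and $\tau$. On the following portion of stage~2 (Lemma~\ref{lem:LV2}), $w \sim E_0$ and $v \sim \ep$, giving the dominant contributions $Y(\Delta t) \sim \tau(\Delta t) \sim E_0\Delta t = \sqrt{E_0/\ep}$. The diffusive spread $\sqrt{\tau(\Delta t)} \sim (E_0/\ep)^{1/4}$ is smaller than the transport $Y(\Delta t) \sim (E_0/\ep)^{1/2}$ by a factor of $(E_0/\ep)^{1/4}$, which justifies approximating the evolution by pure transport, $c_j(t) \approx c_{j-Y(t)}(0)$.

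For Step~\ref{step2:diff}, the heat-kernel representation in the moving frame gives $c_j(t) \approx u(j-Y(t),\tau(t))$ with $u$ solving~\eqref{eq:DiffAppB}. The accumulated diffusion on $[\Delta t, T_0 - \Delta t]$ equals $\tau(T_0-\Delta t) - \tau(\Delta t) \sim D_0 \sim E_0/\ep$ by Lemma~\ref{lem:DYT}, so the spread is of order $\sqrt{D_0} \sim \sqrt{E_0/\ep}$, coming essentially from the intervals $(t_1,t_2)$ and $(t_3,t_4)$, while the lag interval $(t_2,t_3)$ contributes negligibly because $v$ and $w$ are exponentially small there (Lemma~\ref{lem:LV3}). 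For Step~\ref{step3:drift}, the argument of Step~\ref{step1:drift} applies by the symmetry of the LV cycle (Lemmas~\ref{lem:LV4}--\ref{lem:LV5}): $\dot Y = w-v < 0$, so the distribution is transported back towards small sizes. The additional ingredient is that the PDE~\eqref{eq:DiffApp} is not valid for $j$ of order one and must be replaced by the discrete system~\eqref{eq:DiffForm1}--\eqref{eq:DiffForm2}; for returning mass that reaches $j=1$, the boundary equation $\dot c_1 = v c_2 - w c_1$ with $v \gg w$ in this regime yields a positive incoming flux, so mass accumulates at $c_1$, producing the Dirac component of $\varphi_{n+1}$.

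The hard part will be making the PDE approximation rigorous near the boundary $j \sim 1$, where the smoothness condition~\eqref{eq:contapprox} breaks down and where the accumulation in Step~\ref{step3:drift} actually takes place. A rigorous treatment of this boundary layer, together with error estimates for the PDE approximation, is deferred to the companion paper~\cite{DFMV2}; the arguments above remain at the level of leading-order formal asymptotics consistent with the rest of this article.
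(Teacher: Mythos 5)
Your proof is correct and follows essentially the same route as the paper's: reduce \eqref{eq:DiffApp} to the heat equation in the moving frame, use Lemmas~\ref{lem:scaling1}, \ref{lem:LV1}--\ref{lem:LV4} and~\ref{lem:DYT} to size the transport $Y$ and the diffusion budget $\tau$ on each sub-interval, and treat the return to $j\approx 1$ via the discrete boundary equation. The only cosmetic difference is the derivation of $\Delta t$: the paper obtains it by asking how long the already-diffused tail of width $\sqrt{E_0/\varepsilon}$ needs to be convected at speed $\approx E_0$ before it reaches $j=1$, whereas you take $\Delta t$ as given and verify a posteriori that $Y(\Delta t)\approx\sqrt{E_0/\varepsilon}$ while $\sqrt{\tau(\Delta t)}\approx(E_0/\varepsilon)^{1/4}$, which amounts to the same order-of-magnitude bookkeeping.
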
 
\begin{proof}
W.l.o.g. and for the sake of simplicity, we set $E_0=1$  as the general case follows by the change of variables detailed in Lemmas~\ref{lem:scaling1} and~\ref{lem:scaling2}. 
Let us begin by proving the central behaviour of Step~\ref{step2:diff} given by~\eqref{eq:DiffAppB}. To do so, we use~\eqref{eq:DiffAppA} and identify -- with some abuse of notations -- 
its solution $c(j,{\tau})$ with $c(x,{\tau})$.
We have $\tau(T(E_0,\ep))=D_0$ defined in~\eqref{def:Y}, and it follows that~\eqref{eq:DiffAppA} becomes~\eqref{eq:DiffAppB}. 

The approximation~\eqref{eq:DiffAppB} is valid on the whole size space $(0,+\infty)$ {(and even $(-\infty,+\infty)$ in the sense that we do not need any boundary condition to solve the heat equation)} if the mass is contained in large enough clusters, {\it i.e.} $Y(t)$ large enough, which is true, thanks to Lemma~\ref{lem:DYT}, on an interval $[\Delta t,T_0-\Delta t]$ for some $\Delta t \in (t_1,t_2)$ to be specified. In this case, we expect to have $c_1$ very small, so that the equations for $v$ and $w$ \eqref{eq:BD2}--\eqref{eq:BD2a} may be approximated by the unperturbed  LV system (\ref{D8}), and the approximation of $Y$ done in Lemma~\ref{lem:DYT} is {\it a posteriori} validated.

During this time interval, the range of values of cluster sizes in
which the concentrations $c$ are relevant spreads according to the equation (%
\ref{eq:DiffAppB}). Notice that this spreading process can be thought as
diffusion in the space of cluster sizes, although this diffusion is not
related to any physical diffusion in the physical space, but it is just a
mathematical description that allows to compute the evolution of the
concentrations in the space of cluster sizes.  
Let us denote the heat equation semigroup $\exp\left( \tau
\partial_{x}^{2}\right) $ as $S\left( \tau\right) .$ 
Assuming $\Delta t \ll D_0$ (this will be confirmed {\it a posteriori}), the total time  for diffusion during $[\Delta t,T_0-\Delta t]$ is thus {in the order of $\tau(T_0)=D_0\sim {\f{E_0}{\ep}}$, see Lemma~\ref{lem:scaling2} and~\ref{lem:DYT}. Due to the properties of the heat semigroup,  the characteristic length for diffusion is thus of order $\sqrt{\frac{E_0}{\varepsilon}} \ll D_0.$}

This provides us with the order of magnitude for $\Delta t$: at the final stage $[T_0-\Delta t,T_0]$ of the
LV cycle, when the 
concentrations front
returns to values of $j$ of order
one, the concentrations $c_j$ for $j=O(1)$ got diffused over a region of
order $\sqrt{\frac{E_0}{\varepsilon}}$ cluster sizes (see Figures \ref{fig:sd3_p1} and \ref{fig:sd4bis_p1}).
The transport speed $\f{\diff Y}{\diff t}=w(t)-v(t)$ in this region is of order {$-{E_0}$} by Lemmas~\ref{lem:scaling1} and~\ref{lem:LV4}, thus the time interval during which the size region $\Bigl[1, \sqrt{\frac{E_0}{\varepsilon}}\Bigr]$ hits $c_1$ is of order 
$$
\Delta t=\f{1}{E_0} \sqrt{\f{E_0}{\ep}}=\sqrt{\f{1}{E_0\ep}}.
$$

Concerning the dynamics of the clusters in the boundary layer during the arrival of
the concentrations front, the mass diffused accumulates in $j=1$
(see Figures \ref{fig:sd3_p1} and \eqref{fig:inner_layer_1_p1}).
For clusters of order one, we can approximate the concentrations $c_j$ by means of the solutions of 
\begin{align}
\frac{dc_{j}}{d\tau} & =\left( c_{j+1}-c_{j-1}\right) +\left(
c_{j-1}+c_{j+1}-2c_{j}\right)  ,\ \ j\geq2  \label{eq:BoundLay1} \\
\frac{dc_{1}}{d\tau} & =2\biggl( c_{2}-\frac{\varepsilon}{E_0}c_{1}\biggr)
\label{eq:BoundLay2}.
\end{align}
{To conclude, it remains to prove that the diffusion may be neglected in the intervals $[0,\Delta t]$ and $[T_0-\Delta t, T_0]$: this is due to the fact that the time interval is much smaller than the total diffusion $D_0$. The approximations in Steps~\ref{step1:drift} and~\ref{step3:drift} by a pure transport equation are thus valid.}
\end{proof}

\subsection{From the initial cycle to the next \label{subsec:iter}}

In this section, we (heuristically) obtain~\eqref{eq:Coninit} and~\eqref{eq:Coninit2} of Proposition~\ref{prop:Ln}. These two formulae may be viewed as linking a prototypical size distribution at a first time point $T_0$ (or in general $T_n$, for the $n-$th cycle) to the size distribution after one more cycle at time $T_1$ (respectively $T_{n+1}$) provided that the initial energy $E_0$ (or $E_n$) of this period satisfies $\ep\ll E_0, E_n\ll 1.$ 

We remark that as a first prototypical size distribution at $T_0$, we consider the size distribution obtained after finishing an initial LV cycle, at the time point when the values
of $w\ $and $v$ cross the half-line $\left\{ w=v>\varepsilon\right\}$ and we already 
have aggregated a positive amount of mass in clusters with size $j$ of
order one, and more precisely in the smallest cluster size $j=1$ as shown in Figure \eqref{fig:sd4bis_p1} (see also Figure \eqref{fig:inner_layer_2_p1}) below. This mass comes from two sources, first, a possible contribution of the original size-distribution {$\varphi_0(1/L_0)$} which has been transported  back and forth - and diffused, hence lowered - and secondly, to the diffusion of small clusters $\varphi_0(j/L_0)$ with $j$ within a width $\sqrt{\f{E}{\ep}}\gg 1,$ which will accumulate to some fraction on $j=1$ during the transport back to the origin.

\begin{figure}[ht!]
     \centering
     \begin{subfigure}[b]{0.49\textwidth}
         \centering
		 \includegraphics[width=\textwidth]{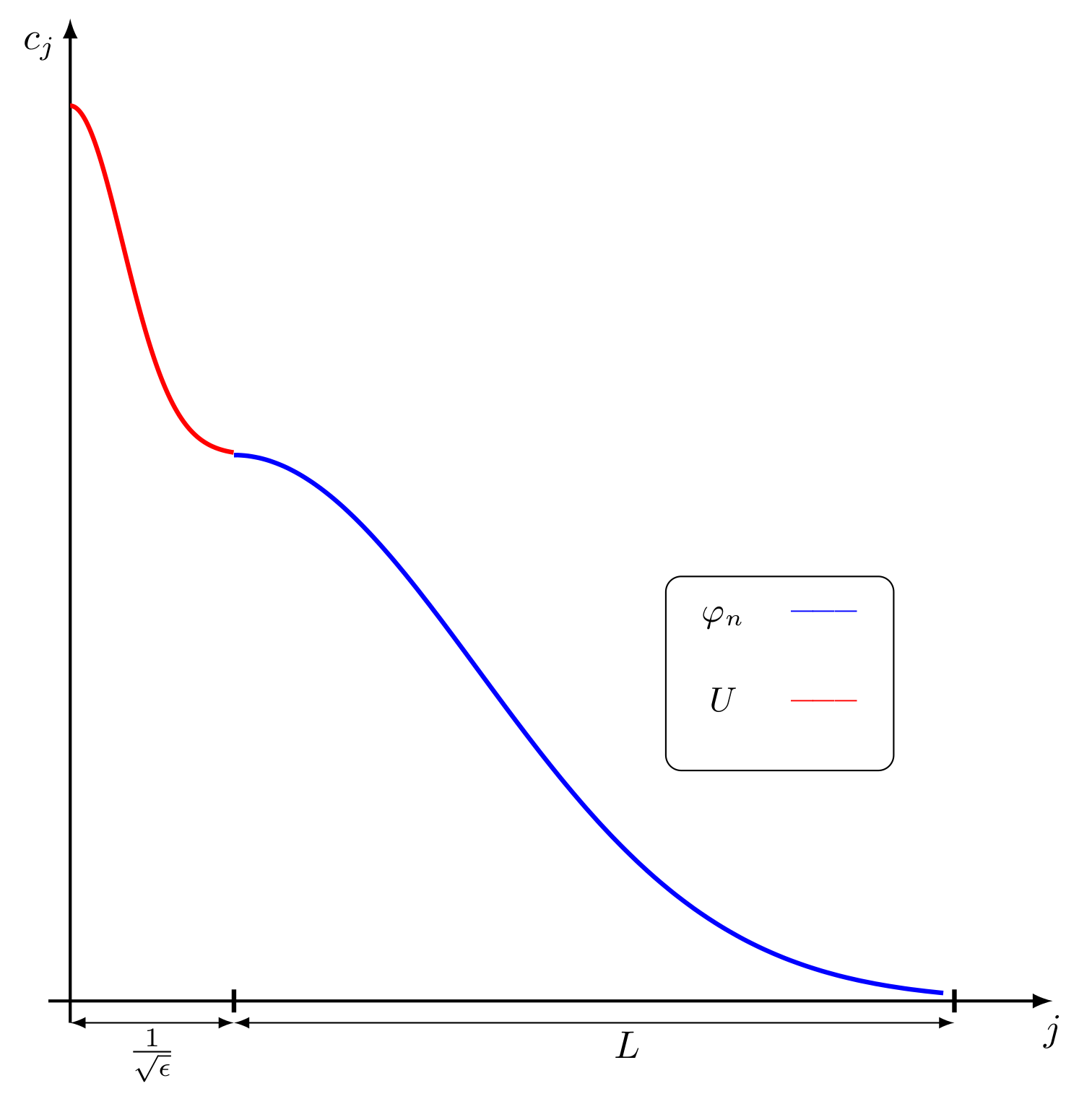}
         \caption{Size distribution when $v>w$, $v>\epsilon$}
         \label{fig:inner_layer_1_p1}
     \end{subfigure}
     \hfill
     \begin{subfigure}[b]{0.49\textwidth}
         \centering
         \includegraphics[width=\textwidth]{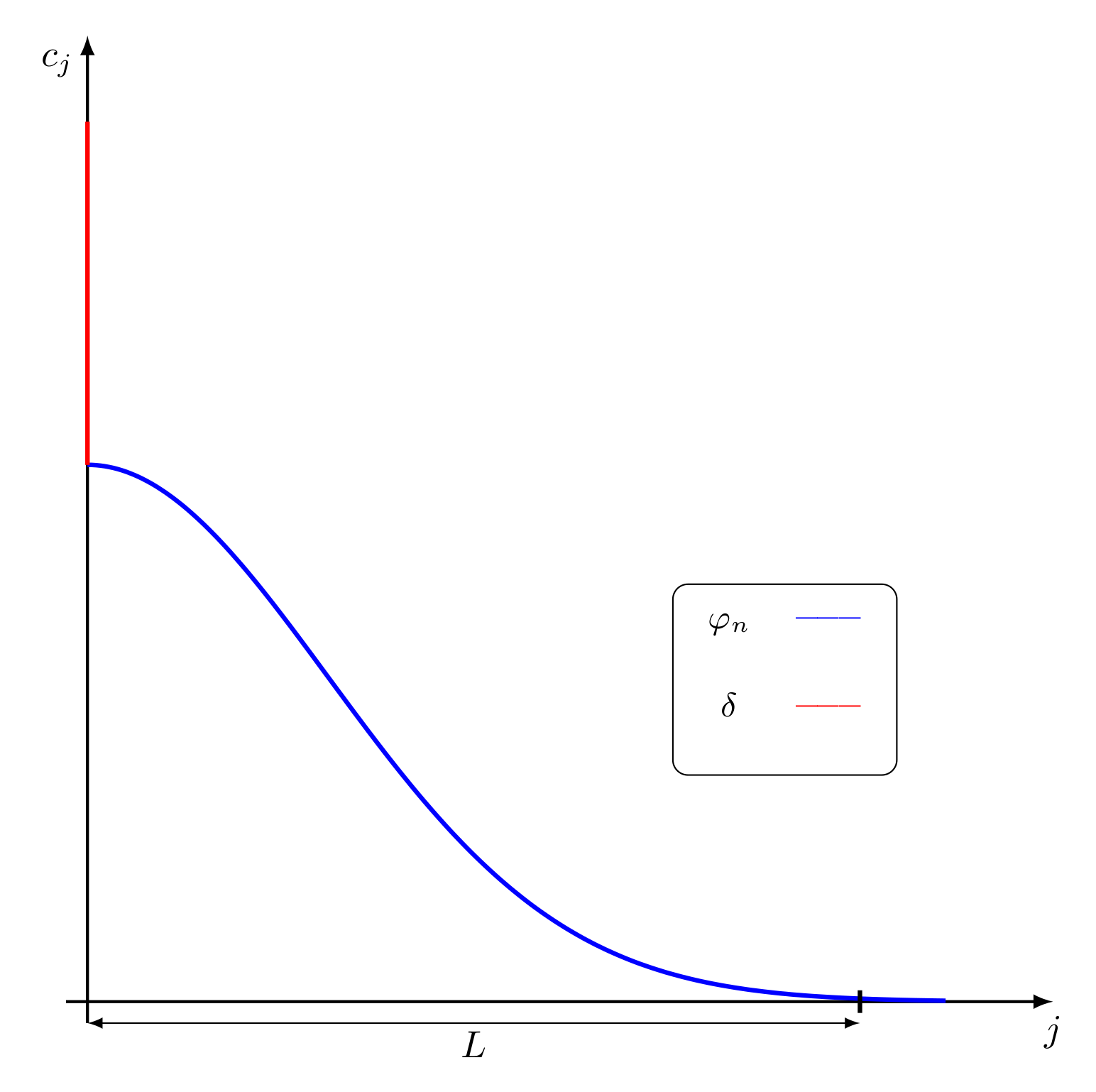}
         \caption{Size distribution when $v=w$, $v>\epsilon$}
         \label{fig:inner_layer_2_p1}
     \end{subfigure}
     \hfill
     \begin{subfigure}[b]{0.49\textwidth}
         \centering
         \includegraphics[width=\textwidth]{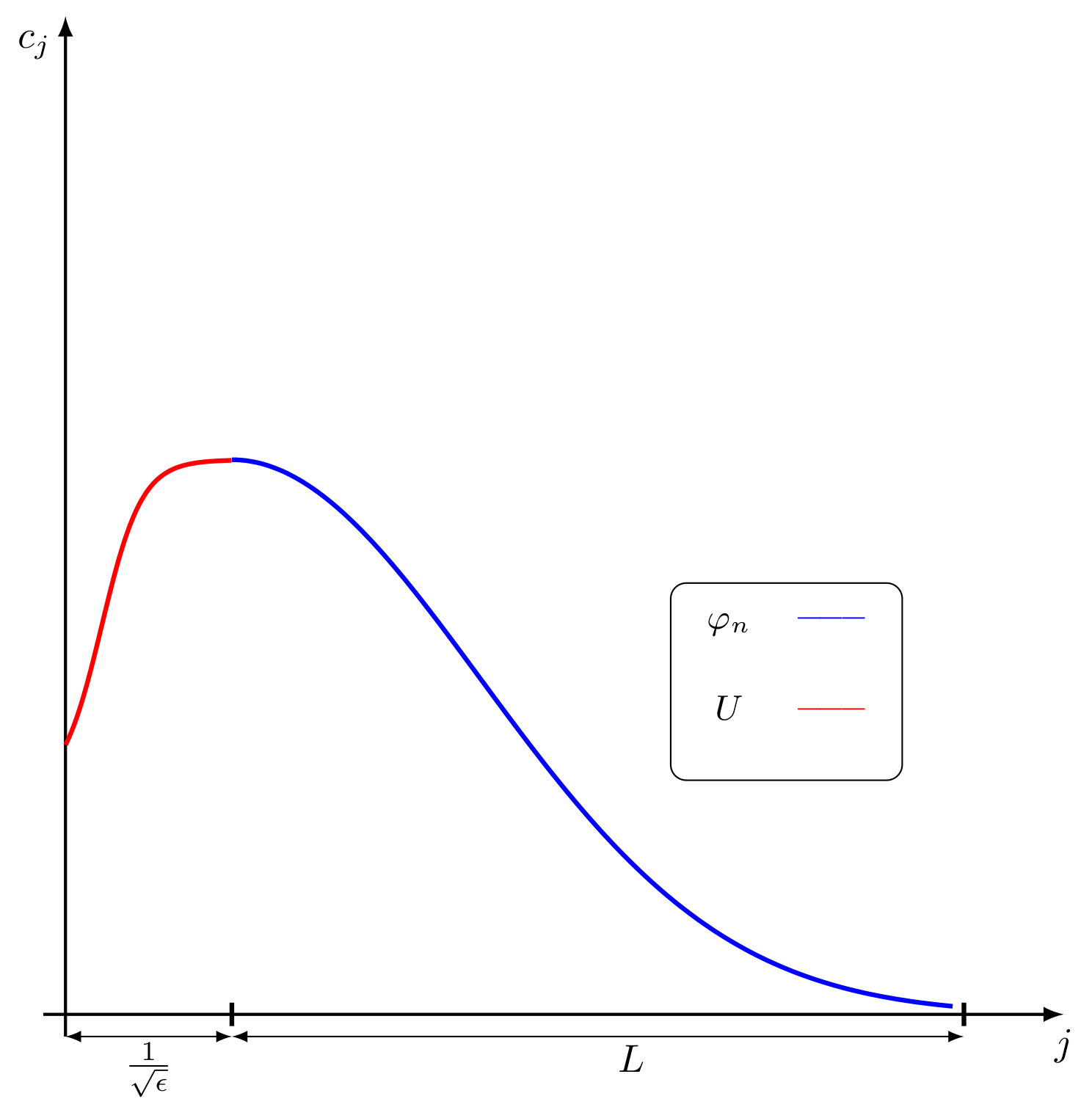}
         \caption{Size distribution when $v<w$, $w>\epsilon$}
         \label{fig:inner_layer_3_p1}
     \end{subfigure}
        \caption{Scheme of the evolution of the cluster sizes of order one.}
        \label{fig:inner_layer_p1}
\end{figure}

We remark that the conditions in (\ref{eq:Mresc}) imply that%
\begin{equation*}
1=v+w+\varepsilon L_{0}\int_{0}^{\infty}x\varphi_{0}\left( x\right) dx\ \ ,\
\ \int_{0}^{\infty}\varphi_{0}\left( x\right) dx=1 
\end{equation*}
where we approximate the sums in (\ref{eq:Mresc}) by means of integrals. In
order to define $L_{0}$ in a unique manner, we assume~\eqref{as:phi0:2} and obtain%
\begin{equation}
1=v+w+\varepsilon L_{0}\ \ ,\ \ \int_{0}^{\infty}\varphi_{0}\left( x\right)
dx=1  \label{eq:clustNorm}
\end{equation}

First, we pretend that we can neglect the effect of the boundary conditions
arising for clusters with size $j=1$. Then, Lemma~\ref{lem:I:sizedistrib} allows us to compute the concentrations after one LV
cycle from the concentrations at the beginning of the
LV cycle (with energy $E$) by evaluating the heat semigroup $S\left(
\tau\right) $ with time step $\tau=D\left( E,\varepsilon\right).$ This would
give the approximation%
\begin{equation}
\frac{\varepsilon}{L_{1}}\varphi_{1}\left( \frac{j}{L_{1}}\right) =%
\frac{\varepsilon}{L_{0}}S\left( D_0 \right) \left[
\varphi_{0}\left( \frac{\cdot}{L_{0}}\right) \chi_{\left( 0,\infty\right)
}\left( \cdot\right) \right](j)\quad{\text{for}} \; j\gg \sqrt{\f{E_0}{\ep}},  \label{eq:SemGroupRough}
\end{equation}
where $\chi_{\left( 0,\infty\right) }\left( \cdot\right) $ is the
characteristic function in the half-line $\left\{ j>0\right\} .$
This approximation can be expected to be valid for cluster sizes  away of the boundary layer, {\it i.e.} for $j\gg \sqrt{\f{E_0}{\ep}}$. We notice that if we were to take negative values of $j,$ the right-hand side would still be positive, which is impossible with the discrete system. As already stated in Lemma~\ref{lem:I:sizedistrib}, 
this mass, instead of being diffused in the region of negative clusters, accumulates in clusters with $%
j=1$ {(see the last frames in Figure \eqref{fig:sd3_p1} and Figure \eqref{fig:sd4bis_p1} and Figure \eqref{fig:inner_layer_2_p1})}.

Then, to define $\varphi_{1}$
in terms of $\varphi_{0}$ we must include all this mass in a Dirac mass at
the origin. This gives

\begin{equation*}
\begin{aligned}
\frac{\varepsilon}{L_{1}}\varphi_{1}\left( \frac{j}{L_{1}}\right) = &%
\frac{\varepsilon}{L_{0}}\chi_{\left( 0,\infty\right) }\left( j\right)
S\left( D_0 \right) \left[ \varphi_{0}\left( \frac{%
\cdot}{L_{0}}\right) \chi_{\left( 0,\infty\right) }\left( \cdot\right) %
\right](j)\\
& +\frac{\varepsilon}{L_{0}}\delta\left( j\right)
\int_{-\infty}^{0}\left( S\left( D_0 \right) \left[
\varphi_{0}\left( \frac{\cdot}{L_{0}}\right) \chi_{\left( 0,\infty\right)
}\left( \cdot\right) \right] \right) \left( \xi\right) d\xi 
\end{aligned}
\end{equation*}
or after some simplifications%
\begin{equation*}
\begin{aligned}
\varphi_{1}\left( \frac{j}{L_{1}}\right) = & \frac{L_{1}}{L_{0}}%
\chi_{\left( 0,\infty\right) }\left( j\right) S\left( D_0 \right) \left[ \varphi_{0}\left( \frac{\cdot}{L_{0}}%
\right) \chi_{\left( 0,\infty\right) }\left( \cdot\right) \right](j)\\
& +\frac{%
L_{1}}{L_{0}}\delta\left( j\right) \int_{-\infty}^{0}\left( S\left(
D_0 \right) \left[ \varphi_{0}\left( \frac{\cdot}{%
L_{0}}\right) \chi_{\left( 0,\infty\right) }\left( \cdot\right) \right]
\right) \left( \xi\right) d\xi .
\end{aligned}
\end{equation*}
We rewrite this equation using  the fundamental solution for the heat
equation $G(\xi;s)$ defined by~\eqref{eq:FundSol},
and we obtain%
\begin{align*}
\varphi_{1}\left( \frac{j}{L_{1}}\right) & =\frac{L_{1}}{L_{0}}%
\chi_{\left( 0,\infty\right) }\left( j\right) \int_{0}^{\infty
}G\left( j-\xi;D_0\right)
\varphi_{0}\left( \frac{\xi}{L_{0}}\right) d\xi \\
& +\frac{L_{1}}{L_{0}}\delta\left( j\right) \int_{-\infty}^{0}dk\int
_{0}^{\infty}G\left( k-\xi;D_0
\right) \varphi_{0}\left( \frac{\xi}{L_{0}}\right) \diff \xi.
\end{align*}
We write $x=\frac{j}{L_{1}}$ and use the change of variables $%
\xi=L_{0}\eta,$ and $k=L_{0}\zeta:$ 
\begin{align*}
\varphi_{1}\left( x\right) & =L_{1}\chi_{\left( 0,\infty\right) }\left(
x\right) \int_{0}^{\infty}G\left( L_{1}x-L_{0}\eta;D_0\right) \varphi_{0}\left( \eta\right) d\eta \\
& +\delta\left( x\right) L_{0}\int_{-\infty}^{0}d\zeta\int_{0}^{\infty
}G\left( L_{0}\zeta-L_{0}\eta;D_0
\right) \varphi_{0}\left( \eta\right) d\eta,
\end{align*}
where we use that $L_{1}\delta\left( L_{1}x\right) =\delta\left(
x\right).$ 

We now use that  $G(\xi,s)= \frac{1}{\lambda }%
G\left( \f{\xi}{\lambda},\f{s}{\lambda^2}\right) $ 
for any $\lambda>0$, and taking $\lambda=L_0$ we obtain%
\begin{align*}
\varphi_{1}\left( x\right) & =\frac{L_{1}}{L_{0}}\chi_{\left(
0,\infty\right) }\left( x\right) \int_{0}^{\infty}G\left( \frac{L_{1}}{%
L_{0}}x-\eta;\frac{D_0}{\left( L_{0}\right) ^{2}}%
\right) \varphi_{0}\left( \eta\right) d\eta  \notag \\
& +\delta\left( x\right) \int_{-\infty}^{0}d\zeta\int_{0}^{\infty}G\left(
\zeta-\eta;\frac{D_0 }{\left( L_{0}\right) ^{2}}%
\right) \varphi_{0}\left( \eta\right) d\eta,  
\end{align*}
which is~\eqref{eq:Coninit}. 
  
In order to conclude the computation of $\varphi_{1}$ we need to determine 
$L_{1}.$ To this end we impose the condition~\eqref{as:phi0:2}, namely $\int_{0}^{\infty}x%
\varphi_{1}\left( x\right) dx=1.$ Notice that since we assume that $\int
_{0}^{\infty}\varphi_{0}\left( x\right) dx=1$ we automatically have $\int_{0}^{\infty
}\varphi_{1}\left( x\right) dx=1.$ We thus have%
\begin{equation*}
1=\int_{0}^{\infty}x\varphi_{1}\left( x\right) dx=\frac{L_{1}}{L_{0}}%
\int_{0}^{\infty}xdx\int_{0}^{\infty}G\left( \frac{L_{1}}{L_{0}}x-\eta;%
\sigma_0^2\right)
\varphi_{0}\left( \eta\right) d\eta\ . 
\end{equation*}
Then%
\begin{equation*}
\frac{L_{1}}{L_{0}}=\int_{0}^{\infty}\int_{0}^{\infty} G\left( x-\eta;%
\sigma_0^2\right)
 x\diff x \varphi_{0}\left( \eta\right) \diff \eta,
\end{equation*}
which is the first equality of~\eqref{eq:Coninit2}. The second one has the advantage of highlighting that $L_1>L_0$. To obtain it, we use the symmetry of $G$  to write
\[
\begin{aligned}
\f{L_1}{L_0}&=\int_{0}^{\infty}\left(\int_{-\infty}^{\infty} G\left( x-\eta;%
\sigma_0^2\right)
x\diff x  - \int_{-\infty}^0  G\left( x-\eta;%
\sigma_0^2 \right)
x\diff x \right)\varphi_{0}( \eta) \diff \eta
\\
&=\int_0^\infty \eta \varphi_0(\eta)\diff \eta - \int_0^\infty \int_{-\infty}^{0}G\left( x-\eta;\sigma_0^2\right) x\diff x
\varphi _{0}\left(
\eta\right) \diff \eta
\\
&=1+ \int_{0}^\infty \varphi _{0}\left(
\eta\right) \int_{0}^{\infty}xG\left( x+\eta;\sigma_0^2 \right) \diff x \diff \eta
\end{aligned}
\]
and this is the second equality of~\eqref{eq:Coninit2}.
It ends the proofs of Prop.~\ref{prop:Ln}. 
This also provides us with the the formulae in~\eqref{eq:SemGroup}.
Taking as the initial step $\psi_n$ and $m_n$ with $Supp(\psi_n)\subset (0,\infty)$ as in \eqref{def:psi} and
denoting accordingly $D_n$ and $\sigma_n^2=\frac{D_n}{(L_n)^2}$ using $E=E_n$ in \eqref{def:Y}, 
we obtain the formulae in \eqref{eq:SemGroup} and finish the proof of Corollary~\ref{coroll:sizedistrib} by using the formulae in Prop. \ref{prop:Ln}, replacing the corresponding terms and iterating over the LV cycles.

\subsection{Energy decay and iterative mapping}
As a first result in Proposition~\ref{prop:energy_first}, we compute the energy decay over a first LV cycle and we obtain a semi-explicit representation formula and provide an estimation in terms of $\sqrt{\ep}$. As a second step, Proposition~\ref{prop:iter} quantifies the change of energy $E$ and the change of typical cluster length scale $L$ over following LV cycle iterations as long as the energy remains relatively big in comparison with $\varepsilon$. Please note that some cases in the proof of Proposition~\ref{prop:iter} require a more detailed analysis compared to the present section and are therefore postponed to subsection \ref{subsec:following}. While this might be sub-optimal in terms of presentation, we felt that as a result, Proposition~\ref{prop:iter} belongs to this section and shouldn't be stated any later. 
\begin{proposition}\label{prop:energy_first}
Under the assumptions and notations of Prop.~\ref{prop:Ln}, the main change of the energy with a LV cycle occurs in the interval $[T_0-\Delta t, t_4]$ and may be approximated as follows: 
\begin{equation}\label{eq:EnerIter}
E_1-E_0\sim -\int_{-\infty}^0 |s| W_0 (s)\diff s,
\end{equation}
with $W_0$ defined by
\begin{equation}
W_{0}\left( \cdot\right) =\frac{\varepsilon}{L_{0}}S\left( D_0 \right) \left[ \varphi_{0}\left( \frac {\cdot}{L_{0}}%
\right) \chi_{\left( 0,\infty\right) }\left( \cdot\right) \right].
\label{eq:UWaves}
\end{equation}
Moreover, since $\sigma_0 := \f{D_0}{(L_0)^2} \approx 1$, the following estimation holds
\begin{equation}
   |\Delta_{E_0}|:=\int_{-\infty}^0 |s| W_0 (s) \diff s\approx \varepsilon L_0 \lesssim\sqrt{\ep}.
   \label{eq:W0}
\end{equation}
\end{proposition}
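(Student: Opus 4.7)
My strategy is to bypass the direct computation of $\int_0^{T_0}(\varepsilon-v)c_1\,dt$, instead exploiting the mass-transport structure of the system to relate $E_1-E_0$ to $L_1-L_0$ computed in Proposition~\ref{prop:Ln}. Starting from the energy identity~\eqref{D5a}, one has
\[
E_1-E_0 = \int_0^{T_0}(\varepsilon-v(t))c_1(t)\,dt.
\]
The integrand concentrates in a narrow time window: by Lemma~\ref{lem:I:sizedistrib}, $c_1(t)$ is only appreciable when the returning diffused front reaches the boundary $j=1$, which happens on $[T_0-\Delta t, t_4]$ at the end of Stage~4. On this window, $v(t) \gg \varepsilon$ by Lemma~\ref{lem:LV4}, so $(\varepsilon-v) \sim -v$ and
\[
E_1-E_0 \sim -\int_0^{T_0} v(t)\,c_1(t)\,dt.
\]

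Next, I would use the first-moment identity. An Abel summation combined with $J_0=0$ yields
\[
\frac{d}{dt}\sum_{j\ge 1}j c_j = \sum_{j\ge 1}J_j = \varepsilon w - v(\varepsilon-c_1) = \varepsilon(w-v) + v c_1.
\]
Integrating over one period and using $\int_0^{T_0}(w-v)\,dt = Y(T_0)-Y(0) = 0$ (periodicity of the underlying LV cycle, Lemma~\ref{lem:DYT}), the $\varepsilon(w-v)$ term drops out. Since $\sum_j j c_j = \varepsilon L$ by the normalisation~\eqref{as:phi0:2}, this yields $\int_0^{T_0}v c_1\,dt = \varepsilon(L_1-L_0)$, and therefore $E_1-E_0 \sim -\varepsilon(L_1-L_0)$.

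It remains to recognise $\varepsilon(L_1-L_0)$ as the stated integral. Starting from the second line of~\eqref{eq:Coninit2}, I would apply the change of variable $s=-L_0 x$, use the symmetry of $G$ in its first argument, and the rescaling $G(\xi;D_0) = L_0^{-1}G(\xi/L_0;\sigma_0^2)$. Combined with the explicit representation~\eqref{eq:UWaves} of $W_0$, the inner convolution in~\eqref{eq:Coninit2} is identified as $(L_0/\varepsilon)W_0(s)$, producing
\[
\varepsilon(L_1-L_0) = \int_{-\infty}^0 |s|\,W_0(s)\,ds,
\]
which is exactly~\eqref{eq:EnerIter}. For the magnitude estimate~\eqref{eq:W0}, I would use $\int_{\mathbb{R}} W_0 = \varepsilon$ (conservation of cluster number) together with the observation that when $\sigma_0 \approx 1$ an order-one fraction of this mass has spread to the negative half-line with typical $|s|$ of order $L_0$; hence $|\Delta_{E_0}| \approx \varepsilon L_0$, and the bound $\lesssim \sqrt{\varepsilon}$ follows from $L_0 \sim \sqrt{D_0} \sim 1/\sqrt{\varepsilon}$, a consequence of $\sigma_0 \approx 1$ with $D_0 \sim 1/\varepsilon$ from Lemma~\ref{lem:DYT}.

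The most delicate point is the localisation claim in Step~1. One must show that contributions to $\int(\varepsilon-v)c_1\,dt$ outside the window $[T_0-\Delta t, t_4]$ are of order $o(\sqrt{\varepsilon})$. The naive bound $c_1 \le \varepsilon$ gives $\varepsilon\int c_1\,dt \le \varepsilon^2 T_0 = O(1)$, which is far too weak. Instead, one has to track $c_1(t)$ carefully: for example, any initial Dirac mass $m_0\delta$ in $\varphi_0$ contributes to $c_1$ during the fast transport of Stage~1, where $v$ is \emph{large} so the contribution to $E_1-E_0$ is \emph{positive} and partially cancels the loss during Stage~4. A matched-asymptotic analysis of the boundary-layer system~\eqref{eq:BoundLay1}--\eqref{eq:BoundLay2}, showing that $c_1$ is appreciable only on a window of duration $\sim\Delta t = 1/\sqrt{\varepsilon}$, should provide the sharp bound $\varepsilon\int c_1\,dt = O(\varepsilon^{3/2}) \ll \sqrt{\varepsilon}$.
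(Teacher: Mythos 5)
Your route is genuinely different from the paper's, and in one respect cleaner. The paper computes $E_1-E_0=\int_0^{T_0}(\varepsilon-v)c_1\,dt$ directly: it partitions the LV cycle, shows each subinterval other than $[T_0-\Delta t,t_4]$ contributes negligibly to the energy change, and evaluates the dominant contribution as a double time-integral against the returning front $W_0$. You instead exploit the exact first-moment identity $\frac{d}{dt}\sum_j jc_j=\varepsilon(w-v)+vc_1$, which after integration over a cycle couples $E_1-E_0$ algebraically to $L_1-L_0$; you then read off the answer from Proposition~\ref{prop:Ln}. Your identification of $\varepsilon(L_1-L_0)$ with $\int_{-\infty}^0|s|W_0(s)\,ds$ by unwinding the rescaling of the heat kernel is correct. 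The trade-off is logical rather than technical: the paper's direct energy computation is an independent derivation and a consistency check on the heuristics behind Proposition~\ref{prop:Ln}, whereas you take that proposition as input.

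The step claiming $\int_0^{T_0}(w-v)\,dt=Y(T_0)-Y(0)=0$ should be tightened: $Y$ tracks the \emph{unperturbed} LV orbit, and the actual solution $(v,w)$ is neither periodic nor does it return to its starting point after time $T_0$ (indeed $v(T_0)-v(0)=-\tfrac{\varepsilon}{2}(L_1-L_0)\neq 0$ by mass conservation). A sharper accounting is available and is to your advantage: dividing the $v$- and $w$-equations by $v$ and $w$, integrating over $[0,T_0]$, and using $w(0)=v(0)$, $w(T_0)=v(T_0)$, one gets the exact identity
\begin{equation*}
\int_0^{T_0}(w-v)\,dt=-\int_0^{T_0}c_1\,dt-2\log\frac{v(T_0)}{v(0)}\,.
\end{equation*}
Substituting this into the integrated first-moment balance, the $\varepsilon\int c_1$ piece recombines with $\int vc_1$ to give exactly $\int_0^{T_0}(v-\varepsilon)c_1\,dt=-(E_1-E_0)$, so that
\begin{equation*}
\varepsilon(L_1-L_0)=-(E_1-E_0)-2\varepsilon\log\frac{v(T_0)}{v(0)}\,,
\end{equation*}
with no asymptotic approximation up to this point. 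Finally $\log\tfrac{v(T_0)}{v(0)}\sim-\varepsilon(L_1-L_0)/E_0$ using $v(0)\sim E_0/2$, giving $E_1-E_0=-\varepsilon(L_1-L_0)\bigl(1-2\varepsilon/E_0+o(\varepsilon/E_0)\bigr)$. In particular, the "most delicate point" you flag — localising $\int(\varepsilon-v)c_1\,dt$ to $[T_0-\Delta t,t_4]$ and bounding $\varepsilon\int c_1\,dt$ — is not in fact needed for the formula~\eqref{eq:EnerIter}: the troublesome $\varepsilon c_1$ terms cancel identically. It \emph{is} needed for the first assertion of the proposition (that the energy change is concentrated in that interval), which your argument does not address and for which the paper's interval-by-interval bookkeeping remains required.
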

With Propositions \ref{prop:Ln} and \ref{prop:energy_first}, we can now obtain formulae quantifying the change of energy and the change of the cluster length scale over instances of following LV cycles.
\begin{proposition} \label{prop:iter}
Under the assumptions and notations of Propositions \ref{prop:Ln} and \ref{prop:energy_first} and Corollary \ref{coroll:sizedistrib},  
as long as $ E_n \gg \varepsilon$, the following estimation holds
\begin{equation}
\label{eq:W}
   |\Delta_{E_n}|:=\int_{-\infty}^0 |s| W_n (s) \diff s\approx  m_n \sqrt{E_n \varepsilon} + \f{E_n}{L_n} \psi_n(0) \lesssim \sqrt{\ep} 
\end{equation}
if $\sigma_{n}^2 \ll 1$.
Moreover, iteration formulae for the energy $E_{n+1}$ and the cluster length scale $L_{n+1}$ are given by 
\begin{equation}
|E_{n+1}-E_n|\approx
\frac{E_n}{L_{n}}, \qquad \f{L_{n+1}}{L_n} = 1+ C_n, \qquad C_n = \f{|\Delta_{E_n}|}{\ep L_n}\approx \f{E_n}{\ep L_n^2}.  \label{eq:OrdMagnCh}
\end{equation}
\end{proposition}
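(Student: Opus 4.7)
My plan is to mimic and extend the heuristic argument of Proposition~\ref{prop:energy_first}, now at the level of the $n$-th LV cycle, and to exploit the decomposition $\varphi_n=\psi_n+m_n\delta$ from \eqref{def:psi} to split $W_n$ into a Gaussian piece (coming from the Dirac mass at the origin) and a convolutional piece (coming from $\psi_n$). I would first define
\[
W_n(s)=\f{\varepsilon}{L_n}\,S(D_n)\bigl[\varphi_n(\cdot/L_n)\,\chi_{(0,\infty)}\bigr](s)
=\varepsilon m_n G(s;D_n)+\f{\varepsilon}{L_n}\int_0^\infty G(s-\xi;D_n)\psi_n(\xi/L_n)\,\diff\xi,
\]
so that, as in Proposition~\ref{prop:energy_first}, $E_{n+1}-E_n\sim-|\Delta_{E_n}|$ with $|\Delta_{E_n}|=\int_{-\infty}^0|s|W_n(s)\diff s$.

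The Dirac piece is immediate: $\int_{-\infty}^0|s|\varepsilon m_n G(s;D_n)\diff s=\varepsilon m_n\sqrt{D_n/\pi}$, which combined with Lemma~\ref{lem:DYT}'s scaling $D_n\sim E_n/\varepsilon$ yields the $m_n\sqrt{\varepsilon E_n}$ term. For the smooth piece, I would rescale $s=L_n y$, $\xi=L_n\eta$ and use $G(L_n z;D_n)=L_n^{-1}G(z;\sigma_n^2)$ to reduce matters to $\varepsilon L_n\,I_n$ with
\[
I_n=\int_{-\infty}^0 |y|\int_0^\infty G(y-\eta;\sigma_n^2)\,\psi_n(\eta)\,\diff\eta\,\diff y.
\]
The key asymptotic I need is $I_n\sim c_1\sigma_n^2\psi_n(0)$ for $\sigma_n\ll 1$: the heat kernel concentrates on the diagonal $y=\eta$, but the integration domain is $y<0<\eta$, so only a boundary layer of width $\sigma_n$ at the corner $y=\eta=0$ contributes. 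I would extract this via a Laplace-type change of variables $(y,\eta)=\sigma_n(\tilde y,\tilde\eta)$ and a Taylor expansion of $\psi_n$ around $0$. Multiplying back by $\varepsilon L_n$ gives $\varepsilon D_n\psi_n(0)/L_n\sim E_n\psi_n(0)/L_n$, which combined with the Dirac piece produces the formula \eqref{eq:W}. The bound $|\Delta_{E_n}|\lesssim\sqrt\varepsilon$ then follows from $m_n,\psi_n(0)=O(1)$, $E_n\le 1$, and the standing condition $\sigma_n^2\ll1$ (equivalently $L_n\gg\sqrt{E_n/\varepsilon}$): the Dirac term is bounded by $\sqrt\varepsilon$ outright, while $E_n/L_n\ll\sqrt{\varepsilon E_n}\le\sqrt\varepsilon$ takes care of the smooth term.

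For the cluster-length iteration I would use the cleanest route, namely mass conservation. At the cycle endpoints one has $v=w$, so \eqref{eq:Mresc} reads $1=2v+\varepsilon L_n$, while \eqref{D5} gives $v+w\sim E_n$ (up to $\varepsilon\log\varepsilon$ corrections) provided $E_n\gg\varepsilon$; hence $\varepsilon L_n\sim 1-E_n$. Discrete differentiation immediately yields $\varepsilon(L_{n+1}-L_n)\sim|\Delta_{E_n}|$, which is exactly $C_n=|\Delta_{E_n}|/(\varepsilon L_n)$, and the dominant smooth contribution $E_n\psi_n(0)/L_n$ further turns this into $C_n\approx E_n/(\varepsilon L_n^2)$ and $|E_{n+1}-E_n|\approx E_n/L_n$ under the Phase~I--II ansatz $\psi_n(0)\sim 1$. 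As a cross-check I would verify that the same result also drops out of a direct Laplace-type evaluation of the expression for $L_{n+1}/L_n$ in Corollary~\ref{coroll:sizedistrib}, which by the same $\sigma_n\ll 1$ asymptotics yields $L_{n+1}/L_n-1\sim m_n\sigma_n/\sqrt\pi+c_2\sigma_n^2\psi_n(0)$, matching $|\Delta_{E_n}|/(\varepsilon L_n)$ term by term.

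The hard part, I expect, will not be the one-step Laplace expansion $I_n\sim c_1\sigma_n^2\psi_n(0)$, but rather its \emph{self-consistency across many cycles}: one must confirm that $\psi_n$ retains sufficient regularity near $0$ under the semigroup iteration \eqref{eq:SemGroup}, and that $\psi_n(0)$ stays of order $1$, so that the expansion used to derive both \eqref{eq:W} and \eqref{eq:OrdMagnCh} remains valid uniformly in $n$ throughout Phases~I and~II. Establishing this will probably require a structural, fixed-point-type analysis of the iteration in Corollary~\ref{coroll:sizedistrib} rather than pure per-step estimates, and the relative size of $m_n\sqrt{\varepsilon E_n}$ versus $E_n\psi_n(0)/L_n$ will have to be tracked carefully to justify the regime in which the smooth term dominates.
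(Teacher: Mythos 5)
Your derivation of \eqref{eq:W} mirrors the paper's own argument exactly: split $W_n$ into the Dirac piece (giving $\varepsilon m_n\sqrt{D_n/\pi}\sim m_n\sqrt{E_n\varepsilon}$) and a smooth piece, rescale by $L_n$, and extract the boundary-layer contribution $\varepsilon L_n\sigma_n^2\psi_n(0)\cdot\text{const}\sim E_n\psi_n(0)/L_n$ by a Laplace-type expansion. Both you and the paper then punt the key structural hypotheses ($\psi_n(0)\approx 1$, $m_n\approx\sigma_n M$) to the fixed-point analysis of Proposition~\ref{prop:UM}; you have correctly identified that this is where the real work lies. Where you genuinely depart from the paper is the iteration formula: the paper obtains $C_n=|\Delta_{E_n}|/(\varepsilon L_n)$ by expanding the third line of Corollary~\ref{coroll:sizedistrib} for small $\sigma_n$ and verifying term by term that it reproduces $|\Delta_{E_n}|$; you instead use the exact mass balance $1=v+w+\varepsilon L_n$ and the endpoint identity $v+w\sim E_n$ (up to $\varepsilon\log\varepsilon$), so that discrete differencing of $\varepsilon L_n\sim 1-E_n$ immediately gives $\varepsilon(L_{n+1}-L_n)\sim|\Delta_{E_n}|$. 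Your route is cleaner: it delivers the identity $C_n=|\Delta_{E_n}|/(\varepsilon L_n)$ as a structural consequence of the two conserved quantities, without an additional heat-kernel expansion, while the paper's route has the virtue of being a non-trivial consistency check of the asymptotics (which you also note as a cross-check).

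One small imprecision: you say the ``dominant smooth contribution'' $E_n\psi_n(0)/L_n$ yields $C_n\approx E_n/(\varepsilon L_n^2)$, but in fact the two terms in \eqref{eq:W} are comparable, not ordered -- once the fixed-point analysis supplies $m_n\approx\sigma_n M$, the Dirac term $m_n\sqrt{E_n\varepsilon}\approx\sigma_n\sqrt{D_n}\,\varepsilon=E_n/L_n$ has exactly the same order as the smooth one (the paper's remark after Proposition~\ref{prop:UM} makes this explicit). The conclusion $|\Delta_{E_n}|\approx E_n/L_n$ is unaffected, but the phrasing should reflect that neither piece dominates. You also omit the easier regime $L_n\approx 1/\sqrt{\varepsilon}$ (i.e., $\sigma_n\approx 1$), where the paper falls back on \eqref{eq:W0} from Proposition~\ref{prop:energy_first} directly; this is a quick observation worth including for completeness since the statement as written covers both regimes.
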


\begin{proof}[Proof of Proposition \ref{prop:energy_first}]
As a first step, we prove the expression \eqref{eq:EnerIter}.
{We recall that the energy dynamics is given by~\eqref{D5a}:
$\f{dE}{dt}=(\ep-v)c_1,$
so that we need to compute where the contribution to $\int (\ep -v) c_1\diff t$ is dominant. 
{We also recall the partition of the LV phase space and the corresponding partition of the time period introduced in subsection \ref{subsec:LV} and in Lemma \ref{lem:I:sizedistrib}.}
\begin{itemize}
\item During $[0, t_1],$ by Lemmas~\ref{lem:scaling1} and~\ref{lem:LV1} we have $c_1\leq \ep$ and $v\in [\ep, \f{E_0}{2}]$ hence $\left|\int_0^{t_1} (\ep -v) c_1 \diff t \right| \lesssim  \ep \f{E_0}{2}t_1\sim -\f{\ep}{2}\log(\ep).$
\item During $[t_1,\Delta t]$, we have $v\leq \ep$ hence $|\int_{t_1}^{\Delta t} (\ep -v)c_1 \diff t \lesssim \ep^2 \Delta t\approx \f{\ep^{\f{3}{2}}}{\sqrt{E_0}}.$
\item During $[\Delta t, T_0-\Delta t]$, $c_1 \lesssim e^{-\sqrt{\f{E_0}{\ep}}} \ll \ep^4$ hence $\int_{\Delta t}^{T_0-\Delta t} (\ep - v) c_1\diff t \ll \ep^2$ is negligible.
\item During $[T_0-\Delta t, t_4]$, we compute below the contribution $\int_{T_0-\Delta t}^{t_4} (\ep-v)c_1\diff t.$ 
\item During $[t_4, T_0],$ analogously to the time $[0,t_1],$ we have $\left|\int_{t_4}^{T_0} (\ep -v) c_1 \diff t \right| \lesssim   -\f{\ep}{2}\log(\ep).$ 
\end{itemize}
It remains to compute the contribution to the change of energy during the time $[T_0-\Delta t, t_4]$. We have seen in Lemma~\ref{lem:I:sizedistrib} that we can approximate the size distribution dynamics by the pure transport equation along $Y(t)$,  and moreover in $[T_0-\Delta t, t_4]$ we have $v\sim E_0$ and $w \lesssim \ep,$ hence in the variable $\tau$ we have $\f{dY}{d\tau} \sim -2.$ }
The discontinuity of the characteristic function $\chi_{(0,\infty)}(\cdot)$ results in the formation of a concentration front, that can be described when it approaches to clusters of order one by means of the function
\begin{equation}
c\left( \cdot,\tau\right) =\frac{\varepsilon}{L_{0}}S\left( D_0 \right) \left[ \varphi_{0}\left( \frac{\cdot-2\left(
\tau_{\ast}-\tau\right) }{L_{0}}\right) \chi_{\left[ 0,\infty\right) }\left(
\cdot \right) \right]  \label{eq:cMatching}
\end{equation}
where {$\tau_*=D_0$ is the period of the LV cycle}. 
It is convenient to rewrite this formula defining the function $W_0$ by~\eqref{eq:UWaves},%
then%
\begin{equation*}
c\left( j,\tau \right) =W_0\left( j-2\left( \tau _{\ast }-\tau
\right) \right) .
\end{equation*}
{Here, we use $j$ as continuum variable.}
The flux of monomers towards clusters of order one
can thus be approximated 
as $c\left( 0^{+},t\right) =W_0$ leading~(\ref{eq:BoundLay2}) to be written%
\begin{equation*}
\frac{dc_{1}}{d\tau}=2W_0\left( -2\left( \tau_{\ast}-\tau\right)
\right).
\end{equation*}
Then, using that the function describing the concentrations front $W_0$ decays fast for large negative values of its argument {so that integrating between $\tau(T_0-\Delta t)$ and $\tau$ as the same order of magnitude as integrating between $0$ or even $-\infty$ and $\tau$}, we obtain
\begin{equation}
c_{1}\left( \tau\right) \sim 2\int_{{0}}^{\tau}W_0\left(
-2\left( \tau_{\ast}-s\right) \right) ds\sim \int_{-\infty}^{2\left( \tau
-\tau_{\ast}\right) }W_0\left( s\right) ds\ \ \text{for }%
\tau<\tau_{\ast}  \label{eq:MonConc}.
\end{equation}
In order to estimate the order of magnitude of the change of the energy 
due to the interaction of the concentrations waves with the
regions with cluster sizes $j$ of order one, we use the fact that $%
W_0$ is of order $\frac{\varepsilon}{L_{0}}$ and it has a width
of order $\sqrt{\frac{E_0}{\varepsilon}}$ (cf. (\ref{eq:UWaves})). 
We remark that this scaling properties for $W_0$ are based in the
assumption that the contribution of the Dirac in $\varphi_{0}$ gives a
contribution to $W_0$ of the same order of magnitude that the
part of $\varphi _{0}$\ in $\left\{ x>0\right\} .$\\ 
We can then compute the change of the energy 
using (\ref{D5a}). 
We recall that 
$\frac{dE}{dt}=\left( \varepsilon-v\right) c_{1},$ hence 
 $\frac{dE}{d\tau}=\frac{2\left( \varepsilon-v\right) }{\left(
w+v\right) }c_{1}\approx -2c_1$ in $[T_0-\Delta t,t_4]$ since $v\simeq E_0$
and $w\lesssim\varepsilon.$ 
Then, as long as $E\gg\varepsilon,$ we have the following approximation during the range
of times in which $c_{1}$ contributes significantly to the change of $E $%
\begin{equation}
\frac{dE}{d\tau}\simeq-\frac{2E}{E}c_{1}=-2c_{1}  \label{eq:EnerChang}
\end{equation}
Combining (\ref{eq:MonConc}) and (\ref{eq:EnerChang}) {and claiming again that the integrals are negligible away from $[T_0-\Delta t, t_4]$,} we obtain%
\[\begin{array}{ll}
E_{1}-E_{0}&\approx {- 4 \displaystyle \int_0^{\tau_*} \int_0^\tau W_0 (-2(\tau_*-s))\diff s\diff \tau }=\int_{-\tau_*}^0 W_0 (s) s \diff s 
\\ \\
&\approx -\displaystyle\int_{-\infty}^{0}W_0\left( s\right) \left\vert
s\right\vert ds.
\end{array}\]
\\

In order to prove the estimation \eqref{eq:W0}, we compute
\begin{eqnarray*} 
\Delta_{E_0}&=&
\frac{\varepsilon}{L_{0}}\int_{-\infty}^0 |s| \int_0^\infty \varphi_0\left(\f{y}{L_0}\right)G(s-y;D_0)\diff y \diff s
\\
&=& \f{\ep}{{L_0^2}} \int_0^\infty s \int_0^\infty \varphi_0\left(\f{y}{L_0}\right) G\left(\f{s+y}{L_0};\sigma_0^2\right)\diff y \diff s 
\\
&=& {\ep} L_0 \int_0^\infty s \int_0^\infty \varphi_0(y) G(s+y;\sigma_0^2)\diff y \diff s
\\
&=& {\ep} L_0 \int_0^\infty s \left(m_0 G(s;\sigma_0^2)+ \int_0^\infty \psi_0(y) G(s+y;\sigma_0^2)\diff y  \right)\diff s.
\end{eqnarray*}
Using $\int_0^\infty s G(s;\sigma^2)\diff s=\f{\sigma}{\sqrt{\pi}}$, we obtain
\begin{equation}
  \Delta_{E_{0}}  = {\ep} L_0 \left( \f{m_0 \sigma_0 }{\sqrt{\pi}} + \int_0^\infty s \int_0^\infty \psi_0(y) G(s+y;\sigma_0^2)\diff y  \diff s \right).
  \label{eq:ener_decay_estimate}
\end{equation}
Since $\sigma_0 \approx 1,$ the sum of the first and second terms is in the order of $\ep L_0$ since $m_0+\int_0^\infty \psi_0(x)dx=1,$
which shows \eqref{eq:W0}.
\end{proof}

\begin{proof}[Proof of Proposition \ref{prop:iter}]
{The formula of the energy decay \eqref{eq:EnerIter} as well as its estimate \eqref{eq:ener_decay_estimate} holds over the iterations of the LV cycles as long as $E_n\gg \ep$. 
The only change in comparison to the first cycle is that if $\sigma_n \ll 1$, we have that }
$$\Delta_{E_{n}}  = {\ep} L_n \left( \f{m_n \sigma_n }{\sqrt{\pi}} + \int_0^\infty s \int_0^\infty \psi_n(y) G(s+y;\sigma_n^2)\diff y  \diff s \right), $$
where the first term is in the order of $\ep L_n m_n\sigma=\ep m_n\sqrt{D_n}=m_n\sqrt{E_n\ep}$ {(cf. Lemma \ref{lem:DYT}),} whereas the second term may be approximated by 
$$\ep L_n \psi_n(0) \sigma_n^2 \int_0^\infty\int_y^\infty (z-y) G(z;1)\diff z \diff y \approx \ep \psi_n(0)\f{D_n}{L_n}=\f{E_n}{L_n}\psi_n(0).$$
Concerning the changes in the cluster distribution over the LV cycles, 
for $\f{L_{n+1}}{L_n}$, 
we have
\[\f{L_{n+1}}{L_n} = 1 +C_n,\]
with
\[C_n=\int_0^\infty \!\int_0^\infty x G(x+\eta, \sigma_n^2) \varphi_n(\eta)\diff \eta \diff x=\f{1}{\ep L_n} \int_{-\infty}^0 |s| W_n(s)\diff s  =\f{|E_{n+1}-E_n|}{\ep L_n},\]
so that it only remains to prove either the estimation of $E_{n+1}-E_n$ or the one for $C_n$ to get the other. 

In the cases that $L_n\approx \f{1}{\sqrt{\ep}},$ we easily obtain~\eqref{eq:OrdMagnCh} from \eqref{eq:W0} 
since then $E_n\approx 1,$ so $\ep L_n \approx \sqrt{\ep} \approx \f{E_n}{L_n},$ 
which finishes the proof of Proposition~\ref{prop:iter} in this case.
However, the proof is much more involved in case $L_n \gg \f{1}{\sqrt{\ep}},$ or equivalently, for $\sigma_n \ll 1.$ It then requires the analysis carried out in Subsection~\ref{subsec:following} below.
\end{proof}

\section{{Phases I and II: overall dynamics}}
\label{sec:IandII:overall}
\subsection{Early Phase I: initial increase of the characteristic length\label{subsec:early}}
With an initial energy $E_0\approx 1,$ we have noticed that even if $L_0 \ll \f{1}{\sqrt{\ep}}$, the diffusive effects within one LV cycle yield $L_1\approx \f{1}{\sqrt{\ep}}$, whereas the energy remains $E_1\approx 1$ since $\Delta E_0 \approx \ep L_0 \ll 1$ by Prop.~\ref{prop:energy_first}.
Over following LV cycles, by~\eqref{eq:OrdMagnCh} we have that $L_n$ increases much faster than $E_n$ decreases. 
Departing from $L_1\approx \f{1}{\sqrt \ep}$, we have $C_1\approx 1$. Hence, after  a few cycles (in the order of $-\log (\ep)$) we have $L_n\gg \f{1}{\sqrt \ep},$ so that $\sigma_n^2 =\f{E_n}{\ep L_n^2} \ll 1$ {(see Prop. \ref{prop:iter})}.
The change in the size distribution for the early Phase I is illustrated in the Figure \ref{fig:sd_shape_p1}.  

\begin{figure}[hptb]

  \centering
     \begin{subfigure}[b]{0.49\textwidth}
         \centering
		 \includegraphics[width=\textwidth]{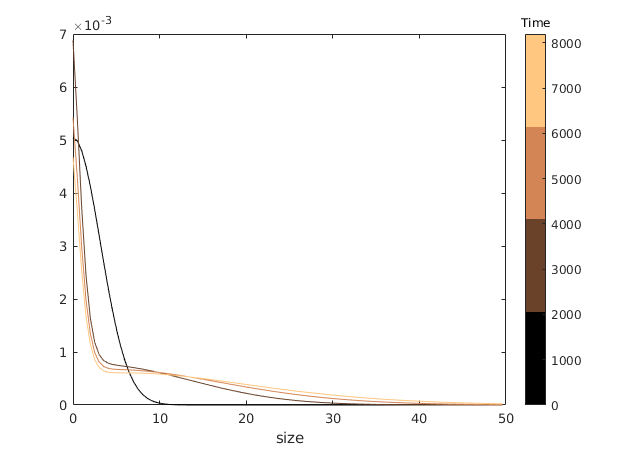}
         \caption{Size distribution $c(j,t)$ when $v=w>\epsilon$, i.e. for $t =0, \, T_0, \, T_1, \, T_2.$}
         \label{fig:sd_shape_1_p1}
     \end{subfigure}
     \hfill
     \begin{subfigure}[b]{0.49\textwidth}
         \centering
         \includegraphics[width=\textwidth]{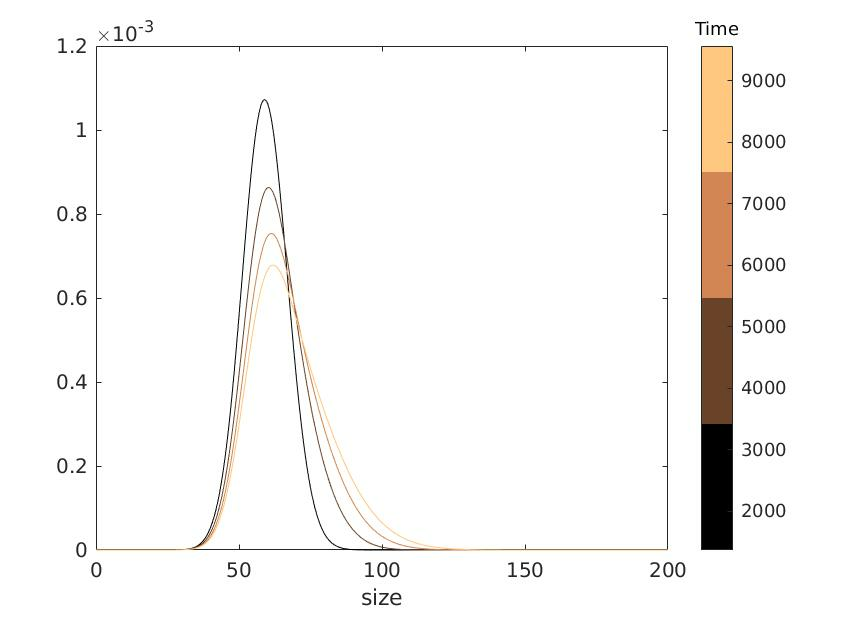}
         \caption{Size distribution $c(j,t)$ when $v=w<\epsilon$, i.e. for $t \approx\Delta t, \, T_0 + \Delta t, \, T_1+ \Delta t, \, T_2+ \Delta t.$}
         \label{fig:sd_shape_2_p1}
     \end{subfigure}

\caption{Evolution of the cluster size distribution over four LV cycles
with $\ep=0.02$ and $1/\sqrt{\ep}\approx 7$. See Appendix \ref{sec:app_num} for the details about the numerical simulations.}
\label{fig:sd_shape_p1}
\end{figure}

\begin{figure}[hptb]
     \centering
     \begin{subfigure}[b]{0.49\textwidth}
         \centering
         \includegraphics[width=\textwidth]{./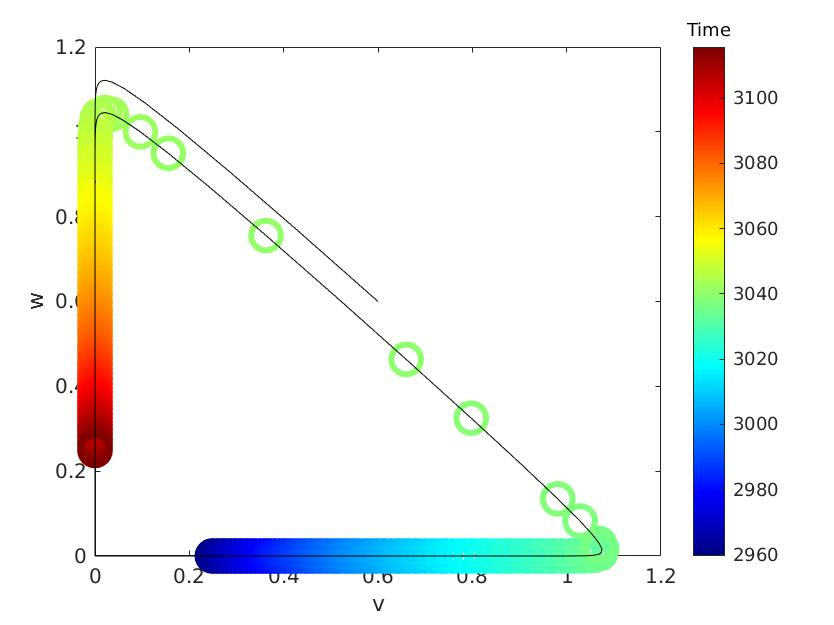}
         \caption{Phase portrait of the monomers}
         \label{fig:vw_ener_p1}
     \end{subfigure}
     \hfill
     \begin{subfigure}[b]{0.49\textwidth}
         \centering
         \includegraphics[width=\textwidth]{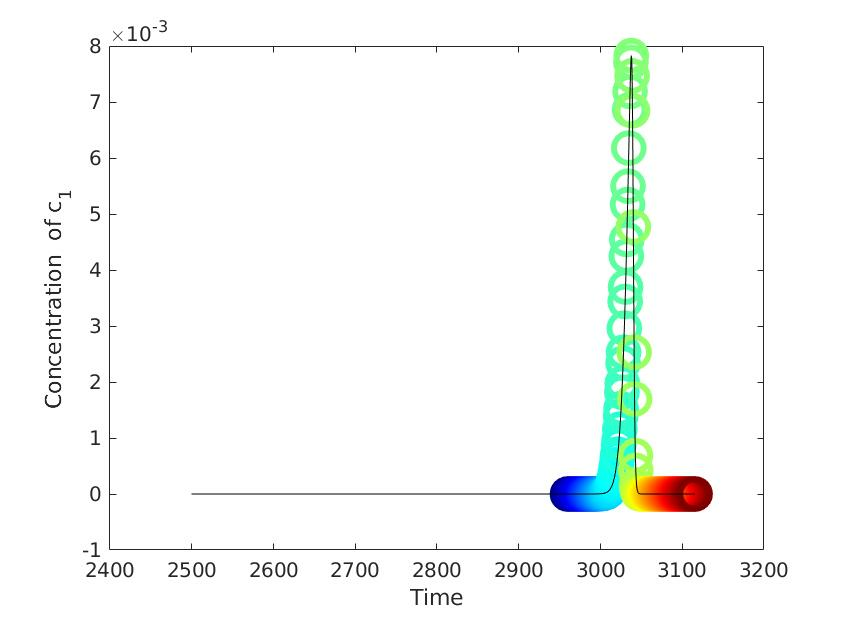}
         \caption{Evolution of the concentration of $c_1$}
         \label{fig:c1_ener_p1}
     \end{subfigure}
     \hfill
     \begin{subfigure}[b]{0.49\textwidth}
         \centering
         \includegraphics[width=\textwidth]{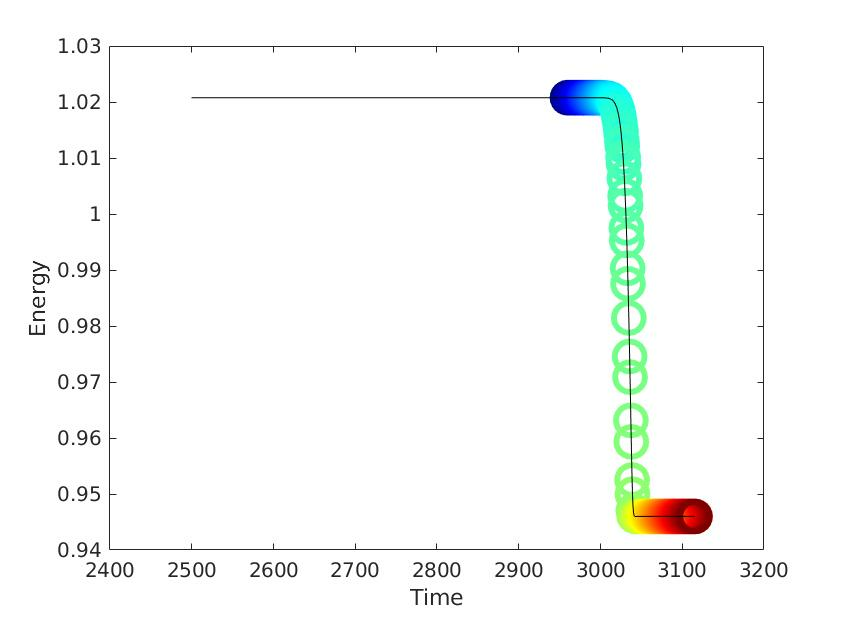}
         \caption{Evolution of the energy $E$ }
         \label{fig:ev_ener_p1}
     \end{subfigure}
        \caption{{Change of the energy $E$ \eqref{D5} for one iteration of the LV cycle when $c_1\neq 0$ in equations \eqref{eq:BD2}--\eqref{eq:BD2a}.}}
        \label{fig:ener_p1}
\end{figure}

{\subsection{Continuation of Phase I and Phase II: size distribution evolution\label{subsec:following}}}

 The change of energy in one LV cycle being of
order {at most } $ \sqrt{\varepsilon}$ 
{(see Prop. \ref{prop:energy_first} and \ref{prop:iter})},
{the energy $E_n$ first remains of order one while $\sigma_n \ll 1$ (Phase I) then decreases to order $\ep$ (Phase II). }

{To describe the dynamics after the early Phase I, 
we are in the case of {$\sigma_n \ll 1$ (see Prop. \ref{prop:iter})}} 
so that we need to evaluate both $m_n$ and $\psi_n(0)$ to prove~\eqref{eq:OrdMagnCh}. 
It is thus convenient to separate the size domain in three parts: the Dirac delta for $c_1$ (providing $m_n$), a boundary layer for sizes $1\ll i\lesssim \f{1}{\sqrt \ep},$ and large sizes $i\gg \f{1}{\sqrt \ep},$ so that $\psi_n(0)$ may be viewed as the limit for $x\to\infty$ of the boundary layer as well as the limit $x\to 0$ of the large-sizes part  of the domain. This is expressed in the following  proposition.

\begin{proposition}\label{prop:UM}
Let  $\ep \ll E_n$ and $\f{1}{\sqrt \ep} \ll L_n,$ so that $\sigma_n^2=\f{E_n}{\ep L_n^2} \ll 1.$ Set $x=\f{j}{L_n}$. Following the notations of Corollary~\ref{coroll:sizedistrib}, we have the following approximations for the size distribution $\psi_n.$
\begin{itemize}
\item For $x>0$ not small, more precisely $x=\f{j}{L_n}$ with $j \gg \sqrt{\f{E_n}{\ep}}$, we have
\begin{equation}\label{eq:PsiAs}
\psi_n(x)\approx \psi(x):=\f{2}{\pi} e^{-\f{x^2}{\pi}},
\end{equation}
from which we also achieve the proof of~\eqref{eq:OrdMagnCh}.
\item For $x=\f{j}{L_n}$ with $j\lesssim \sqrt{\f{E_n}{\ep}},$ defining $x=\sigma_{n+1} \xi$, $m_n=\sigma_{n}M_n$ and $\psi_n (\sigma_n \xi)=U_n(\xi)$, we have that $(U_n,M_n)\approx (U,M)$ solutions of the following system: 
\begin{align}
U\left( \xi\right) & =\int_{0}^{\infty}G\left( \xi-\zeta;1\right) U\left(
\zeta\right) d\zeta+MG\left( \xi;1\right) \ \ ,\ \ \xi >0  \label{BLSt1} 
\\
\frac{M}{2} & =\int_{-\infty}^{0}d\zeta\int_{0}^{\infty}G\left( \zeta
-\xi;1\right) U\left( \xi\right) d\xi  \label{BLSt2} 
 \\
U\left( \infty\right) & =\frac{2}{\pi}  \label{BLSt3} 
\end{align}

\end{itemize}
 \end{proposition}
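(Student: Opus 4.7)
The plan is a matched asymptotic analysis of the discrete iteration \eqref{eq:SemGroup} in the distinguished limit $\sigma_n\ll 1$, separating the size variable $x = j/L_n$ into an outer region where $x$ is of order one and an inner boundary layer of width $\sigma_n$ near the origin. In each region I would exhibit a profile preserved (in the appropriate continuum limit) by the iteration, and the overlap condition will simultaneously fix all remaining constants, including the value $2/\pi$ appearing in \eqref{eq:PsiAs} and \eqref{BLSt3}.

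For the outer region $x\gg\sigma_n$, first observe that in \eqref{eq:SemGroup} the Dirac-tail $m_n(L_{n+1}/L_n)G((L_{n+1}/L_n)x,\sigma_n^2)$ is exponentially suppressed and the convolution against the narrow kernel $G(\cdot,\sigma_n^2)$ admits a short Taylor expansion using its first moments $\int G=1$, $\int sG=0$, $\int s^2G=2\sigma_n^2$. Writing $\lambda_n = L_{n+1}/L_n = 1+C_n$ and expanding also in $C_n\ll 1$, the iteration reduces to
\begin{equation*}
\psi_{n+1}(x)-\psi_n(x) \;\approx\; C_n\bigl(\psi_n(x)+x\psi_n'(x)\bigr) + \sigma_n^2\,\psi_n''(x).
\end{equation*}
Regarded in a slow continuous time $d\tau\propto C_n\,dn$, this is a Fokker-Planck-type equation $\partial_\tau\psi = (x\psi)' + \alpha\psi''$ with $\alpha = \sigma_n^2/C_n$, whose decaying self-similar stationary solution is the Gaussian $\psi(x) = \psi(0)\,e^{-x^2/(2\alpha)}$. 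The mass-moment normalisation $\int_0^\infty x\psi\,dx=1$ forces $\alpha\psi(0)=1$, reducing the outer profile to a one-parameter family.

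For the inner region I would rescale $\xi = x/\sigma_n$, $U_n(\xi) = \psi_n(\sigma_n\xi)$, $M_n = m_n/\sigma_n$ and exploit the similarity $G(\sigma_n a,\sigma_n^2) = \sigma_n^{-1}G(a,1)$. Under the slow variations $\lambda_n\to 1$ and $\sigma_{n+1}/\sigma_n\to 1$ (self-consistency to be checked a posteriori from \eqref{eq:OrdMagnCh}), the three formulas of \eqref{eq:SemGroup} translate into
\begin{align*}
U_{n+1}(\xi) &\approx \int_0^\infty G(\xi-\zeta,1)\,U_n(\zeta)\,d\zeta + M_n\,G(\xi,1),\\
M_{n+1} &\approx \int_{-\infty}^0 d\zeta\int_0^\infty G(\zeta-\eta,1)\,U_n(\eta)\,d\eta + M_n/2,
\end{align*}
whose fixed point is precisely the system \eqref{BLSt1}--\eqref{BLSt2}. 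The inner far-field is inherited from the outer boundary value, so that $U(\infty) = \psi(0^+) = 1/\alpha$, which is the matching condition becoming \eqref{BLSt3} once $\alpha$ is identified.

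The closure would be performed by computing $C_n$ from the third line of \eqref{eq:SemGroup}. Splitting the integrals into their inner and outer contributions and using the moments of $G$ yields, to leading order, $C_n \approx \sigma_n^2\psi_n(0)/2 + m_n\sigma_n/\sqrt{\pi}$; imposing $\sigma_n^2/C_n = \alpha$ with $m_n = \sigma_n M$ and with $M = M(\alpha)$ determined from \eqref{BLSt1}--\eqref{BLSt2} pins $\alpha = \pi/2$, whence $\psi(0) = U(\infty) = 2/\pi$, which is \eqref{eq:PsiAs} and \eqref{BLSt3}. Substituting these values into the estimate \eqref{eq:W} of Proposition~\ref{prop:iter} finally gives $|\Delta E_n| \approx (M+2/\pi)\,E_n/L_n \approx E_n/L_n$ (using $\sigma_n\sqrt{E_n\varepsilon} = E_n/L_n$), closing the gap left in \eqref{eq:OrdMagnCh} in the regime $\sigma_n\ll 1$. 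The hardest step will be the closure of the matching: the dependence $M = M(\alpha)$ extracted from \eqref{BLSt1}--\eqref{BLSt3} is a Wiener-Hopf-type problem which has to be coupled to the length-iteration to determine $\alpha$ exactly; a secondary difficulty, beyond the scope of this formal asymptotic plan, would be to establish that the discrete iteration actually attracts to the proposed profiles rather than merely admitting them as invariant.
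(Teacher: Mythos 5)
Your matched-asymptotics skeleton---outer Gaussian fixed point of the iteration, inner boundary layer in $\xi = x/\sigma_n$ recovering the Wiener--Hopf system \eqref{BLSt1}--\eqref{BLSt2}, overlap condition \eqref{BLSt3}---agrees with the paper's argument. The gap is in how you propose to pin down the constant $2/\pi$.

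Writing the outer profile as $\psi(x) = \psi(0)\,e^{-a x^2/2}$, you invoke only the first-moment normalisation $\int_0^\infty x\psi\,dx = 1$, i.e. $\psi(0)/a = 1$, and then try to determine $a$ by closing the loop through the length iteration $C_n$ and the boundary-layer mass $M$. This cannot work: the system \eqref{BLSt1}--\eqref{BLSt2} is linear, so its solution scales with the far-field datum, $M = \kappa\,U(\infty) = \kappa\,\psi(0)$ for a universal constant $\kappa$. Inserting $m_n = \sigma_n M$ and your estimate $C_n \approx \sigma_n^2\psi(0)/2 + m_n\sigma_n/\sqrt{\pi}$ into the relation $\sigma_n^2/C_n = 1/a$, and then substituting $\psi(0)=a$, yields $1 = 1/2 + \kappa/\sqrt{\pi}$: the parameter $a$ cancels identically. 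What remains is a consistency identity on $\kappa$ (in substance, conservation of the number of clusters across the boundary layer), not an equation for $a$. The Wiener--Hopf detour therefore leaves the Gaussian variance undetermined.

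The ingredient you dropped is the zeroth-moment normalisation $\int_0^\infty\psi\,dx = 1$, which holds to leading order since $m_n + \int_0^\infty\psi_n\,dx = 1$ is preserved by \eqref{eq:SemGroup} and $m_n = \sigma_n M \to 0$. Imposing both normalisations of \eqref{as:phi0:2} on $\psi = Ce^{-ax^2/2}$ gives $\int_0^\infty\psi\,dx = C\sqrt{\pi/(2a)} = 1$ and $\int_0^\infty x\psi\,dx = C/a = 1$, whence $C = a = 2/\pi$ and $\psi(x) = (2/\pi)e^{-x^2/\pi}$. This is exactly the paper's route: the steady-state ODE $a\psi + ax\psi' + \psi'' = 0$ (your Fokker--Planck stationarity condition) together with both moment constraints closes the outer problem on its own, and the inner solution $(U,M)$ then inherits its scale from $U(\infty) = 2/\pi$ rather than the other way round. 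A secondary, less serious, point: your $C_n$ estimate replaces the inner-region integral by $\psi_n(0)$, whereas the exact expression (see the Remark following the proposition) involves an integral against $U$; this affects the value of $\kappa$ but not the orders of magnitude.
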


 \begin{proof}
In all the following dynamics, we have that $\f{|\Delta_{E_n}|}{\ep L_n}\ll 1$ so that \eqref{eq:OrdMagnCh} implies $\f{L_{n+1}}{L_n}\sim 1$ in the limit $\sigma_n \to 0^+$.
For convenience, we recall the first two iteration formulas \eqref{eq:SemGroup} previously stated in Corollary ~\ref{coroll:sizedistrib}:
\begin{align}
\psi_{n+1}\left( x\right)  &=\frac{L_{n+1}}{L_{n}}\int_{0}^{\infty}G\biggl(
\frac{L_{n+1}}{L_{n}}x-\eta,{\sigma_n^2}\biggr) \psi_{n}( \eta) d\eta\notag
 +m_{n}\frac{L_{n+1}%
}{L_{n}}G\biggl( \frac{L_{n+1}}{L_{n}}x,\sigma_n^2\biggr),  \notag \\
m_{n+1} & =\int_{-\infty}^{0}d\zeta\int_{0}^{\infty}G\left( \zeta-\eta,{
\sigma_n^2} \right)
\psi_{n}\left( \eta\right) d\eta+\frac{m_{n}}{2}  
\label{eq:SemGroupbis}
\end{align}
In the following we approximate \eqref{eq:SemGroup} resp. \eqref{eq:SemGroupbis} using the 
expansion formula%
\begin{equation}
\int_{0}^{\infty}G\left( x-y;\sigma^2\right) \psi\left( y\right) dy=\psi\left(
x\right) +\sigma^2\partial_{xx}\psi\left( x\right) +o\left( \sigma^2\right) 
\quad \text{as} \quad \sigma\rightarrow0^{+}
\label{eq:HeatForm}
\end{equation}
which holds not uniformly but provided that $x>0$ is fixed while and $%
\sigma\rightarrow 0^{+}.$
On the other hand, the term $m_{n}\frac{L_{n+1}}{%
L_{n}}G\left( \frac{L_{n+1}}{L_{n}}x;\sigma_n^{2}\right) $ in the first equation of (\ref%
{eq:SemGroup}) resp. \eqref{eq:SemGroupbis} is exponentially small in $\sigma_n^{2}$ if $x$ is of order one. We then obtain using
(\ref{eq:HeatForm}) the following approximation%
\begin{equation}
{ \psi_{n+1}\left( x\right) =\frac{L_{n+1}}{L_{n}}\Bigl[ \psi_{n}+\sigma_n^2 \partial_{xx}\psi_{n}\Bigr] \Bigl( \frac{L_{n+1}}{L_{n}}x\Bigr)}  ,\ \
 \ x>0\ ,  \label{eq:IterApp}
\end{equation}
if $\sigma_n^2$ tends to zero. Notice that the iterative formula (\ref{eq:IterApp}) does
not depend on $m_{n}.$ This suggests in particular that the ratio $\frac{%
L_{n+1}}{L_{n}}$ can be determined  independently on $%
m_{n}$ if $\sigma_n^2\ll1$.
We will assume that $\psi_{n}$ stabilizes to a steady state. In
addition, we use the fact that $\sigma^2_n$ tends to zero. We will assume
that {$\frac{L_{n+1}}{L_{n}}\sim 1+a\sigma_n^2$ as $\sigma_n^2%
\rightarrow0, $} where $a>0$ must be determined. Then, looking for steady
states $\psi$ of (\ref{eq:IterApp}) and neglecting lower order terms, as
well as using the approximation $\psi\left( \left( 1+a\sigma^2\right)
x\right) \simeq \psi\left( x\right) +a\sigma^2 x\partial_{x}\psi\left(
x\right) ,$ we obtain that $\psi$ solves%
\begin{equation*}
\sigma^2\left( a\psi\left( x\right) +ax\partial_{x}\psi\left( x\right)
+\partial_{xx}\psi\left( x\right) \right) =0 
\end{equation*}
where we keep only the lower order terms in $\sigma^2.$ Then%
\begin{equation}
a\psi\left( x\right) +ax\partial_{x}\psi\left( x\right)
+\partial_{xx}\psi\left( x\right) =0\  ,\ \ x>0\ .  \label{eq:psiEq}
\end{equation}

We can expect to have the approximation $\psi_{n}\simeq\psi$ as $%
n\rightarrow \infty.$ Moreover, we remark that we can expect to have $%
m_{n}\rightarrow0$ as $n$ becomes larger enough. Indeed, this is a consequence of
the fact that as $\sigma^2_n\rightarrow0,$ also the amount of mass transferred
to the region $x<0$ by the heat semigroup tends to zero. Therefore, the
normalization condition (\ref{as:phi0:2}) implies that approximately%
\begin{equation}
\int_{0}^{\infty}\psi\left( x\right) \diff x=\int_{0}^{\infty}x\psi\left(
x\right) \diff x=1.  \label{eq:NormPsi}
\end{equation}

The integrable solutions of (\ref{eq:psiEq}) have the form $\psi\left(
x\right) =Ce^{-\frac{ax^{2}}{2}}$ where $C$ is an arbitrary real constant.
The normalization conditions (\ref{eq:NormPsi}) imply $C=a=\frac{2}{\pi }.$ Then we obtain~\eqref{eq:PsiAs},
and also%
\begin{equation*}
\frac{L_{n+1}}{L_{n}}=1+\frac{2}{\pi}\sigma_n^2\ \ ,\ \ \sigma_n^2=\frac{%
D\left( E_n,\varepsilon\right) }{\left( L_{n}\right) ^{2}}.\
\end{equation*}
as $n$ becomes large enough. This also achieves the heuristic proof of~\eqref{eq:OrdMagnCh}.\\

We now consider the description of the functions $\psi_{n}\left( x\right) $
for small $x$,  {\it i.e.} for $x$ in the order of $\sigma_n$. In the first equation of \eqref{eq:SemGroup} resp. \eqref{eq:SemGroupbis},
we use $\frac{L_{n+1}}{L_{n}}\sim 1$
and introduce the new variables $x=\sigma_{n+1} \xi,$ $\psi_{n}\left( \sigma_n %
\xi\right) =U_{n}\left( \xi\right).$ Then%
\[
U_{n+1}\left( \xi\right) \sim \int_{0}^{\infty}G\left( 
\sigma_{n+1} \xi-\eta
;\sigma_n^2\right) \psi_{n}\left( \eta\right) d\eta
 +m_{n}%
G\left(  \sigma_{n+1} \xi;\sigma_n^2\right).
\]
Defining $ \eta=\sigma_n \zeta$ and using $G(y;\sigma^2)=\f{1}{\sigma}G(\f{y}{\sigma};1)$, we obtain%
\begin{equation*}
U_{n+1}\left( \xi\right) \sim \int_{0}^{\infty}G\left( 
\f{\sigma_{n+1}}{\sigma_n}\xi-\zeta;1\right) U_{n}\left( \zeta\right)
d\zeta+\frac{m_{n}}{\sigma_n}G\left( \f{\sigma_{n+1}}{\sigma_n}\xi;1\right).
\end{equation*}

We now remark that $\f{\sigma_{n+1}}{\sigma_n} \sim 1$ as $\sigma_n \to 0$. Moreover, using  
the rescaling {$m_{n}=%
\sigma_{n}M_{n}$}, we introduce the following equation as the definition for the rescaled profile $U_n$ of boundary layer
describing the concentrations near $x=0$:%
\begin{equation}
U_{n+1}\left( \xi\right) = \int_{0}^{\infty}G\left( \xi-\zeta;1\right)
U_{n}\left( \zeta\right) d\zeta+M_{n}G\left( \xi;1\right)   ,\ \ \xi >0\ .
\label{BL1}
\end{equation}

In order to obtain a closed system for both $U_{n}$ and $M_{n}$, we use the
second equation in (\ref{eq:SemGroup}) resp. \eqref{eq:SemGroupbis}. With the definitions of $U_{n}$ and 
$M_{n}$, we obtain the equation%
\begin{equation*}
M_{n+1}=\frac{{L_{n+1}}}{L_{n}}\int_{-\infty}^{0}d\zeta\int_{0}^{\infty}G%
\left( \zeta-\xi;1\right) U_{n}\left( \xi\right) d\xi+{\frac{L_{n+1}}{L_{{n}}}}
\frac{M_{n}}{2} .
\end{equation*}

Approximating then $\frac{L_{n+1}}{L_{n}}$ by $1$, we 
obtain%
\begin{equation}
M_{n+1}=\int_{-\infty}^{0}d\zeta\int_{0}^{\infty}G\left( \zeta-\xi;1\right)
U_{n}\left( \xi\right) d\xi+\frac{M_{n}}{2} . \label{BL2}
\end{equation}

The equations \eqref{BL1}--\eqref{BL2} describe the iterative dynamics of the
boundary layer near $x=0$ yielding the cluster concentrations as well as the
mass of the peaks appearing there. These equations must be solved combined
with the matching condition that results from the fact that $\psi_{n}\left( 
\sigma_n \xi\right) =U_{n}\left( \xi\right) . $
Then, since $\sigma_n\ll 1,$ it is natural to impose the matching condition $%
U_{n}\left( \infty\right) =\psi_{n}\left( 0^{+}\right) .$ Taking into
account (\ref{eq:PsiAs}), we must then impose the following matching
condition for the system \eqref{BL1}--\eqref{BL2}%
\begin{equation}
U_{n}\left( \infty\right) =\frac{2}{\pi}  \label{BL3}.
\end{equation}
It is natural to assume that the solutions of \eqref{BL1}--\eqref{BL3}
approach a stationary solution for large $n.$ Therefore, they become
close to the solutions of the problem~\eqref{BLSt1}--\eqref{BLSt3}{, and it ends the (heuristic) proof.}%
\end{proof}
\bigskip

{In the companion paper~\cite{DFMV2},} it will be proved  that there exists a unique solution of \eqref{BLSt1}--\eqref%
{BLSt2} satisfying the boundary condition (\ref{BLSt3}). The resulting
function $U\left( \xi\right) $ and the rescaled mass $M$ describe the
concentration of clusters $c_{j}$ for $j$  {in the order of $\f{1}{\sigma_n},$ with $1\ll \f{1}{\sigma_n} \ll L_{n}.$}

\bigskip

\begin{remark}
We can use the third equation in (\ref{eq:SemGroup}) to estimate $\frac{%
L_{n+1}}{L_{n}}$ to check the correctness of the approximation~\eqref{eq:OrdMagnCh} 
that has been obtained with a different approach,
approximating the evolution of the cluster sizes with $x$ of order one by
means of a differential operator. We have%
\begin{equation*}
\frac{L_{n+1}}{L_{n}}=1+\sigma_n^2 \int_{0}^{\infty
}y\diff y\int_{0}^{\infty}G\left( y+\xi;1\right) U_{n}\left( \xi\right)
\diff \xi+M_{n}{\sigma_n^2}\int_{0}^{\infty}yG\left( y;1\right)
\diff y\ .
\end{equation*}
Using then the approximations $U_{n}\simeq U$
and $M_{n}\simeq M$, we obtain
the following approximation%
\begin{equation*}
\frac{L_{n+1}}{L_{n}}=1+\sigma_n^2\left( \int
_{0}^{\infty}ydy\int_{0}^{\infty}G\left( y+\xi;1\right) U\left( \xi\right)
d\xi+M\int_{0}^{\infty}yG\left( y;1\right) dy\right). 
\end{equation*}
Therefore, both terms in the formula of $\frac{L_{n+1}}{L_{n}}$ give a
comparable contribution. 
{Another way to see this result is to go back to~\eqref{eq:W} in Prop. \ref{prop:iter}: replacing $\psi(0)$ by $\f{2}{\pi}$ and $m$ by $\sigma_n M$ we have
\[ |\Delta_{E_n}| \approx M \sigma_n \sqrt{E_n \ep} + \f{E_n}{L_n}\f{2}{\pi} \approx \f{E_n}{L_n}\left(M+\f{2}{\pi}\right),
\]
and here again both terms are of similar order.}
\end{remark}

{\subsection{Overall dynamics during phases I and II}}
\label{subsec:IandII:overall}
From~\eqref{eq:OrdMagnCh} we can now detail the overall dynamics during Phases I and II, that we summarize in Prop.~\ref{prop:phasesIandII}.
Let us first define the constant%
\begin{equation}
A:=\int_{0}^{\infty}ydy\int_{0}^{\infty}G\left( y+\xi;1\right) U\left(
\xi\right) d\xi+M\int_{0}^{\infty}yG\left( y;1\right) dy.
\label{eq:Aconstant}
\end{equation}

{\begin{proposition}\label{prop:phasesIandII}
Departing from $E_0\approx 1$ and $L_0\approx \f{1}{\sqrt{\ep}},$  with $A$ defined by~\eqref{eq:Aconstant}, we have the following dynamics.
\begin{itemize}
\item Phase I: as long as $n \ll \f{1}{\ep},$ we have
\begin{equation}
L_{n}\sim\sqrt{\frac{2An}{\varepsilon}},\qquad E_n \sim 1.  \label{eq:LPhI}
\end{equation}
\item Phase II: $L_n$ increases to $\f{1}{\ep}$ and $E_n$ decreases, and at the end of Phase II, {\it i.e.} for $\f{1}{\ep}\ll n\lesssim -\f{\log(\ep)}{\ep}$, we have
\begin{equation}\label{eq:endPhaseII}
L_n \sim \f{1}{\ep}(1-e^{-1}e^{-A\ep n}),\qquad E_n\sim e^{-1}e^{-A\ep n}.
\end{equation}
\end{itemize}
\end{proposition}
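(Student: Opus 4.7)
The plan is to derive from the one-cycle update rules of Proposition~\ref{prop:iter} and Proposition~\ref{prop:UM} a pair of discrete iterations that, after passing to a continuous-in-$n$ approximation, admit an exact conservation law. Combining the two cases $E_n\sim 1$ and $E_n\ll 1$ of this reduced ODE will yield the two regimes stated in Phase~I and Phase~II.

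First, I would collect the refined constant. The remark following Proposition~\ref{prop:UM} gives
\[
\frac{L_{n+1}}{L_n}-1 \sim A\,\sigma_n^2 = \frac{A\,E_n}{\varepsilon L_n^2},
\]
with $A$ defined in~\eqref{eq:Aconstant}, so using $|\Delta_{E_n}|=\varepsilon L_n C_n$ from~\eqref{eq:OrdMagnCh} the same constant appears in the energy update. This produces the coupled iterations
\[
L_{n+1}-L_n \sim \frac{A\,E_n}{\varepsilon L_n}, \qquad
E_{n+1}-E_n \sim -\frac{A\,E_n}{L_n},
\]
valid as long as $\varepsilon\ll E_n$ and $\sigma_n\ll 1$, both of which we will verify a posteriori.

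Next, I would observe that adding $\varepsilon$ times the first relation to the second gives $\varepsilon(L_{n+1}-L_n)+(E_{n+1}-E_n)\sim 0$, so the quantity $\varepsilon L_n+E_n$ is essentially conserved. From the initial data $E_0\approx 1$ and $L_0\approx 1/\sqrt{\varepsilon}$ we have $\varepsilon L_0+E_0 = 1+\sqrt{\varepsilon}\sim 1$, hence
\[
\varepsilon L_n + E_n \sim 1 \qquad\text{throughout Phases I and II.}
\]
Treating $n$ as continuous, the system reduces to $\dfrac{dE}{dn}=-\dfrac{A\varepsilon E}{1-E}$. Separating variables and using the condition $E(0)=1$, I would integrate
\[
\int_1^{E_n}\!\frac{1-E}{E}\,dE = -A\varepsilon n
\qquad\Longrightarrow\qquad
\log E_n - E_n = -A\varepsilon n - 1.
\]

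For Phase~I, the regime $n\ll 1/\varepsilon$ makes $A\varepsilon n\ll 1$, so $E_n=1-O(\sqrt{\varepsilon n})$ and $\varepsilon L_n = 1-E_n$. Substituting into the first iteration gives $L_{n+1}^2-L_n^2\sim 2A/\varepsilon$, which telescopes (absorbing the initial $L_0^2\sim 1/\varepsilon$, negligible for $n\gg 1$) to $L_n\sim\sqrt{2An/\varepsilon}$, establishing~\eqref{eq:LPhI}. For Phase~II, once $A\varepsilon n\gtrsim 1$ the value $E_n$ becomes small, so $\log E_n - E_n\sim \log E_n$, whence
\[
\log E_n \sim -1 - A\varepsilon n,
\qquad E_n \sim e^{-1} e^{-A\varepsilon n},
\qquad L_n \sim \frac{1-E_n}{\varepsilon} = \frac{1}{\varepsilon}\bigl(1-e^{-1}e^{-A\varepsilon n}\bigr),
\]
which is~\eqref{eq:endPhaseII}. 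The final task is to check self-consistency: in Phase~II one has $E_n\gg\varepsilon$ precisely while $A\varepsilon n\lesssim -\log\varepsilon$, recovering the stated range of $n$, and $\sigma_n^2=E_n/(\varepsilon L_n^2)\to 0$, so the hypotheses of Proposition~\ref{prop:UM} remain valid.

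The main obstacle I anticipate is justifying that the accumulated errors in the one-cycle update (which are of order $\sqrt{\varepsilon}$ per cycle from~\eqref{eq:W0}) do not destroy the conservation law $\varepsilon L_n+E_n\sim 1$ over $n\sim 1/\varepsilon$ or $n\sim -\log(\varepsilon)/\varepsilon$ cycles, particularly in Phase~II where $E_n$ becomes very small and the relative error in the energy update could blow up. A careful accounting, perhaps by showing that the conservation of $\varepsilon L+E$ holds to higher accuracy than each individual update, or by controlling the cumulative error via the ODE stability, will be needed; the rigorous justification is deferred to the companion paper~\cite{DFMV2}.
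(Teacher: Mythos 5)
Your proposal is correct and follows essentially the same route as the paper: the one-cycle iteration formulae are approximated by a continuous-in-$n$ ODE, whose conserved quantity $\varepsilon L_n+E_n\sim1$ (the paper reaches it through the rescaling $\ell=\varepsilon L$, $e=E$, $s=\varepsilon n$ and $\ell+e\equiv1$) closes the system and yields the implicit relation $\log E_n-E_n=-A\varepsilon n-1$, from which both the Phase~I and Phase~II asymptotics drop out exactly as in the paper's derivation. Your remark about the need to control the cumulative error of the per-cycle approximations over $O(\log(1/\varepsilon)/\varepsilon)$ cycles is well taken; the paper likewise leaves rigorous justification to the companion work.
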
}
\begin{proof}
{The whole proof is based on studying the sequence $(L_n,E_n)$ defined by~\eqref{eq:OrdMagnCh}. }%
{We have already described the early Phase I in subsection~\ref{subsec:early}: after a few cycles we have $L_n\gg \f{1}{\sqrt \ep}$, and then by Prop.~\ref{prop:UM} }$\psi_{n}$
approaches $\frac{2}{\pi}e^{-\frac{x^{2}}{\pi}}$ and we can approximate $%
M_{n}$ and $U_{n}$ by the solutions of \eqref{BLSt1}--\eqref{BLSt3}. We can
then approximate the evolution of $L_{n}$ using 
{that $C_n\sim A\sigma_n^2$  so that~\eqref{eq:OrdMagnCh} may be written in the following more precise way:}
\begin{equation}
\frac{L_{n+1}}{L_{n}}- 1 \sim \frac{AE_{n}}{\varepsilon\left( L_{n}\right) ^{2}},\qquad
{E_{n+1}-E_{n}\sim -A \f{E_n}{L_n}}.\label{eq:Echn}
\end{equation}

{\subsubsection*{Phase I asymptotics.} }
As long as $E_n \sim 1,$ we can simplify~\eqref{eq:Echn} by writing
\begin{equation*}
L_{n+1}\sim L_{n}+\frac{A}{\varepsilon L_{n}}.
\end{equation*}
Hence%
\begin{equation*}
\frac{\left( L_{n}\right) ^{2}}{2}\sim\frac{\left( L_{0}\right) ^{2}}{2}+%
\frac{An}{\varepsilon} 
\end{equation*}
for {$1\ll n\ll \f{1}{\ep}$, and} we obtain the asymptotics~\eqref{eq:LPhI}.

 \subsubsection*{{Phase II asymptotics}\label{SubPhase2}}

We
define a new set of variables, namely%
\begin{equation}
L_{n}=\frac{1}{\varepsilon}\ell\left( s\right) \ ,\ E_{n}=e\left( s\right) \
,\ s=\varepsilon n  \label{eq:ELDef}
\end{equation}
Then (\ref{eq:Echn}) becomes%
\begin{equation*}
\frac{\ell\left( s+\varepsilon\right) -\ell\left( s\right) }{\varepsilon }%
\sim \frac{Ae\left( s\right) }{\ell\left( s\right) }\  ,\ \ \frac {e\left(
s+\varepsilon\right) -e\left( s\right) }{\varepsilon}\sim -\frac{{A} e\left(
s\right) }{\ell\left( s\right) }\ . 
\end{equation*}
It is natural to approach the left-hand side of these equations by
derivatives. We then obtain the following system of ODEs%
\begin{equation}
\frac{d}{ds}\ell\left( s\right) =\frac{Ae\left( s\right) }{\ell\left(
s\right) }\  ,\ \ \frac{d}{ds}e\left( s\right) =-\frac{{ A}e\left( s\right) }{%
\ell\left( s\right) }\ ,  \label{eq:ODEPhII}
\end{equation}
{and since at the beginning of Phase II we still have $L_n \ll \f{1}{\ep}$ and $E_n\sim 1,$}
 the matching conditions for $L_{n},\ E_{n}$ yield the initial
conditions%
\begin{equation}
{\ell\left( 0\right) =0}\  ,\ \ e\left( 0\right) =1.  \label{eq:ODEInCon}
\end{equation}
The initial conditions stated in \eqref{eq:ODEInCon} makes the system \eqref{eq:ODEPhII} singular, however the initial conditions are compatible since the solution $\ell$ is bounded away from $0$ exponentially fast. 
Equations \eqref{eq:ODEPhII}--\eqref{eq:ODEInCon} yield the evolution of $%
L_{n},\ E_{n}$ during Phase II. These equations can be solved in the
following implicit form%
\begin{equation}
-\frac{1}{{A}}\ell-{\frac{1}{A}}\log\left( 1-{ \ell}\right) =s\  ,\
\ e=1-{ \ell}\ .  \label{eq:EnLePhII}
\end{equation}
{
Notice that this equation (or directly (\ref{eq:ODEPhII})) implies the
asymptotic behaviour%
\begin{equation*}
\ell\left( s\right) \sim\sqrt[\ ]{2As}\ ,\ \ e\left( s\right) \sim1\ \ 
\text{as \ }s\rightarrow0\ . 
\end{equation*}
}
\

We obtain that $\ell$ increases from $\ell=0$ for $s=0$ to $\ell={ 1}$
as $s$ tends to infinity. On the other hand $e$ decreases from $e=1$ for $%
s=0 $ to $e=0$ as $s$ tends to infinity. We have the following asymptotic
behaviour for $\ell$ and $s$%
\begin{equation}
\ell\left( s\right) \sim \left( 1-e^{-1}e^{-{ A}%
s}\right)  ,\ \ e\left( s\right) \sim e^{-1}e^{-{ A} s}\ \ \text{%
as\ \ }s\rightarrow\infty\ .  \label{eq:enelevol}
\end{equation}

Combining (\ref{eq:ELDef}) and (\ref{eq:enelevol}) yields the evolution of $%
L_{n},$ $E_{n}$ during Phase II.  The formula for $\ell\left(
s\right) $ in (\ref{eq:enelevol}) implies that $L_{n}$ remains of order $%
\frac{1}{\varepsilon}$ during the Phase II.
\end{proof}

\begin{remark}
The approximations (\ref{eq:Echn})  have been
computed under the assumption that this displacement $\frac{E_{n}}{%
\varepsilon }$ is very large. However, the formula for $e\left( s\right) $
in (\ref{eq:enelevol}) as well as the fact that $E_{n}=e\left( s\right) $
imply that for $n$ of order ${\frac{1}{A}}\frac{1}{\varepsilon }\log
\left( \frac{1}{\varepsilon }\right) $ the energy $E_{n}$ becomes of order $%
\varepsilon $ and then the approximations~(\ref{eq:Echn}) are not any longer valid. This {marks} the beginning of Phase
III.
We also notice that during this phase the
concentrations $c_{k}$ experience a displacement of order $\frac{E_{n}}{%
\varepsilon}$ in the space of cluster sizes $k$ {(see Lemmas~\eqref{lem:scaling2} and~\eqref{lem:DYT})}.
\end{remark}
\begin{figure}
     \centering
     \begin{subfigure}[b]{0.49\textwidth}
         \centering
         \includegraphics[width=\textwidth]{./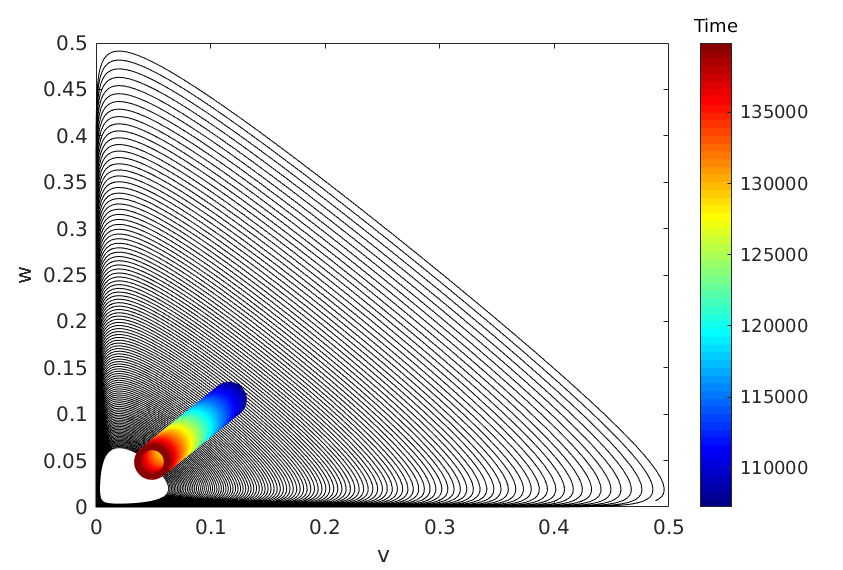}
         \caption{Phase portrait of the monomers}
         \label{fig:vw_ener_p2}
     \end{subfigure}
     \hfill
     \begin{subfigure}[b]{0.49\textwidth}
         \centering
         \includegraphics[width=\textwidth]{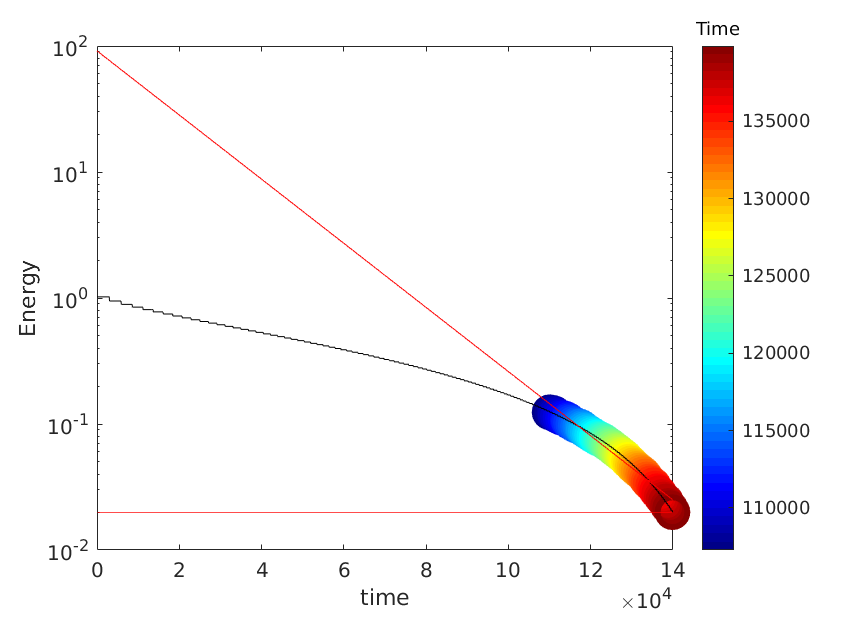}
         \caption{Evolution of the energy $E$ }
         \label{fig:ener_p2}
     \end{subfigure}
        \caption{{Numerical {simulation} of the evolution of the monomers and the energy in Phase II (c.f section~\ref{subsec:IandII:overall}). The color scheme indicates the time evolution in Figures \ref{fig:vw_ener_p2} and \ref{fig:ener_p2}. (Right) The y--axis scale is logarithmic and the horizontal line is $E_n \approx \ep$ where $\ep=0.02$ in the numerical simulation. The decreasing affine line in red is an illustration of the exponential decay of the energy that takes place in Phase II (see Prop. \ref{prop:phasesIandII}). } }
        \label{fig:ener_change_p2}
\end{figure}

\section{Phase III: Energy reduction  from order $\protect\varepsilon$ to  order $\protect\varepsilon^{2}.$}
\label{sec:PhaseIII}
\subsection{Early Phase III: matching condition}
\label{subsec:PhaseIII:early}
Phase III begins when $E_{n}$ becomes of order $\varepsilon,$ {so that all the analysis made for Phases~I and~II, which rely on the assumption $\f{E_n}{\ep}\gg 1,$ is no more valid. However, we can infer the initial state at the beginning of Phase~III as the limit value obtained at the end of Phase~II: this is expressed in the following lemma.
\begin{lemma}\label{lem:initPhaseIII}With the notations of the previous results, at the end of Phase~II and beginning of Phase~III, we have 
\begin{equation}\label{eq:initPhaseIII}
E_n \approx \ep,\quad L_n{\approx} \f{1}{\ep},\quad T_n \approx \f{1}{\ep},\quad Y^n_{\max} \approx 1,\quad c_j(T_n)\sim \ep^2 \psi(\ep j),\quad j \gg 1,
\end{equation}
with $\psi$ defined by~\eqref{eq:PsiAs}. Moreover, we have  $v(T_n)=w(T_n)=O(\ep)$ and $c_j(T_n)\approx U(j)$ for $j=O(1).$
\end{lemma}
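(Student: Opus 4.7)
The strategy is essentially a matching argument: we take the Phase II asymptotics of Proposition~\ref{prop:phasesIandII}, subsection~\ref{subsec:following} at the value of $n$ where the small-parameter hypothesis $E_n\gg\varepsilon$ first breaks down, and read off the listed quantities by combining them with the LV scaling results (Lemmas~\ref{lem:scaling1}, \ref{lem:DYT}, \ref{lem:scaling2}) and with the inner/outer matching of Proposition~\ref{prop:UM}. So the first step is to fix $n$ of order $-\log(\varepsilon)/\varepsilon$, precisely the threshold at which the exponential formula \eqref{eq:endPhaseII} gives $E_n\sim e^{-1}e^{-A\varepsilon n}\approx\varepsilon$ and $L_n\sim \frac{1}{\varepsilon}(1-\varepsilon/e^{-1})\approx \frac{1}{\varepsilon}$. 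The two first estimates of \eqref{eq:initPhaseIII} follow directly.

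For the period $T_n$, I would combine the scaling \eqref{eq:TScal} with the asymptotic $T(1,\varepsilon)\sim 1/\varepsilon^{2}$ from \eqref{eq:Tasymp} applied to the rescaled small parameter $\tilde\varepsilon=\varepsilon/E_n$: this gives $T_n=T(E_n,\varepsilon)=T(1,\tilde\varepsilon)/E_n\sim E_n/\varepsilon^{2}$, which yields $T_n\approx 1/\varepsilon$ upon inserting $E_n\approx\varepsilon$. For $Y^n_{\max}$ I would proceed identically, using \eqref{eq:YScal} together with the maximum $Y_{\max}(1,\varepsilon)\sim 1/\varepsilon$ stated in Lemma~\ref{lem:DYT}; rescaling gives $Y^n_{\max}\sim E_n/\varepsilon\approx 1$. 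For the value of $v(T_n)=w(T_n)$ I would invert the energy identity \eqref{Ev0} near its minimum: since $v=w>\varepsilon$ at $T_n$ and $E_n\approx\varepsilon$ is close to the steady-state value $\bar E\sim\varepsilon^{3}/2$, a Taylor expansion of $E(v)$ around $v=\varepsilon$ gives $E\sim (v-\varepsilon)^{2}/\varepsilon$, hence $v(T_n)-\varepsilon\sim\sqrt{\varepsilon E_n}\approx\varepsilon$, which shows $v(T_n)=w(T_n)=O(\varepsilon)$.

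The outer size-distribution estimate $c_j(T_n)\sim\varepsilon^{2}\psi(\varepsilon j)$ for $j\gg 1$ then comes from the ansatz $c_j(T_n)\simeq(\varepsilon/L_n)\varphi_n(j/L_n)$ together with Proposition~\ref{prop:UM}: for $x=j/L_n$ not small, $\varphi_n(x)\approx\psi(x)=\frac{2}{\pi}e^{-x^{2}/\pi}$, and the atomic part at $x=0$ does not contribute away from the boundary layer. Inserting $L_n\approx 1/\varepsilon$ gives the stated $\varepsilon^{2}\psi(\varepsilon j)$. For the inner (boundary-layer) estimate at $j=O(1)$, I would use the rescaling $\psi_n(\sigma_n\xi)=U_n(\xi)$ and $m_n=\sigma_n M_n$, together with $U_n\to U$ and $M_n\to M$ solving \eqref{BLSt1}–\eqref{BLSt3}. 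At the end of Phase II, $D_n=D(E_n,\varepsilon)\sim E_n/\varepsilon\approx 1$, so $\sigma_n=\sqrt{D_n}/L_n\approx\varepsilon$, and then $j=O(1)$ corresponds to $\xi=j/(\sigma_n L_n)=j/\sqrt{D_n}=O(j)$; substituting yields $c_j(T_n)\approx(\varepsilon/L_n)U_n(j/\sqrt{D_n})\approx U(j)$ up to the natural $O(\varepsilon^{2})$ prefactor absorbed in the normalisation of $U$.

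The main obstacle will be the self-consistency of the matching between the outer profile $\psi$ and the boundary-layer profile $U$: one has to verify that the value $U(\infty)=2/\pi=\psi(0^{+})$ imposed as a matching condition in \eqref{BLSt3} is indeed the effective limit delivered by the Phase II iterates at the precise cycle where $E_n$ reaches the order $\varepsilon$, and not merely in a formal limit $n\to\infty$ with $\sigma_n^{2}\to 0$. This requires checking that during Phase II the rescaled profiles $U_n$ and masses $M_n$ have already relaxed to the stationary solution of \eqref{BLSt1}–\eqref{BLSt2} by the time $\sigma_n$ reaches its Phase II terminal value $\sigma_n\approx\varepsilon$; equivalently, one must control the convergence rate of the discrete semigroup \eqref{BL1}–\eqref{BL2} and show it is fast compared to the $-\log(\varepsilon)/\varepsilon$ cycles available in Phase II. All remaining steps are then routine substitutions of the formulae from subsections~\ref{subsec:LV}, \ref{subsec:sizedistrib}, \ref{subsec:following} and~\ref{subsec:IandII:overall}.
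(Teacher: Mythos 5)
Your proposal follows essentially the same matching argument as the paper's proof: fix $n$ at the exit of Phase~II via $E_n\approx\varepsilon$, plug into the scaling Lemmas~\ref{lem:scaling1}--\ref{lem:DYT} to read off $T_n$, $Y^n_{\max}$ and $v(T_n)=w(T_n)$, and into Proposition~\ref{prop:UM} together with $\sigma_n\approx\varepsilon$ to read off the outer profile $\varepsilon^2\psi(\varepsilon j)$ and the boundary layer for $j=O(1)$. Your observation that the boundary-layer estimate should carry the prefactor $\varepsilon^2$ (i.e.\ $c_j(T_n)\approx\varepsilon^2 U(j)$, consistent with the rescaling $c_k=\varepsilon^2 C_k$ of~\eqref{F1}) is correct and the paper's $c_j\approx U(j)$ is a slight abuse of notation, while your final caveat about whether $U_n,M_n$ have actually relaxed to the stationary solution of \eqref{BLSt1}--\eqref{BLSt3} by the Phase~II/III interface is a legitimate point that the paper's heuristic proof also leaves open.
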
}
\begin{proof}
{The energy $E_n\approx \ep$ is the definition of the beginning of Phase~III, so that we deduce from Prop.~\ref{prop:phasesIandII}  that $n\approx -\f{\log(\ep)}{\ep}.$ Hence, by~\eqref{eq:endPhaseII} we have $L_n {\approx}\f{1}{\ep}.$ Thanks to Lemmas~\ref{lem:scaling2} and~\ref{lem:DYT}, the maximal displacement for clusters $Y_{\max}^n\approx \f{E_n}{\ep}$ tends to become of order $1$, and the time period for one cycle $T_n\approx \f{E_n}{\ep^2}$ tends to be of order $\f{1}{\ep}$. }
 Since $E_{n}$ is of order $\varepsilon$ we obtain
that $w$ and $v$ are of order $\varepsilon$ too (cf. \eqref{D5}). 

For the size distribution, we recall its description done in Prop.~\ref{prop:UM}: a Dirac mass of weight $m_n\approx \sigma_{n} M,$ a boundary layer of width $\sigma_n$ in the variable $x=\f{j}{L_n}$ and the function $\f{\ep}{L_n} \psi(\f{j}{L_n})$ for $\f{j}{L_n}\gg \sigma_n.$ 
We now have $\sigma_n=\f{\sqrt{D_n}}{L_n}=\f{\sqrt{E_n}}{\sqrt{\ep}L_n} \approx \ep$ so that the boundary layer width vanishes: the approximation by  $\f{\ep}{L_n} \psi(\f{j}{L_n})\sim \ep^2 \psi(\ep j)$ is valid for any $j \gg 1$, and for the boundary layer we have $c_j\approx U(j).$
\end{proof}

\subsection{Dynamics of Phase III:  a new scaling}
\label{subsec:PhaseIII:dynamics}
{From this initial state,} it is
natural to introduce the following change of variables in order to describe
Phase~III:\ 
\begin{equation}
w=\varepsilon W\ \ ,\ \ v=\varepsilon V\ \ ,\ \ c_{k}=\varepsilon^{2}C_{k}\
\ ,\ \ \tau=\varepsilon t.  \label{F1}
\end{equation}
 {Notice that the variable $\tau$ here is different from the variable $\tau$ defined for Phases~I and~II.}
Then the equations~(\ref{eq:BD2})--(\ref{eq:BD3}) become%
\begin{align}
\frac{dV}{d\tau} & =-VW+V\left( 1-\varepsilon C_{1}\right),  \label{D1} \\
\frac{dW}{d\tau} & =VW-W,  \label{D2} \\
\frac{dC_{j}}{d\tau} & =\bar{J}_{j-1}-\bar{J}_{j},\ \ \forall j\geq 1,\qquad \bar{J}_{0}=0, \quad
\bar{J}_{j}=WC_{j}-VC_{j+1},\ \ j\geq1.  
\label{D3}
\end{align}

It is also natural to define a rescaled LV energy associated to the problem \eqref%
{D1}--\eqref{D3}. Notice that if we set $\varepsilon=0$ in \eqref{D1}--\eqref{D2}, we obtain the conservation of the rescaled LV\ energy $\tilde{E}$, i.e.
\begin{equation}\label{def:Etilde}
E=\ep\tilde{E}, \qquad\tilde{E}=V+W-2-\log\left( VW\right). 
\end{equation}
During the Phase III we expect the rescaled energy $%
\tilde{E}$ to be reduced from values of order one to values {of order {$\ep^2,$} which is the order of magnitude of its equilibrium value}. Notice
that the solutions of \eqref{D1}--\eqref{D3} are small perturbations of the
LV equation{, though the perturbation is different from the one in Phases I and II: here, for $\tilde E\approx 1,$ we have that $V$ and $W$ always remain in the order of one, and each phase of the LV cycle lasts for a time of order one in the variable $\tau$.} The perturbative term $-\varepsilon VC_{1}$ yields changes in
the rescaled energy $\tilde{E}$ {that is illustrated numerically in Figures  \ref{fig:sd_vw_ener_p3b} and \ref{fig:energy_p3_decay}}. 

The 
main difference between Phase III, described by means of \eqref{D1}--\eqref{D3}, and Phases I and II is that the change of energy of
the LV oscillations does not take place in the specific interval {$[T-\Delta t, t_4]$} of the LV
cycle, but during the whole cycle. Moreover, in the analysis of Phases I and
II, the main change of the energy during a LV cycle takes
place when the concentration wave is closer to cluster sizes $j$ of order one and during those times $c_{1}$ is much larger than the
concentrations $c_{j}$ with $j\neq1.$ On the contrary, during Phase III, the
rescaled concentrations $C_{j}$ with $j$ of order one have the same order of
magnitude as $C_{1}.$ Due to this, we cannot use a perturbative argument 
to approximate the values of $C_{1}$ as it was made in the analysis of
Phases I and II, but we need to study a problem which involves the whole
sequence $\left\{ C_{j}\right\} _{j\in\mathbb{N}}.$ 
Let us gather the results for Phase~III in the following proposition.

\begin{figure}[h!tb]
     \centering
     \begin{subfigure}[b]{0.48\textwidth}
         \centering
         \includegraphics[width=\textwidth]{./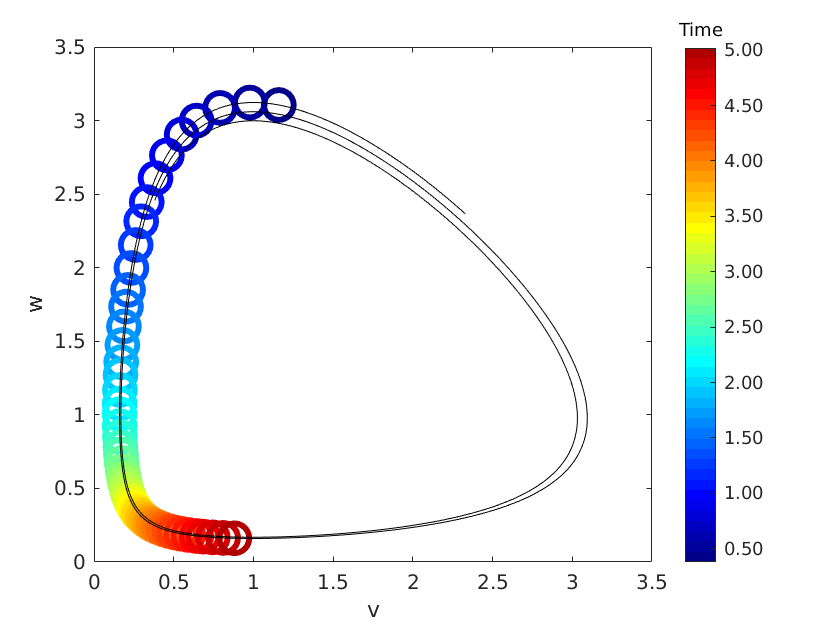}
         \caption{Monomers}
         \label{fig:vw_p3b_1}
     \end{subfigure}
     \hfill
     \begin{subfigure}[b]{0.48\textwidth}
         \centering
         \includegraphics[width=\textwidth]{./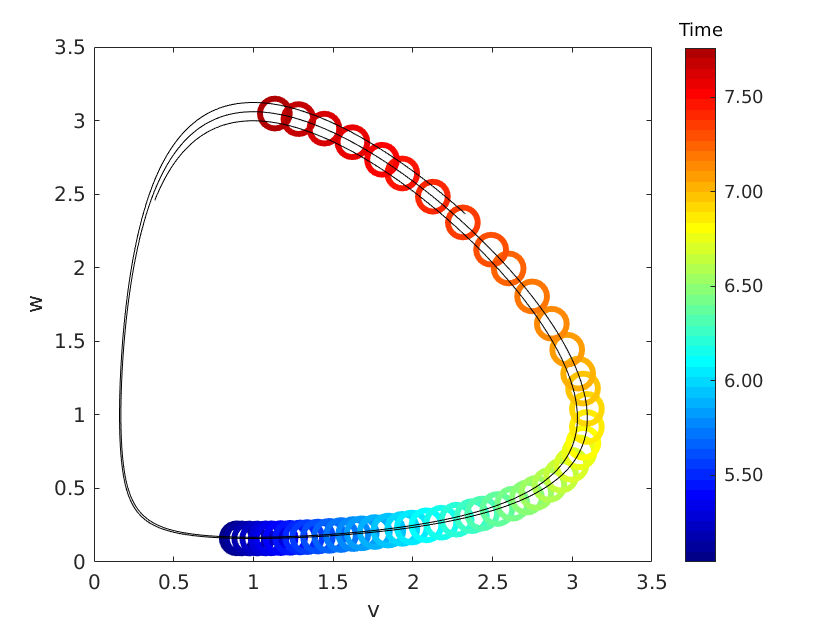}
         \caption{Monomers}
         \label{fig:vw_p3b_2}
     \end{subfigure}
     \hfill
     \begin{subfigure}[b]{0.48\textwidth}
         \centering
         \includegraphics[width=\textwidth]{./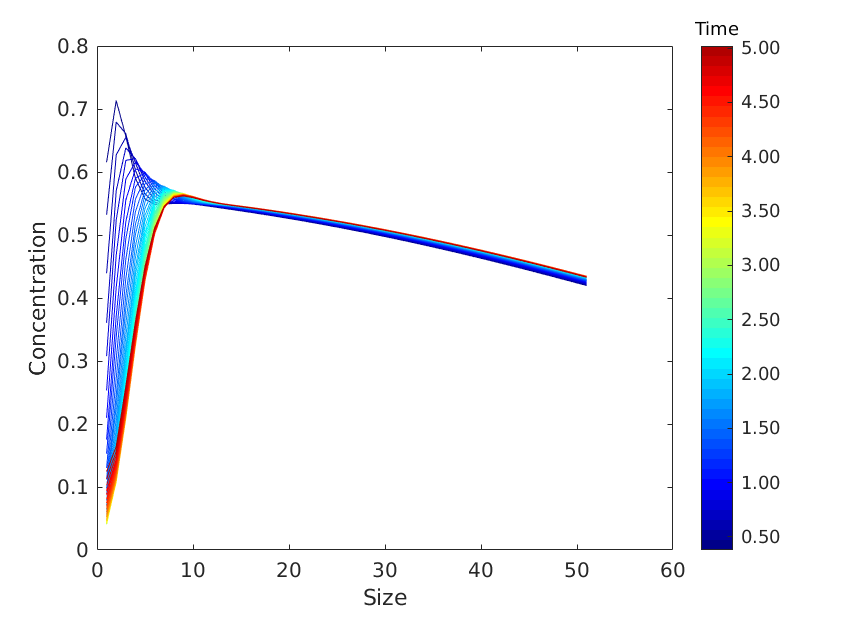}
         \caption{Size distribution}
         \label{fig:sd_p3b_1}
     \end{subfigure}
     \hfill
    \begin{subfigure}[b]{0.48\textwidth}
         \centering
         \includegraphics[width=\textwidth]{./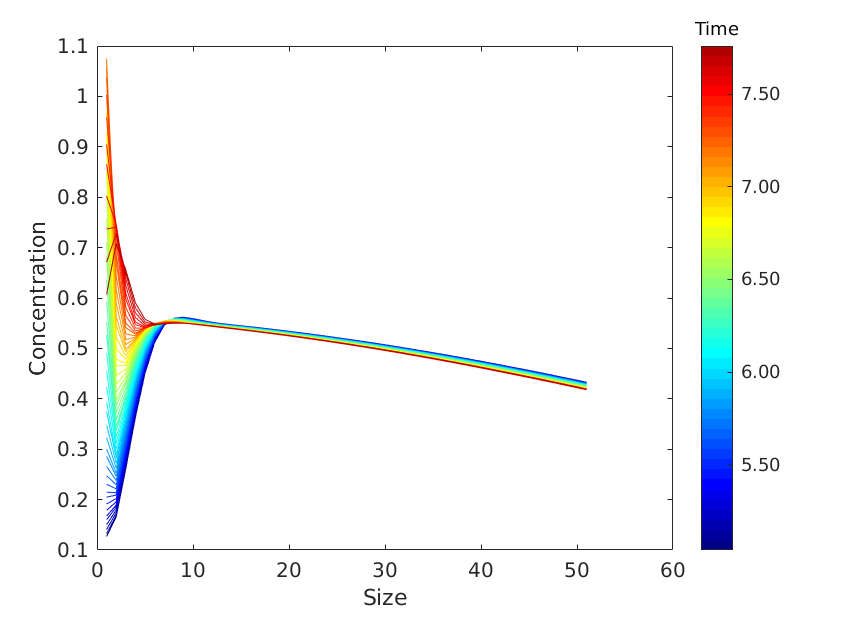}
         \caption{Size distribution}
         \label{fig:sd_p3b_2}
     \end{subfigure}
     \hfill
    \begin{subfigure}[b]{0.48\textwidth}
         \centering
         \includegraphics[width=\textwidth]{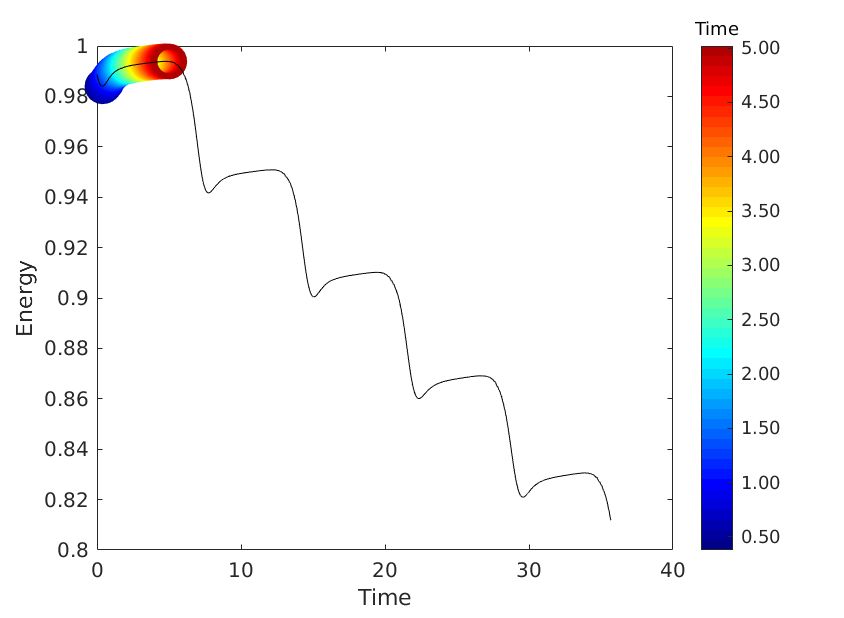}
         \caption{Energy}
         \label{fig:ener_p3b_1}
     \end{subfigure}
     \hfill
      \begin{subfigure}[b]{0.48\textwidth}
         \centering
         \includegraphics[width=\textwidth]{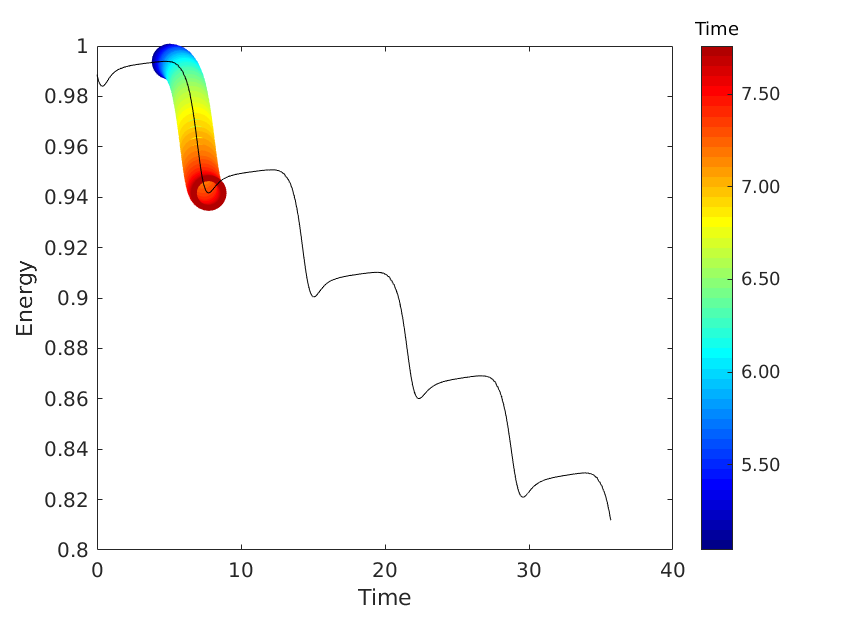}
         \caption{Energy}
         \label{fig:ener_p3b_2}
     \end{subfigure}
        \caption{Numerical illustration of the evolution of the size distribution of the clusters, the monomers and the energy (\ref{def:Etilde}) in Phase III over one cycle (c.f the rescaled system described in the equations \eqref{D1}--\eqref{D3}). The time evolution indicated by the colors is the same for the three Figures \ref{fig:vw_p3b_1}, \ref{fig:sd_p3b_1}  and \ref{fig:ener_p3b_1} (resp. \ref{fig:vw_p3b_2}, \ref{fig:sd_p3b_2} and \ref{fig:ener_p3b_2}). The Figures \ref{fig:sd_p3b_1} and \ref{fig:sd_p3b_2} are truncated and only show the evolution of the size distribution for sizes smaller than 50. The time $t=0$ corresponds to the end of Phase II and the beginning of Phase III in this simulation. }
        \label{fig:sd_vw_ener_p3b}
\end{figure}

{\begin{proposition}
\label{prop:PhaseIII}
Let $T^{in}_3$ denote 
the beginning of Phase III.
Let us depart from $E_n,$ $L_n$, $T_n$ and $c_j(T_n)$ described by~\eqref{eq:initPhaseIII} and define $V,\,W,\;\tau$ and $C_j$ by~\eqref{F1}, $\tilde E_n:={\tfrac{1}{\ep}} E_n$ and $\tau^{in}_n=\ep T^{in}_3$. On each cycle $\tau_n > \tau^{in}_n$, we can approximate the dynamics with the following system taken on $[\tau_{n}, \tau_{n+1}]$:
\begin{align}
\frac{dV}{d\tau} & =-VW+V,\label{G1}
\\
\frac{dW}{d\tau}  &=VW-W  \label{G2}, \\
\frac{dC_{j}}{d\tau} & = \frac{W-V}{2} \left(
C_{j-1}-C_{j+1}\right) +\frac{V+W}{2}\left( C_{j-1}-2C_{j}+C_{j+1}\right),\; \;  j \geq 2,  \label{G3} \\
\frac{dC_{1}}{d\tau} & =VC_{2}-WC_{1}, \quad  {\tilde E_k}= V+W-2-\log(VW),\quad V(0)=W(0)>1,
\label{G3a}
\end{align}
and with the additional matching condition%
\begin{equation}
\lim\limits_{j\to \infty} C_{j}\left( \tau\right)= \frac{2}{\pi }\qquad  \forall \tau \in 
[\tau_{n}, \tau_{n+1}].
\label{G4}
\end{equation}
The change of energy between two successive cycles is then given by
\begin{equation}
\tilde{E}_{n+1}-\tilde{E}_{n}\approx \varepsilon\int_{\tau_{n}}^{\tau_{n+1}}\left( 1-V( \tau) \right) C_{1}\left(
\tau\right) d\tau  \label{eq:EnChPh3},
\end{equation}
with $(C_1,V)$ solutions to \eqref{G1}--\eqref{G4}. 
\end{proposition}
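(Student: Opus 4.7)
My plan for proving Proposition~\ref{prop:PhaseIII} is as follows. First, I would insert the Phase~III scaling \eqref{F1} into the full system \eqref{eq:BD2}--\eqref{eq:BD3}, which directly yields the rescaled system \eqref{D1}--\eqref{D3}. To extract the leading-order Lotka--Volterra dynamics \eqref{G1}--\eqref{G2}, I would drop the term $-\varepsilon V C_1$ from \eqref{D1}, justified by Lemma~\ref{lem:initPhaseIII}, which gives $C_1 = O(1)$ at the start of Phase~III, and by the fact that $C_1$ remains bounded along the dynamics since \eqref{G3a} is forced by bounded data through the matching \eqref{G4}. The cluster equations \eqref{G3}--\eqref{G3a} are then obtained exactly, by rewriting \eqref{D3} in the symmetric convection--diffusion form already used in \eqref{eq:DiffForm1}--\eqref{eq:DiffForm2}; no further approximation is needed at this level since $V,W$ are of order one throughout the phase.

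The matching condition \eqref{G4} requires more care. My plan is to use Lemma~\ref{lem:initPhaseIII}, which states that at the beginning of Phase~III one has $c_j(T_n)\sim\varepsilon^2\psi(\varepsilon j)$ for $j\gg 1$, with $\psi$ the Gaussian profile from~\eqref{eq:PsiAs}. Rescaling by \eqref{F1} gives $C_j\approx\psi(\varepsilon j)$, and for $1\ll j\ll 1/\varepsilon$ this converges to $\psi(0)=2/\pi$, which establishes the matching condition at the start of Phase~III. To carry this matching through each subsequent cycle, I would then show that the outer profile $\psi(\varepsilon j)$ essentially cannot evolve on the rescaled time scale of a single LV cycle: the characteristic speed $W-V$ in \eqref{G3} is of order one in $\tau$, so the cumulative displacement of the outer profile along one cycle is bounded, whereas $\psi$ varies on the cluster-size scale $1/\varepsilon$; hence the correction to $C_j$ in the intermediate region $1\ll j \ll 1/\varepsilon$ is of order $\varepsilon$ per cycle and can be neglected at leading order.

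The energy-change formula \eqref{eq:EnChPh3} is the most mechanical step: starting from \eqref{D5a} and substituting $E=\varepsilon\tilde E$, $v=\varepsilon V$, $c_1=\varepsilon^2 C_1$, $t=\tau/\varepsilon$, a direct calculation yields $d\tilde E/d\tau=\varepsilon(1-V)C_1$, and integrating over $[\tau_n,\tau_{n+1}]$ with $(V,C_1)$ replaced by the solution of the leading-order system \eqref{G1}--\eqref{G4} gives \eqref{eq:EnChPh3} up to a higher-order error.

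The hard part will be the second step: justifying the matching condition self-consistently over many cycles. One must verify that the slow drift of the outer profile driven by the coupling in \eqref{D3} contributes only $O(\varepsilon)$ corrections at the inner edge, that the boundary-layer residue of width $\sigma_n=O(\varepsilon)$ inherited from Phase~II relaxes quickly on the rescaled time scale, and that the neglected $-\varepsilon V C_1$ perturbation in \eqref{D1} does not accumulate so as to push $(V,W)$ off the leading LV orbit within a cycle. These compatibility estimates amount to controlling the error between the actual dynamics and the reduced system \eqref{G1}--\eqref{G4}; a rigorous justification would presumably belong to the companion paper~\cite{DFMV2}, but the formal asymptotic argument sketched here suffices for the leading-order description claimed in the proposition.
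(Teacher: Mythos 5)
Your approach matches the paper's in every main ingredient: insert the scaling \eqref{F1}, drop the $-\varepsilon V C_1$ term to get the leading Lotka--Volterra system \eqref{G1}--\eqref{G2}, rewrite \eqref{D3} in the symmetric convection--diffusion form to get \eqref{G3}--\eqref{G3a}, invoke Lemma~\ref{lem:initPhaseIII} for the initial matching condition \eqref{G4}, and derive \eqref{eq:EnChPh3} directly from \eqref{D5a}. The one place where your argument is materially weaker is the persistence of the matching condition over the whole of Phase~III. A per-cycle correction of ``order $\varepsilon$'' does not by itself rule out accumulation to $O(1)$ over the $O(\varepsilon^{-1}\log\varepsilon^{-1})$ cycles that make up the phase. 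The paper closes this at the formal level by comparing two $\tau$-time scales, using the continuous approximation \eqref{eq:DiffPh3}: the diffusion term $\frac{V+W}{2}\varepsilon^2\partial_x^2 C$ produces $O(1)$ changes in the outer profile only over $\tau\sim\varepsilon^{-2}$, while the convective term $\varepsilon(V-W)\partial_x C$ is periodic and only produces $O(1)$-in-$j$ (i.e.\ $O(\varepsilon)$-in-$x$) excursions each cycle. The paper then observes --- anticipating Proposition~\ref{prop:endPhaseIII} --- that the whole of Phase~III lasts $\tau\sim-\varepsilon^{-1}\log(\varepsilon^2)\ll\varepsilon^{-2}$, so the outer profile $C(x,\tau)\approx\psi(x)$ is frozen for the duration of the phase, not merely cycle by cycle. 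Your concluding paragraph correctly flags this accumulation as the ``hard part'' needing verification; the paper's explicit $\tau$-time-scale comparison is the formal argument that fills it.
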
}
\begin{proof}
Let us first prove that $C_{j}\approx \psi(\ep j)$ for
$j\approx \f{1}{\ep}$: this implies the matching condition~\eqref{G4}, {the system~\eqref{G1}--\eqref{G3a} being written for $j$ large but $\ep j$ small. This is obtained through a transport-diffusion equation as for Phases I and II.}  
We depart from the initial condition given by Lemma~\ref{lem:initPhaseIII}
\begin{equation}
C_{j}{(\tau^{in}_n)}=\psi\left( {\varepsilon}j\right),\qquad {\tau^{in}_n=\ep T^{in}_3} \ \ \ \text{for }j%
\text{ large}\ ,  \label{eq:CjRes}
\end{equation}
We rewrite
(\ref{D3}) as {a discrete transport-diffusion equation, as we did in~\eqref{eq:DiffForm1}}%
\begin{equation}
\frac{dC_{j}}{d\tau}=\left( \frac{W-V}{2}\right) \left(
C_{j-1}-C_{j+1}\right) +\frac{V+W}{2}\left( C_{j-1}-2C_{j}+C_{j+1}\right). \label{eq:C:transportdiff}
\end{equation}
Denoting as $x$ the variable $\ep j$ and approximating the
discrete derivatives in the previous formula by means of continuous
derivatives  {as we did in~\eqref{eq:DiffApp}} (writing $C_{j}\left( \tau\right) $ as $C\left( x,\tau\right) $
with $x=\ep j$), we obtain%
\begin{equation}
\frac{\partial C\left( x,\tau\right) }{\partial\tau}={\varepsilon}%
\left( V-W\right) \frac{\partial C\left( x,\tau\right) }{\partial x}+\frac{%
V+W}{2}\ep^{2}\frac{\partial ^{2}C\left(
x,\tau\right) }{\partial x^{2}}.  \label{eq:DiffPh3}
\end{equation}
Equation~(\ref{eq:DiffPh3}) allows to estimate the variation of the
concentrations $C_{j}$ with large $j:$  
\begin{itemize}
\item The periods $\tau_{n}>\tau^{in}_{n}$ of each LV cycle is of order one during Phase III.
\item $\left( V-W\right) $ is periodic in each LV cycle, hence  the transport rate $\ep \left(
V-W\right)$ cause an oscillation of $C\left( x,\tau\right)$ in the variable $x$ of order $\varepsilon$    or equivalently, in the variable $j$ the amplitude of the oscillations is of order one. 
\item Since the second order term $%
\frac{V+W}{2}\ep^{2}\frac{%
\partial^{2}C\left( x,\tau\right) }{\partial x^{2}}$ is $O(\ep^2)$, significant changes in
the shape of the concentrations $C\left( x,\tau\right) $ only happen over times $\tau$ of order $%
\frac{1}{\varepsilon^{2}}$.
\end{itemize}
To sum-up, over times $\tau$ much smaller than $\frac{1}{%
\varepsilon^{2}}$, we can neglect the modifications of the
concentrations $C( x,\tau_{n})$, and only small oscillations during each period. Notice that the equations \eqref{D1}--\eqref{D3} suggest that there is a change of the rescaled LV energy $%
\tilde{E}$ of order $\varepsilon$ in each LV cycle {as long as $\tilde E$ is of order one, so that the number of LV cycles needed for $\tilde E$ to become small is of order $\f{1}{\ep}$}. 
{The end of Phase~III, characterised by $\tilde{E}$ small, is studied below in Prop.~\ref{prop:endPhaseIII}, and concerns {a number of cycles in the order $-\f{\log(\ep^2)}{\ep}.$}
Since one LV period is of order one in the variable $\tau$ during the whole Phase~III, we deduce that the total duration of Phase~III is of order $-\f{\log(\ep^2)}{\ep}\ll \f{1}{\ep^2}$ in the variable $\tau,$ hence}
the external
concentrations $C\left( x,\tau\right)\approx \psi (x)$ remain frozen during Phase
III. 

This allows to obtain the matching condition~\eqref{G4} {(cf. Prop \ref{prop:UM})}. 
We then obtain~\eqref{G1}--\eqref{G3a}  to approximate $C_{1}$
to the leading order. 
We expect to have a stable periodic solution of \eqref{G1}--\eqref{G4} for
each value of the rescaled energy $\tilde{E}.$ 
Notice that the solution of \eqref{G1}-- \eqref{G2} can be
computed for each value of $\tilde{E}\geq0$ independently on the values of $%
C_{j}.$ Thus, \eqref{G3}--\eqref{G4} become a system of infinitely many ODEs
with prescribed functions $W,\ V$ for each value of $\tilde{E}.$

We end the proof of Prop.~\ref{prop:PhaseIII} by computing
 the change of rescaled energy $\tilde{E}$ during each
cycle: we use that
\begin{equation*}
\frac{d\tilde{E}}{d\tau}=\varepsilon\left( 1-V\right) C_{1}, 
\end{equation*}
which yields~\eqref{eq:EnChPh3}.
\end{proof}
\\

\begin{remark}Contrarily to Phase~II, where~\eqref{eq:Echn} provided an explicit formula to estimate the decay of energy, we have in Phase III no way to approximate~\eqref{G1}--\eqref{G4} in order to estimate the right-hand side of~\eqref{eq:EnChPh3}: in general this has to be done numerically, except at the end of Phase III, for which we have the following result.
\end{remark}
\subsection{End of Phase~III: {energy decay}}
\label{subsec:PhaseIII:end}
As already said, we expect to have a number of cycles in the order of $\f{1}{\ep}$ before reaching $\tilde E_n \ll 1.$
We now compute the integral on the right-hand side of (\ref{eq:EnChPh3}) at the end of Phase~III, {\it i.e.} for  $%
\tilde{E}_{n}\rightarrow0.$  {We obtain the following result.}
{\begin{proposition}\label{prop:endPhaseIII}Under the assumptions and notations of Prop.~\ref{prop:PhaseIII}, for the range $\ep^2 \ll \tilde E_n \ll 1$, we can approximate the decay of energy by
\begin{equation}
\tilde E_{n+1}-\tilde E_n \approx - \ep a \tilde E_n,
\end{equation}
for a given constant $a>0$ of order one. Hence the energy decays exponentially fast at the end of Phase~III.
The assumptions of Prop.~\ref{prop:PhaseIII} are valid until $\tilde E_n \approx \ep^2,$ which happens after a number of cycles in the order of $\f{1}{a\ep}\log(\f{1}{\ep^2})$. At the end of this period, we thus have $\tilde E_n=O(\ep^2),$ $V-1=O(\ep),$ $W-1=O(\ep),$ $C_j\approx \psi(\ep j)$ for $j\approx \f{1}{\ep}.$
\end{proposition}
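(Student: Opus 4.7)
The plan is to linearise the system~\eqref{G1}--\eqref{G4} around the common equilibrium $V=W=1$, $C_j = 2/\pi$: at leading order in $\tilde E_n$ the rescaled LV system becomes a harmonic oscillator of unit frequency, the cluster equations a discrete heat equation driven through the boundary at $j=1$, and the recurrence~\eqref{eq:EnChPh3} can then be evaluated explicitly as a linear response computation.

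The linearisation step is straightforward. Setting $\delta V := V-1$ and $\delta W := W-1$ and expanding~\eqref{def:Etilde} yields $\tilde E \approx \tfrac{1}{2}(\delta V^2 + \delta W^2)$, while~\eqref{G1}--\eqref{G2} reduce to $\delta V' = -\delta W$ and $\delta W' = \delta V$ up to $O(\tilde E)$ corrections. Thus $(\delta V,\delta W)$ rotates on the circle of radius $\sqrt{2\tilde E_n}$ with period $2\pi + O(\tilde E_n)$. Writing $\ti C_j := C_j - 2/\pi$ and using the matching condition~\eqref{G4}, the equations~\eqref{G3}--\eqref{G3a} linearise to the discrete heat equation $\ti C_j' = \ti C_{j-1} - 2\ti C_j + \ti C_{j+1}$ for $j\geq 2$ with $\ti C_j \to 0$ as $j\to\infty$, coupled to the forced boundary equation $\ti C_1' = \frac{2}{\pi}(\delta V-\delta W) + \ti C_2 - \ti C_1$.

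The heart of the plan is a linear response computation: since the forcing has frequency~$1$, I would use the complex ansatz $\ti C_j = \mathrm{Re}(Z_j e^{i\tau})$. The interior equation then forces $Z_j = Z_1\mu^{j-1}$, where $\mu$ is the root of $\mu^2-(2+i)\mu+1=0$ with $|\mu|<1$, and the boundary equation gives the closed-form expression $Z_1 = \frac{(2/\pi)(\gamma_V-\gamma_W)}{1+i-\mu}$, with $\gamma_V, \gamma_W$ the complex amplitudes of $\delta V, \delta W$; the first-order LV relation implies $\gamma_W = -i\gamma_V$, so in particular $|\gamma_V|^2 = 2\tilde E_n$. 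Inserting this response back into~\eqref{eq:EnChPh3}, I would first isolate the constant part $2/\pi$ of $C_1$: its contribution vanishes over one LV period, since $\int(V-1)\,\diff\tau = \int W'/W\,\diff\tau = 0$ by periodicity. The remaining bilinear integral $-\ep\int \delta V\,\ti C_1\,\diff\tau$ reduces, by orthogonality of $e^{\pm i\tau}$, to a scalar multiple of $\tilde E_n$, giving
\begin{equation*}
\tilde E_{n+1} - \tilde E_n \;\approx\; -\ep\, a\,\tilde E_n, \qquad a \;:=\; 4\,\mathrm{Re}\!\left(\frac{1+i}{1+i-\mu}\right).
\end{equation*}

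The main obstacle is to verify that $a > 0$. The direct route is to compute $\mu$ from the quadratic --- the branch of modulus less than one satisfies $\mathrm{Re}(\mu)\in (0,1)$ and $\mathrm{Im}(\mu)<0$ --- and then check the sign of $\mathrm{Re}((1+i)/(1+i-\mu))$ by elementary arithmetic. A more conceptual route is to invoke the local linear stability of the unique positive steady state (Appendix~\ref{app:linear}), which rules out $a \leq 0$ in the regime $\tilde E \to 0^+$. Once $a>0$ is secured, iterating $\tilde E_{n+1} \approx (1-\ep a)\tilde E_n$ produces the announced geometric decay and hence the cycle count $n \approx \tfrac{1}{a\ep}\log(1/\ep^2)$ to reach $\tilde E_n \approx \ep^2$; the estimates $V-1 = O(\ep)$ and $W-1 = O(\ep)$ at the end of Phase~III follow from the quadratic relation $\tilde E \approx \tfrac{1}{2}(\delta V^2+\delta W^2)$; and the preservation of $C_j \approx \psi(\ep j)$ for $j\approx 1/\ep$ throughout Phase~III comes from the fact that the total elapsed rescaled time $\tau \sim \log(1/\ep^2)/\ep$ remains much smaller than the outer diffusive timescale $1/\ep^2$ already identified in the proof of Proposition~\ref{prop:PhaseIII}.
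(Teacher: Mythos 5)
Your proposal is correct and follows essentially the same route as the paper: linearise the rescaled system around $(V,W,C_j)=(1,1,2/\pi)$, solve the discrete heat equation with periodic boundary forcing via the complex ansatz $Z_j = Z_1\mu^{j-1}$ (the paper's $A_j = K_0 r_-^{j-1}$, with $\mu=r_-$), and evaluate the energy integral~\eqref{eq:EnChPh3} to obtain $a = 4\,\mathrm{Re}\bigl((1+i)/(1+i-\mu)\bigr)$, which the paper computes numerically to be $\approx 3.69>0$. Your observation that the contribution of the constant part of $C_1$ vanishes exactly because $\int(V-1)\,\diff\tau = \int W'/W\,\diff\tau = 0$ over a period is a small but clean improvement over the paper's silent dropping of that term, and the alternative stability-based argument for $a>0$ is a nice conceptual check, though the paper relies on the direct numerical evaluation.
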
}
{\begin{remark} We notice that, as at the end of Phase~II and Phase~III, and despite the fact that the scalings differ, the energy decays exponentially, with a rate in the order of $\ep$ in both cases (cf. Figures \ref{fig:ener_p2} and \ref{fig:energy_p3_decay}). However the constants ($A$ and $a$) are different.
\end{remark}}
{\begin{remark}
    The energy decay at the end of Phase III is also characterizing the damping of the oscillations of the trajectories of the monomers' concentrations. Proposition \ref{prop:endPhaseIII} proves that the trajectories $(V,W)$ are enclosed in a ball of center $(1,1)$ and radius $\ep$. 
    However, one can note that the steady-state \eqref{eq:steady} for the monomers is shifted from the point $(1,1)$, hence more precise estimate can be found for $V-1$ after $\f{1}{a\ep}\log(\f{1}{\ep^2})$ cycles.
    The damping of the oscillations are studied more precisely in section \ref{sec:PhaseIV} and the results will be refined in Lemma \ref{lem:endPhaseIII} using a more precise asymptotic expansion when linearizing around the steady-state.
\end{remark}}
\begin{figure}
    \centering
    \includegraphics[width=0.75\textwidth]{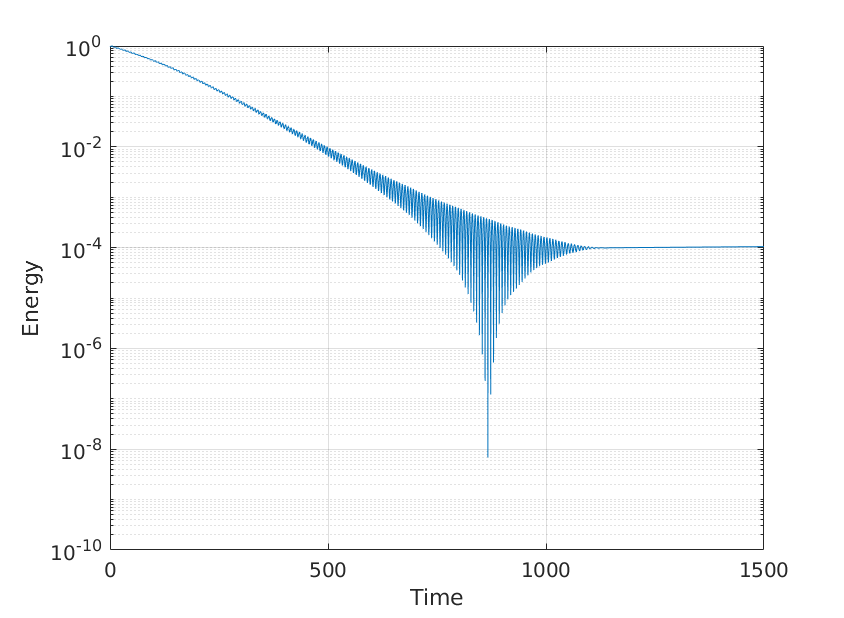}
    \caption{Numerical simulation of the time evolution of the Energy $\tilde E$ defined in~\eqref{def:Etilde} during Phase III and the beginning of Phase IV. The time $t=0$ corresponds to the end of Phase~II and the beginning of Phase~III in this simulation. The y-axis is logarithmic and $\varepsilon = 0.02$. The decay of the energy is exponential-like until it reaches an order of magnitude of $\varepsilon^2\approx 10^{-4}$; { the order of magnitude of the energy  oscillations then become of the same order of magnitude as the energy itself, and then decay (end of Phase~III). 
    }}
    \label{fig:energy_p3_decay}
\end{figure}
\begin{proof}
Let us linearize the set of equations 
\eqref{G1}--\eqref{G4}, writing%
\begin{equation*}
V=1+\alpha\ \ ,\ \ W=1+\beta\ \ ,\ \ C_{j}=\frac{2}{\pi }\left( 1+\eta
_{j}\right) 
\end{equation*}
where $\left\vert \alpha\right\vert ,\ \left\vert \beta\right\vert $ and $%
\left\vert \eta_{j}\right\vert $ are small. Then, keeping only the linear
terms in $\alpha,\ \beta$ and $\eta_{j}$, we obtain the following equations%
\begin{align}
\frac{d\alpha}{d\tau} & =-\beta  \label{H1},\\
\frac{d\beta}{d\tau}  &=\alpha  \label{H2}, \\
\frac{d\eta_{j}}{d\tau} & =\left( \eta_{j-1}-2\eta_{j}+\eta_{j+1}\right),\ \  j \geq 2,  \label{H3} \\
\frac{d\eta_{1}}{d\tau} & =\left( \alpha-\beta\right) +\eta_{2}-\eta _{1},
\qquad \qquad
\lim\limits_{j\to\infty} \eta_{j}\left( \tau\right)  =0\ \ {\text{for all }}%
\tau.  \label{H5}
\end{align}
The rescaled energy $\tilde{E}$ can be approximated for small values as%
\begin{equation*}
\tilde{E}=\frac{1}{2}\left( \alpha^{2}+\beta^{2}\right). 
\end{equation*}
Then, the solution of \eqref{H1}--\eqref{H2} can be written, up to a
translation of the origin of time $\tau$ as%
\begin{equation*}
\alpha=\sqrt{2\tilde{E}}\cos\left( \tau\right), \ \ \ \ \beta=\sqrt {2\tilde{%
E}}\sin\left( \tau\right). 
\end{equation*}
We notice that we now have periods of the LV cycle of $\tau_n\sim 2\pi$.
In fact, it is enough to solve
the reference problem
\begin{align}
\frac{d\varphi_{j}}{d\tau} & =\left(
\varphi_{j-1}-2\varphi_{j}+\varphi_{j+1}\right), \ \ \ \ j\geq2  ,\label{Hm1}
\\
\frac{d\varphi_{1}}{d\tau} & =e^{i\tau}+\varphi_{2}-\varphi_{1},  \label{Hm2}
\\
\lim_{j\rightarrow\infty}\varphi_{j}\left( \tau\right) & =0\ \ \text{for all 
}\tau .  \label{Hm3}
\end{align}
since solutions $\left\{ \eta_{j}\right\} _{j\in\mathbb{N}}$ to
\eqref{H3}--\eqref{H5}
are recovered from solutions $\left\{ \varphi_{j}\right\} _{j\in\mathbb{N}}$ to 
\eqref{Hm1}--\eqref{Hm3} by means
of%
\begin{equation*}
\eta_{j}=\sqrt{2\tilde{E}}\left[ \func{Re}\left( \varphi_{j}\right) -\func{Im%
}\left( \varphi_{j}\right) \right] \ \ ,\ \ j\geq1. 
\end{equation*}
We look for solutions of the problem \eqref{Hm1}--\eqref{Hm3} in the form%
\begin{equation*}
\varphi_{j}=A_{j}e^{i\tau} 
\end{equation*}
where the coefficients $A_{j}$ solve%
\begin{align}
iA_{j} & =A_{j-1}-2A_{j}+A_{j+1},\ \ \ \ j\geq2  \label{Am1} \\
iA_{1} & =A_{2}-A_{1}+1.  \label{Am2}
\end{align}
We can look for particular solutions of (\ref{Am1}) in the form%
\begin{equation*}
A_{j}=\left( r\right) ^{j-1}\ \ ,\ \ j\in\mathbb{N},
\end{equation*}
which yields the two roots
\begin{equation*}
r_{\pm}=\frac{1}{2}\left[ \left( 2+i\right) \pm\sqrt{4i-1}\right].
\end{equation*}
We compute numerically $\left\vert
r_{-}\right\vert \approx 0.48053 <1$ while  $\left\vert r_{+}\right\vert >1$.
Therefore, in order to obtain a solution $\varphi_{j}$
satisfying \eqref{Hm3}, we must have%
\begin{equation*}
A_{j}=K_{0}\left( r_{-}\right) ^{j-1},\ \ j\in\mathbb{N} 
\end{equation*}
for some $K_{0}\in\mathbb{C.}$ 
In order to determine $K_{0}$ we use (\ref%
{Am2}). Then%
\begin{equation*}
K_{0}\left[ \left( 1+i\right) -r_{-}\right] =1. 
\end{equation*}
Therefore,%
\begin{equation*}
\varphi_{j}=\frac{\left( r_-\right) ^{j-1}}{\left[ \left( 1+i\right) -r_{-}%
\right] }e^{i\tau}=\frac{\left( r_-\right) ^{j-1}}{\left[ \frac{i}{2}+\sqrt{%
{ i-\f{1}{4}}}\right] }e^{i\tau} ,\ \ \ j \geq 1.
\end{equation*}
Thus,%
\begin{equation*}
\eta_{j}=\sqrt{2\tilde{E}}\left[ \func{Re}\left( \frac{\left( r_-\right) ^{j-1}%
}{\left[ \frac{i}{2}+\sqrt{{i-\f{1}{4}}}\right] }e^{i\tau}\right) -\func{Im}\left( 
\frac{\left( r_-\right) ^{j-1}}{\left[ \frac{i}{2}+\sqrt{{i-\f{1}{4}}}\right] }%
e^{i\tau}\right) \right], \ \ j\geq 1. 
\end{equation*}
or, equivalently%
\begin{equation*}
\eta_{j}=\sqrt{2\tilde{E}}\func{Re}\left( \frac{\left( r_-\right) ^{j-1}\left(
1+i\right) }{\left[ \frac{i}{2}+\sqrt{{i-\f{1}{4}}}\right] }e^{i\tau }\right) =\sqrt{%
2\tilde{E}}\func{Re}\left( \left( 1+i\right) \varphi_{j}\right) \ \ ,\ \ j\geq 1.\end{equation*}

We can now approximate the integral on the right-hand side of (\ref{eq:EnChPh3}) as follows:

\begin{align*}
\tilde E_{n+1}-\tilde E_n
&= \varepsilon\int_{0}^{\tau_n }\left(
1-V\left( \tau;\tilde{E}_{n}\right) \right) C_{1}\left( \tau;\tilde{E}%
_{n}\right) d\tau \\
& =-\frac{2\varepsilon}{\pi }\int_{0}^{\tau_n}\alpha\bigl( \tau;\tilde{E}_{n}\bigr) \bigl( 1+\eta _{1}\bigl(
\tau;\tilde{E}_{n}\bigr) \bigr) d\tau \\
& =-\frac{2 \sqrt{2\tilde{E}}\ep }{\pi }\int_{0}^{\tau_n }\cos\left( \tau\right) \eta_{1}\bigl( \tau;%
\tilde{E}_{n}\bigr) d\tau \\
& =-\frac{4\varepsilon\tilde{E}}{\pi }\func{Re}\left( \left( 1+i\right)
K_{0}\int_{0}^{2\pi}\cos\left( \tau\right) e^{i\tau}d\tau\right) \\
& =-\frac{4\varepsilon\tilde{E}}{\pi }\func{Re}\left( \left( 1+i\right)
K_{0}\int_{0}^{2\pi}\cos^{2}\left( \tau\right) d\tau\right) 
=-{4\varepsilon\tilde{E}}{}\func{Re}\left( \left(
1+i\right) K_{0}\right)
\end{align*}
with
\begin{equation*}
\func{Re}\left( \left( 1+i\right) K_{0}\right) =\func{Re}\left( \frac{\left(
1+i\right) }{\frac{i}{2}+\sqrt{{ i-\f{1}{4}}}}\right) ={ 0.92505} >0. 
\end{equation*}
Then%
\begin{equation}
\tilde{E}_{n+1}-\tilde{E}_{n}=-{4\varepsilon\tilde{E}_{n}}{}\func{Re}%
\left( \left( 1+i\right) K_{0}\right) =-{ a} \varepsilon\tilde{E}_{n}\ \ 
\text{where } { a \approx 3.6922}>0. \label{eq:EnExp}
\end{equation}
Phase~III continues until the time in which the contribution of the term 
$-\varepsilon VC_{1}$ in (\ref{D1}) becomes comparable to the term $%
-VW+V=-V\beta,$ {so that the approximation of~\eqref{D1} by~\eqref{G1} becomes invalid.} Therefore, we need to compare the terms $\varepsilon C_{1}$
and $\beta.$ Since $C_{1}$ remains of order one during the whole phase,  Phase III ends when $\beta$ becomes of order $%
\varepsilon$. The order of magnitude of $\beta$ is of order $\sqrt{\tilde{E}%
_{n}}.$ Thus, Phase III is valid as long as $\tilde{E}_{n}\gtrsim%
\varepsilon^{2}.$ Notice that \eqref{eq:EnExp} yields an exponential decay
for $\tilde{E}_{n}$ with the form $\exp\left( -a n\varepsilon\right) . $
It then follows that $\tilde {E}_{n}$ becomes of order $\varepsilon^{2}$
after $\frac{1}{a\varepsilon }\log\left( \frac{1}{\varepsilon^{2}}%
\right) $ LV cycles. 
\end{proof}

\section{{Phase IV: Oscillations decay and stabilization} }
\label{sec:PhaseIV}

As outlined in the previous section, the final Phase IV starts once the energy level is in the order of $\ep^3$ (or $\ep^2$ for the rescaled variables of Phase III). We recall that this is also the order of magnitude of our system's equilibrium energy value $\bar E,$ see~\eqref{eq:steady:energy}. We decompose Phase~IV into two successive stages: first, a phase similar to the end of Phase~III, where now the oscillations (yet no longer the energy) decay and finally become negligible. Secondly, the trend to equilibrium occurs through an approximate parabolic equation, with non-oscillatory coefficients.

\subsection{End of Phase III and early 
Phase~IV: decay of the oscillations}
\label{subsec:PhaseIV:decay}
In the previous section, Proposition \ref{prop:endPhaseIII} gives the decay of the energy until the end of Phase III as well as the characterization of the monomers and the cluster distribution until the end of Phase III, i.e. until $\tilde E_n \approx \ep^2$, resp. $E_n \approx \ep^3$.
The following lemma also characterizes the end of Phase III, but allows us to describe more precisely the damping of the oscillations occurring at the very end of Phase III and the beginning of Phase IV.

\begin{lemma}\label{lem:endPhaseIII}
Under the assumptions of Prop. \ref{prop:endPhaseIII}, the oscillations become negligible after a number of cycles in the order of $\frac{\log\left(\ep^{-1}\right)}{\ep}$ and we have 
\begin{equation*}
    V=1+O(\ep^{3/2}),\quad W=1-\ep C_\infty + O(\ep^{3/2}),\quad C_1=C_\infty + O(\ep). 
\end{equation*}

During these cycles, the changes in the size distribution  $C_j$ have been negligible, so that $C_\infty =\f{2}{\pi}$ and $C_j \approx \psi (\ep j)$.
\end{lemma}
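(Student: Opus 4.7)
The plan is to continue and refine the linear analysis begun in the proof of Proposition~\ref{prop:endPhaseIII}, carrying the expansion one further order in $\ep$ by centering it on the quasi-equilibrium $(\bar V_*,\bar W_*,\bar C_*)=(1,\,1-\ep C_\infty,\,C_\infty)$ of the rescaled system \eqref{D1}--\eqref{D3} rather than on $(1,1,C_\infty)$. This point is the equilibrium of \eqref{D1}--\eqref{D2} once the perturbation $-\ep V C_1$ is retained and $C_1$ is frozen at its matched value $C_\infty=2/\pi$; it differs from $(1,1)$ precisely by a shift of order $\ep$, comparable with the amplitude of the oscillations at the end of Phase~III.

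First I would substitute $V=1+\ep x$, $W=1-\ep C_\infty+\ep y$, and $C_j=C_\infty+\ep\gamma_j$ into \eqref{D1}--\eqref{D3}, where $(x,y,\{\gamma_j\})$ are $O(1)$ at the beginning of the sub-phase, matching the end-of-Phase-III amplitudes $\sqrt{2\tilde E_n}=O(\ep)$ from Proposition~\ref{prop:endPhaseIII}. Keeping leading-order terms yields a linearized system whose $(x,y)$-part is still the harmonic oscillator $\dot x=-y+O(\ep)$, $\dot y=x+O(\ep)$, while the cluster perturbations satisfy, at leading order, the same discrete heat equation with source at $j=1$ as the reference system \eqref{Hm1}--\eqref{Hm3}. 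The coupling of $(x,y)$ to $\gamma_1$ then reproduces the calculation of the recurrence \eqref{Am1}--\eqref{Am2}, with the same decaying root $r_{-}$ and coefficient $K_0$. Projecting the cluster feedback back onto the $(x,y)$-mode produces an effective damping term of negative sign and order $\ep$, giving a per-cycle decay factor for the amplitude $\sqrt{x^2+y^2}$ equal to $(1-a\ep/2+o(\ep))$, i.e.~the square root of the energy-decay factor in \eqref{eq:EnExp}, with the same positive constant $a$.

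Iterating over $n$ LV cycles (each of $\tau$-duration $\simeq 2\pi$) starting from $|(x,y)|=O(1)$, which corresponds to $V-1=O(\ep)$ and $W-1+\ep C_\infty=O(\ep)$ at the end of Phase~III, the amplitude reaches $O(\sqrt\ep)$ precisely when $(1-a\ep/2)^n\simeq\sqrt\ep$, i.e.~after $n\sim\frac{1}{a\ep}\log(\ep^{-1})$ cycles. Translating back to the physical variables gives $V-1=\ep x=O(\ep^{3/2})$ and $W-1+\ep C_\infty=\ep y=O(\ep^{3/2})$, as required. For $C_1=C_\infty+\ep\gamma_1$, the oscillatory component of $\gamma_1$ inherits the same decay, while a slow non-oscillatory drift of $\gamma_1$ generated by the onset of the Phase~IV parabolic evolution of the cluster distribution can be bounded by $O(1)$ on this $\tau$-time-scale, yielding the weaker bound $C_1-C_\infty=O(\ep)$ stated in the lemma.

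The main obstacle is the self-consistency of freezing the outer cluster profile at $\psi(\ep j)$ throughout this sub-phase, so that the matching value $C_\infty=2/\pi$ coming from Proposition~\ref{prop:UM} and \eqref{G4} remains valid: one must verify that the elapsed time $O(\ep^{-1}\log(\ep^{-1}))$ in $\tau$ is much smaller than the $O(\ep^{-2})$ diffusion time scale governing the evolution of $\{C_j\}$ for $j\sim 1/\ep$ according to \eqref{eq:DiffPh3} (an argument analogous to the one used in the proof of Proposition~\ref{prop:PhaseIII}). A secondary technical point is the a priori control of the non-oscillatory drift in $\gamma_1$ induced by the residual flux $\bar J\sim -\ep C_\infty^2$ at the quasi-equilibrium; this drift is balanced at leading order by the discrete diffusion that spreads mass from the boundary layer into the outer profile, so no local accumulation at $j=1$ exceeds $O(1)$ in $\gamma_1$ on the relevant time scale.
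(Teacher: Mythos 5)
Your overall strategy matches the paper's: you linearize around the quasi-equilibrium $(1,1-\ep C_\infty,C_\infty)$, observe that the $(V,W)$-part is a perturbed harmonic oscillator, recover the recurrence with root $r_-$ and coefficient $K_0$ from the end-of-Phase-III analysis for the oscillatory coupling, and count cycles until the amplitude drops to $O(\sqrt\ep)$, i.e.~$n\approx\ep^{-1}\log(\ep^{-1})$. That skeleton is the same.

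The substantive gap is in how you treat the \emph{constant} part of the source at $j=1$. After the centring $W=1-\ep C_\infty+\ep y$, $C_j=C_\infty+\ep\gamma_j$, the equation for $\gamma_1$ acquires a source $\tilde\alpha-\tilde\beta+C_\infty$: an oscillatory piece \emph{plus} a constant piece $C_\infty$. You describe the cluster system as ``the same discrete heat equation with source at $j=1$ as the reference system \eqref{Hm1}--\eqref{Hm3}'', but \eqref{Hm1}--\eqref{Hm3} has the oscillatory source only. The paper handles the constant source separately: its response $\tilde\eta_j^{(2)}$ solves \eqref{Dabis3}--\eqref{Dabis4}, is computed by Laplace transform, and its value at $j=1$ \emph{grows} like $\sqrt\tau$ (discrete heat equation with a steady boundary flux). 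This is not a secondary point that can be ``bounded by $O(1)$'' — at the end of the sub-phase $\tau$ is $O(\ep^{-1}\log\ep^{-1})$, so the non-oscillatory part of $\gamma_1$ is not $O(1)$. More importantly, this $\sqrt\tau$ growth feeds back into the per-cycle energy change $\Delta_{\hat E}$ and produces a forcing term of order $\sqrt{\hat E}\cdot F(s)$ with $F(s)=O(\sqrt{\ep/s})$ in the continuous limit. The effective law is then $\frac{de}{ds}=-a e+\sqrt{e}\,F(s)$, not a pure exponential decay. The paper solves this via $y=\sqrt{e}$ and finds $y(s)=O(e^{-as/2}+\sqrt{\ep/s})$: the exponential part gives the cycle count $s=O(\log\ep^{-1})$, but the residual floor $\sqrt{\ep/s}$ is what eventually controls the limiting order $\tilde\alpha,\tilde\beta=O(\sqrt\ep)$ at the crossover to Phase~IV. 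Your derivation reaches the same numerical orders, but because you omit the forcing, your argument would wrongly predict that the amplitude continues to decay past $O(\sqrt\ep)$; it is the stall created by the constant-source drift that fixes the crossover scale.

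A secondary inaccuracy is the sentence claiming the drift at $j=1$ ``is balanced at leading order by the discrete diffusion''. In the paper, the discrete diffusion is precisely the mechanism that \emph{produces} the $\sqrt\tau$ growth (via the $z^{-3/2}$ singularity at $z=0$ in the Laplace transform); it does not cancel it. Along the same lines, you should verify as the paper does that $\tilde\eta_j^{(2)}$ decays rapidly for $j\gg 1$, so that the outer profile $C_j\approx\psi(\ep j)$ stays frozen, but that verification comes out of the same Laplace computation rather than a separate balance argument. In short: adopt the decomposition $\tilde\eta_1=\tilde\eta_1^{(1)}+\tilde\eta_1^{(2)}$ and carry out the Laplace-transform estimate for $\tilde\eta_1^{(2)}$ — without it, the per-cycle energy balance and the final asymptotics are not actually established.
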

\begin{proof}
In~\eqref{D1}, {when linearizing and taking $W = 1 + \beta$ (cf. Proof of Prop. \ref{prop:endPhaseIII}),} the term $-\varepsilon VC_{1}$ becomes comparable to $-V\beta$
and it cannot be any longer ignored to the leading order. In order to
understand the evolution of $W,\ V,\ C_{j}$ during this phase, 
it is natural to introduce
new variables $\tilde{\alpha}$, $\tilde \beta$ {and $\eta_j$} defined by
\begin{equation*}
V=1+\varepsilon\tilde{\alpha},\qquad  W=1{ -\ep C_\infty }+\varepsilon\tilde{\beta},\qquad {C_j=C_\infty(1+\ep\tilde \eta_j),}
\end{equation*}
{where $C_\infty=\f{2}{\pi}$ at the  "beginning of the very end" of Phase~III but is expected to slowly change during Phase IV.}
Then, keeping the leading terms in \eqref{D1}--\eqref{D3} we obtain the
following approximate model 
\begin{align}
\frac{d\tilde{\alpha}}{d\tau} & =-\tilde{\beta}{ -\varepsilon \tilde{%
\alpha}\tilde \beta -\ep C_\infty \tilde \eta_1 }\label{Da1a} \\
\frac{d\tilde{\beta}}{d\tau} & =\tilde{\alpha} {(1-\ep C_\infty)+\ep\tilde\alpha \tilde\beta}  \label{Da2a} \\
{ \frac{d\ti\eta_{j}}{d\tau} }& ={\varepsilon\biggl( \frac{\tilde{\beta}-\tilde{\alpha }%
}{2}\biggr) \left( \ti\eta_{j-1}-\ti\eta_{j+1}\right) +\left(
\ti\eta_{j-1}-2\ti\eta_{j}+\ti\eta_{j+1}\right) }\\ \nonumber
& {-\ep C_\infty(\ti\eta_{j-1}-\ti\eta_j) +\ep\f{\ti\beta+\ti\alpha}{2}(\ti\eta_{j-1}-2\ti\eta_j+\ti\eta_{j+1}) }\  ,\ \ j\geq2   \\
\frac{d\tilde \eta_{1}}{d\tau} & =\tilde\eta_{2}-\tilde \eta_1 +\tilde\alpha-\tilde\beta+C_\infty+\ep(\tilde \alpha\tilde\eta_2-\tilde\beta\tilde\eta_1+C_\infty\ti\eta_1)\ .  \label{Da4a}
\end{align}
We keep only the leading order terms to get
\begin{align}
\frac{d\tilde{\alpha}}{d\tau} & =-\tilde{\beta}\label{Da1} \\
\frac{d\tilde{\beta}}{d\tau} & =\tilde{\alpha}   \label{Da2} \\
{\frac{d\ti\eta_{j}}{d\tau} }& =
\ti\eta_{j-1}-2\ti\eta_{j}+\ti\eta_{j+1}\ ,\quad j\geq2  \label{Da3} \\
{\frac{d\tilde \eta_{1}}{d\tau} }& ={\tilde\eta_{2}-\tilde \eta_1 +\tilde\alpha-\tilde\beta+C_\infty.}  \label{Da4}
\end{align}
We notice that the system obtained only differs from~\eqref{H1}--\eqref{H5} by the source term $+C_\infty$ in the equation for $\tilde\eta_1,$ {which expresses the fact that there is a constant non-negligible difference between the influx of monomers (due to the depolymerisation rate $V$) and the outflux due to the polymerisation $W\approx V-C_\infty.$ }

We first solve~\eqref{Da1}--\eqref{Da2} as we did for~\eqref{H1}--\eqref{H2}: we notice that solutions to \eqref{Da1}--\eqref{Da2} are  $2\pi$ periodic (as for ~\eqref{H1}--\eqref{H2}), which implies that periods $\tau_n$ of \eqref{Da1a}-\eqref{Da2a} are $2\pi+O(\ep)$. With respect to the energy of \eqref{Da1a}--\eqref{Da2a}, we have
\[\tilde E \approx \ep^2 \f{\tilde \alpha^2 + (\tilde \beta - C_\infty)^2}{2}=: \ep^2 \hat E +\ep^2\f{C_\infty^2 - 2\tilde \beta C_\infty}{2},
\]
where we have defined $\hat E:=\f{\tilde\alpha^2 +\tilde\beta^2}{2},$ so that up to choosing a proper initial time we have $\tilde\alpha=\sqrt{2 \hat E}\cos (\tau)$ and $\tilde \beta=\sqrt{2\hat E}\sin (\tau).$ The oscillations of the energy $\tilde E$ are now of the same order of magnitude as its value {{(see Fig.~\ref{fig:energy_p3_decay}, values around $E=10^{-4}$, for an illustration)}}, as they are expressed by the term $-\ep^2 \tilde\beta C_\infty,$ so that we also define its average over a time period:
\[<\tilde E>=\ep^2 {\Delta_{\hat E}}  + \ep^2 \f{C_\infty^2}{2},
\]
so that it tends to its equilibrium $\ep^2 \f{C_\infty^2}{2}$ when ${\Delta_{\hat E}}$ vanishes. We also compute
\begin{align}
\f{d\hat E}{d\tau}=\ep^{-2} \f{d\tilde E}{d\tau}+C_\infty \f{d\tilde\beta}{d\tau}=
{ - \ep \tilde \alpha C_\infty  \tilde\eta_1 - \ep\tilde \alpha\tilde \beta (C_\infty  +\tilde \alpha-\tilde \beta),}
\label{eq:hatE}
\end{align}

so that over a period $\tau_n = 2\pi +O(\ep)$, we have 
\begin{equation}
    {\Delta_{\hat E}} { \sim } -\ep C_\infty \f{1}{2\pi}{\int_{\tau_n}^{\tau_n+2\pi}} \tilde \alpha(s) \tilde \eta_1 (s) ds, 
    \label{eq:EnExp2}
\end{equation}
and we are left to compute $\tilde \eta_1$ in order to estimate the decay of energy.
\\

\paragraph{Solution to~\eqref{Da3}--\eqref{Da4}.}
Recognizing in~\eqref{Da3}--\eqref{Da4} a linear system with two sources $\tilde\alpha-\tilde\beta$ and $C_\infty$, we superimpose the solution $\tilde \eta_j^{(1)}$ to~\eqref{Da1}--\eqref{Da4} {with} $C_\infty=0$ -- this is exactly the solution to~\eqref{H3}--\eqref{H5} computed above  for $\eta_j$, where we simply replace $\alpha,\,\beta,\,\tilde E$ by $\tilde \alpha,\,\tilde\beta$ and $\hat E$ respectively  -- with the solution $\tilde \eta_j^{(2)}$ of the system with a constant source $C_\infty,$ namely
\begin{align}
{\frac{d\ti\eta_{j}^{(2)}}{d\tau} }& =
\ti\eta_{j-1}^{(2)}-2\ti\eta_{j}^{(2)}+\ti\eta_{j+1}^{(2)},\quad j\geq2  \label{Dabis3} \\
{\frac{d\tilde \eta_{1}^{(2)}}{d\tau} }& ={\tilde\eta_{2}^{(2)}-\tilde \eta_1^{(2)} +C_\infty.}  \label{Dabis4}
\end{align}
To solve~\eqref{Dabis3}--\eqref{Dabis4}, let us compute the system satisfied by its Laplace transform $L_j(z):=\int_0^\infty e^{-z\tau }\tilde\eta_j^{(2)}(\tau)d\tau:$
\begin{align*}zL_1(z)&=L_2(z)-L_1(z)+\f{C_\infty}{z},
\\
zL_j(z)&=L_{j-1}(z)-2L_j(z)+L_{j+1}(z).
\end{align*}
We look for solutions $L_j(z)=B(z)\theta(z)^{j-1}$, to find
\[\theta^2 -\theta (2+z) +1=0,
\]
so that 
\[\theta_{\pm}(z)=1+\f{z}{2} \pm \sqrt{z+\f{z^2}{4}}.
\]
Since we want $|\theta|<1$ to ensure that the solutions vanish when $j\to\infty,$ we have to choose $\theta_-(z)$ which satisfies $|\theta_-(z)|<1$ for $z\notin \{0,4\}.$ We then compute $B(z)$ with the equation for $L_1,$ and find
\[zB(z)=B(z)(\theta_-(z)-1)+\f{C_\infty}{z},
\]
so that finally
\[L_j(z)=\f{C_\infty \theta_-(z)^{j-1}}{z\left(z-\theta_-(z)+1\right)} =\f{C_\infty \left(1+\f{z}{2} - \sqrt{z+\f{z^2}{4}}\right)^{j-1}}{z\left(\f{z}{2}+\sqrt{z+\f{z^2}{4}}\right)}. \]
We can now compute the inverse Laplace transform, and obtain, taking $\delta >0,$
\[\ti\eta_j^{(2)}(z)(\tau)=\f{C_\infty}{2i\pi} \int_{\delta-i\infty}^{\delta+i\infty} e^{z\tau}\f{\left(1+\f{z}{2} - \sqrt{z+\f{z^2}{4}}\right)^{j-1}}{z\left(\f{z}{2}+\sqrt{z+\f{z^2}{4}}\right)}dz. \]
We can deform the contour of integration $(\delta-i\infty,\delta+{i\infty})$ by making $\delta \to 0$ and taking the contour {$(-\infty-i\delta,-i\delta)\cup \{|z|=\delta, Re(z)>0\}\cup(i\delta,-\infty+i\delta)$} so that the main contribution for the inverse Laplace transform comes from the half circle ${\cal C}:= \{|z|=\delta, Re(z)>0\},$ on which we compute
\[\ti\eta_1^{(2)}{\approx} \f{C_\infty}{2i\pi} \int_{\cal C} e^{z\tau} z^{-\f{3}{2}} dz\approx \f{C_\infty \sqrt{\tau}}{2i\pi} \int_{\cal C} \f{e^\eta d\eta}{\eta^{3/2}}{\approx} \f{C_\infty \sqrt{\tau}}{i\pi} \int_{\cal C} \f{e^\eta d\eta}{\eta^{1/2}}=C \f{C_\infty \sqrt{\tau}}{i\pi},
\]
for a given constant $C.$ This allows us to compute the change of energy due to the contribution of $\ti\eta_1^{(2)},$ see below. 

{Let us also verify that for $j\gg 1$ the influence of $\ti\eta_j^{(2)}$ remains negligible on the timecourse of this phase, which is in the order of $\f{1}{\ep},$ so that the large-sizes distribution remains approximately unchanged. For $z\ll 1,$ we compute
\[L_j(z)\sim C_\infty \f{(1-\sqrt{z})^{j-1}}{z^{\f{3}{2}}} \sim C_\infty \f{e^{-j\sqrt{z}}}{z^{\f{3}{2}}}.
\]
 Thus, computing the inverse Laplace transform for $j\gg 1$ -- more precisely $j=\sqrt{\tau} x,$ since the typical timescale for the discrete diffusion equation is $O(\sqrt{\tau})$ -- and differentiating it in time, we obtain
\[\f{\partial}{\partial \tau}\ti\eta_j^{(2)}\approx \f{C_\infty}{2i\pi} \int_{\cal C} e^{z\tau} \f{e^{-j\sqrt z}}{\sqrt{z}} dz =\f{C_\infty}{2i\pi \sqrt{\tau}} \int_{\cal C} e^{\eta} \f{e^{-x\sqrt \eta}}{\sqrt{\eta}} d\eta,
\]
where we have used the change of variables $\eta=z\tau.$ We can see by integrating by parts that this expression decreases faster than any polynomial when $x\to \infty,$ which ensures that the contribution of $\ti\eta_j^{(2)}$ vanishes for $j\to\infty.$ Moreover, since the time derivative of $\ti\eta_j^{(2)}$ is in the order of $\f{1}{\sqrt{\tau}},$
 the {time needed for the 
 {clusters distribution}
 departing from $j\approx 1$ to come to $j\approx \f{1}{\ep}$ is of order $\f{1}{\ep^2},$ which appears much larger than the total time of this period in the $\tau$ variable, which is in the order of $\f{\log(\ep^{-1})}{\ep},$ see below.}}

\paragraph{Computation of the change of energy.}
{Writing $\ti \eta_1=\ti\eta_1^{(1)}+\ti\eta_1^{(2)},$ we can now gather the two contributions for the change of energy {(cf. \eqref{eq:EnExp} and \eqref{eq:EnExp2})}, writing
\begin{align*}
\Delta_{\hat E}&=\Delta_{\hat E}^{(1)}+\Delta_{\hat E}^{(2)},
\\
&\approx -\ep a \hat E -\ep \sqrt{2\hat E}  \f{C_\infty^2}{2\pi}\f{C}{i\pi} \int_{{\tau}}^{{\tau}+2\pi}  \sqrt{\tau} { \cos(\tau)} d\tau.
\end{align*}

As already seen above, we have $a>0$ so that the first term ensures an exponential decay for $\hat E.$ However, the sign of the second term changes according to the exact time-point $\tau\in[\tau_n, \tau_n+2\pi]$, 
and since it is weighted by $\sqrt{\hat E}$ it may after a while reveal dominant. However, at the beginning of this stage, we have $\hat E\gg 1$ so that the second term is negligible compared to the first one. We can thus assume $\tau > \tau_n \gg 1$ without loss of generality. We now estimate the integral in $\Delta_{\hat E}^{(2)}$ for large $\tau.$ 

\begin{align*}\int_{\tau}^{\tau+2\pi}  \sqrt{\tau} {\cos(\tau)} d\tau&=(\sqrt{\tau+2\pi}-\sqrt{\tau}){ \sin (\tau)} { -} \int_{\tau}^{\tau+2\pi}\f{1}{2}\f{{ \sin(s)}}{\sqrt{s}}ds,
\\
&= \sqrt{\tau}\left(\sqrt{1+\f{2\pi}{\tau}}-1\right){ \sin (\tau)}+O\left(\f{1}{\sqrt {\tau}}\right),
\\
&=O(\f{1}{\sqrt{\tau}}).
\end{align*}
We can now argue as in~\eqref{eq:ELDef} for Phase~II, and define $\hat E_n=e(s)$ with $s=\ep n,$ to get 
\[ \f{de}{ds}=-a e+\sqrt{e} F(s),
\]
with $a>0$ and $F(s)=O(\sqrt{\f{\ep}{s}}).$ We solve this equation by writing $y=\sqrt{e}$ which satisfies
\[\f{dy}{ds}=\f{1}{2\sqrt{e}}\f{de}{ds}=-\f{a}{2}y (s)+ \f{1}{2}F(s),
\]
so that 
\[y(s)=y(0)e^{-\f{a}{2}s} + e^{-\f{a}{2}s}\int_0^s F(t)e^{\f{a}{2}t}dt=_{s\to\infty} O\left(e^{-\f{a}{2}s} +\sqrt{\f{\ep}{s}}\right).
\]
Solving this equation we obtain that $e(s)$ is in the order of $e^{-as}+O(\f{\ep}{s}).$ Since the validity of the approximation of this phase ends  when $\ti\alpha,\,\ti\beta \ll 1,$ {\it i.e.} when $e(s)\ll 1,$ we notice that the second term does not really play a role, and the phase ends for $s=O(-\log(\ep))$, {\it i.e.} for a number of cycles in the order of $\f{\log(\ep^{-1})}{\ep}.$  We notice however that after this, the energy decays much more slowly, due to the second term: at the end of this phase, the energy $\hat E$ is still in the order of $\f{\ep}{\log{\ep^{-1}}}$, so that $\ti\alpha,\;\ti\beta = O(\ep^{1/2})$. It is however sufficient to enter the next and final phase, where oscillations have become negligible.}
\end{proof}

{\subsection{{Phase IV:} stabilization by means of a parabolic equation\label{subsec:PhaseIV:stab}}}

During this phase, the concentrations $C_{j}$ stabilize to their equilibrium
values. The oscillations of the concentrations $W,\ V$ cease, $V$ stabilizes to its equilibrium, and finally there is a feedback loop from the concentrations $C_j$ to determine the value of the concentration $W.$ 
{Let us now sum-up the main results of Phase~IV in the following formal proposition.
\begin{proposition}\label{prop:PhaseIV}
Departing at time $T_{4}^{in}$ from the initial conditions described in Lemma.~\ref{lem:endPhaseIII},  the behaviour of $C_j$ is approximated by the following free-boundary problem in the variable $\bar \tau:=\ep^3(t-T_4^{in}):$
\begin{eqnarray}
\frac{\partial C\left( x,\bar{\tau}\right) }{\partial\bar{\tau}}=C\left(
0^{+},\bar{\tau}\right) \frac{\partial C\left( x,\bar{\tau}\right) }{%
\partial x}+\frac{\partial^{2}C\left( x,\bar{\tau}\right) }{%
\partial x^{2}}\  ,\ \ x>0\  ,\ \ \bar{\tau}>0\ ,  \label{eq:CxEvol}
\\
\frac{\partial C\left( 0^{+},\bar{\tau}\right) }{\partial x}%
+\left( C\left( 0^{+},\bar{\tau}\right) \right) ^{2}=0\  ,\ \ \ \bar {\tau}%
>0\ ,  \label{eq:CxBV}
\\
C\left( x,\bar{\tau}=0\right) =\psi\left( x\right) . \label{eq:CIn}
\end{eqnarray}
Moreover, we have $V\sim 1$ and $W\sim 1-{\ep} C\left( 0^{+},\bar{\tau}\right)$ during all of Phase IV. We thus have, for $\bar{\tau}\to\infty$ 
and a time $t$ in the order of $\f{1}{\ep^3}$ in the physical time variable: $\lim\limits_{\bar{\tau}\to\infty} W-1= -\ep,$ $\lim\limits_{\bar{\tau}\to\infty} C_j \sim \exp (-\ep j).$ 
\end{proposition}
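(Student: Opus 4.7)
The plan is to carry out a matched asymptotic expansion on the slow timescale $\bar{\tau}=\varepsilon^{3}(t-T_{4}^{in})=\varepsilon^{2}(\tau-\tau_{4}^{in})$, starting from the state described at the end of Lemma~\ref{lem:endPhaseIII}. First, I would treat equations~\eqref{D1}--\eqref{D2} as singularly perturbed on this timescale: since $d/d\tau=\varepsilon^{2}\partial/\partial\bar{\tau}$ makes the monomer time derivatives $O(\varepsilon^{2})$, the right-hand sides $V(1-W-\varepsilon C_{1})$ and $W(V-1)$ must vanish to leading order. This forces the quasi-steady slaving $V\sim 1$ and $W\sim 1-\varepsilon C_{1}(\tau)=1-\varepsilon C(0^{+},\bar{\tau})$ throughout Phase~IV. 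The legitimacy of this slaving rests on the damping of the fast LV oscillations established in Lemma~\ref{lem:endPhaseIII}: once the oscillations are negligible, the monomers adiabatically follow the slow boundary value $C_{1}$.

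Second, substituting these quasi-steady expressions into the flux form $dC_{j}/d\tau=\bar{J}_{j-1}-\bar{J}_{j}$ with $\bar{J}_{j}=WC_{j}-VC_{j+1}$, I would rewrite
\[
\bar{J}_{j-1}-\bar{J}_{j}=V(C_{j-1}-2C_{j}+C_{j+1})+(W-V)(C_{j-1}-C_{j}),
\]
and Taylor-expand $C_{j\pm 1}=C(x\pm\varepsilon,\bar{\tau})$ with $x=\varepsilon j$, using $W-V\sim -\varepsilon C(0^{+},\bar{\tau})$. This yields $\varepsilon^{2}\bigl[\partial_{xx}C+C(0^{+},\bar{\tau})\,\partial_{x}C\bigr]+O(\varepsilon^{3})$, which divided by $\varepsilon^{2}$ gives exactly~\eqref{eq:CxEvol}. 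The boundary condition~\eqref{eq:CxBV} would be extracted by matching orders in the $j=1$ equation: since $\bar{J}_{0}=0$, we have $dC_{1}/d\tau=-\bar{J}_{1}=V(C_{2}-C_{1})+(V-W)C_{1}=\varepsilon\bigl[\partial_{x}C(0^{+},\bar{\tau})+C(0^{+},\bar{\tau})^{2}\bigr]+O(\varepsilon^{2})$, while the left-hand side is only $O(\varepsilon^{2})$; the $O(\varepsilon)$ bracket on the right must therefore vanish, which is precisely~\eqref{eq:CxBV} (and is equivalent to the no-flux condition $\bar{J}(0^{+},\bar{\tau})=0$). The initial condition~\eqref{eq:CIn} follows by matching with the outer profile $C_{j}\approx\psi(\varepsilon j)$ prevailing at the end of Phase~III.

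For the large-$\bar{\tau}$ asymptotics, I would classify the stationary solutions of~\eqref{eq:CxEvol}--\eqref{eq:CxBV}. Setting $a:=C^{eq}(0)$, the ODE $C^{eq}_{xx}+aC^{eq}_{x}=0$ together with integrability on $(0,\infty)$ forces $C^{eq}(x)=Be^{-ax}$, and~\eqref{eq:CxBV} then yields $B=a$. The value of $a$ is selected by two conservation laws preserved by the PDE (which I would verify by integration by parts, the boundary terms vanishing thanks to~\eqref{eq:CxBV}): cluster number $\int_{0}^{\infty}C(x,\bar{\tau})dx=1$ and mass $\int_{0}^{\infty}xC(x,\bar{\tau})dx=1-v-w\to 1$ since $v,w=O(\varepsilon)$. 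Both are satisfied by $\psi$ at $\bar{\tau}=0$, and $\int_{0}^{\infty}x\,ae^{-ax}dx=1/a=1$ forces $a=1$. Returning to the original variables, $C^{eq}(x)=e^{-x}$ gives $C_{j}\to e^{-\varepsilon j}$ and $W\to 1-\varepsilon$ on a physical time $t$ of order $1/\varepsilon^{3}$, as claimed.

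The hard part, which this formal matched expansion does not close, is the rigorous proof of convergence to the stationary profile $e^{-x}$ for the nonlocal free-boundary problem~\eqref{eq:CxEvol}--\eqref{eq:CIn}: the coefficient $C(0^{+},\bar{\tau})$ ties the drift to a boundary trace, so standard linear parabolic theory does not apply directly. A natural strategy would be to construct a convex Lyapunov (relative-entropy) functional tailored to the boundary condition~\eqref{eq:CxBV} and derive an associated $H$-theorem with a quantitative rate of convergence; establishing this is the kind of step that is naturally deferred to the companion paper~\cite{DFMV2}.
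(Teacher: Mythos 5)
Your proposal is correct and follows the same overall route as the paper: quasi-steady slaving of $(V,W)$ on the slow timescale, continuum approximation of the $C_j$ equations for $j\approx 1/\ep$ giving the convection--diffusion PDE~\eqref{eq:CxEvol}, extraction of the boundary condition~\eqref{eq:CxBV}, and selection of the limiting exponential $e^{-x}$ via the two conservation laws. The one genuine difference is how~\eqref{eq:CxBV} is extracted. You obtain it by balancing orders in the discrete $j=1$ equation: since $\bar J_0=0$, one has $dC_1/d\tau=-\bar J_1$, and the $O(\ep)$ term in $-\bar J_1$, namely $\ep\bigl[\partial_x C(0^+)+C(0^+)^2\bigr]$, must vanish because the left-hand side is only $O(\ep^2)$ under the slaving assumption. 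The paper instead imposes that $\int_0^\infty C(x,\bar\tau)\,dx=1$ (from the discrete conservation $\ep=\sum_j c_j$) is preserved, and integrates~\eqref{eq:CxEvol} over $(0,\infty)$ to deduce~\eqref{eq:CxBV}. The two derivations encode the same physical content -- a vanishing flux at the origin -- but your version makes the reflecting boundary at size one more transparent, whereas the paper's is consistent with how it invokes the conservation law $\int xC\,dx=1$ to fix $a=1$. You also explicitly re-derive the slaving $V\sim 1$, $W\sim 1-\ep C(0^+,\bar\tau)$ from the quasi-steady limit of~\eqref{D1}--\eqref{D2}, which the paper treats as inherited from Lemma~\ref{lem:endPhaseIII}. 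Your acknowledgement that rigorous convergence of the nonlocal free-boundary problem~\eqref{eq:CxEvol}--\eqref{eq:CIn} is not established here, and is deferred to~\cite{DFMV2}, matches the formal-asymptotics status of the paper's own argument.
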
}
\begin{proof}
At the end of Phase~III, we have seen that $\ti\alpha$ and $\ti\beta$ became negligible compared to $C_\infty$ and to $\ti\eta_j.$ We can approximate system~\eqref{Da3}--\eqref{Da4} by
\begin{align}
\f{d\ti\eta_j}{d\tau}&=\ti\eta_{j-1}-2\ti\eta_{j}+\ti\eta_{j+1},\quad j\geq2  \label{DaIV1} \\
{\frac{d\tilde \eta_{1}}{d\tau} }& ={\tilde\eta_{2}-\tilde \eta_1 +C_\infty,}  \label{DaIV2}
\end{align}
or equivalently, by recalling $C_j=C_\infty(1+\ep\tilde \eta_j)$ and taking the equations for $C_j$ for $j$ of order one, we
can approximate \eqref{Da3}--\eqref{Da4} as%
\begin{align}
\frac{dC_{j}}{d\tau} & =\left( C_{j-1}-2C_{j}+C_{j+1}\right)  ,\ \ j\geq 2
\label{Db3} \\
\frac{dC_{1}}{d\tau} & =C_{2}-C_{1},  \label{Db4}
\end{align}
where, at the beginning of Phase~IV,  $C_{j}$ is approximately constant equal to $C_\infty=\f{\pi}{2}$ for $j$ large.
We notice that Equations~\eqref{Db3}--\eqref{Db4}
yield an evolution for $C_{j}$ independent of $\tilde{\alpha},\ \tilde{\beta}%
.$ Equation (\ref{Db3}) is a discrete diffusion equation, so that solutions of \eqref{Db3}--\eqref{Db4} converge, in times $\tau$ of order one,
to $C_{j}=C_{\infty}$.

To understand what happens for larger times, we now examine the evolution of the concentrations $C_{j}$ with $j$ of order 
$\frac{1}{\varepsilon}.$ Let us recall~\eqref{eq:C:transportdiff}, where we keep only the leading order term for the diffusion and the $\ep$ order for the transport:
\[\f{dC_j}{d\tau}=-\f{\ep C_\infty}{2} (C_{j-1}-C_{j+1}) + (C_{j-1}-2C_j+C_{j+1}).
\]

 {As done for the previous phases (we recall that since the end of Phase~II we have $L_n\sim\f{1}{\ep}$), w}e use the approximation of $C_{j}\left(
\tau\right) $ as $C\left( x,\tau\right) $ with $x=\varepsilon j.$
Then%
\begin{equation}
\frac{\partial C\left( x,\tau\right) }{\partial\tau}=-{\varepsilon ^{2}%
}C_\infty \frac{\partial C\left(
x,\tau\right) }{\partial x}+\ep^2\frac{%
\partial^{2}C\left( x,\tau\right) }{\partial x^{2}}.  \label{eq:CxPh4}
\end{equation}

This equation must be solved with the initial condition~\eqref{eq:CIn} obtained in Lemma~\eqref{lem:endPhaseIII} at the
beginning of Phase IV.
{The value of $C_{\infty}$ can be
approximated identifying it with the value obtained for the outer
concentrations $C\left( x,\tau\right) $ as $x\rightarrow0^{+}.$ }
Introducing
also the time scale $\bar{\tau}=\varepsilon^{2}\left(
\tau-\tau_{in}\right) $ where $\tau_{in}$ $=\ep T_{4}^{in}$ is the time when we assume the
Phase IV to begin, we obtain~\eqref{eq:CxEvol}.

In order to determine the boundary condition to be imposed at $x=0$ for the
solutions to \eqref{eq:CxEvol}, we use the second condition in \eqref{eq:Mresc}, that is
\begin{equation}
1=\sum_{k=1}^{\infty}\varepsilon C_{k}=\sum_{k=1}^{\infty}\varepsilon
C\left(\ep k,\bar{\tau}\right) \simeq%
\int_{0}^{\infty }C\left( x,\bar{\tau}\right) dx \ , \label{eq:CNorm}
\end{equation}
which implies that $\int_{0}^{\infty}C\left( x,\bar{\tau}\right) dx$ is
constant for all $\bar{\tau}$ and integration of \eqref{eq:CxEvol} yields \eqref{eq:CxBV}.

On the other hand, the first condition in \eqref{eq:Mresc} implies an additional normalization condition, namely 
\begin{align*}
1&=\varepsilon\left( V+W\right)
+\varepsilon^{2}\sum_{j=1}^{\infty}j C_{j}=\varepsilon\left( V+W\right)
+\sum_{j=1}^{\infty}\varepsilon j%
C\left( \varepsilon j,\bar{\tau}\right) \ep \\
&\approx\varepsilon\left( V+W\right) +\int_{0}^{\infty}xC\left( x,\bar{\tau}\right) dx \ .
\end{align*}

Using that $V$ and $W$ are close to $1$ we then obtain the following
normalization condition as $\varepsilon\rightarrow0$%
\begin{equation}
\int_{0}^{\infty}xC\left( x,\bar{\tau}\right) dx=1. \label{eq:CNorm1}
\end{equation}

The problem \eqref{eq:CxEvol}--\eqref{eq:CIn} yields the evolution of the
cluster concentrations during Phase IV. The steady states of the system \eqref%
{eq:CxEvol}--\eqref{eq:CxBV} have the form%
\begin{equation*}
C_{s}(x) =K\exp\left( -Kx\right),\ \quad x>0 \ ,
\end{equation*}
where $K>0$ is an arbitrary constant. Notice that $C_{s}(x)$
satisfies the normalization condition (\ref{eq:CNorm}) for any $K>0$.
In order to determine $K$ we use the normalization condition \eqref{eq:CNorm1}%
. We calculate%
\begin{equation*}
\int_{0}^{\infty}x\, C_{s}(x) dx=%
\frac{1}{K} \ .
\end{equation*}
Thus, (\ref{eq:CNorm}) implies $K=1.$ Therefore the equilibrium distribution
of clusters is given by%
\begin{equation}
C_{s}\left( x\right) =\exp\left( -x\right) ,\ \ x>0\ .
\label{eq:Ceq}
\end{equation}
\end{proof}
\section{Discussion}
\label{sec:discussion}
\subsection{Other choices of initial concentrations.}
\label{subsec:discussion:other}

{
It is worth to note that choosing initial cluster distributions different
from the ones considered in the previous subsections, i.e. the energy at initial time $E_{0}$ of order $%
1$ and the characteristic length at initial time $L_{0}$ of order $1$ (or more generally smaller than $\frac {1}{%
\varepsilon}$), the dynamics of the system differ from those described above in the four consecutive phases.

\paragraph{Initial condition widely spread along the size axis.} Certainly, we can skip some of the phases starting
with initial cluster contributions having, say $E_{0}\approx1$ and $%
L_{0}\approx\frac{1}{\varepsilon}.$ This would result in an evolution
without Phase I, starting directly in Phase II. If, in addition to $%
L_{0}\approx \frac{1}{\varepsilon}$ we have also $E_{0}\approx%
\varepsilon^{2},$ we could have evolutions starting directly in Phase III,
skipping the two previous phases.\\

\paragraph{Initial condition concentrated on the small sizes.} If we take as starting point values of $%
\left( w^0,v^0\right) $ for which the concentrations $c_{k}$ tend to move
towards smaller values of $k,$ we can obtain very large changes of the
energy just in the early transient states. For instance, if we begin with $E_0$
close to $1$ with $w^0=v^0<\varepsilon,$ and concentrations $c_{k}^0$ concentrated
in values of {$k$ of order one}, we obtain a concentrations wave
moving towards lower values of $k.$ This results in a large increase of $%
c_{1}$ and as a consequence large changes of the Energy $E$. Due to this the initial,
transient dynamics of the whole system can result in values of $\left(
w,v\right) $ that differ significantly from a LV orbit. After the values of $%
\left( w,v\right) $ reach the line $w=v>\varepsilon$ we obtain a dynamics
that can be described as indicated above, for suitable values of $E.$

\paragraph{Initial condition concentrated in a dirac mass along the size axis.} Moreover, we can make choices of the
initial cluster concentrations which differ more drastically of the previous
phases, because they cannot be characterized in a meaningful manner by a
single characteristic length $L_{0}.$ This would be the case, for instance
with distributions with the form%
\begin{equation*}
c_{j,0}=\varepsilon\delta_{j,R_{\varepsilon}}\ \ \text{where }R_{\varepsilon
}=\frac{a}{\varepsilon}\  ,\ \ \text{for some }a\in\left( 0,1\right) .
\end{equation*}

We will assume also that $v$ and $w$ are of order one. In this case we have
approximately $E_{0}\simeq1-a>0$ for small $\varepsilon.$ We will assume
then that initially $v=w\simeq\frac{1-a}{2}$ by definiteness. On the other
hand, it is not clear what should be the definition of $L_{0}.$ Taking
into account the set of values where the mass of the clusters is
concentrated we should take $L_{0}=R_{\varepsilon}=\frac{a}{\varepsilon}.$
On the other hand, there is not dispersion in the concentration
distributions and therefore it would be also reasonable to assume that $%
L_{0}=1.$ Actually, the evolution of the cluster concentrations in this
case differs from the one described in the previous sections.
For these initial concentrations, we obtain oscillations of the
concentrations $c_{j}$ in the space of cluster sizes $j$ in a manner similar
to the one described in subsections~\ref{subsec:sizedistrib},~\ref{subsec:following}, while
at the same time diffusion in the space of clusters takes place (cf. (\ref%
{eq:Coninit})). During the first oscillations, the values of $c_{1}$ are
exponentially small in $\frac{1}{\varepsilon}$ and, due to this, the energy $%
E_{n}$ which characterizes each of the LV cycles remains almost constant.
The diffusion in the space of cluster sizes increases slowly the width of
the cluster distributions. The values of $c_{1}$ become significant (i.e.
non-exponentially small), after $\frac{1}{\varepsilon}$ LV cycles.
Actually, after this number of cycles, the distribution of clusters has a
characteristic length of order $\frac{1}{\varepsilon}$ and the corresponding
evolution becomes similar to the one described in Phase II, with the only
difference that the cluster concentrations is not necessarily given by the
Gaussian distribution $\psi\left( x\right) $ in (\ref{eq:PsiAs}). We then
obtain an evolution similar to the one described in subsection~\ref%
{subsec:following}, but where an additional evolution of the concentrations $%
\varphi_{n}\left( x\right) $ by means of the iteration (\ref{eq:SemGroup})
must be included (cf. Figure \ref{fig:evo_dirac}).  During
this modified Phase II, the energy $E_{n}$ decreases until reaching values
of order $\varepsilon,$ when the system starts to evolve according to the
mechanism described in Phase III, and eventually, the concentrations
approach to the equilibrium as described in Phase IV.

\begin{figure}
     \centering
     \begin{subfigure}[b]{0.495\textwidth}
         \centering
         \includegraphics[width=\textwidth]{./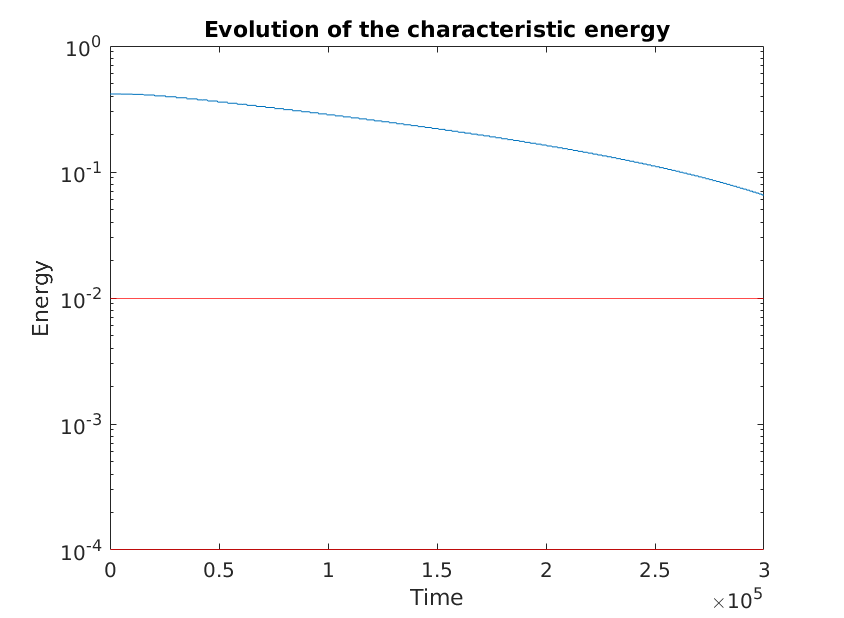}
         \caption{Energy (log scale)}
         \label{fig:ener_dirac}
     \end{subfigure}
     \hfill
     \begin{subfigure}[b]{0.495\textwidth}
         \centering
         \includegraphics[width=\textwidth]{./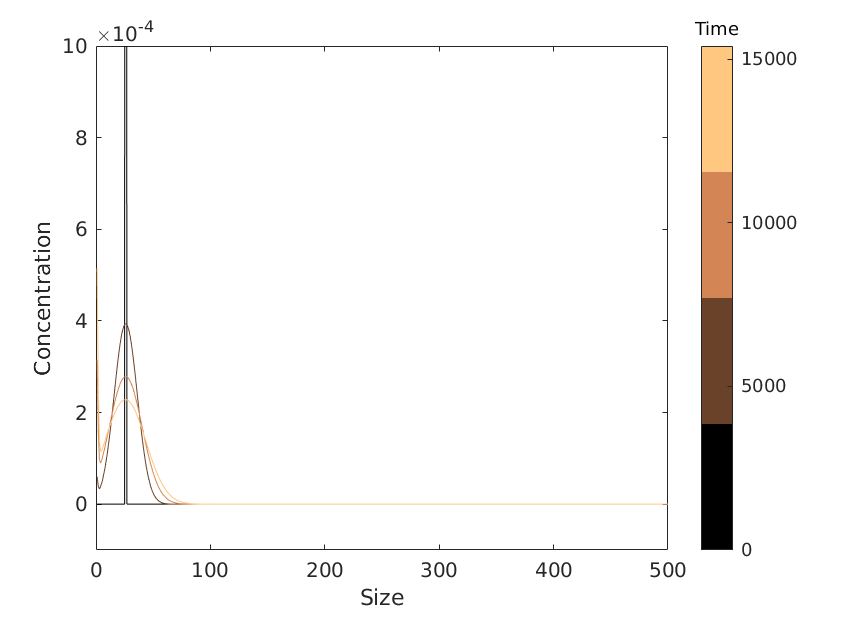}
         \caption{Size distribution}
         \label{fig:sd_dirac}
     \end{subfigure}
        \caption{{Numerical simulation of the evolution of the cluster size distribution and the energy taking as initial condition a Dirac mass located at the size $R_{\varepsilon
}=25$ and $ v=w \approx \tfrac{1-a}{2}$. The time evolution is indicated by the colors in Figure \ref{fig:sd_dirac}. The Figure \ref{fig:sd_dirac} is truncated on the y--axis at $0.001$. In the Figure \ref{fig:ener_dirac}, the horizontal red lines illustrate the thresholds when the energy would be inferior to $\varepsilon$.} }
        \label{fig:evo_dirac}
\end{figure}

\subsection{Concluding remarks}
\label{subsec:discussion:concluding}
In this paper, using asymptotic and numerical methods, we have described the
behaviour of the solutions of the system of equations \eqref{eq:BD2}--\eqref{eq:BD3}. This system of equations couples the dynamics of the classical
Lotka-Volterra oscillator with a generalized Becker-D\"oring model that
describes the concentrations of clusters with each given length. The coupled
system was introduced in \cite{DFMR}. This paper describes the way in which
the chemical concentrations $v,\ w,\ \left\{ c_{k}\right\} _{k\in\mathbb{N}} 
$ approach to their equilibrium values if the parameter $\varepsilon$ is
small. This equilibrium is reached by means of a mechanism in which the
concentrations of $v,\ w$ oscillate, with decreasing amplitude around a
center $\left( \varepsilon,\varepsilon\right) ,$ with the monomers spreading
in the set of chemical concentrations $\left\{ c_{k}\right\} _{k\in\mathbb{N}%
}$ in increasingly larger values of cluster sizes $k,$ until reaching sizes
of order $k\approx\frac{1}{\varepsilon}.$ In this paper we have derived a
set of equations that describes the decrease of the amplitude of the
oscillations in the space $\left( v,w\right) $ as well as a sequence of
iterative mappings that describes the evolution of the chemical
concentrations $\left\{ c_{k}\right\} _{k\in\mathbb{N}}.$ A characteristic
feature of the mechanism that yields the damping of oscillations that we
have obtained is the onset of some oscillations of the whole
set of chemical concentrations $\left\{ c_{k}\right\} _{k\in\mathbb{N}}$ in
the space of cluster sizes $k.$

{In the course of the analysis, we have raised a series of interesting asymptotic problems, e.g. the trend of the iterative system~\eqref{BL1}--\eqref{BL2}--\eqref{BL3} towards its steady state~\eqref{BLSt1}--\eqref{BLSt3}, or the study of the nonlinear free-boundary problem~\eqref{eq:CxEvol}--\eqref{eq:CIn}, or yet a rigorous justification for the heuristic description of the "collision" of the cluster concentration waves with regions where $k\approx 1$, in the fast-moving regions $t\in [T_n-\Delta t,T_n+\Delta t]$, see Lemma~\ref{lem:I:sizedistrib}.  Their proof is let for future work~\cite{DFMV2}.}

In this paper we have assumed that the reaction coefficients are constant
and we have modified the time scale to normalize them as one. It would be
relevant to understand if the damping mechanism for the chemical
oscillations obtained in this paper is also valid for more general choices
of the chemical coefficients. 
Constant coefficients are well-adapted for polymerisation by one or the two ends of fibrils, for instance, whereas linear or affine coefficients could take into account secondary nucleation, {\it i.e.} lateral polymerisation, and other more complex laws could take into account varying geometries of the clusters~\cite{prigent:hal-00778052,Radford}. In the case
considered in this paper, the model can be rewritten in an obvious manner
as a perturbation of the Lotka-Volterra model. However, in the case of more
general coefficients, it is not clear  if the model can be
rewritten as a perturbation of Lotka-Volterra or some more involved
oscillator.
\bibliographystyle{plain}
\bibliography{Biblio_Bimono.bib}

\appendix

\bigskip

\section{Extra computations for the LV system}
\label{sec:extra}

\label{app:LV} In this appendix, we prove the asymptotic expansions of Lemmas~\ref{lem:LV1} to~\ref{lem:LV4}. We 
discuss also various orders of magnitude, which are useful to get some
insight in the oscillatory behaviour of the chemical concentrations $v,\ w.$

\paragraph{Proof of Lemma~\ref{lem:LV1}: from $0$ to $t_{1}$.}\hfill \\
We develop
asymptotically \eqref{D8} as%
\begin{equation*}
\frac{\diff v}{\diff t}=-vw + \varepsilon v =
-v\left(1-v+O(\varepsilon\log(\varepsilon))\right).
\end{equation*}
Hence, 
\[
\frac{1}{v(1-v)}\frac{\diff v}{\diff t}=\frac{\diff }{\diff t}\left(\log(\f{v}{1-v})\right)=\f{\diff}{\diff t} \log(-1+\f{1}{1-v})=-1+O(\varepsilon\log(\varepsilon))
\]
and we deduce 
\[
\log (-1+\frac{1}{1-v})=-t
+O(\varepsilon\log(\varepsilon) t),\quad -1+\frac{1}{1-v}=e^{-t (1+O(\varepsilon\log(\varepsilon) ))},
\]
so finally
\begin{equation*}
v(t)=1-\frac{1}{1+e^{-t}} +O(e^{-t} \varepsilon \log(\varepsilon) (t+1)). 
\end{equation*}
We then compute that $v(t_1)=\varepsilon$ if $e^{-t_1}=\varepsilon
+O(e^{-t_1 \varepsilon \log(\varepsilon) (t_1+1)}),$ which implies 
\begin{equation*}
t_1\sim-\log(\varepsilon). 
\end{equation*}

The equation for $w$ may now be expanded: 
\begin{equation*}
\frac{\diff w}{\diff t}=vw -\varepsilon w=w\left(1-\frac{1}{1+e^{-t}} +O( \varepsilon \log(\varepsilon) )\right), 
\end{equation*}
hence
\[
\frac{\diff }{\diff t} \log (w)= 1-\frac{e^t}{e^t+1} +O(\varepsilon \log
(\varepsilon)) =\frac{\diff}{\diff t} (t- \log(e^t+1)) +O(\varepsilon \log
(\varepsilon)),
\]
so by integrating we get 
\[
\log w(t)=-\log 2 + t - \log(e^t+1) + \log(2)+ O(\varepsilon
\log(\varepsilon) t)
\]
and finally
\begin{equation*}
w(t)=\frac{e^t}{1+e^t} (1+O(\varepsilon \log(\varepsilon) t)), 
\end{equation*}
which provides us with $w(t_1)=1 + O(\varepsilon (\log\varepsilon)^2),$ and
gives~\eqref{estim:t1}. More precisely, we compute from the energy 
\begin{equation*}
1=w(t_1)-\varepsilon -\varepsilon \log(\frac{w(t_1)}{\varepsilon}) \implies
w(t_1)=1 -\varepsilon\log(\varepsilon) + \varepsilon+o(\varepsilon). 
\end{equation*}

\paragraph{Proof of Lemma~\ref{lem:LV2}: From $t_{1}$ to $t_{2}$.}

During this second phase, $v$ goes from $\ep$ to exponentially small values, whereas $w$ goes from approximately $1$ to $\ep.$ We thus
define an intermediate time $t_{1,2}$ by $v(t_{1,2}):=\varepsilon^{3/2},$ and  the energy equality becomes
\begin{equation*}
1=w(t_{1,2}) -\varepsilon \log(\frac{w}{\sqrt{\varepsilon}}) -2\varepsilon
+O(\varepsilon^{3/2}) \implies w(t_{1,2})=1-\frac{\varepsilon}{2}
\log(\varepsilon)+2\varepsilon +O(\varepsilon^{3/2}), 
\end{equation*}
from which we deduce that on $(t_1,t_{1,2}),$ 
\begin{equation*}
\frac{\diff v}{\diff t}=-v(w-\varepsilon)=-v(1+O(\varepsilon\log(\varepsilon)))%
\implies v(t)=\varepsilon
e^{-(t-t_1)+O(\varepsilon\log(\varepsilon)(t-t_1))}, 
\end{equation*}
hence we have 
\begin{equation*}
\varepsilon^{3/2} = \varepsilon
e^{-(t_{1,2}-t_1)+O(\varepsilon\log(\varepsilon)(t_{1,2}-t_1))} \implies
t_{1,2}-t_1\sim -\frac{1}{2}\log(\varepsilon)
\end{equation*}
and on $(t_{1,2},t_2)$ we have $v=O(e^{3/2})\ll \varepsilon$ hence 
\begin{equation*}
\frac{d w}{dt}=-w (\varepsilon + O(\varepsilon^{3/2})) \implies w(t)=\left(1-%
\frac{\varepsilon}{2} \log(\varepsilon)+2\varepsilon
+O(\varepsilon^{3/2})\right)e^{-\varepsilon(t-t_{1,2})}, 
\end{equation*}
we then use $w(t_2)=\varepsilon$ to obtain 
\begin{multline}\label{eqsim:t2}
\varepsilon= \left(1-\frac{\varepsilon}{2} \log(\varepsilon)+2\varepsilon
+O(\varepsilon^{3/2})\right)e^{-\varepsilon(t_2-t_{1,2})} \\
\implies t_2-t_{1,2} \sim-\frac{1}{\varepsilon}\log(\ep)\ ,
\end{multline}
so that 
\begin{equation*}  
t_2-t_1\sim -\frac{1}{\varepsilon}\log(\ep). 
\end{equation*}
 If we need an intermediate time $t_2^*$ such that $t_2^*-t_{1,2}\sim \f{1}{\sqrt{\ep}}$, useful for the typical symetric time interval during which mass accumulates in $c_1,$ we see that $w(t_2^*)\sim e^{-\sqrt{\ep}}\sim 1,$ justifying the approximations carried out for $\Delta t$ in Lemma~\ref{lem:I:sizedistrib}.

We can also compute explicitely 
\begin{equation*}
\frac{\diff v}{\diff t}=-v(w-\varepsilon)=-v\left(\left(1-\frac{\varepsilon}{2}
\log(\varepsilon)+2\varepsilon
+O(\varepsilon^{3/2})\right)e^{-\varepsilon(t-t_{1,2})} -\varepsilon\right), 
\end{equation*}
which implies 
\begin{equation*}
v(t)=\varepsilon^{3/2}\exp\left(-\left(\frac{1}{\varepsilon}-\frac{1}{2}
\log(\varepsilon)+2+O(\varepsilon^{1/2})\right)(1-e^{-%
\varepsilon(t-t_{1,2})}) +\varepsilon(t-t_{1,2})\right). 
\end{equation*}
Taken in $t=t_2$ and using~\eqref{eqsim:t2} we get 
\begin{multline*}
v(t_2)=
\varepsilon^{3/2}\exp\left(-\frac{1}{\varepsilon}+\frac{1}{2}
\log(\varepsilon)-2+O(\varepsilon^{1/2}) +\f{1}{\ep}\ep -  \log(\varepsilon)\right) \\
=\varepsilon^{3/2}\exp\left(-\frac{1}{\varepsilon}-\frac{1}{2}
\log(\varepsilon)-1+O(\varepsilon^{1/2}) 
\right) 
\sim \ep e^{-1}e^{-\f{1}{\ep}},
\end{multline*}
formula which we can also obtain from
the conservation of energy:
\begin{align*}
E & =v+w-2\varepsilon-\varepsilon\log\left( \frac{vw}{\varepsilon^{2}}\right)
\\
1 & =v+\varepsilon-2\varepsilon-\varepsilon\log\left( \frac{vw}{%
\varepsilon^{2}}\right) \\
1 & =-\varepsilon-\varepsilon\log\left( \frac{v\varepsilon}{\varepsilon^{2}}%
\right) +O(\ep^{3/2})\\
v & \sim \varepsilon e^{-1}\exp\left( -\frac{1}{\varepsilon}\right)
\end{align*}

\textbf{Proof of Lemma~\ref{lem:LV3}: From $t_{2}$ to $t_{3}$.}
We divide the trajectory connecting $\left( v\left(
t_{2}\right) ,w\left( t_{2}\right) \right) $ with $\left( v\left(
t_{3}\right) ,w\left( t_{3}\right) \right) $ into two (symmetric) parts, by defining $t_{2,3}$ by
\[v(t_{2,3})=w(t_{2,3} ).\]
For $t\in [ t_{2},t_{2,3}] $ we have $v\left(
t\right) \leq w\left( t\right) ,$ and for $t\in\left[
t_{2,3},t_{3}\right] $ we have $v\left( t\right) \geq w\left( t\right) .$
During the first interval we have that $v$ is much smaller than $\varepsilon$
and we approximate (\ref{D8}) as
\begin{equation*}
\frac{dv}{dt}=-vw+v\varepsilon\ \ ,\ \ \frac{dw}{dt}=-\varepsilon w \ .
\end{equation*}

We then obtain the approximations%
\begin{align*}
w & \sim\varepsilon\exp\left( -\varepsilon\left( t-t_{2}\right) \right) \\
v & \sim v\left( t_{2}\right) \exp\left( \varepsilon\left( t-t_{2}\right) - 
\left[ 1-\exp\left( -\varepsilon\left( t-t_{2}\right) \right) \right] \right)
\end{align*}
for $t\in\left[ t_{2},t_{2,3}\right] .$ We can determine $t_{2,3}$ by means
of the condition $v\left( t_{2,3}\right) =w\left( t_{2,3}\right) $ %
applied to the energy conservation: since $v=w=O(e^{-\f{1}{\ep}})$,
we compute 
\begin{equation*}
1=2v -2\ep -\ep\log(\f{v^2}{\ep^2})=-2\ep (1+\log(\f{v}{\ep}))+ O(e^{-\f{1}{\ep}}). \end{equation*}
Hence $v(t_{2,3})=w(t_{2,3})\sim \ep e^{-1} e^{-\frac{1}{2\varepsilon}},$
and using $w(t_{2,3})\sim \ep e^{-\ep(t_{2,3}-t_2)}$ we get
\begin{equation*}
t_{2,3}-t_2 \sim \frac{1}{2\varepsilon^2}. 
\end{equation*}

We can now compute $\left( t_{3}-t_{2,3}\right) $ using a same argument, due
to the symmetry of the equation. Notice that the energy formula \eqref{D5}
as well as $v\left( t_{3}\right) =\varepsilon$ imply $%
w\left( t_{3}\right) \sim\varepsilon e^{-1}\exp\left( -\frac{1}{\varepsilon}%
\right) .$ In the interval $t\in\left[ t_{2,3},t_{3}\right] $ we can
approximate \eqref{D8} as%
\begin{equation*}
\frac{dv}{dt}=\varepsilon v\  ,\ \ \frac{dw}{dt}=vw-\varepsilon w \ ,
\end{equation*}
because $w\ll\varepsilon$ for $t\in\left[ t_{2,3},t_{3}%
\right] .$ We then obtain the following approximation $\left(
t_{3}-t_{2,3}\right) \sim\frac {1}{2\varepsilon^{2}}$ for small $%
\varepsilon. $ Then%
\begin{equation*}
t_{3}\sim t_{2}+\frac{1}{\varepsilon^{2}}\ \ \text{as\ \ }\varepsilon
\rightarrow0. 
\end{equation*}

\textbf{Proof of Lemma~\ref{lem:LV4}: From $t_{3}$ to $t_{4}$.}

For $t>t_{3}$ both $v$ and $w$ increase. This stage can be analyzed in a
manner symmetric to the one from $t_{1}$ to $t_{2}.$ During this phase $v$
becomes of order one. This happens in times of order $\frac{1}{\varepsilon}$
(up to a logarithmic correction)$.$ We can define $t_{4}$ by means of $%
w\left( t_{4}\right) =\varepsilon.$ During this phase we can do the same computations as in $(t_1, t_2)$ so that we obtain:%
\begin{equation*}
t_{4}-t_{3}\sim\frac{1}{\varepsilon}\log\left( \frac{1}{\varepsilon }%
\right). 
\end{equation*}

\textbf{Proof of Lemma~\ref{lem:LV5}: From $t_{4}$ to $t_{5}$.}

Finally, the trajectory returns to the line $\left\{ w=v\right\} .$ 
Using the conservation of energy formula (\ref{D5}) we can approximate the
evolution of $\left( w,v\right) $ by means of the equation%
\begin{equation*}
\frac{dw}{dt}=w\left( 1-w\right)  ,\ \ v=1-w 
\end{equation*}

We have defined define $t_{5}$ by means of $w\left( t_{5}\right) =v\left(
t_{5}\right) >\varepsilon.$ Using that $w\left( t_{4}\right) =\varepsilon$
we obtain the following approximation, symmetric to the interval $(0,t_1)$: %
\begin{equation*}
t_{5}-t_4\sim \log\left( \frac{1}{\varepsilon}\right). 
\end{equation*}

\textbf{Proof of Lemma~\ref{lem:scaling2}}
Using (\ref{eq:Scal}) and (\ref{eq:TScal}) we obtain%
{
\begin{equation*}
\begin{aligned}
D\left( E,\varepsilon\right) &=\frac{E}{2}\int_{0}^{T\left( E,\varepsilon
\right) }\left( w+v\right) \left( Es;1,\frac{\varepsilon}{E}\right) ds
=\frac{%
1}{2}\int_{0}^{ET\left( E,\varepsilon\right) }\left( w+v\right) \left( s;1,%
\frac{\varepsilon}{E}\right) ds\\
&=\frac{1}{2}\int_{0}^{T\left( 1,\frac{%
\varepsilon}{E}\right) }\left( w+v\right) \left( s;1,\frac {\varepsilon}{E}%
\right) ds=D\left( 1,\frac{\varepsilon}{E}\right) 
\end{aligned}
\end{equation*}

\textbf{Proof of Lemma~\ref{lem:DYT}}

The equivalent $T(1,\ep)\sim \f{1}{\ep^2}$ comes directly from Lemmas~\ref{lem:LV1} to~\ref{lem:LV5}. 
It is known that for the Lotka-Volterra system, the mean value of $v$ and $w$ satisfies the equality
\[\f{1}{\ep}\int_0^T v(s)\diff s=\f{1}{\ep}\int_0^T w(s)\diff s= \ep,\]
hence their equivalent and the one for $D(1,\ep).$ This proves~\eqref{eq:Tasymp}.
The maximum value of $Y$ is given by
\[Y_{\text{max}} (t)=\int_0^{\f{T}{2}} (w(s)-v(s))\diff s = \int_0^{t_{2,3}}(w(s)-v(s))\diff s
\]
and with Lemmas~\ref{lem:LV1} to~\ref{lem:LV5} we get
\[\begin{array}{lll}
Y(t_1)=\int_0^{t_1} (w(s)-v(s))\diff s &\sim& \int_0^{t_1} \diff s\sim -\log(\ep),
\\ 
\int_{t_1}^{t_{1,2}}  (w(s)-v(s))\diff s &\sim& t_{1,2}-t_1\sim-\f{1}{2} \log(\ep),
\\
\int_{t_{1,2}}^{t_2}  (w(s)-v(s))\diff s &\sim& \int_{t_{1,2}}^{t_2} e^{-\ep(s-t_{1,2})} \diff s \sim\f{1}{\ep},
\\
\int_{t_{2}}^{t_{2,3}}  (w(s)-v(s))\diff s &\leq &\int_{t_{2}}^{t_{2,3}} w(s)\diff s \sim \int_{t_{2}}^{t_{2,3}} \ep e^{-\ep(t-t_2)} \diff s \leq 1.
\end{array}
\]
Finally, we see that the maximal contribution to the displacement of $Y$ is linked to the part $(t_{1,2},t_2),$ and we have $Y_{\text{max}}\sim\f{1}{\ep}.$
}

\section{Linearised stability around the positive steady state \label%
{app:linear}}

In this appendix we study the linear stability of the steady states (\ref{eq:steady}%
).

\begin{proposition}[Linear stability around the positive steady state] \hfill\\
For $M=1$ and $\ep \ll 1,$ the unique positive steady state defined by~\eqref{eq:steady} is locally asymptotically stable.
\label{prop:linear}
\end{proposition}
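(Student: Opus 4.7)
The plan is to carry out a spectral analysis of the linearisation and to show, on the codimension-two invariant subspace cut out by the two conservation laws, that the spectrum of the linearised generator lies in the open left half-plane. First I would linearise~\eqref{eq:BD2}--\eqref{eq:BD3} around $(\bar v,\bar w,\{\bar c_j\})$ by setting $v=\bar v+V$, $w=\bar w+W$, $c_j=\bar c_j+C_j$. Using $\bar v=\varepsilon$ together with the detailed balance relation $\bar w\bar c_j=\bar v\bar c_{j+1}$ built into~\eqref{eq:steady}, the linearised system reads
\begin{align*}
\dot V &= -\bar v(W+C_1),\qquad \dot W=\bar w V,\\
\dot C_1 &= -\bar w C_1+\bar v C_2-\bar c_1\alpha,\\
\dot C_j &= \bar w(C_{j-1}-C_j)-\bar v(C_j-C_{j+1})+\theta\bar c_{j-1}\alpha,\quad j\geq 2,
\end{align*}
with $\alpha:=W-(1-\theta)V$. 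The two conserved quantities $M$ and $\varepsilon$ give $V+W+\sum_{j\geq1}jC_j=0$ and $\sum_{j\geq1}C_j=0$, which define the codimension-two invariant subspace on which the spectral analysis must be carried out.

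Next I would look for eigenmodes $\propto e^{\mu t}$ by introducing the generating function $\Phi(z)=\sum_{j\geq1}C_j z^j$. The infinite system for $\{C_j\}$ collapses to a single inhomogeneous functional equation with explicit solution
\[ \Phi(z)=\frac{(z-1)z\bigl[\bar c_1 z\,\alpha+\bar v\,C_1\,h(z)\bigr]}{h(z)\,P(z)},\quad h(z)=1-(1-\theta)z, \]
\[ P(z)=-\varepsilon(1-\theta)z^2+\bigl(\mu+\varepsilon(2-\theta)\bigr)z-\varepsilon. \]
The roots of $P$ satisfy $z_1z_2=1/(1-\theta)$, and at $\mu=0$ reduce to $z_1=1$, $z_2=1/(1-\theta)$, so that the numerator and $h$ cancel $P$ exactly, reproducing the one-parameter family of equilibria in~\eqref{eq:steady}. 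For $\mu\neq 0$, requiring $\Phi$ to be the generating function of a summable perturbation (analyticity in a disk strictly larger than $\{|z|\leq 1\}$, with only the usual singularity at $z=1/(1-\theta)$) forces the cancellation condition $\bar c_1 z_1(\mu)\,\alpha+\bar v\,C_1\,h(z_1(\mu))=0$.

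Combining this cancellation with the $(V,W)$ relations $C_1=-D(\mu^2+\varepsilon^2(1-\theta))/(\varepsilon^2(1-\theta))$ and $\alpha=D(\varepsilon-\mu)/\varepsilon$, where $D$ denotes the amplitude of $W$, and eliminating $z_1$ via $P(z_1)=0$ yields a single polynomial characteristic equation $Q(\mu;\varepsilon)=0$. Its roots I would track by perturbation in the small parameter $\varepsilon$: a conjugate pair appears close to $\pm i\varepsilon\sqrt{1-\theta}$ with a strictly negative real part of order $\varepsilon^2$ at next order, matching the $e^{-a\varepsilon n}$ damping derived in Sections~\ref{sec:PhaseIII}--\ref{sec:PhaseIV}; the remaining roots are real, of order $\varepsilon$, and negative, coming from the discrete diffusive relaxation of the boundary cluster $C_1$. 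A Routh--Hurwitz check on the explicit polynomial then closes the point-spectrum step.

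The main obstacle will be controlling the essential spectrum, which exists because the phase space is infinite-dimensional. To handle it, I would work in the weighted $\ell^1$ space with weight $\rho^{-j}$ for some $\rho\in(1-\theta,1)$, so that the geometric equilibrium and its small perturbations lie inside, and decompose the generator as the unforced discrete drift-diffusion plus a finite-rank coupling to $(V,W,C_1)$. The symbol of the drift-diffusion part, evaluated on the circle $|z|=\rho$, is $\bar v z+\bar w/z-(\bar v+\bar w)$, whose real part is uniformly bounded above by $\bar v\rho+\bar w/\rho-(\bar v+\bar w)<0$ for $\rho\in(1-\theta,1)$; this yields a strict spectral gap for the unforced part, and the coupling being compact only adds the discrete eigenvalues computed above. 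Combining the two steps gives a uniform spectral gap on the invariant subspace, from which local exponential stability in the weighted norm---and hence the nonlinear local asymptotic stability stated in the proposition---follows by the standard principle of linearised stability for analytic semigroups.
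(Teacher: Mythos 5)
Your proposal is correct in its overall structure and takes a genuinely different route from the paper's. The paper linearises with \emph{multiplicative} perturbations $c_j=\bar c_j(1+\varphi_j)$, rescales time by $\varepsilon$, and treats the problem as a perturbation in $\theta$ of the $\theta=0$ limit, where the system decouples into a two-dimensional harmonic oscillator (eigenvalues $\pm i$) and the discrete heat equation (continuous spectrum in $[-4,0]$); it then computes $\lambda=\lambda_0+\theta\lambda_1$ for the pair $\lambda_0=\pm i$ and checks numerically that $\mathrm{Re}(\lambda_1)<0$, while explicitly deferring a rigorous treatment of the continuous spectrum to future work. You instead use additive perturbations and the generating function $\Phi(z)=\sum_{j\geq1}C_jz^j$ to produce an \emph{exact} characteristic equation from the pole--cancellation condition at the root $z_1(\mu)$ of $P$ near $1$, and then handle the essential spectrum head-on via the weighted $\ell^1_\rho$ norm. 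I verified the key algebraic claims: your $P(z)=-\varepsilon(1-\theta)z^2+(\mu+\varepsilon(2-\theta))z-\varepsilon$ and the expression for $\Phi$ are correct; $P(1)=\mu$, so the numerator factor $(z-1)$ encodes $\sum C_j=0$ automatically for $\mu\neq 0$; the $(V,W)$ elimination gives $C_1=-W(\mu^2+\bar v\bar w)/(\bar v\bar w)$ and $\alpha=W(\varepsilon-\mu)/\varepsilon$ as you state; and the symbol estimate $\bar v\rho+\bar w/\rho<\bar v+\bar w$ indeed holds precisely for $\rho\in(1-\theta,1)$ since the difference equals $\frac{\rho-1}{\rho}(\bar v\rho-\bar w)$.

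The essential-spectrum step is the genuine advance over the paper's argument. The paper's multiplicative change of variables is tantamount to the critical weight $\rho=1-\theta$, for which the drift--diffusion symbol $\bar v\rho e^{i\phi}+\tfrac{\bar w}{\rho}e^{-i\phi}-(\bar v+\bar w)$ has real part $(\bar v+\bar w)(\cos\phi-1)$, touching $0$ at $\phi=0$; this is exactly why the paper is left with the delicate $\lambda=0$ region and cannot close a spectral gap. Choosing $\rho$ strictly inside $(1-\theta,1)$ fixes this at no cost since the equilibrium $\bar c_j\propto(1-\theta)^j$ and the geometrically decaying eigenprofiles still lie in $\ell^1_\rho$, and the coupling to $(V,W,C_1)$ is finite-rank, hence compact, so Weyl's theorem localises the essential spectrum strictly to the left. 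The remaining work in your plan --- writing out the resultant $Q(\mu;\varepsilon)$ explicitly, discarding spurious roots coming from the branch $z_2(\mu)$, and running Routh--Hurwitz --- is the part the paper also does not fully carry out (it also relies on a numerical check of $\mathrm{Re}(\lambda_1)<0$), so you are on equal footing there. One small discrepancy worth noting: you place the leading pair at $\pm i\varepsilon\sqrt{1-\theta}$ (the true LV frequency $\pm i\sqrt{\bar v\bar w}$) while the paper's zeroth order in $\theta$ is $\pm i\varepsilon$; these agree to leading order, you are simply expanding in a slightly different parameter.
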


\begin{proof}
We provide here the main lines for the linear stability analysis, letting a fully rigorous and complete study for future work. We introduce new variables by means of 
\begin{equation}  \label{def:phi}
c_{j}=\bar c_{j}(1+\varphi_{j}),\qquad j\geq1,\qquad v=\bar v (1+V),\qquad
w=\bar w(1+W),
\end{equation}
and recall that the positive steady state $(\bar v,\bar w,\bar c_{j})$ is
defined by~\eqref{eq:steady}, namely 
\begin{equation*}
\begin{array}{l}
\theta:=1-\frac{1}{2}\biggl({\frac{1}{\varepsilon} - \sqrt{\Bigl(\frac {1}{%
\varepsilon}-2\Bigr)^{2}+{4}}}\biggr)\sim{\varepsilon}{}, \\ 
\bar c_{1}:=\varepsilon\theta\sim{\varepsilon^{2}}{},\qquad\bar c_{i}:
= (1-\theta)^{i-1} \bar c_{1}, \qquad\bar v:=\varepsilon,\qquad\bar
w:=\varepsilon-\bar c_{1}=\varepsilon(1-\theta).%
\end{array}%
\end{equation*}
Assuming $\vert\varphi_{j}\vert+\vert V\vert+\vert W\vert\ll1,$ neglecting
higher order terms, we get the following linearised system:

\begin{align}
\frac{dV}{dt} & =-\varepsilon(1-\theta)W-\varepsilon\theta\varphi_{1}, \\
\frac{dW}{dt} & =\varepsilon V, \\
\frac{d\varphi_{1}}{dt} & =-\varepsilon(1-\theta) (W
+\varphi_{1}-V-\varphi_{2}), \\
\frac{d\varphi_{j}}{dt} & =\varepsilon(\varphi_{j-1} - \varphi_{j}+W-V)
-\varepsilon(1-\theta) (\varphi_{j}-\varphi_{j+1}+W-V).
\end{align}
Introducing the time variable $\tau=\varepsilon t$ we get 
\begin{align}
\frac{dV}{d\tau} & =-(1-\theta)W-\theta\varphi_{1}, \\
\frac{dW}{d\tau} & = V, \\
\frac{d\varphi_{1}}{d\tau} & =-(1-\theta) (W +\varphi_{1}-V-\varphi_{2}), \\
\frac{d\varphi_{j}}{d\tau} & = (\varphi_{j-1} - \varphi_{j}+W-V) -
(1-\theta) (\varphi_{j}-\varphi_{j+1}+W-V).
\end{align}
We look for solutions to the eigenproblem, hence of the form $%
(V(t),W(t),\varphi_{j}(t))=e^{\lambda\tau} (V,W,\varphi_{j})$ with $%
\lambda\in\mathbb{C}.$ We get the system: 
\begin{align}  \label{eq:V:eigen}
\lambda V & =-(1-\theta)W-\theta\varphi_{1}, \\
\lambda W & = V,  \label{eq:W:eigen} \\
\lambda\varphi_{1} & =-(1-\theta) (W +\varphi_{1}-V-\varphi_{2}),
\label{eq:phi1:eigen} \\
\lambda\varphi_{j} & = (\varphi_{j-1} - \varphi_{j}+W-V) - (1-\theta)
(\varphi_{j}-\varphi_{j+1}+W-V).  \label{eq:phij:eigen}
\end{align}
We recall that $\theta\sim{\varepsilon}$ and, thus, $\theta\ll1$ as soon as $%
\varepsilon\ll 1$, hence we 
treat the solutions to~\eqref{eq:V:eigen}--\eqref{eq:phij:eigen} in a
perturbative manner with respect to $\theta$.

\textbf{At the limit $\theta=0,$} we get 
\begin{align}  \label{eq:V:eigen0}
\lambda V & =-W, \\
\lambda W & = V ,  \label{eq:W:eigen0} \\
\lambda\varphi_{1} & =-W -\varphi_{1}+V+\varphi_{2},  \label{eq:phi1:eigen0}
\\
\lambda\varphi_{j} & =\varphi_{j-1} - 2 \varphi_{j} +\varphi_{j+1}.
\label{eq:phij:eigen0}
\end{align}
We notice that we have two eigenvalues $\lambda_{0}=\pm i,$ with $V=\pm i W$%
, linked to an oscillatory regime of period $2\pi$ (coherent with the period found at the end of Phase~III), and, in the case $V=W=0$, a continuous
spectrum similar to the one of the discrete heat equation, whose
admissible generalised eigenvalues  all have a negative real part except $\lambda=0$
(associated to the constant sequence $\varphi_{j}=\varphi_{1}$ for all $j$).

\ \textbf{Case (a):} $\lambda=\pm i,$
$V=\pm i W.$ We only detail the computations for $\lambda=i$ since the case $\lambda=-i$ follows from taking the conjugate. For $W_{0}=-i,$ $V_{0}=1$ we 
obtain 
\begin{align*}
\varphi_{2} & =(i+1) \varphi_{1} -(1+i) , \\
\varphi_{j+1} & =(i+2) \varphi_{j}-\varphi_{j-1},\qquad j\geq2.
\end{align*}
We recognize a linear recurrent sequence of order 2 for $j\geq2$, whose
characteristic equation is 
\[
r^{2}-(i+2)r+1=0 \implies r_{\pm}=\frac{i+2\pm\sqrt{(i+2)^{2}-4}}{2}=1+\frac{%
i}{2}\pm\sqrt{i-\frac{1}{4}}.
\]
Since $\vert r_{-}\vert<1$ and $\vert r_{+}\vert>1$, the only admissible value is $r_-,$ that we denote $r=r_{-}$ for simplicity. 
We finally have
\begin{equation}  \label{eq:eigen:theta0}
V_{0}=1,\quad W_{0}=-i,\quad\varphi_{j,0}=\frac {1+i}{1-r}r^{j},\quad r=1+%
\frac{i}{2}-\sqrt{i-\frac{1}{4}}.
\end{equation}

 \textbf{%
Case (b):} $W=V=0.$ Then ~\eqref{eq:phi1:eigen0} implies $%
(\lambda+1)\varphi_{1}=\varphi_{2},$ and~\eqref{eq:phij:eigen0} implies for $%
j\geq2$ 
\begin{align*}
\lambda\varphi_{j} & =\varphi_{j-1} -2\varphi_{j} +\varphi_{j+1},\qquad
j\geq2, \\
\lambda\varphi_{1} & =-(\varphi_{1}-\varphi_{2}).
\end{align*}
We recognize the discrete heat equation. 
As previously, the characteristic equation is 
\[
r^{2} -(\lambda+2) r+1=0\implies r_{\pm}=\frac{\lambda+2\pm\sqrt {%
(\lambda+2)^{2}-4}}{2}=\frac{\lambda+2\pm\sqrt{\lambda(\lambda+4)}}{2}
\]
We see that for $\lambda=0$ we have $r_\pm=1$ linked to  the constant sequence $\varphi_j=1.$

If $\lambda\neq 0,$  generalised eigenvectors are given by linear combinations of the two sequences $(r_\pm ^{j-1})_{j\geq 1}$. Using the boundary condition at $j=1$ and $r_++r_-=\lambda+2,$ we obtain
\[\varphi_j=r_+^{j-1} - \f{1-{r_-}}{1-r_+}r_-^{j-1},\] 
so that to ensure $(\varphi_j)\in \ell^\infty (\mathbb{C})$ we need $\vert r_{\pm} \vert \leq 1.$ Since $r_+r_-=1,$ we define $r_\pm = e^{\pm i\beta}$ with $\beta\in[0,2\pi),$ and compute 
\[\lambda=e^{i\beta}+e^{-i\beta}
- 2=2(\cos(\beta) -1) \in[-4,0].
\] To conclude:
%\end{details}
We have found that no admissible (generalised)
eigenvalue  has a positive real part, and the only eigenvalues with a
nonnegative real part are $\lambda=0,$ $\lambda=i$ and $\lambda=-i.$

%%%%%%%%%%%%%%%%%%%%%%%%%%%%%%%%%%%%%%%%%%%%%%%%%%%%%%%%%%%%%%%%%%%%%%%%%%%%%%%%%%%%%%%%%%%%%%%%%%%%%%%%%
%%%%%%%%%%%%%%%%%%%%%%%%%%%%%%%%%%%%%%%%%%%%%%%%%%%%%%%%%%%%%%%%%%%%%%%%%%%%%%%%%%%%%%%%%%%%%%%%%%%%%%%%%

\textbf{In the case $\theta\ll1,$} we consider a perturbation of the
eigenvalue $\lambda_{0}=i$ (the case $\lambda_0=-i$ is similar) and its eigenvector given by~%
\eqref{eq:eigen:theta0} by writing
the asymptotics 
\begin{equation*}
\lambda=\lambda_{0}+\theta\lambda_{1},\quad V=V_{0}+\theta V_{1},\quad
W=W_{0}+\theta W_{1},\quad\varphi_{j}=\varphi_{j,0}+\theta\varphi_{j,1}, 
\end{equation*}
then keeping only the first order terms in $\theta,$ the system~%
\eqref{eq:V:eigen}--\eqref{eq:phij:eigen} becomes 
\begin{align*}
\lambda_{0}V_{1}+\lambda_{1}V_{0} & =-W_{1}+W_{0}-\varphi_{1,0}, & \implies
& iV_{1}+\lambda_{1}=-W_{1}-i-\varphi_{1,0} \\
\lambda_{0}W_{1}+\lambda_{1}W_{0} & =V_{1}, & \implies & iW_{1}-i\lambda
_{1}=V_{1} \\
\implies & -W_{1}+2\lambda_{1}=-W_{1}-i-\varphi_{1,0} & \implies &
\lambda_{1}=-\frac{1}{2}(i+\frac{1+i}{1-r}r),
\end{align*}
and we check numerically that $\mathcal{R}e(\lambda_{1})<0,$ leading to
damping oscillations. 

We now consider the stability of the continuous spectrum, when $\lambda
_{0}=2(\cos(\beta)-1).$ Detailed computations are necessary for a fully rigorous justification, and are left for future work. In brief, we notice that as soon as $\theta \lesssim \beta,$  we have $\mathcal{R}e(\lambda)=\lambda_0+\theta \mathcal{R}e(\lambda_1) <0,$ so this part of the spectrum remains stable. The more delicate part is thus when $\beta \lesssim \theta \ll 1.$ We can carry out an asymptotic expansion as before, and conclude that the continuous spectrum remains in the half-plane with nonpositive real parts. 

To conclude, we note that the oscillations have a period of order
$2\pi$ in the variable $\tau$ and the damping occurs in a time of order $%
1/\theta\sim1/\varepsilon,$ which corresponds to a period of order ${ 2\pi}/{%
\varepsilon}$ and a damping as ${1}/{\varepsilon^{2}}$ in the original
variable $t:$ this is in line with the analysis carried out for the end of Phase~III.
\end{proof}

%%%%%%%%%%%%%%%%%%%%%%%%%%%%%%%%%%%%%%%%%%%%%%%%%%%%%%%%%%%%%%%%%%%%%%%%%%%%%%%%%%%%%%%%%%%%%%%%%%%%%%%%%
%%%%%%%%%%%%%%%%%%%%%%%%%%%%%%%%%%%%%%%%%%%%%%%%%%%%%%%%%%%%%%%%%%%%%%%%%%%%%%%%%%%%%%%%%%%%%%%%%%%%%%%%%
%%%%%%%%%%%%%%%%%%%%%%%%%%%%%%%%%%%%%%%%%%%%%%%%%%%%%%%%%%%%%%%%%%%%%%%%%%%%%%%%%%%%%%%%%%%%%%%%%%%%%%%%%
%%%%%%%%%%%%%%%%%%%%%%%%%%%%%%%%%%%%%%%%%%%%%%%%%%%%%%%%%%%%%%%%%%%%%%%%%%%%%%%%%%%%%%%%%%%%%%%%%%%%%%%%%
%%%%%%%%%%%%%%%%%%%%%%%%%%%%%%%%%%%%%%%%%%%%%%%%%%%%%%%%%%%%%%%%%%%%%%%%%%%%%%%%%%%%%%%%%%%%%%%%%%%%%%%%%
%%%%%%%%%%%%%%%%%%%%%%%%%%%%%%%%%%%%%%%%%%%%%%%%%%%%%%%%%%%%%%%%%%%%%%%%%%%%%%%%%%%%%%%%%%%%%%%%%%%%%%%%%

\section{Numerical computations}\label{sec:app_num}
In this appendix, we give details on the numerical simulation used to illustrate the evolution of the system (\ref{A1})-(\ref{A3}). The code is written in MATLAB and is published online (\href{https://github.com/mmezache/BiMono_Becker_Doring}{https://github.com/mmezache/BiMono\_Becker\_Doring}). The numerical study has an illustrative purpose and is divided based on the framework considered for the model, either the size-continuous framework or the size-discrete framework. 
\subsection{The continuous model}\label{sec:app_PhaseI}
We recall the approximation of the model by a diffusion-convection equation \eqref{eq:DiffApp}--\eqref{eq:DiffCoef} describing the evolution of clusters during Phases~I and II:

\begin{equation*}
\partial_t c(j,t) + \tilde{V}(t)\partial_j c(j,t)= \frac{d(t)}{2}\partial_j^2c(j,t), \quad j\in \Omega\subset\mathbb{R}_+^*, \; t\in [0,T)
\label{eq:evo_p1}
\end{equation*}
where 
\begin{equation*}
\tilde{V}(t) = w(t)-v(t)\; , \quad d(t) = w(t) +v(t)
\end{equation*}
 and where $v$ and $w$ are the solution of the Lotka-Volterra system \eqref{D8}:
 
\begin{equation*}
\left\lbrace
\begin{array}{l@{}l@{}r}
\frac{d}{dt}v(t) & = v(t)(\epsilon -w(t)) ,&\quad  v(0)=0.6,\\
\\
\frac{d}{dt}w(t) & = w(t)(v(t) - \epsilon) ,&\quad w(0)=0.6,
\end{array}
\right.
\label{eq:LV_p1}
\end{equation*}  
where $\epsilon = \int_\Omega c(j,t)dj$.
\subsubsection*{Computation of the monomers}
The system \eqref{D8} depends on the parameters $\epsilon \ll 1$ which makes it stiff when one attempts to solve it numerically. The main reason for the instability of the numerical schemes is that the trajectories of $v$ and $w$, which form the following closed-curve 
$$E = v(t) + w(t) -2\epsilon -\epsilon \log\left(\frac{v(t) w(t)}{\epsilon^2} \right), $$
pass through strictly positive values but very close to $0$. The issue is then to find the balance between the accuracy of the numerical schemes and preserving the positivity properties (see for instance \cite{blanes2022positivity}). To illustrate this phenomenon, let $\epsilon = 10^{-2}$ and we define the function 
$$g: x\in \mathbb{R}_+^* \mapsto 2(x-\epsilon -\epsilon(\log(x) -\log(\epsilon)))-1.$$ 
The function $g$ corresponds to the Hamiltonian $E$ \eqref{D5} minus 1. Then, the root $x^* \in (0,\epsilon)$ such that $g(x^*)=0$ gives insights on the minimal values taken by $v$ and $w$.
We have 
\begin{align*}
g(x^*) & = 0 \\
 - 2 \epsilon\log(x^*) & = 1-2\epsilon\log(\epsilon) +2\epsilon - 2x^* \\
-2\epsilon \log(x^*) & \geq 1-2\epsilon\log(\epsilon)\\
x^* & \leq \exp\left(\tfrac{1-2\epsilon\log(\epsilon)}{-2\epsilon} \right) \leq 2\times10^{-24}.
\end{align*}  
Since the solution of system \eqref{D8} goes very close to $0$, the numerical scheme needs to be capable of preserving the positivity. In order to ensure the positivity of the numerical solutions of system \eqref{D8}, the structure of the Lotka-Volterra system that forces the solutions to remain positive can be used. We apply the following change of variable $x = \log(v)$ and $y = \log(w)$. Hence, we have
\begin{equation}
\left\lbrace
\begin{array}{l@{}l@{}r}
\frac{d}{dt}x(t) & = \epsilon -e^{y(t)} ,&\quad  x(0)=\log(0.6),\\
\\
\frac{d}{dt}y(t) & = e^{x(t)} - \epsilon ,&\quad y(0)=\log(0.6).
\end{array}
\right.
\label{eq:LV_exp_p1}
\end{equation}  
The numerical solution of \eqref{eq:LV_exp_p1} is then computed with the high accuracy Runge-Kutta scheme at the 8th order.

\subsubsection*{Numerical scheme for the diffusion-convection equation}
We propose to use an implicit scheme to solve the diffusion-convection equation \eqref{eq:DiffApp} with a centered difference operator for the convection term. Consider a constant step time discretisation of the interval $[0,T]$ $t_n = n\Delta t$, $n=0,\hdots,N_1,$ where $T=t_{N_1}$ and $\Delta t$ is the time step. Consider a constant step "space" discretisation of the interval of sizes $[0,L]$ $j_k = k\Delta j$, $k=0,\hdots,N_2,$ where $L=j_{N_2}$ and $\Delta j$ is the space step. We look for an approximation $c_k^n$ of $c(k\Delta j, n\Delta t)$. The scheme is then the following:
\begin{equation}
\frac{c^{n+1}_k -c^{n}_k}{\Delta t} + \tilde{V}^{n+1}\frac{c^{n+1}_{k+1} -c^{n+1}_{k-1}}{2\Delta j} -\frac{d^{n+1}}{2}\frac{c^{n+1}_{k+1}-2c^{n+1}_{k} +c^{n+1}_{k-1}}{(\Delta j)^2} = 0,
\label{eq:evo_p1_num}
\end{equation} 
for $n=0, \hdots, N_1-1$ and $k=1,\hdots,N_2-1$. The approximation of the integral terms are given by
\begin{align}
\epsilon^n &= \Delta j \sum\limits_{k=1}^{N_2} c_k^n, \label{eq:epsilon_num}\\
M^n &= \Delta j \sum\limits_{k=1}^{N_2} j_k c_k^n. \label{eq:M1_num}
\end{align}

\subsubsection*{Conservation of the total concentration of clusters}
\begin{proposition}\label{prop:bdd_num_p1}
The numerical scheme defined in \eqref{eq:evo_p1_num} is conservative for the quantity $\epsilon$ if the following equation holds for the boundary conditions : 
\begin{equation}
\begin{split}
 0 = &\, c_0^n - (a^{n+1} + b^{n+1} +1 )c_0^{n+1} - (a^{n+1} - b^{n+1})c_1^{n+1} + c_{N_2}^n\\
 &+ (a^{n+1} - b^{n+1} -1 )c_{N_2}^{n+1} + (a^{n+1} + b^{n+1})c_{N_2-1}^{n+1},   
\end{split}
\label{eq:bdd_cond_num}
\end{equation}
where $$a^{n+1} = \frac{\Delta t \tilde{V}^{n+1}}{2\Delta j} \quad \text{and} \quad b^{n+1} = \frac{\Delta t d^{n+1}}{2(\Delta j)^2}.$$
\end{proposition}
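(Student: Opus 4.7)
The plan is to derive the boundary condition \eqref{eq:bdd_cond_num} as a necessary and sufficient algebraic consequence of requiring that the discrete total $\Delta j \sum_{k} c_k^n$ be invariant between two successive time steps. The argument is essentially a discrete integration by parts (Abel summation) applied to the implicit scheme \eqref{eq:evo_p1_num}, which produces telescoping contributions at the left and right boundaries; conservation then fixes exactly the combination of $c_0^{n+1}, c_1^{n+1}, c_{N_2-1}^{n+1}, c_{N_2}^{n+1}, c_0^n, c_{N_2}^n$ that appears in \eqref{eq:bdd_cond_num}.

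First, I would multiply \eqref{eq:evo_p1_num} by $\Delta t$ to obtain, for every $k=1,\dots,N_2-1$,
\begin{equation*}
(c_k^{n+1}-c_k^n) + a^{n+1}(c_{k+1}^{n+1}-c_{k-1}^{n+1}) - b^{n+1}(c_{k+1}^{n+1}-2c_k^{n+1}+c_{k-1}^{n+1}) = 0,
\end{equation*}
and then sum these $N_2-1$ identities over $k$. Applying telescoping (or equivalently Abel's summation) to the centred-difference term and to the discrete Laplacian gives
\begin{equation*}
\sum_{k=1}^{N_2-1}(c_{k+1}^{n+1}-c_{k-1}^{n+1}) = c_{N_2-1}^{n+1}+c_{N_2}^{n+1}-c_0^{n+1}-c_1^{n+1},
\end{equation*}
\begin{equation*}
\sum_{k=1}^{N_2-1}(c_{k+1}^{n+1}-2c_k^{n+1}+c_{k-1}^{n+1}) = (c_{N_2}^{n+1}-c_{N_2-1}^{n+1})-(c_1^{n+1}-c_0^{n+1}).
\end{equation*}
Combined, this yields a closed identity expressing $\sum_{k=1}^{N_2-1}(c_k^{n+1}-c_k^n)$ purely in terms of $c_0^{n+1}, c_1^{n+1}, c_{N_2-1}^{n+1}, c_{N_2}^{n+1}$.

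Next, since the conserved discrete quantity formally spans all grid nodes, I would add the two missing endpoint increments $(c_0^{n+1}-c_0^n)$ and $(c_{N_2}^{n+1}-c_{N_2}^n)$ on both sides and require the resulting left-hand side $\sum_{k=0}^{N_2}(c_k^{n+1}-c_k^n)$ to vanish, which is precisely the statement of conservation of $\epsilon$. After collecting coefficients of $c_0^{n+1}$, $c_1^{n+1}$, $c_{N_2-1}^{n+1}$, $c_{N_2}^{n+1}$, $c_0^n$, $c_{N_2}^n$, one obtains exactly equation \eqref{eq:bdd_cond_num}, and conversely any boundary closure satisfying \eqref{eq:bdd_cond_num} will enforce conservation by reversing the algebra. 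This establishes the equivalence stated in the proposition.

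I don't anticipate any genuine obstacle here: everything reduces to careful bookkeeping of telescoping sums, so the only real care required is sign tracking and making sure the indices $k=0$ and $k=N_2$ (which are not covered by \eqref{eq:evo_p1_num}) are handled consistently with the definition of $\epsilon^n$ in \eqref{eq:epsilon_num}. The most delicate point is purely a matter of convention: whether the total discrete mass is indexed from $k=0$ or $k=1$. Since the right-hand side of \eqref{eq:bdd_cond_num} involves $c_0^n$ and $c_0^{n+1}$, the natural reading is that the scheme's ghost/boundary node at $k=0$ is included in the conserved sum (or, equivalently, must be set so that the net flux through $k=0$ and $k=N_2$ cancels), and the boundary identity \eqref{eq:bdd_cond_num} is exactly the algebraic constraint that guarantees this cancellation.
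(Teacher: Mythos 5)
Your argument is correct and is essentially the same as the paper's: both sum the implicit scheme over the interior nodes $k=1,\dots,N_2-1$, telescope the centred-difference and discrete-Laplacian terms to isolate contributions from $c_0^{n+1},c_1^{n+1},c_{N_2-1}^{n+1},c_{N_2}^{n+1}$, add the endpoint increments, and read off \eqref{eq:bdd_cond_num} as exactly the condition for $\sum_{k=0}^{N_2}c_k^n$ to be preserved. The only difference is presentational -- you keep the scheme in time-increment form and telescope directly, while the paper substitutes for $c_k^n$ and re-indexes the spatial sums -- and the resulting coefficient bookkeeping is identical.
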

\begin{proof}
For the sake of clarity, we use the following notations $a^{n+1}= a$ and $b^{n+1}=b$.
The approximation of the integral term is defined by \eqref{eq:epsilon_num}, dividing by the space step, we have 
\begin{align*}
\sum\limits_{k=0}^{N_2} c_k^n & = c_0^n  +\sum\limits_{k=1}^{N_2-1} c_k^n + c_{N_2}^n ,\\
& = c_0^n + \sum\limits_{k=1}^{N_2-1} \big( -(a+b)c_{k-1}^{n+1} +(1+2b)c_k^{n+1} +(a-b)c_{k+1}^{n+1}\big) + c_{N_2}^n,\\ 
&=  c_0^n + c_{N_2}^{n+1} + \sum\limits_{k=2}^{N_2-2} c_k^{n+1} -(a+b)(c_0^{n+1} +c_1)^{n+1}\\
& \quad + (1+2b) (c_{1}^{n+1} +c_{N_2 -1}^{n+1}) + (a-b)(c_{N_2 - 1}^{n+1} +c_{N_2}^{n+1}),\\
 & =  c_0^n - (a + b +1 )c_0^{n+1} - (a - b)c_1^{n+1} \\
 & \quad + c_{N_2}^n + (a - b -1 )c_{N_2}^{n+1} + (a + b)c_{N_2-1}^{n+1} +\sum\limits_{k=0}^{N_2} c_k^{n+1}. 
\end{align*}
Hence, if the equation \eqref{eq:bdd_cond_num} holds then the total cluster concentration defined by \eqref{eq:epsilon_num} is the same at each time iteration. 
\end{proof}

As a direct result of Proposition \ref{prop:bdd_num_p1}, the boundary conditions can be chosen as the following in order to keep the tridiagonal structure of the matrix for the numerical scheme \eqref{eq:evo_p1_num} :
\begin{equation*}
\begin{cases}
c_0^n = (1+2a^{n+1})c_0^{n+1} + \Delta j (a^{n+1}-b^{n+1})\frac{c_1^{n+1} - c_0^{n+1}}{\Delta j},\\
c_{N_2}^{n} = (1-2a^{n+1})c_{N_2}^{n+1} + \Delta j (a^{n+1}+b^{n+1})\frac{c_{N_2}^{n+1} - c_{N_2-1}^{n+1}}{\Delta j}. 
\end{cases}
\end{equation*}

\subsubsection*{Discussion about the total mass of the system \eqref{D8}--\eqref{eq:DiffApp}}
The system composed by equations \eqref{D8} and \eqref{eq:DiffApp} is a continuous approximation of the behaviour of the more complex polymerisation/depolymerisation chemical reactions with two monomers and the catalytic depolymerisation.
As such, the system does not ensure the conservation of the total mass. For instance, let $\Omega = \mathbb{R}_+$, assume sufficient regularity conditions for $(c,v,w)$ and the correct boundary conditions such that $\epsilon$ remains constant.
Then  
\begin{align*}
\frac{d}{dt}\left( v(t) + w(t) + \int_\Omega j c(j,t)dj \right)& = -\tilde{V}(t)\epsilon  + \tilde{V}(t) \int_\Omega c(j,t)dj -\frac{d(t)}{2}\int_\Omega \partial_j c(j,t) dj,\\
& = \frac{d(t)}{2}c(0,t). 
\end{align*}
This implies that the solution of \eqref{eq:DiffApp} has a homogeneous Dirichlet boundary conditions. Moreover, the conservation of the total concentration of clusters imposes a homogeneous Robin boundary conditions. Taking into account the two boundary conditions, the solution is locally constant and equal to $0$ in a neighbourhood of $\partial \Omega = 0$. 
Hence, the numerical scheme proposed above is conservative only for the total concentration of clusters. 
{The correction imposed by the mass conservation law in the equations for the monomers is neglected because of its small order of magnitude in the continuous approximation.
The total mass increases at each time step.\\ 
Using the previous notations, consider $N_2 = \infty$, for the sake of simplicity. The discretization scheme \eqref{eq:evo_p1_num}, with the choice of numerical quadratures \eqref{eq:epsilon_num} and \eqref{eq:M1_num} imply that the total mass is increasing by the following amount at each iteration
$$ \frac{d^{n+1} - \Delta j \tilde{V}^n+1}{2}c_0^{n+1}.$$
One thing to note is that the purpose of the discretization of the continuous approximation is to illustrate the transient dynamics of the clusters over one cycle of the Lotka-Volterra oscillations. Hence, the increase of the total mass does not significantly affect the interpretation that can be made of the graphs.
}

\subsubsection*{Numerical simulation of the advection-diffusion equation}
The numerical simulations of \eqref{eq:DiffApp} and \eqref{D8} are illustrated in Figures %\ref{fig:quant_char_p1}, 
\ref{fig:size_distr_p1} and \ref{fig:size_distr_p1_bis}. 
The parameters are $T=10^4,\; \Delta t = 0.05, \; L = 250$ and  $\Delta j = 0.5$  
The initial conditions are $v(0) = w(0) = 0.6$ and $$c_0(x) = e^{-\tfrac{x^2}{2\sigma}}\mu\sqrt{\tfrac{2}{\pi \sigma}}$$
where $\sigma = 10$ and $\mu = 0.02$. 
Hence, $\epsilon^0 \approx 0.0212$ and $v^0 +w^0 + M^0 \approx 1.2503$.

\subsection{The discrete model}
The numerical computation of the discrete model is using the 8th order Runge Kutta scheme. In order to ensure the nonnegativity of the concentrations of monomers, we apply the change of variables $$v\to e^v \quad \text{and} \quad w \to e^w.$$
The maximal size of the clusters is set to $500$. The code can be found on the following link: \href{https://github.com/mmezache/BiMono_Becker_Doring}{\text{https://github.com/mmezache/BiMono\_Becker\_Doring}}.
\bigskip

\newpage

\end{document}